\def\bn{\bigskip\noindent}
\newcommand{\plim}{\varprojlim}
\newcommand{\mcal}{\mathcal}
\newcommand{\mbf}{\mathbf}
\newcommand{\mfrak}{\mathfrak}
\newcommand{\mbb}{\mathbb}
\newcommand{\mrm}{\mathrm}
\newcommand{\mfS}{\mathfrak{S}}
\newcommand{\mfO}{\mathfrak{O}}
\newcommand{\vphi}{\varphi}
\newcommand{\mfL}{\mathfrak{L}}
\newcommand{\mfM}{\mathfrak{M}}
\newcommand{\mfN}{\mathfrak{N}}
\newcommand{\mft}{\mathfrak{t}}
\newcommand{\mfm}{\mathfrak{m}}
\newcommand{\mfn}{\mathfrak{n}}
\newcommand{\ku}{k[\![u]\!]}
\newcommand{\wh}{\widehat}
\newcommand{\whR}{\widehat{\mathcal{R}}}
\newcommand{\Mod}{\mrm{Mod}}
\newcommand{\cO}{\mathcal{O}}
\newcommand{\E}{\mathcal{E}}
\newcommand{\Eur}{\mathcal{E}^{\mathrm{ur}}}
\newcommand{\cOur}{\mathcal{O}^{\mathrm{ur}}}
\newtheorem{theorem}{Theorem}[section]
\newtheorem{corollary}[theorem]{Corollary}
\newtheorem{lemma}[theorem]{Lemma}
\newtheorem{proposition}[theorem]{Proposition}
\theoremstyle{definition}
\newtheorem{definition}[theorem]{Definition}
\newtheorem{remark}[theorem]{Remark}
\newtheorem{question}[theorem]{Question}
\title{Lattices in crystalline representations and Kisin modules 
associated with 
iterate extensions}
\author{Yoshiyasu Ozeki\footnote{
Kanagawa University, Kanagawa 259-1293, JAPAN.
\endgraf
e-mail: {\tt ozeki@kanagawa-u.ac.jp}}
}
\begin{document}
\maketitle

\begin{abstract}
Cais and Liu extended the theory of Kisin modules and crystalline representations to
allow more general coefficient fields and lifts of Frobenius.
Based on their theory,
we classify lattices in crystalline representations by Kisin modules with additional structures
under a Cais-Liu's setting. Furthermore, we give a geometric interpretation 
of Kisin modules of height one  in terms of  Dieudonn\'e crystals of $p$-divisible groups,
and show a full faithfulness theorem for a restriction functor on torsion crystalline representations.
\end{abstract}

\tableofcontents


\section{Introduction}

Let $K$ be a complete discrete valuation field of mixed characteristics $(0,p)$
with perfect residue field $k$.
Let $\overline{K}$ be an algebraic closure of $K$ and 
$G:=\mrm{Gal}(\overline{K}/K)$ the absolute Galois group of $K$.
Let $e$ be the absolute ramification index of $K$ and $r\ge 0$ an integer.
It is known that to classify $G$-stable lattices in semi-stable or crystalline representations by some linear 
data is one of the powerful tools for studies of various interesting problems such as 
Langlands correspondence.
For this, the theory of {\it Kisin modules}, 
provided in \cite{Kis}, is very useful. 
Based on  Kisin's theory,
Liu \cite{Li2} constructed a theory of {\it $(\vphi,\hat{G})$-modules},
which gives a categorical equivalence between them and 
a category of  $G$-stable lattices in semi-stable representations
with certain Hodge-Tate weights.
One of the advantages of Liu's  theory is that 
there are no  restriction on $e$ and $r$ in his theory.
Throughout Kisin and Liu's theory,
the non-Galois ``Kummer'' extension $K_{\infty}/K$,
obtained by adjoining a given compatible system of $p$-power roots of 
a uniformizer of $K$, plays a central role.
Recently, Cais and Liu \cite{CL} generalized Kisin's theory 
to the setting of many {\it $f$-iterate extension} $K_{\underline{\pi}}/K$.
Here,  the $f$-iterate extension $K_{\underline{\pi}}/K$
that we consider
is defined as follows.
Let $f(u)=u^p+a_{p-1}u^{p-1}+\cdots +a_1u\in \mbb{Z}_p[u]$
such that $f(u)\equiv u^p\ \mrm{mod}\ p\mbb{Z}_p[u]$.
We fix the choice of  a uniformizer $\pi_0=\pi$ of $K$
and $\{\pi_n\}_{n\ge 0}$ such that $f(\pi_{n+1})=\pi_n$.
Then we set $K_{\underline{\pi}}:=\bigcup_{n\ge 0}K(\pi_n)$.
Thus Kisin's theory is the case where $f(u)=u^p$.

The aim of this paper is
to establish the theory of ``crystalline'' $(\vphi,\hat{G})$-modules under 
the Cais-Liu's setting, and apply it to a study of torsion crystalline representations. 
In Section \ref{MainSection}, follwing \cite{Li2},
we define a notion of $(\vphi,\hat{G})$-modules of height $r$.
We show in Theorem \ref{MT} that, 
under some mild assumptions, there exists an anti-equivalence between 
the category of $(\vphi,\hat{G})$-modules of height $r$ (with an additional condition) 
 and the category of  $G$-stable lattices in crystalline
 $\mbb{Q}_p$-representations 
with  Hodge-Tate weights in $[0,r]$.

As a consequence of our arguments,
we can prove a full faithfulness theorem on 
torsion crystalline representations. To give a statement,
we need some more notation.
Let $f(u)=\prod^n_{i=1}f_i(u)$ be the irreducible decomposition
of $f(u)$ in $W(\bar{k})[u]$
with the property that $f_1(u),\dots ,f_m(u)$ are of degree $\le e$
and $f_{m+1}(u),\dots ,f_n(u)$ are of degree $> e$.
We denote by $n_f$ the degree of  $\prod^m_{i=1}f_i(u)$.
For example, we have $n_f=p$ if $f(u)$ is of the form  
$u^p+a_{p-1}u^{p-1}$ with some $a_{p-1}\in p\mbb{Z}_p$.
Let $\mrm{Rep}^{r,\mrm{cris}}_{\mrm{tor}}(G)$
be the category of torsion crystalline representations of $G$
with Hodge-Tate weights in $[0,r]$.
Here, a torsion $\mbb{Z}_p$-representation $T$
is {\it torsion crystalline with Hodge-Tate weights in $[0,r]$}
 if $T$ is a quotient of 
lattices in a crystalline $\mbb{Q}_p$-representation 
with Hodge-Tate weights in $[0,r]$.
It is well-known that the condition that $T$ is torsion crystalline with Hodge-Tate weights in $[0,1]$
is equivalent  to the condition that $T$ is flat in the sense that 
$T$ is of the form $G(\overline{K})$ where $G$ is a finite flat group scheme over $\cO_{\overline{K}}$
killed by some power of $p$.
The theorem below is a torsion analogue of Theorem 1.0.2 of \cite{CL}.

\begin{theorem}[{= Theorems \ref{FFT:r=1} and \ref{FFT}}] 
Under some technical assumptions 
$($see Theorems \ref{FFT:r=1} and \ref{FFT} for details$)$, 
the restriction functor $\mrm{Rep}^{r,\mrm{cris}}_{\mrm{tor}}(G)\to 
\mrm{Rep}_{\mrm{tor}}(G_{\underline{\pi}})$
is fully faithful if $e(r-1)<n_f(p-1)/p$.
\end{theorem}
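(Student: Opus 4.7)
The plan is to use the anti-equivalence of Theorem \ref{MT} to convert the question into one about $(\vphi,\hat{G})$-modules of height $r$, and then to exploit the numerical bound $e(r-1)<n_f(p-1)/p$ to show that a morphism compatible only with $G_{\underline{\pi}}$ is automatically compatible with the full Galois group. The case $r=1$ (Theorem \ref{FFT:r=1}) is best treated separately: torsion crystalline representations with Hodge--Tate weights in $[0,1]$ are precisely flat representations, so the restriction functor can be controlled via finite flat group schemes \`a la Raynaud--Kisin, combined with the $r=1$ case of Theorem \ref{MT}, where the hypothesis is essentially vacuous.

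For the general case, given $T_1,T_2\in\mrm{Rep}^{r,\mrm{cris}}_{\mrm{tor}}(G)$ and a $G_{\underline{\pi}}$-equivariant map $f\colon T_1\to T_2$, I would write each $T_i$ as a quotient $L_i/L_i'$ of $G$-stable lattices in a crystalline $\mbb{Q}_p$-representation with Hodge--Tate weights in $[0,r]$. Theorem \ref{MT} produces $(\vphi,\hat{G})$-modules of height $r$ attached to $L_i, L_i'$, whose subquotients yield torsion $(\vphi,\hat{G})$-modules $\wh{\mfM}(T_i)$ with underlying torsion Kisin modules $\mfM(T_i)$ of height $r$. The map $f$, seen on restriction to $G_{\underline{\pi}}$, corresponds via Fontaine's equivalence in the iterate-extension setting of \cite{CL} to a morphism $F$ of the associated \'etale $\vphi$-modules. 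The proof then reduces to two steps: (i) show that $F$ carries $\mfM(T_1)$ into $\mfM(T_2)$, so that $f$ descends to a morphism of torsion Kisin modules of height $r$; (ii) show that any such morphism of Kisin modules automatically commutes with the residual $\hat{G}$-action. Applying Theorem \ref{MT} to the resulting morphism of $(\vphi,\hat{G})$-modules then yields the desired $G$-equivariant lift of $f$.

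The principal obstacle is step (ii), the automatic $\hat{G}$-equivariance, which is where the inequality $e(r-1)<n_f(p-1)/p$ enters decisively. This hypothesis is the Cais--Liu analogue of the classical Fontaine--Abrashkin ramification bound and reduces to $e(r-1)<p-1$ in Kisin's original setting $f(u)=u^p$ (so $n_f=p$). I expect the argument to proceed by estimating the action of a topological generator $\tau$ of $\mrm{Gal}(K_{\underline{\pi}}(\mu_{p^\infty})/K_{\underline{\pi}})$ on $\wh{\mfM}(T_i)$, exploiting that the cokernel of $1\otimes\vphi$ on $\mfM(T_i)$ is annihilated by a suitable power of the Eisenstein-type polynomial built from $f$. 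The numerical bound is calibrated so that the resulting $u$-adic/$p$-adic estimate is sharp enough: any $\vphi$-equivariant lift of $f$ to the integral Kisin level is unique and forced to respect the $\hat{G}$-structure, with differences between two candidate lifts contracted away. The overall template follows the strategy of \cite{Li2}, adapted to the iterate-extension framework of \cite{CL}.
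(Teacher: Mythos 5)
You correctly identify where the numerical bound $e(r-1)<n_f(p-1)/p$ must enter: a contraction estimate forcing a $\vphi$-equivariant morphism of torsion Kisin modules to automatically commute with the residual $\hat{G}$- (or $G$-) action. This is exactly the content of Theorem~\ref{FFT2}, applied with $\alpha=u_f$ where $v_R(\bar{u}_f)=n_f/e$, so the hypothesis amounts to $v_R(\bar{u}_f)>p(r-1)/(p-1)$. The overall template of converting the problem to Kisin-module statements is also right. However, your step~(i) has a genuine gap: the assertion that the $G_{\underline{\pi}}$-equivariant map $f$, viewed on \'etale $\vphi$-modules, ``carries $\mfM(T_1)$ into $\mfM(T_2)$'' is false in general. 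The functor $T_{\mfS}\colon\Mod^r_{\mfS_\infty}\to\mrm{Rep}_{\mrm{tor}}(G_{\underline{\pi}})$ is exact and faithful but \emph{not} full, so a morphism of torsion representations need not descend to the chosen Kisin modules. The paper repairs this by passing to the abelian category $\mrm{Max}^r_{\mfS_\infty}$ of maximal Kisin modules, on which $T_{\mfS}$ \emph{is} fully faithful (Theorem~\ref{MAX}); none of this machinery appears in your sketch, yet without it you cannot bridge \'etale $\vphi$-modules and Kisin modules.

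Two further omissions matter. First, once you replace $\mfM$ by $\mrm{Max}^r(\mfM)$ there is no canonical transport of the $(\vphi,G)$-structure, so a fresh argument is needed; the paper reduces (Steps~1 and~2) to $k=\overline k$ and $T$ irreducible, then uses Proposition~\ref{simple2} to identify $\mrm{Max}^r(\mfM)$ with an explicit simple object $\mfM(\mfn)$ and Theorem~\ref{simple:str} to show that such $\mfM(\mfn)$ carries a \emph{unique} $(\vphi,G)$-structure obeying the required growth bound, a uniqueness that again hinges on $0\le n_i\le\min\{er,p-1\}$. Your proposal does not address why the lifted morphism of Kisin modules should be compatible with any particular $\hat{G}$-structure on the target, and a direct estimate on a generator $\tau$ as you suggest does not by itself supply this. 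Second, the paper does not handle $r=1$ via finite flat group schemes as a separate argument: Theorem~\ref{FFT:r=1} uses the same maximal-Kisin-module route, with Proposition~\ref{lem0} (needing only $v_p(a_1)>1$) replacing Proposition~\ref{lem1} (which requires the extra hypotheses ${\rm (i)}$, ${\rm (ii)}$ of Theorem~\ref{FFT}). The reason $r=1$ is stated separately is precisely that the weaker estimate of Proposition~\ref{lem0} already suffices to feed into Theorem~\ref{FFT2} when $r=1$, so those hypotheses may be dropped.
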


\noindent
In the case $f(u)=u^p$, this is Theorem 1.2 of \cite{Oz2}.
In this case, previous results have been given by 
some mathematicians.
The theorem was first studied by 
Breuil for  $e=1$ and $r<p-1$ via the Fontaine-Laffaille theory 
(\cite{Br1}, the proof of Th\'eor\`em 5.2).
He also proved the theorem for $p>2$ and $r\le 1$ 
as a consequence of a study of the category of finite flat group schemes (\cite[Theorem 3.4.3]{Br2}).
Later, his result was extended to the case $p=2$ in \cite{Kim}, \cite{La}, \cite{Li3} 
(proved independently). 
Based on studies of  ramification bounds for torsion crystalline representations,
Abrashkin proved the theorem
in the case $[K:\mbb{Q}_p]<\infty$, $e=1$, $p>2$ and $r<p$ (\cite[Section 8.3.3]{Ab}).

On the other hand, our arguments give an affirmative answer to a conjecture 
suggested in \cite[Remark 5.2.3 and Section 6.3]{CL} (in the case where ``$F=\mbb{Q}_p$'').
Let $T$ be a $G$-stable lattice in a crystalline
 $\mbb{Q}_p$-representation
with  Hodge-Tate weights in $[0,r]$. 
Cais-Liu constructed  a  Kisin module $\mfM$
which corresponds to $T|_{G_{\underline{\pi}}}$, 
where $G_{\underline{\pi}}$ is the absolute Galois group of $K_{\underline{\pi}}$.
This Kisin module $\mfM$ depends on the choice of  ($f(u)$, $(\pi_n)_{n\ge 0}$).
If we select another choice of ($f'(u)$, $(\pi'_n)_{n\ge 0}$),
then we obtain a  different Kisin module 
$\mfM'$. 
It seems natural to ask for the relationship between $\mfM$ and $\mfM'$.
For this, we show
\begin{theorem}[= Corollary \ref{comparisoncor}  and Theorem \ref{comparison:r=1}]
Let the notation be as above.
Assume $v_p(a_1)>\mrm{max}\{r,1\}$.
Furthermore,  we assume the condition 
$(P)$ $($cf.\ Section \ref{MainSection}$)$ if $r>1$.
Then the Kisin modules $\mfM$ and $\mfM'$ become isomorphic 
after base change to $W(R)$.
\end{theorem}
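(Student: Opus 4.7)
The plan is to identify both $\mfM \otimes_{\mfS} W(R)$ and $\mfM' \otimes_{\mfS'} W(R)$ with a single $W(R)$-lattice inside an ambient module constructed from $T$ alone, so that the isomorphism falls out. This is the strategy that succeeds in the classical Kummer-tower case ($f(u)=u^p$), and the Cais--Liu framework extends the relevant comparison rings uniformly enough that the same approach should work here.

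First, by the construction underlying Theorem \ref{MT}, the Kisin module $\mfM$ embeds into $W(R) \otimes_{\mbb{Z}_p} T^{\vee}$ as a $\vphi$-stable sublattice (via the canonical embedding $\mfS \hookrightarrow W(R)$ sending $u$ to $[\underline{\pi}]$), and its $W(R)$-span $\mfM_{W(R)} := \mfM \otimes_{\mfS} W(R)$ is a $\vphi$-stable lattice of height $\le r$ inside this ambient module. The same construction, applied to $(f',(\pi'_n))$ with its own embedding $\mfS' \hookrightarrow W(R)$, places $\mfM'_{W(R)}$ inside the \emph{same} ambient module $W(R) \otimes_{\mbb{Z}_p} T^{\vee}$, since that module depends only on $T$. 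It therefore suffices to show $\mfM_{W(R)} = \mfM'_{W(R)}$ as submodules.

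The next step is to characterize this common lattice intrinsically from the crystalline data. Inverting $p$, the Fontaine comparison gives $W(R)[1/p] \otimes_{\mbb{Z}_p} T^{\vee} \cong W(R)[1/p] \otimes_{K_0} D_{\mrm{cris}}(V)^{\vee}$, and one expects $\mfM_{W(R)}[1/p]$ to coincide with the $W(R)[1/p]$-submodule generated by $D_{\mrm{cris}}(V)^{\vee}$ (or by a canonical admissible $W(k)$-lattice therein) under this identification; this description is manifestly independent of the iterate system. The hypothesis $v_p(a_1) > \mrm{max}\{r,1\}$, together with condition $(P)$ when $r>1$, is to be used to show that $\mfM_{W(R)}$ is recovered from $\mfM_{W(R)}[1/p]$ by intersecting with the integral ambient module $W(R) \otimes_{\mbb{Z}_p} T^{\vee}$. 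The same intersection pins down $\mfM'_{W(R)}$, yielding the desired equality.

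The main obstacle will be the integral intersection property: controlling the denominators introduced by inverting $\vphi$ on the Kisin module, which is where the lower bound on $v_p(a_1)$ and the supplementary condition $(P)$ are expected to do the work. For $r=1$ I would bypass this difficulty and instead appeal to the geometric interpretation of height-one Kisin modules by Dieudonn\'e crystals of $p$-divisible groups (announced in the abstract); this gives an iterate-system-free description of $\mfM_{W(R)}$ and proves Theorem \ref{comparison:r=1} directly. For the higher-weight case, Corollary \ref{comparisoncor} should follow from the $(\vphi,\hat{G})$-module classification of Theorem \ref{MT} once the integral characterization above is in place, since both $(\vphi,\hat{G})$-modules recover the same $T$ and hence identify canonically after base change to $W(R)$.
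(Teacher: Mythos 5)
Your high-level framework is close to the paper's (both embed the base-changed Kisin modules into $W(R)\otimes_{\mbb{Z}_p}T^{\vee}$ via the comparison morphism and use $D_{\mrm{cris}}(V)$ to rationalize), but the two steps you flag as the actual work are where the proposal diverges from what succeeds, and both contain genuine gaps.

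First, the ``integral intersection'' step does not go through as stated. You propose to recover $\mfM_{W(R)}$ from $\mfM_{W(R)}[1/p]$ by intersecting with $W(R)\otimes_{\mbb{Z}_p}T^{\vee}$; but the paper establishes no such saturation property, and nothing in Corollary~\ref{comp1} or Proposition~\ref{comp2} asserts that the image of the Kisin lattice is the full intersection of its rationalization with the ambient integral module. What the comparison morphism actually gives is a two-sided sandwich: $\mft_0^r(W(R)\otimes_{\mbb{Z}_p}T^{\vee})\subset \hat{\iota}(W(R)\otimes_{\vphi,\mfS}\mfM)\subset W(R)\otimes_{\mbb{Z}_p}T^{\vee}$, and the same for $\mfM'$. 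The paper's proof of Theorem~\ref{comparison} then passes to the change-of-basis matrix $X\in GL_d(B^+_{\mrm{cris}})$ between the two images (using that $\xi_\alpha$ identifies both $M[1/p]$ and $M'[1/p]$ with $D_{\mrm{cris}}(V)$ inside $(B^+_{\mrm{cris}}\otimes_{\mbb{Z}_p}T^{\vee})^G$), deduces $\mft_0^rX\in M_d(W(R))$ from the sandwich, and finally invokes \cite[Lemma~3.1.3]{Li4} to upgrade this to $X, X^{-1}\in M_d(W(R))$. That final lemma --- a clearing-of-denominators result for $W(R)$ inside $B^+_{\mrm{cris}}$ --- is the device that replaces the intersection claim you want; your proposal omits it and has no substitute.

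Second, your treatment of the $r=1$ case is circular within this paper. You propose to prove Theorem~\ref{comparison:r=1} by appealing to the Dieudonn\'e-crystal description of height-one Kisin modules, but the paper obtains precisely that description (Theorem~\ref{Dieudonne}) as a \emph{consequence} of Theorem~\ref{comparison:r=1}: the isomorphism $A_{\mrm{cris}}\otimes_{\mfS}\vphi^{\ast}\mfM\simeq\mbb{D}(H)(A_{\mrm{cris}})$ for general $f$ is deduced by transporting the known statement in the classical $f(u)=u^p$ setting (Remark~\ref{pdiv:Kisin}) through the comparison isomorphism $W(R)\otimes_{\mfS}\mfM\simeq W(R)\otimes_{\mfS'}\mfM'$. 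The actual way the paper removes the hypothesis $(P)$ when $r=1$ is to replace the use of Lemma~\ref{fix} and the isomorphism \eqref{is} by the explicit $G$-action on $W(R)\otimes_{\mfS}\mfM$ supplied by the $p$-divisible-group arguments of \cite[Section~5]{CL}, which establishes $X_g=Yg(Y)^{-1}$ directly and then lets the general argument run unchanged.
\par
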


\noindent
Now we consider the case $r=1$.
In this case, Cais-Liu showed in \cite[Theorem 5.0.10]{CL} that 
there exists an anti-equivalence of categories between the category 
of Kisin modules of height $1$ and the category of $p$-divisible groups
over the ring of integers $\cO_K$ of $K$.
On the other hand, in the classical Kisin's setting $f(u)=u^p$, 
relationships between Kisin modules of height $1$ and 
Dieudonn\'e crystals are well-studied (cf.\ \cite{Kis}). 
Combining these facts with the above theorem,
we obtain a geometric interpretation of Kisin modules of height $1$ 
for the Cais-Liu's setting.
\begin{corollary}[= Corollary \ref{Dieudonne}]
Assume  $v_p(a_1)>1$.
Let $H$ be a $p$-divisible group over $\cO_K$
and $\mbb{D}(H)$ be the Dieudonn\'e crystal attached to $H$.
Let $\mfM$ be the Kisin module attached to $H$.
Then there is a functorial isomorphism
$A_{\mrm{cris}}\otimes_{\mfS} \vphi^{\ast}\mfM
\simeq \mbb{D}(H)(A_{\mrm{cris}})$.
\end{corollary}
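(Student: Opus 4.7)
The plan is to bootstrap the classical case through the comparison theorem just stated. Take $f_0(u) = u^p$ together with a compatible system of genuine $p$-power roots $\underline{\pi}^{(0)} = (\pi_n^{(0)})_{n\ge 0}$ of a uniformizer, giving the classical Kisin ring $\mfS_0 = W(k)[\![u]\!]$ with $\vphi(u) = u^p$. Applying Cais--Liu's classification \cite[Theorem 5.0.10]{CL} to this choice produces a (classical) Kisin module $\mfM_0$ of height $1$ attached to the same $p$-divisible group $H$. In this classical setting, Kisin's original work (\cite{Kis}) furnishes a natural isomorphism
\[
A_{\mrm{cris}} \otimes_{\mfS_0,\vphi} \mfM_0 \;\simeq\; \mbb{D}(H)(A_{\mrm{cris}}),
\]
functorial in $H$. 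So the task reduces to comparing the two Kisin modules $\mfM$ and $\mfM_0$ attached to $H$ after base change to $A_{\mrm{cris}}$.

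Because $v_p(a_1) > 1$ and $r = 1$, the hypotheses of Theorem \ref{comparison:r=1} are satisfied for the pair of data $((f(u),(\pi_n)),(f_0(u),(\pi_n^{(0)})))$. Applying that theorem yields an isomorphism of $\vphi$-modules
\[
W(R) \otimes_{\mfS} \mfM \;\simeq\; W(R) \otimes_{\mfS_0} \mfM_0.
\]
Now I would apply $\vphi^{\ast}$ to both sides. Since the embedding $W(R) \subset A_{\mrm{cris}}$ is $\vphi$-equivariant, tensoring along this inclusion upgrades the comparison to
\[
A_{\mrm{cris}} \otimes_{\mfS,\vphi} \mfM \;\simeq\; A_{\mrm{cris}} \otimes_{\mfS_0,\vphi} \mfM_0.
\]
Composing with the classical identification gives the desired isomorphism $A_{\mrm{cris}} \otimes_{\mfS} \vphi^{\ast}\mfM \simeq \mbb{D}(H)(A_{\mrm{cris}})$, and functoriality in $H$ is inherited from each of the three functorial constructions involved (Cais--Liu's Kisin module functor, the comparison of Theorem \ref{comparison:r=1}, and Kisin's crystalline identification).

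The only real obstacle I foresee is ensuring that the comparison provided by Theorem \ref{comparison:r=1} intertwines the two Frobenii correctly, so that the base change along $\vphi$ to $A_{\mrm{cris}}$ matches the Frobenius structure on the Dieudonn\'e crystal. If Theorem \ref{comparison:r=1} is proven as an isomorphism of $\vphi$-modules (which is the natural category in which the two Kisin modules live), this is automatic. Otherwise one would need to inspect the construction of the comparison isomorphism, tracing the image of a chosen $\vphi$-compatible basis of $\mfM$ to a $\vphi$-compatible basis of $\mfM_0$ inside $W(R)$, and confirming that the transition matrix lies in $A_{\mrm{cris}}$ (which it trivially does via $W(R) \subset A_{\mrm{cris}}$).
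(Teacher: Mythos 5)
Your argument follows the paper's own proof exactly: combine Cais--Liu's classification (Theorem \ref{pdiv}), the comparison isomorphism of Theorem \ref{comparison:r=1} applied with the classical datum $f_0(u)=u^p$, and the classical Kisin-setting identification of Remark \ref{pdiv:Kisin}. The Frobenius-compatibility concern raised at the end is handled automatically, since Theorem \ref{comparison:r=1} already asserts the comparison is an isomorphism of $\vphi$-modules over $W(R)$.
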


%

\vspace{5mm}
\noindent
{\bf Notation :}
For any topological group $H$,
a free $\mbb{Z}_p$-representation of $H$ 
(resp.\  a $\mbb{Q}_p$-representation of $H$)
is a finitely generated free $\mbb{Z}_p$-module equipped 
with a continuous $\mbb{Z}_p$-linear $H$-action
(resp.\ a finite dimensional $\mbb{Q}_p$-vector space equipped 
with a continuous $\mbb{Q}_p$-linear $H$-action).
We denote by $\mrm{Rep}_{\mbb{Z}_p}(H)$
(resp.\ $\mrm{Rep}_{\mbb{Q}_p}(H)$)
the category of them.

For any ring extension $A\subset B$ and any $A$-linear morphism of $A$-modules
$f\colon M\to N$,
we often abuse notations by writing $f\colon B\otimes_A M\to B\otimes_A N$
for the $B$-linear extension of $f$.


\section{Preliminary}

In this section, we define some basic notation, 
and we recall some results on
iterate extensions given in \cite{CL}.
A lot of arguments in this section are deeply depending on
\cite[Sections 2 and 3]{Li1}. It will be helpful for the reader 
to refer this reference.

\subsection{Basic notation}
Let $p\ge 2$ be a prime number.
Let $K$ be a complete discrete valuation field of mixed characteristics $(0,p)$ 
with perfect residue field $k$.
We denote by $e$ the absolute ramification index of $K$.
Let $\overline{K}$ be an algebraic closure of $K$
and $\cO_{\overline{K}}$ the integer ring of $\overline{K}$.
We denote by $v_p$ the valuation of $\overline{K}$
normalized by $v_p(p)=1$.
We set $G:=\mrm{Gal}(\overline{K}/K)$,
the absolute Galois group of $K$.
We denote by $K_0$ the field $W(k)[1/p]$,
which is the maximal absolutely unramified subfield of $K$.

We fix a uniformizer $\pi$ of $K$ and 
fix the choice of a system $(\pi_n)_{n\ge 0}$, where
$\pi_0=\pi$ and 
$f(\pi_{n+1})=\pi_n$ for any $n\ge 0$.
We also fix a polynomial  $f(u)=\sum^p_{i=1}a_iu^i =u^p+a_{p-1}u^{p-1}+\cdots +a_1u\in \mbb{Z}_p[u]$
which satisfies $f(u)\equiv u^p\ \mrm{mod}\ p$.
By an easy computation of the Newton polygon of $f(u)-\pi_{n-1}$, 
we see that $v_p(\pi_n)=1/(ep^n)$ for any $n\ge 0$. 
We denote by $E(u)$ the minimal polynomial of $\pi$
over $K_0$.

Let $R=\plim \cO_{\overline{K}}/p$, 
where the transition maps are 
given by the $p$-th power map.
This is a complete discrete valuation field with residue field $\overline{k}$.
Let $v_R$ be a valuation of $R$ given by 
$v_R(x):=\lim_{n\to \infty}v_p(\hat{x}_n^{p^n})$
for $x=(x_n)_{n\ge 0}\in R$, where $\hat{x}_n\in \cO_{\overline{K}}$ is any lift of $x_n$.
Let $\mfm_R$ be the maximal ideal of $R$ and set
$\mfm^{\ge c}_R:=\{x\in  R \mid v_R(x)\ge c\}$
for any real number $c\ge 0$.
We set $\underline{\pi}:=(\pi_n\ \mrm{mod}\ p\cO_{\overline{K}})_{n\ge 0}\in R$.
Note that $v_R(\underline{\pi})=1/e$.
By \cite[Lemma 2.2.1]{CL}, there exists a unique set-theoretic section
$\{ \cdot \}_f\colon R\to W(R)$ to the reduction modulo $p$
which satisfies $\vphi(\{x\}_f)=f(\{x\}_f)$ for all $x\in R$.
The embedding 
$W(k)[u]\hookrightarrow W(R)$, given by $u\mapsto \{\underline{\pi}\}_f$,
extends to a unique $W(k)$-algebra embedding 
$\mfS:=W(k)[\![u]\!]\hookrightarrow W(R)$.
By this embedding, we identify $\mfS$ with a $\vphi$-stable $W(k)$-subalgebra
of $W(R)$.
Let $\cO_{\E}$ be the $p$-adic completion of $\mfS[1/u]$.
This is a complete discrete valuation ring with residue field $k(\!(u)\!)$.
Note that $p$ is a uniformizer of $\cO_{\E}$.
Let $\E$ be the fraction field of $\cO_{\E}$.
Then the embedding 
$\mfS\hookrightarrow W(R)$ extends to
$\cO_{\E}\hookrightarrow W(\mrm{Fr}R)$ and 
$\E \hookrightarrow W(\mrm{Fr}R)[1/p]$.
We denote by $\Eur$ the $p$-adic completion of the maximal 
unramified algebraic extension of $\E$, and denote by $\cOur$
the integer ring of $\Eur$. 
We may regard $\Eur$ and  $\cOur$ as $\vphi$-stable 
subrings of $W(\mrm{Fr}R)[1/p]$ and $W(\mrm{Fr}R)$, respectively.
We put $\mfS^{\mrm{ur}}=\cO^{\mrm{ur}}\cap W(R)$.

We set $K_{\underline{\pi}}:=\bigcup_{n\ge 0}K(\pi_n)$
and denote by $G_{\underline{\pi}}$ the absolute Galois group of 
$K_{\underline{\pi}}$.
The extension $K_{\underline{\pi}}/K$ 
is totally wildly ramified. 
Furthermore,  it is shown in \cite[Lemmas 3.1.1 and 3.2.1]{CL} that 
the extension $K_{\underline{\pi}}/K$ is strictly APF in the sense of \cite{Wi},
and the $G_{\underline{\pi}}$-action on $R$ induces an isomorphism
$G_{\underline{\pi}}\simeq G_{k(\!(\underline{\pi})\!)}=G_{{k(\!(u)\!)}}$.
Note that $G_{\underline{\pi}}$-action on $W(\mrm{Fr}R)[1/p]$
preserves  $\Eur$ and  $\cOur$, and 
$G_{\underline{\pi}}$ acts on $\E$ and $\cO_{\E}$ trivial.

Let $\nu \colon W(R)\twoheadrightarrow W(\overline{k})$
be the canonical projection induced by the projection
$R\twoheadrightarrow \overline{k}$.
For any subring $A$ of $B^+_{\mrm{cris}}$,
we set $\mrm{Fil}^iA:=A\cap \mrm{Fil}^iB^+_{\mrm{cris}}$.
We also set 
\begin{align*}
&I_+A:=A\cap \mrm{ker}\ \nu \quad \mrm{and}\\
&I^{[1]}A:=\{x\in A\mid \vphi(x)\in \mrm{Fil}^1A\ \mrm{for\ any}\ n\ge 0 \}.
\end{align*}
Note that we have $I_+A\supset I^{[1]}A$.

\subsection{\'Etale $\vphi$-modules and Kisin modules}
\label{EKmod}

Let $\Mod_{\cO_{\E}}$ (resp.\ $\Mod_{\cO_{\E,\infty}}$)
be the category of finite free $\vphi$-modules $M$ over $\cO_{\E}$
(resp.\ of finite type $\vphi$-modules $M$ over $\cO_{\E}$ killed by a power of $p$)
whose $\cO_{\E}$-linearization 
$1\otimes \vphi\colon \cO_{\E}\otimes_{\vphi,\cO_{\E}}M\to M$
is an isomorphism.
We call objects of these categories {\it \'etale $\vphi$-modules}.

We define  a $\mbb{Z}_p$-representation of $G_{\underline{\pi}}$ 
for any \'etale $\vphi$-module $M$ by
\begin{align*}
T_{\cO_{\E}}(M):=\left\{ 
\begin{array}{cr}
\mrm{Hom}_{\cO_{\E},\vphi}(M,\cOur) & \mrm{if}\ M\in \Mod_{\cO_{\E}}, \cr
\mrm{Hom}_{\cO_{\E},\vphi}(M,\mbb{Q}_p/\mbb{Z}_p\otimes_{\mbb{Z}_p} \cOur) &  \mrm{if}\ M\in \Mod_{\cO_{\E,\infty}}.
\end{array}
\right.
\end{align*}
Here, the  $G_{\underline{\pi}}$-action on  $T_{\cO_{\E}}(M)$
is given by $(g.f)(x):=g(f(x))$ for $f\in T_{\cO_{\E}}(M)$,
$g\in G_{\underline{\pi}}$ and $x\in M$.
Then we have a contravariant functor 
$T_{\cO_{\E}}\colon \Mod_{\cO_{\E}}\to \mrm{Rep}_{\mbb{Z}_p}(G_{\underline{\pi}})$
and $T_{\cO_{\E}}\colon \Mod_{\cO_{\E,\infty}}\to \mrm{Rep}^{\mrm{tor}}_{\mbb{Z}_p}(G_{\underline{\pi}})$.
By \cite[Corollary 3.2.3]{CL}, these two functors give equivalences of categories 
$\Mod_{\cO_{\E}}\simeq \mrm{Rep}_{\mbb{Z}_p}(G_{\underline{\pi}})$ and 
$\Mod_{\cO_{\E,\infty}}\simeq \mrm{Rep}^{\mrm{tor}}_{\mbb{Z}_p}(G_{\underline{\pi}})$.\\

For any integer $r\ge 0$,
we denote by ${}'\Mod^r_{\mfS}$ the category of finite type 
$\vphi$-modules $\mfM$ over $\mfS$
which are {\it of height $r$} in the sense that
the cokernel of the $\mfS$-linearization 
$1\otimes \vphi_{\mfM}\colon \mfS\otimes_{\vphi,\mfS}\mfM\to \mfM$
of $\vphi_{\mfM}$
is killed by $E(u)^r$.
A $\vphi$-modules $\mfM$ is {\it $p'$-torsion free} if, for any non-zero element $x\in \mfM$, 
$\mrm{Ann}_{\mfS}(x)$ is $0$ or $p^n\mfS$ for some $n$.
If $\mfM$ is killed by some power of $p$, then we can check that 
$\mfM$ is $p'$-torsion free if and only if $\mfM$ is $u$-torsion free.
We denote by $\Mod^r_{\mfS}$ the full subcategory of  $'\Mod^r_{\mfS}$
consisting of those objects which are finite and free over $\mfS$.
We also denote by $\Mod^r_{\mfS_{\infty}}$ the full subcategory of  $'\Mod^r_{\mfS}$
consisting of those objects which are $p'$-torsion and killed by a power of $p$.
We call objects of  $\Mod^r_{\mfS}$ or $\Mod^r_{\mfS_{\infty}}$
free Kisin modules or torsion Kisin modules, respectively.
If $\mfM$ is a Kisin module, then one can check that 
$\cO_{\E}\otimes_{\mfS}\mfM$
is an \'etale $\vphi$-module. 

We describe standard linear algebraic properties of  Kisin modules.

\begin{proposition}
Let $0\to \mfM'\to \mfM\to \mfM''\to 0$
be an exact sequence of $\vphi$-modules over $\mfS$.
If $\mfM', \mfM$ and $\mfM''$ are of finite type and $p'$-torsion free 
and $\mfM$ is of height $r$,
then $\mfM'$ and $\mfM$ are of height $r$.
\end{proposition}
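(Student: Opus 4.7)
The plan is to apply the snake lemma to the diagram obtained by applying Frobenius pullback to the given sequence, then reduce the nontrivial half to an injectivity statement.

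First, Frobenius $\vphi\colon\mfS\to\mfS$ is faithfully flat: since $f(u)\equiv u^p\pmod p$ one has $\mfS/(p,f(u))\cong k[u]/(u^p)$, whose basis $1,u,\ldots,u^{p-1}$ lifts by Nakayama to a $\vphi(\mfS)$-basis of $\mfS$. Hence $\vphi^*=\mfS\otimes_{\vphi,\mfS}-$ is an exact functor, and applying it to $0\to\mfM'\to\mfM\to\mfM''\to 0$ yields the commutative diagram with exact rows
\[
\begin{CD}
0 @>>> \vphi^*\mfM' @>>> \vphi^*\mfM @>>> \vphi^*\mfM'' @>>> 0 \\
@. @VV 1\otimes\vphi_{\mfM'} V @VV 1\otimes\vphi_\mfM V @VV 1\otimes\vphi_{\mfM''} V @. \\
0 @>>> \mfM' @>>> \mfM @>>> \mfM'' @>>> 0.
\end{CD}
\]
The snake lemma then produces a six-term exact sequence linking the kernels and cokernels of the three vertical maps.

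The height statement for $\mfM''$ is immediate: $\mrm{coker}(1\otimes\vphi_{\mfM''})$ is a quotient of $\mrm{coker}(1\otimes\vphi_\mfM)$ (the rightmost square of the diagram is cocartesian), and therefore inherits the annihilator $E(u)^r$.

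For $\mfM'$, the snake sequence identifies $\mrm{coker}(1\otimes\vphi_{\mfM'})$, modulo the image of the connecting map $\ker(1\otimes\vphi_{\mfM''})\to\mrm{coker}(1\otimes\vphi_{\mfM'})$, as a submodule of $\mrm{coker}(1\otimes\vphi_\mfM)$. Since the latter is killed by $E(u)^r$, it is enough to show $\ker(1\otimes\vphi_{\mfM''})=0$, i.e.\ that $1\otimes\vphi_{\mfM''}$ is injective. This is where the $p'$-torsion-free hypothesis on $\mfM''$ does real work: any nonzero $x\in\mfM''$ with $ux=0$ would force $u\in\mrm{Ann}_{\mfS}(x)=p^n\mfS$, contradicting $u\notin p\mfS$; so $\mfM''$ is $u$-torsion-free. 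Combined with the analogous observation for elements of $\mrm{Frac}(\mfS)^\times$ coming from the $\mfS$-torsion-free quotient $\mfM''/\mfM''_{\mrm{tor}}$, one obtains an embedding $\mfM''\hookrightarrow M'':=\cO_{\E}\otimes_\mfS\mfM''$. By flatness of $\vphi^*$ this induces an embedding $\vphi^*\mfM''\hookrightarrow \vphi^*M''$; since $M''$ is étale, $1\otimes\vphi_{M''}\colon\vphi^*M''\to M''$ is an isomorphism, and its restriction to $\vphi^*\mfM''$ is therefore injective.

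The main obstacle is the injectivity of $1\otimes\vphi_{\mfM''}$. The rest of the proof is a formal snake-lemma chase; what requires care is the reduction to the étale world, which in turn splits into two clean subcases via the torsion filtration of the finite-type $p'$-torsion-free module $\mfM''$: the $\mfS$-free case (two free modules of equal generic rank with $E(u)^r$-torsion cokernel are compared over $\mrm{Frac}(\mfS)$) and the $p^n$-killed $u$-torsion-free case (embed into the étale localization $\mfM''[1/u]$).
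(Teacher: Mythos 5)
The paper's ``proof'' is a bare citation of Propositions B.1.3.3 and B.1.3.5 of \cite{Fo1}, so what you have written is a self-contained unpacking rather than a genuinely different route; the strategy (exactness of $\vphi^*$, snake lemma, reduction to injectivity of $1\otimes\vphi_{\mfM''}$ via the \'etale generic fibre) is the standard one in this circle of ideas. The mechanical steps are fine: $\vphi\colon\mfS\to\mfS$ is finite free of rank $p$ (your Weierstrass/Nakayama argument works), the snake lemma applies, $\mfM''$ inherits height $r$ because its $\vphi$-linearization cokernel is a quotient of that of $\mfM$, and the $\mfM'$ case reduces to $\ker(1\otimes\vphi_{\mfM''})=0$.

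That injectivity, however, is exactly where the write-up has a gap. You assert ``since $M''$ is \'etale'' without justification, and this cannot be pulled from the paper's blanket statement about Kisin modules: a finite-type $p'$-torsion-free $\vphi$-module of height $r$ need not be $\mfS$-free nor killed by a power of $p$, so it need not be a Kisin module in the paper's sense at all. You must prove \'etaleness of $M''=\cO_{\E}\otimes_{\mfS}\mfM''$ directly: surjectivity of $1\otimes\vphi_{M''}$ because $E(u)$ is a unit in $\cO_{\E}$ and $\mfM''$ has height $r$ (which you have just established), and injectivity because $\vphi^*M''\cong M''$ as abstract $\cO_{\E}$-modules (since $\vphi(p)=p$ and $\vphi$ is finite flat on $\cO_{\E}$), so any $\cO_{\E}$-linear surjection between them is bijective. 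Similarly the embedding $\mfM''\hookrightarrow M''$ needs the filtration you gesture at in your final sentence to be spelled out: with $T\subset\mfM''$ the $p^{\infty}$-torsion, $T$ is $u$-torsion-free and $p^n$-killed so $T\hookrightarrow T[1/u]=\cO_{\E}\otimes_{\mfS}T$, while the quotient $\mfM''/T$ is only $\mfS$-torsion-free, not $\mfS$-free as you phrase it --- torsion-freeness suffices, since it then embeds into a finite free $\mfS$-module and flatness of $\mfS\to\cO_{\E}$ does the rest --- and the five lemma (plus exactness of $\vphi^*$) assembles these into $\vphi^*\mfM''\hookrightarrow\vphi^*M''$. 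With these points filled in, the argument is complete.
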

\begin{proof}
See Propositions B. 1.3.3 and B. 1.3.5 of \cite{Fo1}.
\end{proof}

\begin{proposition}
\label{BASIC2}
Let $\mfM\in {}'\Mod^r_{\mfS}$ be killed by a power of $p$.
Then the following are equivalent.
\begin{itemize}
\item[$(1)$] $\mfM\in \Mod^r_{\mfS_{\infty}}$,
\item[$(2)$] the natural map $\mfM \to \cO_{\E}\otimes_{\mfS}\mfM$ is injective,
\item[$(3)$] there exists an increasing sequence 
$$
0=\mfM_0\subset \mfM_1\subset \mfM_2\subset \cdots \subset \mfM_n=\mfM
$$
of $\vphi$-modules over $\mfS$ such that, for each $i$,
$\mfM_i/\mfM_{i-1}$ is finite free over $k[\![u]\!]$
and $\mfM_i/\mfM_{i-1}\in {}'\Mod^r_{\mfS}$.
\item[$(4)$] $\mfM$ is a quotient of two finite free $\mfS$-modules 
$\mfN'$ and $\mfN''$ with $\mfN\in \Mod^r_{\mfS}$.
\end{itemize}
Moreover, if this is the case,  $\mfM_i$ and $\mfM_i/\mfM_{i-1}$
are objects of $\Mod^r_{\mfS_{\infty}}$ for each $i$.
\end{proposition}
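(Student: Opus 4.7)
The implication $(1)\Leftrightarrow(2)$ is straightforward: since $\mfM$ is killed by some $p^n$ and $\cO_{\E}$ is the $p$-adic completion of $\mfS[1/u]$, the map $\mfM\to\cO_{\E}\otimes_{\mfS}\mfM$ identifies with the localization $\mfM\to\mfM[1/u]$, whose kernel is the $u$-torsion of $\mfM$; under the $p$-power torsion hypothesis, this coincides with the $p'$-torsion, yielding the equivalence. For $(3)\Rightarrow(1)$, the key input is that an extension of $u$-torsion free $\mfS$-modules by $u$-torsion free $\mfS$-modules is again $u$-torsion free (a one-line diagram chase); induction on the filtration length yields $\mfM$ is $u$-torsion free, and combined with the standing hypothesis $\mfM\in{}'\Mod^r_{\mfS}$ this gives (1).

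The main step is $(1)\Rightarrow(3)$, for which the plan is to take the $p$-torsion filtration $\mfM_i:=\mfM[p^i]$ for $i=0,\ldots,n$ with $p^n\mfM=0$. Each $\mfM_i$ is $\vphi$-stable by $\mbb{Z}_p$-linearity, and each successive quotient $\mfM_i/\mfM_{i-1}$ is killed by $p$ and is $u$-torsion free: if $ux\in\mfM_{i-1}$ for $x\in\mfM_i$, then $u(p^{i-1}x)=0$, so $p^{i-1}x=0$ by (1), whence $x\in\mfM_{i-1}$. Thus $\mfM_i/\mfM_{i-1}$ is finite free over $k[\![u]\!]$. The delicate point is showing each $\mfM_i$ has height $r$: exploiting that $\vphi\colon\mfS\to\mfS$ is finite flat (as $\mfS$ is free over $\vphi(\mfS)$ on $1,u,\ldots,u^{p-1}$), the functor $\mfS\otimes_{\vphi,\mfS}(-)$ is exact and commutes with formation of $p^i$-torsion; together with the injectivity of $1\otimes\vphi_{\mfM}$ (which follows from a direct computation showing $\mfS\otimes_{\vphi,\mfS}\mfM$ is $u$-torsion free when $\mfM$ is $u$-torsion free and killed by $p^n$), one can take any expression $E(u)^rx=\sum_j s_j\vphi(m_j)$ with $x\in\mfM_i$ coming from height $r$ of $\mfM$, multiply by $p^i$, and use flatness to conclude $\sum_j s_j\otimes m_j\in\mfS\otimes_{\vphi,\mfS}\mfM_i$, yielding $E(u)^rx\in\mfS\vphi(\mfM_i)$. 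Height $r$ of $\mfM_i/\mfM_{i-1}$ then follows from the previous proposition applied to $0\to\mfM_{i-1}\to\mfM_i\to\mfM_i/\mfM_{i-1}\to 0$, and the moreover assertion is immediate.

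For $(1)\Leftrightarrow(4)$, the direction $(4)\Rightarrow(2)$ follows from flatness of $\cO_{\E}$ over $\mfS$ combined with the elementary identity $\mfS\cap p^n\cO_{\E}=p^n\mfS$, which forces $(\cO_{\E}\otimes_{\mfS}\mfN')\cap\mfN''=\mfN'$ inside $\cO_{\E}\otimes_{\mfS}\mfN''$. The converse $(1)\Rightarrow(4)$ is the principal technical obstacle; the plan is to lift a Frobenius-compatible generating set of $\mfM$ modulo $(p,u)$ to a finite free $\mfS$-module $\mfN''$ equipped with a Frobenius matrix $A$ satisfying $AB=E(u)^rI$ for some $B$ (ensuring height $r$), and then verify that $\mfN':=\ker(\mfN''\twoheadrightarrow\mfM)$ is finite free of height $r$ using that $\mfS$ is a two-dimensional regular local ring so that torsion-free reflexive modules of full rank are free. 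This lifting is the most delicate step, and is analogous to the torsion Kisin module constructions in \cite{Li1}.
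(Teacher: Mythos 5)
Your treatment of the equivalences $(1)\Leftrightarrow(2)$, $(1)\Rightarrow(3)$, $(3)\Rightarrow(1)$, $(4)\Rightarrow(2)$ and the ``moreover'' clause is correct and follows the same $p$-torsion filtration strategy as the cited reference (\cite[Proposition 2.3.2]{Li1}). In particular, the reduction of $(1)\Leftrightarrow(2)$ to $u$-torsion, the diagram-chase for $(3)\Rightarrow(1)$, the flatness of $\vphi\colon\mfS\to\mfS$ (which does hold in the $f$-iterate setting: by Weierstrass preparation and miracle flatness, $\mfS$ is free of rank $p$ over $\vphi(\mfS)$ on $1,u,\dots,u^{p-1}$), and the length/injectivity argument showing each $\mfM[p^i]$ has height $r$, are all sound.

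The gap is in $(1)\Rightarrow(4)$. Your plan is to choose a minimal generating set of $\mfM$ (a basis of $\mfM/(p,u)\mfM$), let $\mfN''=\mfS^d$ with a Frobenius matrix $A$ lifting the Frobenius of $\mfM$, and arrange $AB=E(u)^rI_d$. But such a lift need not exist at minimal rank. Take $e=2$, $r=1$, $E(u)=u^2-p$, and $\mfM=k[\![u]\!]\cdot m$ killed by $p$ with $\vphi(m)=\bar u\, m$; this lies in $\Mod^1_{\mfS_\infty}$. A minimal $\vphi$-equivariant surjection $\mfS\twoheadrightarrow\mfM$ has kernel $p\mfS$, so the Frobenius on $\mfS$ must be $\vphi(e_1)=A e_1$ with $A\equiv u\pmod p$. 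Height $1$ forces $A\mid E(u)$ in $\mfS$; but $E(u)$ is Eisenstein, hence irreducible, so its divisors are units and unit multiples of $E(u)$, which reduce mod $p$ to units and to $\bar u^2$ respectively --- never to $\bar u$. So no admissible $A$ exists at rank $1$. (In this example one \emph{can} realize $\mfM$ as $\mfN''/\mfN'$ with $\mfN'',\mfN'$ free of rank $2$ and height $1$, but this requires a different construction, not the lift of a minimal generating set.) Thus the implication you label as ``the most delicate step'' is genuinely not established by your sketch, and the specific strategy you describe fails. The standard route is to first establish $(1)\Rightarrow(3)$ (which you do) and then handle $(3)\Rightarrow(4)$ by an auxiliary construction that permits passing to larger rank; the reference to \cite{Li1} cannot simply absorb this step, since the point of the proposal was to supply the argument.
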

\begin{proof}
The same proof as \cite[Proposition 2.3.2]{Li1} proceeds.
\end{proof}

\begin{corollary}
\label{exact}
Let $A$ be a $p$-torsion free $\mfS$-algebra
Let $\mfM$ be a Kisin module.
Then we have $\mrm{Tor}^{\mfS}_1(\mfM,A)=0$.
In particular, the functor 
from the category of Kisin modules to the category of $A$-modules
defined by $\mfM\mapsto A\otimes_{\mfS} \mfM$ is  exact. 
\end{corollary}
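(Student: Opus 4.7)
The plan is to split into the two allowed cases, namely $\mfM\in\Mod^r_{\mfS}$ (free) and $\mfM\in\Mod^r_{\mfS_\infty}$ (torsion). If $\mfM$ is a free Kisin module, then it is finite free over $\mfS$, so $\mrm{Tor}^{\mfS}_1(\mfM,A)=0$ trivially, and there is nothing to prove. The substantive content is the torsion case.

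For $\mfM\in\Mod^r_{\mfS_\infty}$, I would invoke Proposition \ref{BASIC2}(3) to obtain a finite filtration
$$0=\mfM_0\subset \mfM_1\subset\cdots\subset \mfM_n=\mfM$$
in which every successive quotient $\mfM_i/\mfM_{i-1}$ is finite free over $k[\![u]\!]=\mfS/p\mfS$. Applying the long exact sequence of $\mrm{Tor}^{\mfS}_\bullet(-,A)$ to each short exact sequence $0\to\mfM_{i-1}\to\mfM_i\to\mfM_i/\mfM_{i-1}\to 0$ and inducting on $i$ reduces the claim to the vanishing $\mrm{Tor}^{\mfS}_1(\mfS/p\mfS,A)=0$. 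This last statement is immediate from the two-term free resolution $0\to\mfS\xrightarrow{\,p\,}\mfS\to\mfS/p\mfS\to 0$ together with the hypothesis that $A$ has no $p$-torsion, since tensoring with $A$ identifies $\mrm{Tor}^{\mfS}_1(\mfS/p\mfS,A)$ with the $p$-torsion submodule of $A$.

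The ``in particular'' clause is then the standard formal consequence: for any short exact sequence $0\to\mfM'\to\mfM\to\mfM''\to 0$ of Kisin modules, the long exact sequence of $\mrm{Tor}^{\mfS}_\bullet(-,A)$ collapses at the left because $\mrm{Tor}^{\mfS}_1(\mfM'',A)=0$, yielding exactness of $A\otimes_{\mfS}-$.

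There is no genuine obstacle here; the whole point of Proposition \ref{BASIC2}(3) is precisely to reduce structural questions about torsion Kisin modules to the single cyclic $\mfS$-module $\mfS/p\mfS$, and $p$-torsion freeness of $A$ handles that one module. Were the filtration unavailable, one could instead appeal to Proposition \ref{BASIC2}(4) and use a two-step resolution of $\mfM$ by free Kisin modules, but that route requires slightly more bookkeeping.
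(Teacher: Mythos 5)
Your proof is correct and takes essentially the same approach as the paper: dévissage via Proposition \ref{BASIC2}(3) to reduce to the case of finite free $k[\![u]\!]$-modules, then $\mrm{Tor}^{\mfS}_1(\mfS/p\mfS,A)=0$ from $p$-torsion freeness of $A$. If anything, your write-up is slightly more careful than the paper's (the paper's phrase "killed by a power of $p$ ... free over $k[\![u]\!]$" conflates the reduction with its endpoint; you correctly invoke the filtration whose graded pieces are killed by $p$).
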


\begin{proof}
By Proposition \ref{BASIC2} and {\it d\'evissage}  argument,
we can reduce a proof to the case where $\mfM$ is killed by a power of $p$.
In this case, $\mfM$ is a  free $k[\![u]\!]$-module of finite rank.
Thus it suffices to show $\mrm{Tor}^{\mfS}_1(k[\![u]\!],A)=0$.
This equality in fact follows from the assumption that  $A$ is $p$-torsion free. 
\end{proof}
By this proposition, the following corollaries immediately follow:
\begin{corollary}
\label{ring:subset}
Let $\mfM$ be a Kisin module.
Let $A\subset B$ be a ring extension of $p$-torsion free $\mfS$-algebras such that
the natural map $A/pA\to B/pB$ is injective.
Then the natural map $A\otimes_{\mfS} \mfM\to B\otimes_{\mfS} \mfM$ is injective.
\end{corollary}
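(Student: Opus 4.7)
The plan is to reduce this injectivity statement to the vanishing of $\mrm{Tor}^{\mfS}_1(C,\mfM)$ for the cokernel $C := B/A$, and to obtain the latter from (a mild extension of) Corollary \ref{exact}. The setup rests on two modest observations: that $C$ is $p$-torsion free, and that the proof of Corollary \ref{exact} in fact applies to any $p$-torsion free $\mfS$-module, not only to $\mfS$-algebras.

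First I would verify that $C$ has no $p$-torsion. Given $c \in C$ with $pc = 0$, choose a lift $b \in B$; then $pb \in A$, and in fact $pb \in A \cap pB$. The hypothesis that $A/pA \hookrightarrow B/pB$ is equivalent to the equality $A \cap pB = pA$, so one obtains $pb = pa$ for some $a \in A$. Since $B$ is $p$-torsion free, $b = a \in A$, and hence $c = 0$.

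Next, tensoring the short exact sequence $0 \to A \to B \to C \to 0$ with $\mfM$ over $\mfS$ and extracting the associated Tor sequence gives
\[ \mrm{Tor}^{\mfS}_1(C,\mfM) \to A \otimes_{\mfS} \mfM \to B \otimes_{\mfS} \mfM. \]
Inspecting the proof of Corollary \ref{exact}, one sees that the $\mfS$-algebra hypothesis on $A$ is used only through the $p$-torsion freeness of $A$ as an $\mfS$-module: the d\'evissage of Proposition \ref{BASIC2} reduces everything to computing $\mrm{Tor}^{\mfS}_1(k[\![u]\!],-)$ via the resolution $0 \to \mfS \xrightarrow{p} \mfS \to k[\![u]\!] \to 0$, which is simply the $p$-torsion functor. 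Applying this extended statement to $C$ yields $\mrm{Tor}^{\mfS}_1(C,\mfM) = 0$, whence the desired injectivity. The only conceptual step is recognizing that Corollary \ref{exact} extends verbatim from $p$-torsion free $\mfS$-algebras to $p$-torsion free $\mfS$-modules; this is the main (and very minor) obstacle.
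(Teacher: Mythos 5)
Your proof is correct, and it is the natural way to derive the corollary from Corollary \ref{exact}; the paper itself gives no argument beyond asserting that the corollary ``immediately follows.'' Both of your auxiliary observations check out: the hypothesis $A/pA\hookrightarrow B/pB$ is indeed equivalent to $A\cap pB=pA$, from which $p$-torsion freeness of $C=B/A$ follows as you argue (using that $B$ is $p$-torsion free); and the proof of Corollary \ref{exact} uses $A$ only through the resolution $0\to\mfS\xrightarrow{\,p\,}\mfS\to k[\![u]\!]\to 0$, which identifies $\mrm{Tor}^{\mfS}_1(k[\![u]\!],A)$ with the $p$-torsion of $A$ as an $\mfS$-module, so the algebra hypothesis is inessential. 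Feeding $\mrm{Tor}^{\mfS}_1(C,\mfM)=0$ into the $\mrm{Tor}$ long exact sequence for $0\to A\to B\to C\to 0$ then gives the injectivity. One could alternatively avoid introducing $C$ and instead d\'evisser directly: for $\mfM$ free over $\mfS$ the map is $A^d\hookrightarrow B^d$; for $\mfM$ killed by $p$ it is an $n$-fold direct sum of $A/pA\hookrightarrow B/pB$; and the general torsion case follows from the filtration of Proposition \ref{BASIC2}(3) together with the exactness of $A\otimes_{\mfS}-$ and $B\otimes_{\mfS}-$ from Corollary \ref{exact} and the snake lemma. Your route is at least as clean and is equally faithful to the tools the paper has set up.
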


\begin{corollary}
\label{mod:sunset}
Let $\mfM$ be a Kisin module
and $\mfN$ a $\vphi$-module over $\mfS$ with $\mfM\subset \mfN$.
Let $\mfS\subset A\subset W(\mrm{Fr}R)$ be ring extensions.
Suppose that 
such that the natural map $A/pA\to \mrm{Fr}R$ is injective.

\noindent
$(1)$ The natural map $A\otimes_{\mfS} \mfM\to A\otimes_{\mfS} \mfN$
is injective.

\noindent
$(2)$ If $A$ is $\vphi$-stable, then the natural map 
$A\otimes_{\vphi, \mfS} \mfM\to A\otimes_{\vphi, \mfS} \mfN$
is injective.
\end{corollary}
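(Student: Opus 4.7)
My plan is to deduce both statements from Corollary \ref{ring:subset} by interpolating through the larger ring $B := W(\mathrm{Fr}\,R)$ and exploiting flatness of $B$ over $\mfS$.

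For (1), I would first apply Corollary \ref{ring:subset} to the inclusion $A \subset B$: both are $p$-torsion free $\mfS$-algebras, and $A/pA \hookrightarrow B/pB = \mathrm{Fr}\,R$ is injective by hypothesis, so the corollary yields an injection $A \otimes_{\mfS} \mfM \hookrightarrow B \otimes_{\mfS} \mfM$. Next I would verify that $B$ is flat over $\mfS$ by factoring the structure map as $\mfS \hookrightarrow \cO_{\E} \hookrightarrow B$: the first map is flat since $\cO_{\E}$ is the $p$-adic completion of the Noetherian localization $\mfS[1/u]$, and the second is flat because both rings are complete discrete valuation rings sharing the uniformizer $p$ (so $B$ is $p$-torsion free, hence flat, over the DVR $\cO_{\E}$). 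Flatness then converts the inclusion $\mfM \subset \mfN$ into an injection $B \otimes_{\mfS} \mfM \hookrightarrow B \otimes_{\mfS} \mfN$. Composing, the map $A \otimes_{\mfS} \mfM \to B \otimes_{\mfS} \mfN$ is injective, and since it factors through $A \otimes_{\mfS} \mfN$, part (1) follows.

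For (2), I would rerun the argument of (1) with $A$ and $B$ viewed as $\mfS$-algebras via the $\vphi$-twisted structure map $\mfS \xrightarrow{\vphi} \mfS \hookrightarrow A \subset B$, so that $A \otimes_{\vphi,\mfS} \mfM$ and $A \otimes_{\vphi,\mfS} \mfN$ become ordinary tensor products over $\mfS$ with this twisted structure. All hypotheses of (1) persist: $A$ and $B$ are unchanged as rings (hence still $p$-torsion free), the inclusion $A \subset B$ and the induced map $A/pA \hookrightarrow \mathrm{Fr}\,R$ are the same maps of abelian groups, and $B$ remains flat over $\mfS$ via the twisted structure because $\vphi\colon \mfS \to \mfS$ is itself flat (it makes $\mfS$ free of rank $p$ over itself with basis $1, u, \dots, u^{p-1}$).

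The hard part is really just the flatness verification for $W(\mathrm{Fr}\,R)$ over $\mfS$; once this standard commutative-algebra observation is in hand, the rest is a routine diagram chase.
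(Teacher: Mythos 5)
Your proof is correct and is, in substance, the argument the paper implicitly intends: the text merely says the corollary "immediately follows," and combining Corollary \ref{ring:subset} (applied to $A\subset W(\mrm{Fr}R)$) with the flatness of $W(\mrm{Fr}R)$ over $\mfS$ is exactly the routine deduction being left to the reader. Your flatness check is sound — $\mfS\to\mfS[1/u]\to\cO_{\E}\to W(\mrm{Fr}R)$ is a composite of a localization, a completion of a Noetherian ring, and a torsion-free module over the DVR $\cO_{\E}$ — and the twist in part (2) is handled correctly, with the freeness of $\mfS$ over $\vphi(\mfS)$ following from Weierstrass division since $f(u)\equiv u^p\pmod p$ is a distinguished polynomial.
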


We define  a $\mbb{Z}_p$-representation of $G_{\underline{\pi}}$ 
for any Kisin module $\mfM$ by
\begin{align*}
T_{\mfS}(\mfM):=\left\{ 
\begin{array}{cr}
\mrm{Hom}_{\mfS,\vphi}(\mfM,\mfS^{\mrm{ur}}) &  \mrm{if}\ \mfM\in \Mod^r_{\mfS},\quad \cr
\mrm{Hom}_{\mfS,\vphi}(\mfM,\mbb{Q}_p/\mbb{Z}_p\otimes_{\mbb{Z}_p} \mfS^{\mrm{ur}}) 
&  
\mrm{if}\ \mfM\in \Mod^r_{\mfS_{\infty}}.
\end{array}
\right.
\end{align*}
Here, the  $G_{\underline{\pi}}$-action on  $T_{\mfS}(\mfM)$
is given by $(g.f)(x):=g(f(x))$ for $f\in T_{\mfS}(\mfM)$,
$g\in G_{\underline{\pi}}$ and $x\in \mfM$.
If $\mfM$ is a Kisin module, then 
$M:=\cO_{\E}\otimes_{\mfS}\mfM$
is an \'etale $\vphi$-module.
Furthermore, 
we have a canonical isomorphism of 
$\mbb{Z}_p[G_{\underline{\pi}}]$-modules 
$T_{\mfS}(\mfM)\simeq T_{\cO_{\E}}(M)$
by \cite[Proposition 3.3.1]{CL}.

\begin{proposition}
\label{KisinFF}
$(1)$ Let $\mfM$ be a Kisin module
and put $M=\cO_{\E}\otimes_{\mfS}\mfM$.
Then, we have a canonical isomorphism 
$T_{\mfS}(\mfM)\simeq T_{\cO_{\E}}(M)$
of 
$\mbb{Z}_p[G_{\underline{\pi}}]$-modules.

\noindent
$(2)$ Let $\mfM$ be a free $($resp.\ torsion$)$ Kisin module.
Then the inclusion $\mfS^{\mrm{ur}}\hookrightarrow W(R)$ induces a natural isomorphism
$T_{\mfS}(\mfM)\simeq \mrm{Hom}_{\mfS,\vphi}(\mfM,W(R))$
$($resp.\ $T_{\mfS}(\mfM)\simeq \mrm{Hom}_{\mfS,\vphi}(\mfM,\mbb{Q}_p/\mbb{Z}_p\otimes_{\mbb{Z}_p}W(R))$$)$
of 
$\mbb{Z}_p[G_{\underline{\pi}}]$-modules.

\noindent
$(3)$ Assume that $\vphi^n(f(u)/u)$ is not a power of $E(u)$
for any $n\ge 0$.
Then the contravariant functor 
$T_{\mfS}\colon \Mod^r_{\mfS}\to \mrm{Rep}_{\mbb{Z}_p}(G)$
is  fully faithful.

\noindent
$(4)$ The contravariant functors 
$T_{\mfS}\colon \Mod^r_{\mfS}\to \mrm{Rep}_{\mbb{Z}_p}(G)$
and 
$T_{\mfS}\colon \Mod^r_{\mfS_{\infty}}\to \mrm{Rep}_{\mrm{tor}}(G)$
are  exact and faithful.
\end{proposition}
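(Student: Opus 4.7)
The plan is to treat the four claims as a staged reduction: (1) reformulates the Cais-Liu comparison in convenient form, (2) shows that one may use $W(R)$ (and its torsion analogue) in place of $\mfS^{\mrm{ur}}$ as the coefficient ring for computing $T_{\mfS}$, and (3)-(4) then follow by transport from the \'etale $\vphi$-module side. The template is \cite[Section 2]{Li1}, adapted to the $f$-iterate setting.

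For (1), I would cite \cite[Proposition 3.3.1]{CL} in the free case; for torsion, use Proposition \ref{BASIC2}(4) to present $\mfM$ as a cokernel of a map of free Kisin modules and pass the free case through by naturality. For (2), the task is to show that any $\mfS$-linear $\vphi$-equivariant $f\colon \mfM\to \cOur$ (or its torsion analogue) already lands in $\mfS^{\mrm{ur}}=\cOur\cap W(R)$. Fix an $\mfS$-basis $e_1,\dots,e_d$ of a free Kisin module and let $A\in M_d(\mfS)$ be the matrix of $\vphi$; by the height condition there exists $B\in M_d(\mfS)$ with $AB=BA=E(u)^r I$. Writing $y_i=f(e_i)\in \cOur\subset W(\mrm{Fr}R)$, the relation $\vphi(\vec y)=A\vec y$ with $A\in M_d(W(R))$ together with a standard successive approximation inside $W(\mrm{Fr}R)$ forces $\vec y\in W(R)^d$; intersecting with $\cOur$ gives the image lies in $\mfS^{\mrm{ur}}$. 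The torsion case is reduced to the free case via Proposition \ref{BASIC2}(4) together with Corollary \ref{exact}.

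For (3), combining (1) with the equivalence $\Mod_{\cO_{\E}}\simeq \mrm{Rep}_{\mbb{Z}_p}(G_{\underline{\pi}})$ reduces full faithfulness of $T_{\mfS}$ to full faithfulness of the base-change functor $\cO_{\E}\otimes_{\mfS}-\colon \Mod^r_{\mfS}\to \Mod_{\cO_{\E}}$. Given Kisin modules $\mfM_1,\mfM_2$ with bases and Frobenius matrices $A_1,A_2$, a morphism on the \'etale side is represented by a matrix $X$ over $\cO_{\E}$ satisfying $A_2\vphi(X)=XA_1$, and the task is to show $X$ has entries in $\mfS$. Iterating the Frobenius relation and using the ``adjugate'' matrices $B_i$ coming from the height-$r$ condition expresses $X$ in terms of $\vphi^n(X)$ with coefficients controlled by products of $E(u)$ and iterates $\vphi^n(f(u)/u)$. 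The hypothesis that no $\vphi^n(f(u)/u)$ is a power of $E(u)$ is exactly what prevents pathological cancellation that would allow $X$ to acquire poles at $u=0$; this mechanism is the same one underlying \cite[Proposition 2.1.12]{Kis} and is the main technical obstacle of the proposition.

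For (4), exactness on the free (resp.\ torsion) side follows from Corollary \ref{exact} (resp.\ its analogue obtained via Proposition \ref{BASIC2}(3)), combined with (1), (2), and the exactness of $T_{\cO_{\E}}$ given by the equivalence of categories recalled in Section \ref{EKmod}. Faithfulness in the free case is immediate from (3); in the torsion case it follows because $\mfM\hookrightarrow \cO_{\E}\otimes_{\mfS}\mfM$ by Proposition \ref{BASIC2}(2), so the \'etale equivalence $T_{\cO_\E}$ detects non-zero objects. The only subtle issue is surjectivity of $T_\mfS(\mfM)\to T_\mfS(\mfM')$ at the sub-object term of a short exact sequence; this is handled via (2), using that $W(R)$ and $\mbb{Q}_p/\mbb{Z}_p\otimes_{\mbb{Z}_p}W(R)$ are sufficiently injective as $\vphi$-modules to lift $\vphi$-equivariant maps across the quotient.
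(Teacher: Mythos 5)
Your proposal follows essentially the same route as the paper's (short) proof: (1) and (2) are \cite[Proposition 3.3.1]{CL} in the free case with a d\'evissage for the torsion case, (3) is \cite[Proposition 3.3.5]{CL}, and (4) reduces to the exactness and faithfulness of the base-change functor $\mfM\mapsto\cO_{\E}\otimes_{\mfS}\mfM$. That reduction, together with (1) and the fact that $T_{\cO_{\E}}$ is an equivalence of categories, is all that is needed: exactness of base change comes from the flatness of $\mfS\to\cO_{\E}$ (equivalently Corollary \ref{exact} with $A=\cO_{\E}$, which is $p$-torsion free), and faithfulness from the injectivity of $\mfM\to\cO_{\E}\otimes_{\mfS}\mfM$ (Proposition \ref{BASIC2}(2) or Corollary \ref{ring:subset}).

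However, your closing remark in the treatment of (4) is off. There is no ``subtle issue'' of surjectivity remaining once you factor $T_{\mfS}\simeq T_{\cO_{\E}}\circ(\cO_{\E}\otimes_{\mfS}-)$ via (1): the composite of two exact functors is exact, full stop. The claim that $W(R)$ and $\mbb{Q}_p/\mbb{Z}_p\otimes_{\mbb{Z}_p}W(R)$ are ``sufficiently injective as $\vphi$-modules'' is both unjustified as stated and unnecessary; it also needlessly drags in part (2), which plays no role in (4). Similarly, in (2) the phrase ``standard successive approximation inside $W(\mrm{Fr}R)$'' glosses over the actual mechanism in \cite[Proposition 3.3.1]{CL}, which is a valuation-theoretic estimate using the height-$r$ condition; and in (3) the comparison with \cite[Proposition 2.1.12]{Kis} is fair only with the caveat that condition (P) is automatic when $f(u)=u^p$, whereas in the $f$-iterate setting it is a genuine hypothesis precisely because $\vphi^n(f(u)/u)$ need not be a pure power of $u$. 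None of these points is fatal since you are reconstructing cited arguments, but the injectivity claim in (4) should simply be deleted in favour of the composition-of-exact-functors argument you already state.
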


\begin{proof}
Assertions (1) and (2) for free Kisin modules are \cite[Proposition 3.3.1]{CL},
and a proof for the torsion case is essentially the same.
For this, the proof of \cite[Corollary 2.2.2]{Li1} is helpful for the readers.
The assertion (3) is \cite[Proposition 3.3.5]{CL}.
To show  (4), it suffices to show that 
$\Mod^r_{\mfS}\to \Mod_{\cO_{\E}}$ and  
$\Mod^r_{\mfS_{\infty}}\to \Mod_{\cO_{\E,\infty}}$
given by $\mfM\to \cO_{\E}\otimes_{\mfS}\mfM$ are
exact and faithful.
The exactness follows from the fact that the inclusion map
$\mfS\to \cO_{\E}$ is flat.
The faithfulness follows from 
Proposition \ref{BASIC2} (2) or Corollary \ref{ring:subset}.
\end{proof}

The following is  the main results of Section 5 of \cite{CL}.

\begin{theorem}[\cite{CL}, Theorem 1.0.3]
\label{pdiv}
Assume $v_p(a_1)>1$.
Then there exists an anti-equivalence of categories between 
the category $\Mod^1_{\mfS}$ of free Kisin modules of height $1$ 
and the category $(p\mrm{-div}{}_{/\cO_K})$ of  $p$-divisible groups
over the ring  of integers $\cO_K$ of $K$.
If $\mfM$ is a free Kisin module of height $1$, then 
the $G_{\underline{\pi}}$-action on $T_{\mfS}(\mfM)$ naturally extends to $G$.
This induces an anti-equivalence of categories between 
$\Mod^1_{\mfS}$ and $\mrm{Rep}^{1,\mrm{cris}}_{\mbb{Z}_p}(G)$.
Moreover, the following diagram is commutative:
\begin{center}
$\displaystyle \xymatrix{  
(p\mrm{-div}{}_{/\cO_K}) \ar^{\simeq}[rr] \ar^{\simeq}[rd] \ar_{T_p}[rd]
& 
&  \Mod^1_{\mfS} \ar^{T_{\mfS}}[dl] \ar_{\simeq}[dl]
\\
& \mrm{Rep}^{1,\mrm{cris}}_{\mbb{Z}_p}(G)
&
}$
\end{center}
\end{theorem}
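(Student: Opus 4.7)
The plan is to follow Kisin's strategy adapted to the Cais-Liu framework, using an intermediary category of Breuil-style modules over a divided power envelope. Let $S$ denote the $p$-adic completion of the divided power envelope of $\mfS$ with respect to $E(u)$; since $f(u)\equiv u^p \bmod p$, the Frobenius $u\mapsto f(u)$ on $\mfS$ extends uniquely to $S$, so Breuil's theory admits an analogue in this deformed-Frobenius setting.

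First I would construct a functor $\mfM\mapsto \mcal{M}(\mfM):=S\otimes_{\vphi,\mfS}\mfM$, equipped with the induced Frobenius and the filtration $\mrm{Fil}^1\mcal{M}(\mfM):=\{x\in \mcal{M}(\mfM)\mid (1\otimes \vphi_{\mfM})(x)\in \mrm{Fil}^1 S\otimes_{\mfS}\mfM\}$. The height-$1$ condition on $\mfM$ ensures the resulting data forms a Breuil module of the correct type, so one may invoke the Breuil-style classification of $p$-divisible groups by such modules (adapted to the deformed Frobenius on $S$) to produce a $p$-divisible group $H(\mfM)$ over $\cO_K$. Conversely, starting from a $p$-divisible group, one recovers a Kisin module by descending its Breuil module along $\mfS\hookrightarrow S$; this descent, parallel to Kisin's analysis in the classical case, is where the hypothesis $v_p(a_1)>1$ is expected to enter, since it controls the linear part of the Frobenius lift and thereby the relevant connection matrices.

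For the Galois action, since $T_p(H(\mfM))$ carries a natural continuous $G$-action, identifying $T_{\mfS}(\mfM)\simeq T_p(H(\mfM))$ as $G_{\underline{\pi}}$-modules — via Proposition \ref{KisinFF}(2) and Fontaine-Faltings-type comparison between $W(R)$ and $A_{\mrm{cris}}$ — transports the $G$-action, yielding commutativity of the diagram. Finally, to identify the essential image with $\mrm{Rep}^{1,\mrm{cris}}_{\mbb{Z}_p}(G)$, I would invoke Tate's theorem that $T_p(H)\otimes \mbb{Q}_p$ is crystalline with Hodge-Tate weights in $[0,1]$, together with the converse due to Raynaud and Fontaine that every lattice in such a crystalline representation arises as the Tate module of a unique $p$-divisible group. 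The hardest step is the descent from Breuil modules over $S$ back to Kisin modules over $\mfS$ in the deformed Frobenius setting: in Kisin's original argument for $f(u)=u^p$ this uses a delicate analysis of how $\vphi$ interacts with the canonical connection on $\vphi$-stable sub-$\mfS$-modules of $\mcal{M}(\mfM)[1/p]$, and one must verify that the lower-order terms $a_iu^i$ do not obstruct the argument — the hypothesis $v_p(a_1)>1$ is precisely where this sharpens.
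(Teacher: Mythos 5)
The paper itself gives no proof of this statement: it is imported wholesale from Cais--Liu (Theorem 1.0.3, with the details in their Section 5). So the right question is whether your sketch is a plausible alternative argument or whether it has a genuine gap --- and it has a gap, in fact a circularity with respect to this very paper.

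Your plan routes through a Breuil module $\mcal{M}(\mfM)=S\otimes_{\vphi,\mfS}\mfM$ over the divided power envelope $S$ and then invokes ``a Breuil-style classification of $p$-divisible groups by such modules, adapted to the deformed Frobenius on $S$,'' followed by a descent from $S$-modules back to Kisin modules over $\mfS$. In Kisin's original setting $f(u)=u^p$ these two ingredients are theorems (Kisin, Theorem 2.2.7 and Appendix~A; Kim for $p=2$), and they yield the comparison $S\otimes_{\mfS}\vphi^{\ast}\mfM\simeq\mbb{D}(H)(S)$. But in the deformed setting this comparison is precisely the content of Corollary~\ref{Dieudonne} of the present paper, and the paper proves it \emph{by using} Theorem~\ref{pdiv} together with Theorem~\ref{comparison:r=1} and the classical Kisin-setting comparison of Remark~\ref{pdiv:Kisin}. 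In other words, the Dieudonn\'e-crystal/Breuil-module comparison in the Cais--Liu setting is a downstream consequence of the theorem you are trying to prove, not an input available to its proof. Taking it as a black box therefore begs the question. Cais--Liu's actual argument avoids $S$-modules entirely: they construct the correspondence directly at the level of Kisin modules, \'etale $\vphi$-modules and $W(R)$, extend the $G_{\underline{\pi}}$-action on $T_{\mfS}(\mfM)$ to $G$ by an explicit convergence argument (this is exactly where $v_p(a_1)>1$ enters, controlling the linear term $a_1u$ of $f$), show the resulting representation is crystalline of weights in $[0,1]$, and only then invoke the Raynaud--Tate classification to get the $p$-divisible group.

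Two smaller points. First, the assertion that the Frobenius $u\mapsto f(u)$ ``extends uniquely to $S$, so Breuil's theory admits an analogue'' conflates two things: the extension of $\vphi$ to $S$ (which does follow from $\vphi(E(u))\equiv E(u)^p\bmod p$) with the existence of a Breuil-type classification theorem over $(S,\vphi)$, which is a substantial result requiring, among other things, a compatible connection on $\mcal{M}(\mfM)$ and a descent mechanism from $S$ to $\mfS$; neither is automatic once $\vphi(u)\neq u^p$. Second, the step ``identifying $T_{\mfS}(\mfM)\simeq T_p(H(\mfM))$ as $G_{\underline{\pi}}$-modules \ldots transports the $G$-action'' presupposes that you already have $H(\mfM)$; but producing $H(\mfM)$ from $\mfM$ is exactly the direction whose justification is missing. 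You cannot both define $H(\mfM)$ via the $S$-module classification and then separately argue the extension of the Galois action --- the two are entangled, and in Cais--Liu the Galois extension comes first.
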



Assume that $v_p(a_1)>1$.
Let $\mfS(1)$ be the free Kisin module of rank $1$
corresponding to $\mbb{Z}_p(1)$ via Theorem \ref{pdiv}.
Let $\mfrak{e}_{(1)}$ be a generator of $\mfS(1)$.
By Lemma 5.2.1 (2), we have  
$\vphi(\mfrak{e}_{(1)})=\mu_0E(u)\mfrak{e}_{(1)}$
for some $\mu_0\in \mfS^{\times}$.

\bn
{\bf Cartier duality.}
Here we give a Cartier duality theorem for \'etale $\vphi$-modules and 
Kisin modules. 
Since arguments here are completely the same as \cite[Section 3.1]{Li1},
we only give a brief sketch here.
We fix an integer $r\ge 0$. A lot of notion in this subsection depend on 
the choice of $r$ but we omit it from subscripts for an abbreviation.

Assume that $v_p(a_1)>1$.
Let $\mu_0\in \mfS^{\times}$ be as in the previous section. 
Let $\mfS^{\vee}$ be the free Kisin module of rank $1$
such that $\vphi(\mfrak{e})=(\mu_0E(u))^r\mfrak{e}$.
Here $\mfrak{e}$ is a generator of $\mfS^{\vee}$.
(Clearly, we have $\mfS^{\vee}=\mfS(1)$ if $r=1$.)
We see that $\mfS^{\vee}$ is of height $r$.
We set $\cO^{\vee}_{\E}:=\cO_{\E}\otimes_{\mfS}\mfS^{\vee}$, 
which is an \'etale $\vphi$-module. 
Note that we have isomorphisms   
$T_{\cO_{\E}}(\cO^{\vee}_{\E})\simeq T_{\mfS}(\mfS^{\vee})\simeq \mbb{Z}_p(r)$.
For any Kisin module $\mfM$, we define an $\mfS$-module $\mfM^{\vee}$ by
\begin{align*}
\mfM^{\vee}:=\left\{ 
\begin{array}{cr}
\mrm{Hom}_{\mfS}(\mfM,\mfS) & \mrm{if}\ \mfM\in \Mod^r_{\mfS},\quad \cr
\mrm{Hom}_{\mfS}(\mfM,\mfS_{\infty}) &  \mrm{if}\ \mfM\in \Mod^r_{\mfS_{\infty}}.
\end{array}
\right.
\end{align*}
For any \'etale $\vphi$-module $M$, we define an $\cO_{\E}$-module 
$M^{\vee}$ by
\begin{align*}
M^{\vee}:=\left\{ 
\begin{array}{cr}
\mrm{Hom}_{\cO_{\E}}(M,\cO_{\E}) & \mrm{if}\ M\in \Mod_{\cO_{\E}}, \cr
\mrm{Hom}_{\cO_{\E}}(M,\cO_{\E, \infty}) &  \mrm{if}\ M\in \Mod_{\cO_{\E,\infty}}.
\end{array}
\right.
\end{align*}
We then have canonical parings 
\begin{align*}
& \langle \cdot , \cdot \rangle \colon \mfM\times \mfM^{\vee}\to \mfS^{\vee}
& & \mrm{if}\ \mfM\in \Mod^r_{\mfS},\\
& \langle \cdot , \cdot \rangle \colon \mfM\times \mfM^{\vee}\to \mfS_{\infty}^{\vee}
& & \mrm{if}\ \mfM\in \Mod^r_{\mfS_{\infty}}
\end{align*}
and 
\begin{align*}
& \langle \cdot , \cdot \rangle \colon M\times M^{\vee}\to \cO_{\E}^{\vee}\quad 
& & \mrm{if}\ M\in \Mod_{\cO_{\E}},\\
& \langle \cdot , \cdot \rangle \colon M\times M^{\vee}\to \cO_{\E,\infty}^{\vee}
& & \mrm{if}\ M\in \Mod_{\cO_{\E,\infty}}.
\end{align*}

\begin{proposition}
Assume that $v_p(a_1)>1$.

\noindent
$(1)$ There exist a unique $\vphi$-semi-linear map 
$\vphi_{M^{\vee}}\colon M^{\vee}\to M^{\vee}$
which satisfies the following:
\begin{itemize}
\item[$(a)$] $(M^{\vee},\vphi_{M^{\vee}})$ is an \'etale $\vphi$-module,
\item[$(b)$] $\vphi_{M^{\vee}}$ is compatible with 
the pairing $\langle \cdot , \cdot \rangle$ for $M$,
\item[$(c)$] $T_{\cO_{\E}}(M^{\vee})\simeq T_{\cO_{\E}}(M)^{\vee}(r)$.
\end{itemize}

\noindent
$(2)$ Suppose that  $M=\cO_{\E}\otimes_{\mfS}\mfM$.
There exist a unique $\vphi$-semi-linear map 
$\vphi_{\mfM^{\vee}}\colon \mfM^{\vee}\to \mfM^{\vee}$ 
which satisfies the following:
\begin{itemize}
\item[$(a)$] $(\mfM^{\vee},\vphi_{\mfM^{\vee}})$ is a Kisin module of height $r$,
\item[$(b)$] $\vphi_{M^{\vee}}=1\otimes \vphi_{\mfM^{\vee}}$. In particular, 
$\vphi_{M^{\vee}}$ is compatible with the pairing $\langle \cdot , \cdot \rangle$ for $\mfM$,
\item[$(c)$] $T_{\mfS}(\mfM^{\vee})\simeq T_{\mfS}(\mfM)^{\vee}(r)$.
\end{itemize}
\end{proposition}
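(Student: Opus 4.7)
The plan is to follow the standard Cartier-duality argument of \cite[Section 3.1]{Li1}: transport the Frobenius from $M$ (resp.\ $\mfM$) to its dual through the pairing, and then verify the remaining structural properties. For part $(1)$ with $M$ finite free, fix a basis $e_1,\dots,e_d$ of $M$ with $\vphi_M(e_j)=\sum_i A_{ij}e_i$; \'etaleness of $M$ is precisely the invertibility of $A\in M_d(\cO_{\E})$. Writing $\vphi_{M^{\vee}}(e_i^{\vee})=\sum_l B_{li}e_l^{\vee}$ in the dual basis, and using $\vphi(\mfrak{e})=(\mu_0E(u))^r\mfrak{e}$ for the generator $\mfrak{e}$ of $\cO_{\E}^{\vee}$, the compatibility $(b)$ reduces to the matrix identity
$$A^{T}B=(\mu_0E(u))^r\cdot I.$$
This forces $B:=(\mu_0E(u))^r(A^{T})^{-1}$, giving both uniqueness and existence; and $B$ is manifestly invertible over $\cO_{\E}$, so $(M^{\vee},\vphi_{M^{\vee}})$ is \'etale.

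For part $(2)$, the same formula lifts to $\mfS$: the height-$r$ hypothesis on $\mfM$ yields $C\in M_d(\mfS)$ with $AC=CA=E(u)^r\cdot I$; then $B=\mu_0^{r}C^{T}$ has entries in $\mfS$ and the identities $A^{T}B=BA^{T}=(\mu_0E(u))^r\cdot I$ show that the cokernel of $1\otimes\vphi_{\mfM^{\vee}}$ is killed by $E(u)^r$, so $\mfM^{\vee}$ is of height $r$. The relation $\vphi_{M^{\vee}}=1\otimes\vphi_{\mfM^{\vee}}$ is immediate from the shared matrix description, which gives $(b)$; uniqueness on the $\mfS$-side then transfers from $(1)$ via the injection $\mfM^{\vee}\hookrightarrow \cO_{\E}\otimes_{\mfS}\mfM^{\vee}=M^{\vee}$ (Proposition \ref{BASIC2}$(2)$).

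For property $(c)$, the construction makes $\langle\cdot,\cdot\rangle$ $\vphi$-equivariant, so applying the contravariant functor $T_{\cO_{\E}}$ produces a natural pairing
$$T_{\cO_{\E}}(M^{\vee})\otimes_{\mbb{Z}_p}T_{\cO_{\E}}(M)\longrightarrow T_{\cO_{\E}}(\cO_{\E}^{\vee})\simeq \mbb{Z}_p(r).$$
To prove perfectness, base-change to $\cOur$ and invoke the standard comparison $\cOur\otimes_{\mbb{Z}_p}T_{\cO_{\E}}(N)\simeq \cOur\otimes_{\cO_{\E}}N$ for \'etale $\vphi$-modules $N$; the pairing then becomes the evaluation pairing of dual free $\cOur$-modules, which is perfect. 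The Kisin counterpart of $(c)$ follows via Proposition \ref{KisinFF}$(1)$.

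The main obstacle I expect is the torsion case, where bases do not exist in the naive sense and the dualizing object must be replaced by $\cO_{\E,\infty}$ (resp.\ $\mfS_{\infty}$). For torsion \'etale $\vphi$-modules one fixes an invariant-factor decomposition over the DVR $\cO_{\E}$ and repeats the matrix calculation to obtain $(1)$; for torsion Kisin modules, d\'evissage along the filtration of Proposition \ref{BASIC2}$(3)$ reduces the height-$r$ and compatibility assertions to graded pieces that are finite free over $k[\![u]\!]$, where the matrix argument applies essentially verbatim, and the short exact sequences assemble the Frobenius structures on the duals.
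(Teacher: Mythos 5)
Your proposal is essentially a reconstruction of the argument in \cite[Section~3.1]{Li1}, which is exactly what the paper invokes (its ``proof'' is a one-line citation). The key steps all match: the matrix identity $A^{T}B=(\mu_0E(u))^r I$ forcing $B$, the height-$r$ adjugate matrix $C$ with $AC=E(u)^r I$ supplying height $r$ for the dual, the base change to $\cOur$ via the comparison isomorphism to get perfectness in $(c)$, and d\'evissage along Proposition~\ref{BASIC2}$(3)$ for the torsion case.

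One small imprecision worth flagging: since $T_{\cO_\E}$ is contravariant and the Cartier pairing $M\otimes M^{\vee}\to\cO_\E^{\vee}$ is covariant in both arguments, applying $T_{\cO_\E}$ does not literally ``produce a pairing'' $T_{\cO_\E}(M^{\vee})\otimes T_{\cO_\E}(M)\to\mbb{Z}_p(r)$ directly; what one really does is tensor the pairing with $\cOur$, transport it through the two comparison isomorphisms $\cOur\otimes_{\cO_\E}N\simeq\cOur\otimes_{\mbb{Z}_p}T_{\cO_\E}(N)^{\vee}$ (for $N=M, M^{\vee}$), and then take $\vphi$-invariants to land in $\mbb{Z}_p(r)$. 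Your subsequent sentence (``base-change to $\cOur$ and invoke the standard comparison'') is in fact the correct mechanism, so this is a phrasing issue rather than a gap.
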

\begin{proof}
The same proof as \cite[Section 3.1]{Li1} proceeds.
\end{proof}

\bn
{\bf Comparison morphism of Kisin modules.}
We define a comparison morphism between 
Kisin modules and their representations. 
Precise arguments are given in
\cite[Section 3.2]{Li1}.

Let $\mfM$ be a Kisin module.
We define a $W(R)$-linear map 
$\iota_{\mfS}\colon 
W(R)\otimes_{\mfS}\mfM 
\to
W(R)\otimes_{\mbb{Z}_p}T_{\mfS}(\mfM)^{\vee}
$
by the composite
$$
W(R)\otimes_{\mfS}\mfM 
\to
\mrm{Hom}_{\mbb{Z}_p}(T_{\mfS}(\mfM),W(R))\simeq 
W(R)\otimes_{\mbb{Z}_p}T_{\mfS}(\mfM)^{\vee},
$$
where the first map is given by $x\mapsto (f\mapsto f(x))$
and the second is the natural map.
It is not difficult to check that 
$\iota_{\mfS}$ is  $\vphi$-equivalent
and $G_{\underline{\pi}}$-equivalent.

Assume that $v_p(a_1)>1$.
Take any generator $f$ of $T_{\mfS}(\mfS(1))$
and set $\mft:=f(\mfrak{e}_{(1)})\in W(R)$.
Since $f$ is compatible with $\vphi$ and is a generator of $T_{\mfS}(\mfS(1))$,
we see 
$$
\vphi(\mft)=\mu_0 E(u)\mft\quad \mrm{and}
\quad \mft\in W(R)\smallsetminus pW(R).
$$
Such $\mft$ is unique up to multiplication by $\mbb{Z}^{\times}_p$
and is independent of the choice of $f$.

\begin{proposition}
Assume that $v_p(a_1)>1$.
There exist natural 
$W(R)$-linear morphisms
$$
\iota_{\mfS} \colon W(R)\otimes_{\mfS}\mfM 
\to
W(R)\otimes_{\mbb{Z}_p}T_{\mfS}(\mfM)^{\vee}
$$
and 
$$
\iota^{\vee}_{\mfS} \colon 
W(R)^{\vee}\otimes_{\mbb{Z}_p}T_{\mfS}(\mfM)^{\vee}
\to
W(R)(-r)\otimes_{\mfS}\mfM
$$
which satisfy  the following:
\begin{itemize}
\item[$(1)$] $\iota_{\mfS}$ and $\iota^{\vee}_{\mfS}$ are 
$\vphi$-equivalent
and $G_{\underline{\pi}}$-equivalent.
\item[$(2)$] If we identify $W(R)^{\vee}=W(R)(-r)=W(R)$,
then we have
$\iota^{\vee}_{\mfS}\circ \iota_{\mfS}=\mft^r\otimes Id_{\mfM}$
and $\iota_{\mfS}\circ \iota^{\vee}_{\mfS}
=\mft^r\otimes Id_{T_{\mfS}(\mfM)^{\vee}}$.
\end{itemize}
\end{proposition}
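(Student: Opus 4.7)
The map $\iota_{\mfS}$ has already been defined above: it sends $a\otimes x$ to the $W(R)$-valued functional $f\mapsto a\cdot f(x)$ on $T_{\mfS}(\mfM)$, viewed as an element of $W(R)\otimes_{\mbb{Z}_p}T_{\mfS}(\mfM)^{\vee}$. I would check $\vphi$-equivariance using the fact that every $f\in T_{\mfS}(\mfM)=\mrm{Hom}_{\mfS,\vphi}(\mfM,W(R))$ is by definition $\vphi$-compatible. The $G_{\underline{\pi}}$-equivariance follows from the convention $(g.f)(x)=g(f(x))$, combined with the observation that $G_{\underline{\pi}}$ fixes the image of $\mfS$ in $W(R)$ pointwise (since $u\in\mfS$ maps to $\{\underline{\pi}\}_f\in W(R)$, while $\underline{\pi}\in R$ is $G_{\underline{\pi}}$-fixed).

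For the construction of $\iota_{\mfS}^{\vee}$, my plan is to follow the approach of \cite[Section~3.2]{Li1}. I apply the construction of $\iota$ to the Cartier dual $\mfM^{\vee}$, yielding
\[
\iota_{\mfS,\mfM^{\vee}}\colon W(R)\otimes_{\mfS}\mfM^{\vee}\longrightarrow W(R)\otimes_{\mbb{Z}_p}T_{\mfS}(\mfM^{\vee})^{\vee}.
\]
Using the Cartier duality isomorphism $T_{\mfS}(\mfM^{\vee})\simeq T_{\mfS}(\mfM)^{\vee}(r)$ from the previous proposition, the reflexivity $(\mfM^{\vee})^{\vee}\simeq\mfM$, and the identification $\mfS^{\vee}\otimes_{\mfS}W(R)\simeq W(R)(-r)$ sending a generator $\mfrak{e}\mapsto\mft^r$ (which is compatible with $\vphi$ because $\vphi(\mfrak{e})=(\mu_0 E(u))^r\mfrak{e}$ and $\vphi(\mft^r)=(\mu_0 E(u))^r\mft^r$), I take the $W(R)$-linear transpose of $\iota_{\mfS,\mfM^{\vee}}$ to obtain $\iota_{\mfS}^{\vee}$ with the source and target stated in the proposition. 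Its $\vphi$- and $G_{\underline{\pi}}$-equivariance are inherited from those of $\iota_{\mfS,\mfM^{\vee}}$, since all the identifications used respect these structures.

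For assertion~(2), both compositions are $W(R)$-linear endomorphisms of finite free modules that are functorial in $\mfM$. By naturality and the perfectness of the pairing $\mfM\otimes_{\mfS}\mfM^{\vee}\to\mfS^{\vee}$, I would reduce to a computation for $\mfM=\mfS(1)$: there $\iota_{\mfS}(1\otimes\mfrak{e}_{(1)})$ corresponds under the natural identification to $\mft\in W(R)$, and the analogous statement for $\mfS^{\vee}$ gives $\mft^r$. The composition identities then follow once one propagates through the identifications $W(R)^{\vee}=W(R)(-r)=W(R)$. The main technical obstacle I anticipate is not any deep calculation but the bookkeeping: arranging all three identifications and both duality isomorphisms (the Cartier duality on $T_{\mfS}$ and the $\mfS$-linear duality defining $\mfM^{\vee}$) with consistent twist conventions, so that both $\iota_{\mfS}^{\vee}\circ\iota_{\mfS}$ and $\iota_{\mfS}\circ\iota_{\mfS}^{\vee}$ come out to exactly $\mft^r\otimes\mrm{Id}$ on the nose.
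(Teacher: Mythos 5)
Your overall route is the one the paper relies on, namely the argument of \cite[Theorem 3.2.2]{Li1}: define $\iota_{\mfS}$ by evaluation (the equivariance checks you sketch are correct), apply the same construction to the Cartier dual $\mfM^\vee$, and then transpose through the duality structure to obtain $\iota^\vee_{\mfS}$, reducing the composition formula to the rank-one object.

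The place where the proposal is too loose is assertion~(2). Writing that ``by naturality and the perfectness of the pairing $\mfM\times\mfM^\vee\to\mfS^\vee$ one reduces to $\mfM=\mfS(1)$'' omits the step that actually carries the weight. Functoriality of $\iota^\vee_{\mfS}\circ\iota_{\mfS}$ in $\mfM$ does not, by itself, determine an endomorphism of $W(R)\otimes_{\mfS}\mfM$ from its value on a rank-one module: the category of height-$r$ Kisin modules is not generated by $\mfS(1)$ in any way that would make this a formal reduction. What makes the rank-one computation relevant is that $\iota_{\mfS}$ \emph{intertwines the two perfect pairings}: the extension of $\langle\cdot,\cdot\rangle\colon\mfM\times\mfM^\vee\to\mfS^\vee$ to $W(R)$-coefficients on one side, and the induced pairing between $T_{\mfS}(\mfM)^\vee$ and $T_{\mfS}(\mfM^\vee)^\vee$ (with values in $T_{\mfS}(\mfS^\vee)^\vee\simeq\mbb{Z}_p(-r)$) on the other, with $\iota_{\mfS,\mfS^\vee}$ mediating on the target line. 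Once this compatibility is established, one has $\langle\iota^\vee_{\mfS}\iota_{\mfS}(x),y\rangle=\iota_{\mfS,\mfS^\vee}(\langle x,y\rangle)=\mft^r\langle x,y\rangle$, and perfectness of the pairing then forces $\iota^\vee_{\mfS}\iota_{\mfS}=\mft^r\otimes\mathrm{Id}$; the relevant rank-one object is $\mfS^\vee$ (giving the exponent $r$), not $\mfS(1)$. Your rank-one computations $\iota_{\mfS}(1\otimes\mfrak e_{(1)})\leftrightarrow\mft$ and $\iota_{\mfS,\mfS^\vee}(1\otimes\mfrak e)\leftrightarrow\mft^r$ are correct; the missing piece in the writeup is the explicit verification that $\iota_{\mfS}$ respects the pairings, which is where the genuine content of Liu's argument lives, and which should not be filed under ``bookkeeping.''
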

\begin{proof}
The proof is completely the same as that of \cite[Theorem 3.2.2]{Li1}.
\end{proof}

\begin{corollary}
\label{comp1}
Assume that $v_p(a_1)>1$.
The maps $\iota_{\mfS}$ and $\iota^{\vee}_{\mfS}$ are injective, 
and we have 
$\mft^r(W(R)\otimes_{\mbb{Z}_p}T_{\mfS}(\mfM)^{\vee})
\subset \mrm{Im}(\iota_{\mfS})$
and 
$\mft^r(W(R)(-r)\otimes_{\mfS}\mfM)\subset 
\mrm{Im}(\iota^{\vee}_{\mfS})$.
\end{corollary}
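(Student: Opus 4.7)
The plan is to derive both assertions directly from property $(2)$ of the preceding proposition, namely the identities
$\iota^{\vee}_{\mfS}\circ \iota_{\mfS}=\mft^{r}\otimes \mrm{Id}_{\mfM}$
and $\iota_{\mfS}\circ \iota^{\vee}_{\mfS}=\mft^{r}\otimes \mrm{Id}_{T_{\mfS}(\mfM)^{\vee}}$.
The inclusions are immediate: for any $x\in W(R)\otimes_{\mbb{Z}_p}T_{\mfS}(\mfM)^{\vee}$,
we have $\mft^{r}x=\iota_{\mfS}(\iota^{\vee}_{\mfS}(x))\in \mrm{Im}(\iota_{\mfS})$,
and symmetrically for the other containment.

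For injectivity, the same identities show that it suffices to prove that multiplication by $\mft^{r}$ is injective on $W(R)\otimes_{\mfS}\mfM$ and on $W(R)\otimes_{\mbb{Z}_p}T_{\mfS}(\mfM)^{\vee}$. Since $\mft\in W(R)\smallsetminus pW(R)$, its reduction modulo $p$ is a nonzero element of the perfect valuation ring $R$, which is an integral domain. Consequently multiplication by $\mft$ is injective on $R$, and then, by the short exact sequences $0\to p^{i}W(R)/p^{i+1}W(R)\to W(R)/p^{i+1}W(R)\to W(R)/p^{i}W(R)\to 0$ together with the fact that each graded piece is identified with $R$, we conclude that $\mft$ acts as a nonzerodivisor on $W(R)/p^{n}W(R)$ for every $n\ge 1$, and hence on $W(R)$ itself.

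To pass from $W(R)$ to the two tensor products, I would split into the free and torsion cases. If $\mfM\in \Mod^{r}_{\mfS}$ is free, then $W(R)\otimes_{\mfS}\mfM$ is a free $W(R)$-module, and $T_{\mfS}(\mfM)^{\vee}$ is a free $\mbb{Z}_{p}$-module, so each tensor product is a free $W(R)$-module on which $\mft^{r}$ acts injectively by the above. If $\mfM\in \Mod^{r}_{\mfS_{\infty}}$ is torsion, Proposition \ref{BASIC2}(3) provides a filtration of $\mfM$ whose graded pieces are finite free over $k[\![u]\!]$; Corollary \ref{exact} guarantees that tensoring with the $p$-torsion-free $\mfS$-algebra $W(R)$ preserves this filtration, and each graded piece of $W(R)\otimes_{\mfS}\mfM$ is identified with a finite free $R$-module on which $\mft$ (mod $p$) is a nonzerodivisor; a standard dévissage therefore yields injectivity of $\mft^{r}$. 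The same kind of argument, using that $T_{\mfS}(\mfM)^{\vee}$ is a finite direct sum of cyclic $\mbb{Z}_{p}$-modules $\mbb{Z}/p^{n_{i}}\mbb{Z}$, handles the representation side.

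The only step requiring genuine care, rather than bookkeeping, is verifying that $\mft$ is a nonzerodivisor on $W(R)/p^{n}W(R)$, which rests squarely on the nontriviality clause $\mft\notin pW(R)$ built into the construction of $\mft$ in the previous section together with the fact that $R$ is a domain. Everything else is formal from the composition identities and the flatness/dévissage inputs already isolated as Corollary \ref{exact} and Proposition \ref{BASIC2}.
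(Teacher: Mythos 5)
Your proof is correct and is essentially the argument the paper implicitly intends: derive the inclusions directly from the composition identities $\iota^{\vee}_{\mfS}\circ \iota_{\mfS}=\mft^r$ and $\iota_{\mfS}\circ \iota^{\vee}_{\mfS}=\mft^r$, and reduce injectivity to $\mft^r$ being a nonzerodivisor on $W(R)\otimes_{\mfS}\mfM$ and $W(R)\otimes_{\mbb{Z}_p}T_{\mfS}(\mfM)^{\vee}$, which follows from $\mft\notin pW(R)$, the domain property of $R$, and the dévissage via Proposition \ref{BASIC2} and Corollary \ref{exact} in the torsion case. The paper supplies no proof for this corollary (it is stated as an immediate consequence of the preceding proposition, whose proof is deferred to [Li1, Theorem 3.2.2]), and your argument matches the standard reasoning from that source.
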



\section{Lattices in crystalline representations}

In this section, we study Galois actions on Kisin modules 
which corresponds to crystalline representations.
It gives an anti-equivalence 
between a category of Kisin modules with certain Galois actions
and a category of lattices in crystalline representations
with some Hodge-Tate weights.

\subsection{$(\vphi,\hat{G})$-modules}

Let $\widehat{K}_{\underline{\pi}}/K$ be the Galois closure of 
the extension $K_{\underline{\pi}}/K$.
We denote by $\hat{G}$ the absolute Galois group 
$\mrm{Gal}(\overline{K}/\widehat{K}_{\underline{\pi}})$ of $\widehat{K}_{\underline{\pi}}$.
Following \cite{CL}, we set 
$\frak{O}_{\alpha}:=\mfS[\![\frac{E(u)^p}{p}]\!][1/p]\subset B^+_{\mrm{cris}}$.
It is not difficult to check 
$I_+\mfO_{\alpha}=u\mfO_{\alpha}$ and  $\mfO_{\alpha}/I_+\mfO_{\alpha}\simeq K_0$.
We note that we have $\mfS[\![\frac{E(u)^p}{p}]\!]
=\mfS[\![\frac{u^{ep}}{p}]\!]\subset A_{\mrm{cris}}$ and  
$\mfS[\![\frac{E(u)^p}{p}]\!]$ is $p$-adically complete and $\vphi$-stable.

{\it In the rest of this paper, we fix the choice of 
a $K_0$-subalgebra $\mcal{R}_{K_0}$ of  $B^+_{\mrm{cris}}$} 
which satisfies the following properties:
\begin{itemize}
\item $\mfO_{\alpha}\subset \mcal{R}_{K_0}$,
\item $\mcal{R}_{K_0}\subset B^+_{\mrm{cris}}$ is stable under $\vphi$ and $G$-actions, and
\item the $G$-action on $\mcal{R}_{K_0}$ factors through $\hat{G}$.
\end{itemize}

\begin{remark}
(1) Such  $\mcal{R}_{K_0}$ exists.
In fact, the $K_0$-subalgebra  of $B^+_{\mrm{cris}}$  
generated by $\{gx \mid g\in G, x\in  \mfO_{\alpha} \}$ 
satisfies all the desired properties.

\noindent
(2) In the classical setting $f(u)=u^p$,
an explicitly described $\mcal{R}_{K_0}$ has been considered. For this, see \cite{Li2}.
\end{remark}

\noindent
We set $\whR:=\mcal{R}_{K_0}\cap W(R)$.
By definition, we see that 
$\whR\subset W(R)$ is stable under $\vphi$ and $G$-actions, 
the $G$-action on $\whR$ factors through $\hat{G}$, and 
the map $\nu$ induces isomorphisms
$\mcal{R}_{K_0}/I_+\mcal{R}_{K_0}\simeq K_0$
and 
$\whR/I_+\whR\simeq W(k)$.

\begin{definition}
A {\it $(\vphi,\hat{G})$-module $($of height $r$$)$} is a triple 
$\hat{\mfM}=(\mfM,\vphi,\hat{G})$ where
\begin{itemize}
\item[(1)] $(\mfM,\vphi)$ is a free Kisin module $\mfM$ of height $r$,
\item[(2)] $\hat{G}$ is an $\whR$-semi-linear continuous\footnote{
This means that the $G$-action on 
$W(R)\otimes_{\whR} (\whR \otimes_{\vphi, \mfS}\mfM)
=W(R) \otimes_{\vphi, \mfS}\mfM$
induced by the $\hat{G}$-action on $\whR \otimes_{\vphi, \mfS}\mfM$ 
 is continuous with respect to  the weak topology of $W(R)$.} $\hat{G}$-action 
on $\whR \otimes_{\vphi, \mfS}\mfM$,
\item[(3)] the $\hat{G}$-action on $\whR \otimes_{\vphi, \mfS}\mfM$
commutes  with $\vphi_{\whR}\otimes \vphi_{\mfM}$, and 
\item[(4)] $\vphi^{\ast}\mfM \subset 
(\whR \otimes_{\vphi, \mfS}\mfM)^{G_{\underline{\pi}}}$.
\end{itemize}
We denote by $\Mod^{r,\hat{G}}_{\mfS}$ the category of 
 $(\vphi,\hat{G})$-modules of height $r$.
\end{definition}

We define  a $\mbb{Z}_p$-representation $\hat{T}(\hat{\mfM})$ 
of $G$ for any $(\vphi,\hat{G})$-module $\hat{\mfM}$ by
$$
\hat{T}(\hat{\mfM}):=
\mrm{Hom}_{\whR,\vphi}(\whR\otimes_{\vphi,\mfS}\mfM,W(R)). 
$$
Here, the  $G$-action on  $\hat{T}(\hat{\mfM})$
is given by $(g.f)(x):=g(f(g^{-1}(x)))$ for 
$f\in \hat{T}(\hat{\mfM})$,
$g\in G$ and $x\in \whR\otimes_{\vphi,\mfS}\mfM$.
Note that 
we have a natural isomorphism of 
$\mbb{Z}_p[G_{\underline{\pi}}]$-modules
$$
\theta\colon T_{\mfS}(\mfM)\overset{\sim}{\longrightarrow}
\hat{T}(\hat{\mfM})
$$
given by $\theta(f)(a\otimes x):=a\vphi(f(x))$
for $f\in T_{\mfS}(\hat{\mfM})$, $a\in \whR$ and $x\in \mfM$.
In particular, 
$\hat{T}(\hat{\mfM})$ is a free $\mbb{Z}_p$-module of rank $d$,
where $d:=\mrm{rank}_{\mfS}\mfM$.
Hence we obtain a contravariant functor 
$$
\hat{T}\colon \Mod^{r,\hat{G}}_{\mfS}\to \mrm{Rep}_{\mbb{Z}_p}(G).
$$
Note also that we have a canonical isomorphism
$\hat{T}(\hat{\mfM})\simeq
\mrm{Hom}_{W(R),\vphi}(W(R)\otimes_{\vphi,\mfS}\mfM,W(R))$.

\begin{definition}
\label{mod}
(1) We denote by ${}'\Mod^{r,\hat{G},\mrm{cris}}_{\mfS}$ the full subcategory 
of $\Mod^{r,\hat{G}}_{\mfS}$ consisting of objects $\hat{\mfM}$
which satisfy the following condition:
For any $g\in \hat{G}$, there exists $\alpha_g\in B^+_{\mrm{cris}}$
such that 
\begin{itemize}
\item[(a)] $g(1\otimes x)-(1\otimes x)\in 
\alpha_g(B^+_{\mrm{cris}}\otimes_{\vphi,\mfS} \mfM)$ for any $x\in \mfM$, and 
\item[(b)] $\vphi^n(\alpha_g)/p^{nr}$ converges to $0$
$p$-adically in $B^+_{\mrm{cris}}$.
\end{itemize}

\noindent
(2) We denote by $\Mod^{r,\hat{G},\mrm{cris}}_{\mfS}$ the full subcategory 
of $\Mod^{r,\hat{G}}_{\mfS}$ consisting of objects $\hat{\mfM}$
which satisfy the following condition:
For any $g\in \hat{G}$ and $x\in \mfM$, we have 
$$
g(1\otimes x)-(1\otimes x)\in 
\vphi(gu-u)B^+_{\mrm{cris}}\otimes_{\vphi,\mfS} \mfM.
$$
(Note that, if this is the case, $g(1\otimes x)-(1\otimes x)$
is in fact contained in  
$\vphi(gu-u)B^+_{\mrm{cris}}\otimes_{\vphi,\mfS} \mfM
\cap I^{[1]}W(R)\otimes_{\vphi,\mfS}\mfM$
since $\vphi(gu-u)\in I^{[1]}W(R)$.)
\end{definition}

\begin{remark}
To understand  $(\vphi,\hat{G})$-module,
it is very important 
to study the structure of the Galois group $\hat{G}$
and to find a ``good choice'' of $\mcal{R}_{K_0}$.
In the classical Kisin's setting $f(u)=u^p$,
these are well studied. For this, see \cite{Li2}.

We should remark that in this classical setting,
we may consider $(\vphi,\hat{G})$-modules
as ``linear data'' like $(\vphi,\Gamma)$-modules.
In fact,  $\hat{G}$ is topologically 
generated  by $\mrm{Gal}(\widehat{K}_{\underline{\pi}}/K_{\underline{\pi}})$
and a (fixed) generator $\tau$ of 
$\mrm{Gal}(\widehat{K}_{\underline{\pi}}/K(\mu_{p^{\infty}}))$.
Here, $\mu_{p^{\infty}}$ is the set of $p$-power roots of unity. 
Hence the $\hat{G}$-action on  a 
$(\vphi,\hat{G})$-module  
is essentially determined by the $\tau$-action only.
\end{remark}

\begin{remark}
To understand objects of the category $\Mod^{r,\hat{G},\mrm{cris}}_{\mfS}$,
studying the ideal $I_g:=\vphi(gu-u)B^+_{\mrm{cris}}\cap I^{[1]}W(R)$
of $W(R)$  must be important.
However, it is not so easy (at least for the author).
Later, we give a partial result on this ideal in 
Proposition \ref{lem1}.
Here we describe some known facts about $I_g$ and give some remarks.

\noindent
(1) Suppose $v_p(a_1)>1$.
Then we can check $I_g \subset I^{[1+]}W(R)$ as follows:
Let $\mft$ be as in the previous section, which is a generator of $I^{[1]}W(R)$.
Take $x=\vphi(gu-u)y=\vphi(\mft)z$ with $y\in B^+_{\mrm{cris}}$ 
and $z\in W(R)$.
It suffices to show $z\in I_+W(R)$.
By  \cite[Lemma 2.3.2]{CL} (see also Proposition \ref{Gact}),
we have $gu-u\in \vphi(\mft)I_+W(R)$.
This implies  
$\vphi(gu-u)\in \vphi^2(\mft)I_+W(R)=\vphi(E(u))\vphi(\mft)I_+W(R)
\subset \vphi(\mft)I_+W(R)$,
and thus we obtain $z=\vphi(gu-u)y/\vphi(\mft) \in W(R)\cap I_+B^+_{\mrm{cris}}=
I_+W(R)$ as desired.

\noindent
(2) (Kisin's setting)
If $f(u)=u^p$,
then we can show that
$I_g \subset u^pI^{[1]}W(R)$ as follows:
Since $gu-u\in uW(R)$ in this case.
it suffices to show $u^pB^+_{\mrm{cris}}\cap I^{[1]}W(R)\subset u^pI^{[1]}W(R)$.
Take any $x=u^py\in u^pB^+_{\mrm{cris}}\cap I^{[1]}W(R)$.
By \cite[Lemma 3.2.2]{Li3}, $u^py\in W(R)$ shows  $y\in W(R)$.
On the other hand, $u^py\in I^{[1]}W(R)$ and $\vphi^n(u^p)\notin \mrm{Fil}^1B_{\mrm{dR}}$
for any $n\ge 0$
implies that $y\in I^{[1]}B^+_{\mrm{cris}}$.
Hence we have $y\in I^{[1]}W(R)$, which induces $x\in u^pI^{[1]}W(R)$ as desired. 

The ideal $u^pI^{[1]}W(R)$ of $W(R)$
plays an important role for studies of $(\vphi,\hat{G})$-modules
(cf. \cite{Li2})
which correspond to lattices in crystalline representations.
It allows us to study reductions of crystalline representations
and also gives interesting  applications such as the weight part of Serre's conjecture
(cf.\ \cite{Ga},\cite{GLS1},\cite{GLS2}).
\end{remark}

\begin{remark}
\label{fullsub}
By Corollary \ref{polycor} later, 
we obtain the fact that the category 
$\Mod^{r,\hat{G},\mrm{cris}}_{\mfS}$
is a full subcategory of 
${}'\Mod^{r,\hat{G},\mrm{cris}}_{\mfS}$
if $v_p(a_1)>r$.
\end{remark}

\bn
{\bf Comparison morphism of $(\vphi,\hat{G})$-modules.}
Let $\hat{\mfM}$ be a $(\vphi,\hat{G})$-module.
We define a $W(R)$-linear map 
$\hat{\iota}\colon 
W(R)\otimes_{\vphi, \mfS}\mfM 
\to
W(R)\otimes_{\mbb{Z}_p}\hat{T}(\hat{\mfM})^{\vee}
$
by the composite
$$
W(R)\otimes_{\vphi, \mfS}\mfM 
\to
\mrm{Hom}_{\mbb{Z}_p}(\hat{T}(\hat{\mfM}),W(R))\simeq 
W(R)\otimes_{\mbb{Z}_p}\hat{T}(\hat{\mfM})^{\vee},
$$
where the first map is given by $x\mapsto (f\mapsto f(x))$
and the second is the natural map.
It is not difficult to check that 
$\hat{\iota}$ is  $\vphi$-equivalent
and $G$-equivalent.
By the same argument as that in the proof of 
\cite[Proposition (2),(3)]{Li2}, we can check the following.

\begin{proposition}
\label{comp2}
$(1)$ We have $\hat{\iota}\simeq W(R)\otimes_{\vphi,W(R)}\iota_{\mfS}$, that is, the 
following diagram is commutative.
\begin{center}
$\displaystyle \xymatrix{
W(R)\otimes_{\vphi, \mfS}\mfM \ar^{\hat{\iota}}[rr] 
\ar@{=}[d] 
& &
W(R)\otimes_{\mbb{Z}_p}\hat{T}(\hat{\mfM})^{\vee} \ar_{W(R)\otimes \theta^{\vee}}[d] \ar^{\wr}[d]\\
W(R)\otimes_{\vphi, \mfS}\mfM \ar^{\vphi^{\ast}\iota_{\mfS}}[rr] 
& &
W(R)\otimes_{\mbb{Z}_p} T_{\mfS}(\mfM)^{\vee}. 
}$
\end{center}
Here, $\vphi^{\ast}\iota_{\mfS}:=W(R)\otimes_{\vphi,W(R)}\iota_{\mfS}$.

\noindent
$(2)$ Assume that $v_p(a_1)>1$. Then  the map $\hat{\iota}$ is injective and we have
 $\mft_0^r(W(R)\otimes_{\mbb{Z}_p}\hat{T}(\hat{\mfM})^{\vee})
\subset \mrm{Im}(\hat{\iota})$.
Here, $\mft_0$ is any generator of $I^{[1]}W(R)$
(e.g., $\mft_0=\vphi(\mft)$ (cf., \cite[Proposition 5.1.3]{Fo2})).
\end{proposition}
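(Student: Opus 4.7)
The plan is to reduce part (2) to part (1) and to prove part (1) by an explicit diagram chase using the defining formula for $\theta$.

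For part (1), I would verify commutativity on elements of the form $1 \otimes x$ with $x \in \mfM$. Traversing via $\hat{\iota}$ first produces the functional $F \mapsto F(1 \otimes x)$ on $\hat{T}(\hat{\mfM})$, which under $W(R) \otimes \theta^\vee$ becomes the functional on $T_{\mfS}(\mfM)$ sending $f$ to $\theta(f)(1 \otimes x) = \vphi(f(x))$, by the defining formula $\theta(f)(a \otimes y) = a\vphi(f(y))$. Traversing via $\vphi^{\ast}\iota_{\mfS}$, the element $\iota_{\mfS}(1 \otimes x) \in W(R)\otimes T_{\mfS}(\mfM)^{\vee}$ corresponds to $f \mapsto f(x)$, and the base-change identification $a \otimes (b \otimes v) \mapsto a\vphi(b) \otimes v$ turns this into $f \mapsto \vphi(f(x))$. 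The two functionals agree, which gives commutativity.

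For part (2), I would apply $\vphi^{\ast} := W(R) \otimes_{\vphi, W(R)} -$ to the short exact sequence
\[
0 \longrightarrow W(R) \otimes_{\mfS} \mfM \xrightarrow{\iota_{\mfS}} W(R) \otimes_{\mbb{Z}_p} T_{\mfS}(\mfM)^{\vee} \longrightarrow C \longrightarrow 0,
\]
whose cokernel $C$ is annihilated by $\mft^r$ by Corollary \ref{comp1}. Right exactness gives $\mrm{coker}(\vphi^{\ast}\iota_{\mfS}) \simeq \vphi^{\ast} C$, and the relation $1 \otimes_{\vphi} \mft^r c = \vphi(\mft)^r (1 \otimes c)$ shows that $\vphi(\mft)^r$ annihilates $\vphi^{\ast} C$; since any two generators of $I^{[1]}W(R)$ differ by a unit, this yields $\mft_0^r\bigl(W(R) \otimes T_{\mfS}(\mfM)^{\vee}\bigr) \subset \mrm{Im}(\vphi^{\ast}\iota_{\mfS})$. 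For injectivity I would use that both source and target are finite free $W(R)$-modules of the same rank $d$ and that $W(R)$ is a domain (since $R$ is a valuation domain): the determinant $\Delta \in W(R) \smallsetminus \{0\}$ of $\iota_{\mfS}$ in fixed bases transforms under $\vphi^{\ast}$ to $\vphi(\Delta) \neq 0$, forcing $\vphi^{\ast}\iota_{\mfS}$ to be injective. Part (1) then transports both conclusions to $\hat{\iota}$ and $\hat{T}(\hat{\mfM})^{\vee}$.

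The hard part will be bookkeeping: keeping straight the two incarnations of $\vphi^{\ast}(W(R) \otimes T_{\mfS}(\mfM)^{\vee})$ — as an abstract tensor product and as $W(R) \otimes T_{\mfS}(\mfM)^{\vee}$ itself via $a \otimes (b \otimes v) \mapsto a\vphi(b) \otimes v$ — and simultaneously tracking the isomorphism $W(R) \otimes \theta^{\vee}$ so that the annihilator of $C$ is indeed carried to $\vphi(\mft)^r = \mft_0^r$ (up to a unit) on the $\hat{T}(\hat{\mfM})^{\vee}$ side. Beyond this formal matter the proof is a transcription of the argument given for the parts (2) and (3) of the corresponding proposition in \cite{Li2}.
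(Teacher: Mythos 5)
Your proposal is correct and, in substance, the same route the paper takes: the paper's proof of Proposition \ref{comp2} consists solely of a citation to the analogous result in Liu's note \cite{Li2}, and the argument there is exactly the one you reconstruct — verifying the commutativity of the square on the $\vphi$-pullback of generators $1 \otimes x$ by unwinding $\theta(f)(a\otimes y) = a\vphi(f(y))$, and then transporting the injectivity and cokernel-annihilation statements of Corollary \ref{comp1} through the functor $W(R)\otimes_{\vphi,W(R)}(-)$, using that $\vphi$ is an automorphism of $W(R)$ (since $R$ is perfect) and that any generator of $I^{[1]}W(R)$ differs from $\vphi(\mft)$ by a unit. Your bookkeeping of the two incarnations of the $\vphi$-twisted module via $a\otimes(b\otimes v)\mapsto a\vphi(b)\otimes v$ is exactly the identification one needs, and the determinant argument for injectivity is a fine (if slightly heavier) alternative to simply invoking exactness of $\vphi^{\ast}$.
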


\subsection{Main Results}
\label{MainSection}

We often use the following conditions.\\

\noindent
{\bf Condition (P):} $\vphi^n(f(u)/u)$
is not a power of $E(u)$ for any $n\ge 0$.\\

\noindent
{\bf Condition:} $v_p(a_1)>\max \{r,1\}$.\\

\noindent
Note that these conditions are satisfied if $a_1=0$.
We denote by $\mrm{Rep}^{r,\mrm{cris}}_{\mbb{Z}_p}(G)$
the category of $G$-stable $\mbb{Z}_p$-lattices in 
crystalline $\mbb{Q}_p$-representations of $G$
with Hodge-Tate weights in $[0,r]$.
Now we state our main theorem of this paper.

\begin{theorem}
\label{MT}
Assume the conditions $(P)$ and $v_p(a_1)>\max \{r,1\}$.

\noindent
$(1)$ $\Mod^{r,\hat{G},\mrm{cris}}_{\mfS}={}'\Mod^{r,\hat{G},\mrm{cris}}_{\mfS}$.

\noindent
$(2)$ The contravariant functor $\hat{T}$ 
induces an anti-equivalence of categories
between $\Mod^{r,\hat{G},\mrm{cris}}_{\mfS}$ and $\mrm{Rep}^{r,\mrm{cris}}_{\mbb{Z}_p}(G)$.
\end{theorem}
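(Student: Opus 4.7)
The plan is to adapt Liu's proof in the classical setting $f(u)=u^p$ (\cite{Li2}) to the general Cais--Liu iterate extension. The strategy is two-directional: from each $\hat{\mfM}\in {}'\Mod^{r,\hat{G},\mrm{cris}}_{\mfS}$ I will produce a weakly admissible filtered $\vphi$-module $D(\hat{\mfM})$ over $K_0$ whose associated crystalline representation is $\mbb{Q}_p\otimes_{\mbb{Z}_p}\hat{T}(\hat{\mfM})^{\vee}$, and conversely I will reconstruct such a $(\vphi,\hat{G})$-module from any $T\in \mrm{Rep}^{r,\mrm{cris}}_{\mbb{Z}_p}(G)$ by equipping the Cais--Liu Kisin module of $T|_{G_{\underline{\pi}}}$ with the natural $\hat{G}$-action transported from $T$ via the comparison morphism $\iota_{\mfS}$.

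For well-definedness of $\hat{T}$ I set $D(\hat{\mfM}):=\bigl(\mcal{R}_{K_0}\otimes_{\vphi,\mfS}\mfM\bigr)^{\hat{G}}$ and use Proposition \ref{comp2}(2), which holds under $v_p(a_1)>1$, to realize the chain of injections
$$
\mcal{R}_{K_0}\otimes_{\vphi,\mfS}\mfM \hookrightarrow B^+_{\mrm{cris}}\otimes_{\vphi,\mfS}\mfM \overset{\hat{\iota}}{\hookrightarrow} B^+_{\mrm{cris}}\otimes_{\mbb{Z}_p}\hat{T}(\hat{\mfM})^{\vee}.
$$
Taking $\hat{G}$-invariants and using the convergence condition (b) on the $\alpha_g$'s (which forces elements of $D(\hat{\mfM})$ to be controlled by $\vphi$-iterates) should give a $K_0$-vector space of dimension $d=\mrm{rank}_{\mfS}\mfM$ together with a canonical comparison isomorphism $B^+_{\mrm{cris}}\otimes_{K_0}D(\hat{\mfM})\simeq B^+_{\mrm{cris}}\otimes_{\mbb{Z}_p}\hat{T}(\hat{\mfM})^{\vee}$. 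The filtration with Hodge--Tate weights in $[0,r]$ is then induced by the height-$r$ $B^+_{\mrm{dR}}$-lattice $\vphi^{\ast}\mfM\otimes_{\mfS}B^+_{\mrm{dR}}$, and admissibility falls out of the comparison isomorphism.

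For essential surjectivity, given $T\in \mrm{Rep}^{r,\mrm{cris}}_{\mbb{Z}_p}(G)$, Cais--Liu's theory assigns a free Kisin module $\mfM$ of height $r$ with $T_{\mfS}(\mfM)\simeq T|_{G_{\underline{\pi}}}$. Via $\iota_{\mfS}$ one sees $\vphi^{\ast}\mfM$ as a $\vphi$-stable $W(R)$-sub-lattice of $W(R)\otimes_{\mbb{Z}_p}T^{\vee}[\tfrac{1}{\mft}]$, and the $G$-action on $T$ then transports to a $\hat{G}$-action on $W(R)\otimes_{\vphi,\mfS}\mfM$; continuity and the factoring through $\hat{G}$ follow because $\mcal{R}_{K_0}$ is $\hat{G}$-stable by construction. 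The main obstacle will be the integral crystalline condition in Definition \ref{mod}(2): one must show $g(1\otimes x)-(1\otimes x)\in \vphi(gu-u)B^+_{\mrm{cris}}\otimes_{\vphi,\mfS}\mfM$. Comparing with $D_{\mrm{cris}}(V)$ only places the difference in the larger ideal $I_g=\vphi(gu-u)B^+_{\mrm{cris}}\cap I^{[1]}W(R)$ discussed in the remark after Definition \ref{mod}, so the delicate technical step is to pin it down to the tighter form, which will rely on Proposition \ref{lem1} and on the hypothesis $v_p(a_1)>\max\{r,1\}$.

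Part (1) then reduces to showing that any $\hat{\mfM}\in {}'\Mod^{r,\hat{G},\mrm{cris}}_{\mfS}$ satisfies the stronger condition of Definition \ref{mod}(2); this is handled by the reconstruction in the previous paragraph combined with Remark \ref{fullsub}. Finally, for full faithfulness of $\hat{T}$, the canonical isomorphism $\theta\colon T_{\mfS}(\mfM)\simeq \hat{T}(\hat{\mfM})$ of $G_{\underline{\pi}}$-modules reduces the question to full faithfulness of $T_{\mfS}$ on $\Mod^r_{\mfS}$, which is Proposition \ref{KisinFF}(3) and is precisely where condition $(P)$ enters; that a $G_{\underline{\pi}}$-equivariant morphism of the underlying Kisin modules is automatically $G$-equivariant follows since the $\hat{G}$-action on $\whR\otimes_{\vphi,\mfS}\mfM$ is uniquely determined by $\hat{T}(\hat{\mfM})$ through the injective comparison morphism $\hat{\iota}$ of Proposition \ref{comp2}(2).
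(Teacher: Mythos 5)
Your high‐level plan is the correct one and tracks the paper's: prove that $\hat T$ is fully faithful, lands in $\mrm{Rep}^{r,\mrm{cris}}_{\mbb{Z}_p}(G)$, and is essentially surjective; then deduce part (1) by combining surjectivity with Remark \ref{fullsub}. But there are two substantive problems with the sketch of the hard steps.

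First, for well-definedness you propose to \emph{define} $D(\hat{\mfM}):=\bigl(\mcal{R}_{K_0}\otimes_{\vphi,\mfS}\mfM\bigr)^{\hat G}$ and assert that the convergence condition on $\alpha_g$ ``should give a $K_0$-vector space of dimension $d$.'' This is exactly the step that requires work, and taking invariants of an injection only gives an injection of invariants — no lower bound on dimension. The paper instead produces enough invariants explicitly: it takes the Cais--Liu isomorphism $\xi_{\alpha}\colon \mfO_{\alpha}\otimes_{W(k)}M\simeq \mfO_{\alpha}\otimes_{\mfS}\vphi^{\ast}\mfM$ (with $M=\vphi^{\ast}\mfM/u\vphi^{\ast}\mfM$ of $W(k)$-rank $d$) and proves in Lemma \ref{fix} that $\xi_{\alpha}(M)$ is fixed by $G$. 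That proof is where the convergence hypothesis (b) is actually used, through an explicit telescoping identity $X_gg(Y)=Y+\vphi^n(\alpha_g)(\cdots)$. Without this construction your dimension count has no support.

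Second, in essential surjectivity you correctly isolate the integral crystalline condition of Definition \ref{mod}(2) as the delicate step, but you say it ``will rely on Proposition \ref{lem1} and on $v_p(a_1)>\max\{r,1\}$.'' This is the wrong lemma: Proposition \ref{lem1} lives in Section 4, needs the extra hypotheses $gu\in uW(R)$ and $f^{(n)}(\pi)\ne 0$ which are \emph{not} assumed in Theorem \ref{MT}, and is about the ideal $\vphi(gu-u)B^+_{\mrm{cris}}\cap W(R)$ rather than about the transported $G$-action on $\vphi^{\ast}\mfM$. The paper's actual argument (Lemma \ref{range}) expands $X_g=Yg(Y)^{-1}$ using the explicit formula $Y=\lim\vphi(A)\cdots\vphi^n(A)\vphi^n(A_0^{-1})\cdots\vphi(A_0^{-1})$, writes $X_g$ as a $p$-adic limit of partial products $X_{n,g}$, and shows each increment $X_{n,g}-X_{n-1,g}$ lies in $\vphi(u_g)M_d(B^+_{\mrm{cris}})$ and tends to zero; the convergence is exactly Corollary \ref{polycor}, which is where $v_p(a_1)>r$ is used. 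Your sketch offers no mechanism for pinning the difference $g(1\otimes x)-(1\otimes x)$ into the tight ideal $\vphi(gu-u)B^+_{\mrm{cris}}\otimes_{\vphi,\mfS}\mfM$, and the reference you cite would not provide one even if its hypotheses were in force.
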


Summary, 
we have 
$$
\Mod^{r,\hat{G},\mrm{cris}}_{\mfS}={}'\Mod^{r,\hat{G},\mrm{cris}}_{\mfS}
\overset{\sim}{\longrightarrow} \mrm{Rep}^{r,\mrm{cris}}_{\mbb{Z}_p}(G).
$$
under the conditions (P) and $v_p(a_1)>\max \{r,1\}$.
The theorem is an easy consequence of the following result
and Remark \ref{fullsub},
which we show in the rest of this section.

\begin{theorem}
\label{MT'}
$(1)$ Assume the conditions $(P)$ and $v_p(a_1)>1$.
Then the contravariant functor 
$\hat{T}\colon \Mod^{r,\hat{G}}_{\mfS}\to \mrm{Rep}_{\mbb{Z}_p}(G)$
is fully faithful.

\noindent
$(2)$ Assume the condition $v_p(a_1)>\max \{r,1\}$.
Then the contravariant functor 
$\hat{T}\colon {}'\Mod^{r,\hat{G},\mrm{cris}}_{\mfS}\to \mrm{Rep}_{\mbb{Z}_p}(G)$
has values in $\mrm{Rep}^{\infty,\mrm{cris}}_{\mbb{Z}_p}(G)$.
If we furthermore assume the condition $(P)$, 
then it has values in $\mrm{Rep}^{r,\mrm{cris}}_{\mbb{Z}_p}(G)$.

\noindent
$(3)$ Assume the conditions $(P)$ and $v_p(a_1)>\max \{r,1\}$.
Then the contravariant functor 
$\hat{T}\colon \Mod^{r,\hat{G},\mrm{cris}}_{\mfS}\to \mrm{Rep}^{r,\mrm{cris}}_{\mbb{Z}_p}(G)$
is essentially surjective.
\end{theorem}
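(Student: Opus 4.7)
The plan is to follow Liu's strategy from \cite{Li2} for the classical Kummer setting, substituting \cite[Lemma 2.3.2]{CL} and the Cais--Liu comparison morphism $\hat{\iota}$ wherever Liu uses cyclotomic facts. I would prove the three parts in order, since (2) and (3) build on the full faithfulness of part (1).

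\textbf{Part (1).} For $\hat{\mfM}_1, \hat{\mfM}_2 \in \Mod^{r,\hat{G}}_{\mfS}$ and a $G$-equivariant map $h\colon \hat{T}(\hat{\mfM}_2)\to\hat{T}(\hat{\mfM}_1)$, I would first restrict to $G_{\underline{\pi}}$: the isomorphism $T_{\mfS}(\mfM_i)\simeq\hat{T}(\hat{\mfM}_i)$ together with the full faithfulness of $T_{\mfS}$ on $\Mod^r_{\mfS}$ (Proposition \ref{KisinFF}(3), using condition (P)) yields a unique $\vphi$-equivariant morphism $\mfM_1\to\mfM_2$ inducing $h$. What remains is to check that the base change $\whR\otimes_{\vphi,\mfS}\mfM_1\to\whR\otimes_{\vphi,\mfS}\mfM_2$ is $\hat{G}$-equivariant. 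By Corollary \ref{ring:subset} this can be tested after further extension to $W(R)\otimes_{\vphi,\mfS}\mfM_i$, where Proposition \ref{comp2}(2) identifies the $\hat{G}$-action, after inverting $\mft_0$, with the tautological action on $W(R)\otimes_{\mbb{Z}_p}\hat{T}(\hat{\mfM}_i)^{\vee}$ for which $\hat{G}$-equivariance of the transported $h$ is automatic.

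\textbf{Part (2).} For $\hat{\mfM}\in {}'\Mod^{r,\hat{G},\mrm{cris}}_{\mfS}$ put $V:=\mbb{Q}_p\otimes\hat{T}(\hat{\mfM})^{\vee}$ and define the candidate filtered $\vphi$-module
\[
D(\hat{\mfM}):=\bigl(\mcal{R}_{K_0}\otimes_{\vphi,\mfS}\mfM\bigr)^{\hat{G}}\!\left[\tfrac{1}{p}\right],
\]
with filtration induced from that on $B^+_{\mrm{cris}}$. Conditions (a) and (b) of Definition \ref{mod}(1), combined with the bound $v_p(a_1)>r$, force $\vphi^{\ast}\mfM$ to lie essentially inside the $\hat{G}$-fixed part, so that $D(\hat{\mfM})$ has the correct rank over $K_0$. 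Inverting $\mft_0$ in the comparison $\hat{\iota}$ (Proposition \ref{comp2}) and taking $G$-invariants then produces a canonical isomorphism $D(\hat{\mfM})\simeq D_{\mrm{cris}}(V)$. The height-$r$ condition, together with (P) to prevent cancellation between $\vphi^n(f(u)/u)$ and $E(u)$-factors, constrains the Hodge--Tate weights to $[0,r]$.

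\textbf{Part (3) and main obstacle.} For $T\in\mrm{Rep}^{r,\mrm{cris}}_{\mbb{Z}_p}(G)$, the restriction to $G_{\underline{\pi}}$ determines an étale $\vphi$-module and, by the Cais--Liu construction (Theorem 1.0.2 of \cite{CL}), a Kisin module $\mfM$ of height $r$ inside it. The ambient $G$-action on $W(R)\otimes_{\mbb{Z}_p}T^{\vee}$ transports via $\hat{\iota}^{-1}$ (valid after inverting $\mft_0$) to a $G$-action on $W(R)\otimes_{\vphi,\mfS}\mfM$, and one must verify that it preserves $\whR\otimes_{\vphi,\mfS}\mfM$, factors through $\hat{G}$, and fulfills Definition \ref{mod}(2). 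The last step, the crystalline condition, is the main obstacle: it requires translating the abstract crystallinity of $V$ into the concrete divisibility of $g(1\otimes x)-(1\otimes x)$ by $\vphi(gu-u)$ inside $B^+_{\mrm{cris}}\otimes_{\vphi,\mfS}\mfM$. The reduction hinges on $gu-u\in\vphi(\mft)I_+W(R)$ from \cite[Lemma 2.3.2]{CL}, and this is precisely where the sharper hypothesis $v_p(a_1)>\max\{r,1\}$ becomes essential, both to provide convergence of $\vphi^n(\alpha_g)/p^{nr}$ in the sense built into Definition \ref{mod} and to align the resulting object with the $\Mod^{r,\hat{G},\mrm{cris}}_{\mfS}$ subcategory (via Remark \ref{fullsub} and Corollary \ref{polycor}).
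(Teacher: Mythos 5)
Your proposal captures the paper's overall strategy — following Liu's framework from \cite{Li2} with Cais--Liu's comparison morphism $\hat{\iota}$ as the substitute for cyclotomic structures — and Part (1) agrees with the paper, which likewise reduces to full faithfulness of $T_{\mfS}$ and injectivity of the comparison maps. However, Parts (2) and (3) contain genuine gaps. In Part (2), the paper's key tool is the map $\xi_{\alpha}\colon \mfO_{\alpha}\otimes_{W(k)}M \to \mfO_{\alpha}\otimes_{\mfS}\vphi^{\ast}\mfM$ from \cite[Lemma 4.5.6]{CL}, where $M:=\vphi^{\ast}\mfM/u\vphi^{\ast}\mfM$, combined with Lemma \ref{fix}: $\xi_{\alpha}(M)\subset (B^+_{\mrm{cris}}\otimes_{\vphi,\mfS}\mfM)^G$. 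Your formula $D(\hat{\mfM})=(\mcal{R}_{K_0}\otimes_{\vphi,\mfS}\mfM)^{\hat{G}}[1/p]$ bypasses $\xi_{\alpha}$, and the claim that conditions (a), (b) of Definition \ref{mod}(1) ``force $\vphi^{\ast}\mfM$ to lie essentially inside the $\hat{G}$-fixed part'' is not accurate — $\vphi^{\ast}\mfM$ is not $\hat{G}$-fixed; rather the \emph{image of $M$ under $\xi_{\alpha}$} is, and establishing this requires the explicit matrix computation with $Y=\lim \vphi(A)\cdots\vphi^n(A)\vphi^n(A_0^{-1})\cdots\vphi(A_0^{-1})$ and the convergence $\vphi^n(\alpha_g)/p^{nr}\to 0$. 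Without $\xi_{\alpha}$ the rank bound $\mrm{dim}_{K_0}D_{\mrm{cris}}(V)\ge d$ is not established. Moreover, your treatment of the Hodge--Tate bound (``(P) to prevent cancellation between $\vphi^n(f(u)/u)$ and $E(u)$-factors'') is not how it actually goes: (P) is used for full faithfulness of $T_{\mfS}$, and the bound is obtained by showing that a height-$r'$ Kisin module $\mfM'\subset\mcal{M}(D)$ must in fact be of height $r$ (via a lattice-comparison argument and \cite[Proposition B.1.3.5]{Fo2}), then reading off $\mrm{Fil}^{r+1}D_K=0$ from $\mrm{Fil}^{r+1}\vphi^{\ast}\mcal{M}\subset E(u)\vphi^{\ast}\mcal{M}$.

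In Part (3) you transport the $G$-action from $W(R)\otimes_{\mbb{Z}_p}T^{\vee}$ as the paper does, but you skip a necessary preliminary step: the Kisin module $\mfM\subset\mcal{M}(D)$ produced by \cite[Theorem 4.4.1]{CL} satisfies only $\iota_0(T_{\mfS}(\mfM))\subset V$, which need not equal the given lattice $T$. Lemma \ref{=T} of the paper (which uses (P) via full faithfulness) shows one may replace $\mfM$ by a suitable Kisin module so that $\iota_0(T_{\mfS}(\mfM))=T$; this is a real argument involving rescaling, taking kernels under the projection to $M/N$, and saturating, and it cannot be omitted. Finally, the ``main obstacle'' you identify (Lemma \ref{range}) requires more than the ideal membership $gu-u\in\vphi(\mft)I_+W(R)$: the proof decomposes $X_g-I_d=\sum(X_{n,g}-X_{n-1,g})+\ldots$ and shows each increment lies in $\vphi(u_g)M_d(B^+_{\mrm{cris}})$ via the factorization $AB=E(u)^r I_d$ together with Corollary \ref{polycor}; sketching the conclusion without this telescoping leaves the crucial convergence unjustified.
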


The contravariant functor 
$T_{\mfS}\colon \Mod^r_{\mfS}\to \mrm{Rep}_{\mbb{Z}_p}(G_{\underline{\pi}})$
is fully faithful under the condition (P).
By the condition $v_p(a_1)>1$, we know the injectivity of comparison morphisms
(cf. Corollary \ref{comp1} and Proposition \ref{comp2} (2)).
Thus Theorem (1) follows by completely the same way 
as the last paragraph of \cite[Section 3.1]{Li2} 
and so we leave the proof of (1)  for the readers.

In the rest of this section,
we show Theorem (2) and (3).

\subsection{Some notations and Properties}
\label{technical}

Before a proof of Theorem \ref{MT'} (2) and (3), 
we give some notations and their properties.

\bn
{\bf The map $\xi_{\alpha}$.}
Let $\mfM\in \Mod^r_{\mfS}$ be a 
Kisin module of rank $d$
and set $M:=\vphi^{\ast}\mfM/u\vphi^{\ast}\mfM$.

\begin{lemma}[\cite{CL}, Lemma 4.5.6.]
\label{xi1}
Assume that $v_p(a_1)>r$. 
Then  there exists a unique $\vphi$-equivalent 
$\mfrak{O}_{\alpha}$-linear isomorphism 
$$
\xi_{\alpha}\colon \mfrak{O}_{\alpha}\otimes_{W(k)} M
\overset{\sim}{\longrightarrow}
\mfrak{O}_{\alpha}\otimes_{\mfS} \vphi^{\ast}\mfM
$$
whose reduction modulo $u$ is the identity map on $M$.
\end{lemma}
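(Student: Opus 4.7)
The plan is to construct $\xi_\alpha$ by successive approximation, following the standard Frobenius-trivialization strategy for $\vphi$-modules over $p$-adic period rings. Fix an $\mfS$-basis $f_1,\dots,f_d$ of $\mfM$ and let $\epsilon_j := 1\otimes f_j$ be the induced basis of $\vphi^\ast\mfM$, with reductions $\bar\epsilon_j$ forming a $W(k)$-basis of $M$. If $\vphi_{\mfM}(f_j)=\sum_i B_{ij}f_i$ with $B\in M_d(\mfS)$, the height $r$ hypothesis yields $\det B = c\cdot E(u)^r$ for some $c\in\mfS^\times$. A direct calculation shows that the induced Frobenius on $\vphi^\ast\mfM$ has matrix $B(f(u))$ in the basis $\epsilon_j$, so its reduction modulo $u$, which is the Frobenius on $M$, has matrix $B(0)\in M_d(W(k))$ with $\det B(0)\in p^r W(k)^\times$ (because $E(0)=p\cdot\mrm{unit}$); in particular $p^r B(0)^{-1}\in M_d(W(k))$. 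Writing $\xi_\alpha(1\otimes\bar\epsilon_j)=\sum_i Y_{ij}(1\otimes\epsilon_i)$ for an unknown $Y\in M_d(\mfO_\alpha)$, the $\vphi$-equivariance and mod-$u$ conditions become the single matrix equation
\[
Y = B(f(u))\,\vphi(Y)\,B(0)^{-1},\qquad Y(0)=I_d.
\]

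I plan to solve this by iterating $Y^{(0)} := I_d$ and $Y^{(n+1)} := B(f(u))\,\vphi(Y^{(n)})\,B(0)^{-1}$. An induction shows $Y^{(n)}(0)=I_d$ for all $n$, and the consecutive differences $\Delta^{(n)} := Y^{(n+1)}-Y^{(n)}$ satisfy the linear recursion $\Delta^{(n)} = B(f(u))\,\vphi(\Delta^{(n-1)})\,B(0)^{-1}$, with initial term $\Delta^{(0)} = (B(f(u))-B(0))B(0)^{-1}\in (f(u)/p^r)M_d(\mfS)$, because each entry of $B(f(u))-B(0)$ is divisible by $f(u)$. Using $\vphi^n(f(u))=f^{n+1}(u)$ and $\vphi(\mfS)\subset\mfS$, this estimate propagates inductively to
\[
\Delta^{(n)}\in \bigl(f^{n+1}(u)/p^{r(n+1)}\bigr)M_d(\mfS)\subset M_d(\mfO_\alpha).
\]

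The main obstacle is to verify that $f^n(u)/p^{rn}$ converges to zero in $\mfO_\alpha$ with respect to its natural topology inherited from $B^+_{\mrm{cris}}$, and this is exactly where $v_p(a_1)>r$ is needed. Factoring $f(u)=u\cdot h(u)$ with $h(u)=a_1+a_2u+\cdots+u^{p-1}$ gives $f^n(u)=u\cdot\prod_{k=0}^{n-1}h(f^k(u))$; the hypothesis $v_p(a_1)>r$ lets one absorb a factor $1/p^r$ into each $h(f^k(u))$ so that the resulting product lies in an ideal with strictly positive $p$-adic content on constant terms, while the reduction $f(u)\equiv u^p\bmod p$ together with $\mfS[\![E(u)^p/p]\!]\subset A_{\mrm{cris}}$ supplies the needed control via divided powers of $u^{ep}/p$. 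Granting this estimate, $Y := I_d + \sum_{n\ge 0}\Delta^{(n)}$ converges in $M_d(\mfO_\alpha)$, satisfies the matrix equation with $Y(0)=I_d$, and is invertible because $Y\equiv I_d\pmod{I_+\mfO_\alpha}$ and $\mfO_\alpha/I_+\mfO_\alpha\simeq K_0$; this defines the desired isomorphism $\xi_\alpha$. Uniqueness is immediate from the same convergence input: if $Y,Y'$ both satisfy the equation with value $I_d$ at $u=0$, then $Z := Y-Y'$ obeys $Z = B(f(u))\vphi(Z)B(0)^{-1}$ with $Z(0)=0$, and iterating the recursion $n$ times yields $Z\in (f^n(u)/p^{rn})M_d(\mfO_\alpha)$ for every $n$, which forces $Z=0$.
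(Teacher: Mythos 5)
Your construction matches the one the paper recalls from \cite[Lemma~4.5.6]{CL}: reduce to a matrix fixed-point equation $Y=\vphi(B)\,\vphi(Y)\,\vphi(B_0)^{-1}$ with $Y\equiv I_d$ modulo $u$, and build $Y$ by the iteration whose partial iterates are exactly the truncated products $\vphi(B)\cdots\vphi^n(B)\,\vphi^n(B_0^{-1})\cdots\vphi(B_0^{-1})$ appearing in the text. But the crucial step --- that $f^n(u)/p^{rn}\to 0$ in $\mfO_{\alpha}$, which controls both convergence and uniqueness and is the \emph{only} place the hypothesis $v_p(a_1)>r$ enters --- is asserted (``Granting this estimate\ldots'') rather than proved. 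Naming the ingredients (the factorization $f^n(u)=u\prod_{k<n}h(f^k(u))$ and the containment $\mfS[\![u^{ep}/p]\!]\subset A_{\mrm{cris}}$) is not a substitute for carrying out the estimate: one has to show, for every $N$, that each monomial $c\,u^m/p^{rn}$ in $f^n(u)/p^{rn}$ lies in $p^N\mfS[\![u^{ep}/p]\!]$ once $n$ is large, and the low-degree terms (where $v_p(a_1)>r$ is what saves you, since the $u$-coefficient of $f^n(u)$ is $a_1^n$), the high-degree terms (where $m\sim p^n$), and the intermediate range must all be handled. This is precisely the kind of work the present paper expends in Lemma~\ref{poly} and Corollary~\ref{polycor} for the closely analogous statement that $\vphi^n(gu-u)/p^{nr}\to 0$; nothing of that kind appears in your argument, so the proof is incomplete at its central point.

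A few smaller issues, none fatal to the structure but worth correcting. First, $\vphi(B)$ reduced modulo $u$ is $\vphi(B_0)$, not $B(0)$: the Witt Frobenius on $W(k)$ twists the constant matrix, which matters whenever $k\ne\mbb{F}_p$, so the fixed-point equation should read $Y=\vphi(B)\,\vphi(Y)\,\vphi(B_0)^{-1}$. Second, height $r$ does not give $\det B=c\cdot E(u)^r$; what it gives is a matrix $C\in M_d(\mfS)$ with $BC=E(u)^r I_d$, and reducing that identity modulo $u$ is how one obtains the containment $p^{r}B_0^{-1}\in M_d(W(k))$ that you actually use. Third, the invertibility of the limit $Y$ does not follow merely from $Y\equiv I_d\pmod{I_+\mfO_{\alpha}}$, since $\mfO_{\alpha}$ is not local with maximal ideal $I_+\mfO_{\alpha}$; one should instead note that each $Y^{(n)}\in GL_d(\mfO_{\alpha})$ and prove that the sequence of inverses converges by a parallel estimate (or exhibit $\det Y$ as a convergent product of units).
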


We recall how to define $\xi_{\alpha}$.
Let $\mfrak{e}_1,\dots \mfrak{e}_d$ be a basis of $\mfM$
and let $A\in M_d(\mfS)$ be a matrix such that
$\vphi(\mfrak{e}_1,\dots ,\mfrak{e}_d)
=(\mfrak{e}_1,\dots ,\mfrak{e}_d)A$.
Put $e_i=1\otimes \mfrak{e}_i\in \vphi^{\ast}\mfM$ 
for each $i$.
Then $e_1,\dots ,e_d$ is a basis of $\vphi^{\ast}\mfM$
and $\vphi(e_1,\dots, e_d)
=(e_1,\dots ,e_d)\vphi(A)$.
Put $\bar{e}_i:=e_i\ \mrm{mod}\ u\vphi^{\ast}\mfM$ 
for each $i$.
Then $\bar{e}_1,\dots ,\bar{e}_d$ is a basis of $M$
and $\vphi(\bar{e}_1,\dots ,\bar{e}_d)
=(\bar{e}_1,\dots ,\bar{e}_d)\vphi(A_0)$
where $A_0=A\ \mrm{mod}\ u\mfS\in M_d(W(k))$.

It was shown in the proof of \cite[Lemma 4.5.6]{CL}
that the matrix
$$
\vphi(A)\cdots \vphi^n(A)\vphi^n(A_0^{-1})\cdots \vphi(A_0^{-1})
$$
converges to an element of $GL_d(\mfrak{O}_{\alpha})$.
Putting 
$$
Y:=\lim_{n\to \infty} \vphi(A)\cdots 
\vphi^n(A)\vphi^n(A_0^{-1})\cdots \vphi(A_0^{-1}),
$$
we define 
$\xi_{\alpha}\colon \mfrak{O}_{\alpha}\otimes_{W(k)} M
\overset{\sim}{\longrightarrow}
\mfrak{O}_{\alpha}\otimes_{\mfS} \vphi^{\ast}\mfM$
by
$
\xi_{\alpha}(\bar{e}_1,\dots ,\bar{e}_d)=(e_1,\dots ,e_d)Y.
$

\bn
{\bf The map $\xi'_{\alpha}$.}
Let  $T$ be an object of $\mrm{Rep}^{r,\mrm{cris}}_{\mbb{Z}_p}(G)$
and put $V=T[1/p]$.
Let $D=D_{\mrm{cris}}(V):=(B_{\mrm{cris}}\otimes_{\mbb{Q}_p} V^{\vee})^G$
 be the filtered $\vphi$-module
corresponding to $V$. 
Let $\mfO$ be the subring of  $K_0(\!(u)\!)$ 
consisting of those elements which converge for all $x\in \overline{K}$
with $v_p(x)\ge 0$.
We equip $\mfO$ with a $K_0$-semi-linear Frobenius $\vphi\colon \mfO\to \mfO$
such that $\vphi(u)=f(u)$. 
We see that $\mfO$ is a $\vphi$-stable subring of $\mfO_{\alpha}$.
By \cite[Section 4.2]{CL}, there exists a $\vphi$-module 
$\mcal{M}=\mcal{M}(D)$ over $\mfO$ such that 
\begin{itemize}
\item $\mcal{D}_0 \subset \mcal{M} \subset \lambda^{-r}\mcal{D}_0$ where
$\mcal{D}_0:=\mfO\otimes_{K_0} D$ and 
$\lambda:=\prod^{\infty}_{n=0}\vphi^n(E(u)/E(0))\in \mfO$.
\item $\mcal{M}$ is of height $r$ in the sense that  
the cokernel of the $\mfO$-linearization 
$1\otimes \vphi_{\mcal{M}}\colon \mfO\otimes_{\vphi,\mfO}\mcal{M}\to \mcal{M}$
of $\vphi_{\mcal{M}}$ is killed by $E(u)^r$.
\item $\mcal{M}$ is \'etale in the sense of \cite[Section 4.4]{CL}.
\end{itemize}

By Theorem 4.4.1 of {\it loc.\ cit.},
there exists a Kisin module $\mfM\subset \mcal{M}$ of height $r$
such that $\mfO\otimes_{\mfS} \mfM=\mcal{M}$.
Now we define an isomorphism $\xi'_{\alpha}\colon \mfO_{\alpha}\otimes_{K_0} D\overset{\sim}{\rightarrow} 
\mfO_{\alpha}\otimes_{\mfS} \vphi^{\ast}\mfM$
as follows:
The isomorphism $1\otimes \vphi\colon \vphi^{\ast}D\overset{\sim}{\rightarrow}D$
induces an isomorphism 
$1\otimes \vphi\colon \vphi^{\ast}\mcal{D}_0\overset{\sim}{\rightarrow}\mcal{D}_0$.
Thus we obtain an injection
$\xi'\colon \mcal{D}_0\overset{\sim}{\rightarrow} 
\vphi^{\ast}\mcal{D}_0\hookrightarrow \vphi^{\ast}\mcal{M}\simeq 
\mfO\otimes_{\mfS} \vphi^{\ast}\mfM$.
Then we define $\xi'_{\alpha}=\mfO_{\alpha}\otimes_{\mfO} \xi'$.
It is shown in Lemma 4.2.2  of {\it loc.\ cit.} that 
$\xi'_{\alpha}$ is an isomorphism.

\bn
{\bf The map $\iota_0$.}
Following  Proposition 4.5.1 of {\it loc.\ cit.}, 
we define
a  $G_{\underline{\pi}}$-equivalent injection 
$\iota_0\colon T_{\mfS}(\mfM)\hookrightarrow V$
by the composite 
\begin{align*}
T_{\mfS}(\mfM)=\mrm{Hom}_{\mfS,\vphi}(\mfM,W(R)) 
& \hookrightarrow \mrm{Hom}_{\mfO,\vphi,\mrm{Fil}}
(\vphi^{\ast}\mcal{M}, B^+_{\mrm{cris}})\\
& \overset{\sim}{\rightarrow} \mrm{Hom}_{\mfO_{\alpha},\vphi,\mrm{Fil}}
(\mfO_{\alpha}\otimes_{\mfO}\vphi^{\ast}\mcal{M}, B^+_{\mrm{cris}})\\
& \overset{\sim}{\rightarrow} \mrm{Hom}_{\mfO_{\alpha},\vphi,\mrm{Fil}}
(\mfO_{\alpha}\otimes_{K_0} D, B^+_{\mrm{cris}})\\
& \simeq V_{\mrm{cris}}(D)\simeq V,
\end{align*}
where the first arrow is given by 
$f\mapsto (a\otimes x\in \mfO\otimes_{\vphi,\mfS} \mfM=\vphi^{\ast}\mcal{M} \mapsto a\vphi(f(x)))$,
the second and the fourth arrows are natural isomorphisms,
and the third arrow is given by $(f\mapsto f\circ \xi'_{\alpha})$.
We omit definitions of filtrations of various modules appeared above
since precise informations  of them  are not so important here.
We only note that definitions of filtrations  are  given in \cite[Section 4]{CL}.

\bn
{\bf The $G$-action on $u$.}
We consider a difference between $gu$ and $u$ for $g\in G$.
We recall that $f(u)=\sum^p_{i=1}a_iu^i = u^p+a_{p-1}u^{p-1}+\cdots +a_1u\in \mbb{Z}_p[u]$
with the property $f(u)\equiv u^p\ \mrm{mod}\ p$.

At first, here is a Cais-Liu's observation.
\begin{proposition}[\cite{CL}, Lemma 2.3.2]
\label{Gact}
Let $g\in G$ be arbitrary.

\noindent
$(1)$ We have $gu-u\in I^{[1]}W(R)$.

\noindent
$(2)$ If $v_p(a_1)>1$, then we have $gu-u\in I^{[1+]}W(R)$.

\end{proposition}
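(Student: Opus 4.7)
The plan is to study $\delta := gu - u$ via the defining functional equation of $u = \{\underline{\pi}\}_f$. Because the Frobenius on $W(R)$ is $G$-equivariant, $\vphi(gu) = g\vphi(u) = gf(u) = f(gu)$, so by uniqueness of the section $\{\cdot\}_f$ in \cite[Lemma 2.2.1]{CL}, one has $gu = \{g\underline{\pi}\}_f$. Iterating the functional equation yields $\vphi^n(u) = f^{\circ n}(u)$ (using that $f \in \mbb{Z}_p[u]$ and $\vphi$ fixes $\mbb{Z}_p$), and hence $\vphi^n(\delta) = (g-1)\vphi^n(u) = (g-1)f^{\circ n}(u)$ for every $n \ge 0$.

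For part (1), I will exploit the identity $\theta(u) = \pi$, where $\theta \colon W(R) \to \cO_{\mbb{C}_p}$ is the canonical surjection; this is a compatibility property of the embedding $\mfS \hookrightarrow W(R)$ of \cite{CL}, namely that composing with $\theta$ recovers the natural $\mfS$-algebra structure on $\cO_K$ via $u \mapsto \pi$. Granted this, for every $n \ge 0$
\begin{align*}
\theta(\vphi^n(\delta))
&= (g-1)\theta(f^{\circ n}(u))
= (g-1)f^{\circ n}(\theta(u)) \\
&= f^{\circ n}(g\pi) - f^{\circ n}(\pi) = 0,
\end{align*}
the last equality using $\pi \in K$. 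Hence $\vphi^n(\delta) \in \mrm{Fil}^1 W(R)$ for every $n \ge 0$, i.e., $\delta \in I^{[1]}W(R)$.

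For part (2), the refinement to $I^{[1+]}W(R)$ under $v_p(a_1) > 1$ requires more. A natural intermediate step is to show $\delta \in I_+ W(R)$, by applying $\nu$ to $\vphi(u) = f(u)$ in $W(\overline{k})$, writing $\nu(u) = p\beta$ (possible since $u$ reduces to $\underline{\pi} \in \mfm_R$), and dividing the resulting equality $p\vphi(\beta) = \sum_{i=1}^p a_i p^i \beta^i$ by $p$ to obtain a bootstrap that forces $\beta \in \bigcap_n p^n W(\overline{k}) = 0$, i.e., $\nu(u) = 0$. To pass from the easy $\delta \in I^{[1]}W(R) \cap I_+ W(R)$ to the sharper $I^{[1+]}W(R)$ (which, as the subsequent remark on $I_g$ suggests, is $\vphi(\mft) I_+ W(R)$ or an equivalent variant), one then substitutes $\delta = \mft\gamma$ from part (1) into the linearized equation $\vphi(\delta) = f'(u)\delta + O(\delta^2)$ and exploits the hypothesis $v_p(a_1) > 1$: under this hypothesis $f'(u) \in pW(R)$ with $a_1 \in p^2 W(R)$, which should force the extra $E(u)$-divisibility that distinguishes $\vphi(\mft) = \mu_0 E(u)\mft$ from $\mft$ itself after controlling the $\delta^2$-contribution.

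The main obstacles are twofold. First, the input $\theta(u) = \pi$ is crucial to part (1) but relies on the construction of $\{\cdot\}_f$ in \cite{CL}; a self-contained justification would combine $E(u) \in \mrm{Fil}^1 W(R)$ (forcing $\theta(u)$ to be a root of $E$) with a Hensel-type uniqueness argument pinning down which root. Second, extracting the correct divisibility for part (2) is delicate because $\mft$-adic and $p$-adic valuations interact nontrivially via $\vphi(\mft) = \mu_0 E(u) \mft$, and a careful bookkeeping of the Frobenius iterates of $\delta$ will be required; this is precisely where the strengthened hypothesis $v_p(a_1) > 1$ becomes essential, sharpening the easy bound $\delta \in I^{[1]} \cap I_+$ to the finer ideal $I^{[1+]}W(R)$.
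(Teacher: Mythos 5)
The proposition is imported by the paper verbatim from \cite[Lemma~2.3.2]{CL}; the paper supplies no proof of its own, so the only meaningful benchmark is the argument in Cais--Liu.

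Your argument for part~(1) is correct and is essentially the standard one: since $\vphi^n(u) = f^{\circ n}(u)$ and $\theta$ is $G$-equivariant, the identity $\theta(u)=\pi$ (which is \cite[Lemma~2.2.3]{CL}, the analogue for $\{\cdot\}_f$ of $\theta([\underline{\pi}])=\pi$) immediately gives $\theta(\vphi^n(gu-u))=0$ for all $n\ge 0$, hence $gu-u\in I^{[1]}W(R)$. You rightly flag that $\theta(u)=\pi$ is the load-bearing input; it is a genuine theorem of \cite{CL} and not just a notational convention, so a careful writeup would cite it rather than rederive it.

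Part~(2), however, is not a proof but a plan, and you say so yourself. Two concrete issues: first, $I^{[1]}W(R)$ is generated by $\vphi(\mft)$, not by $\mft$ (indeed $\vphi(\mft)=\mu_0 E(u)\mft$, so $\mft$ and $\vphi(\mft)$ differ by the non-unit $E(u)$); writing $\delta=\mft\gamma$ and substituting will not land you in the right ideal, and the entire ``bookkeeping'' step has to be run with $\delta=\vphi(\mft)z$ and the goal of showing $\nu(z)=0$. Second, the actual descent is left undone. To close the gap one must apply $\nu$ to the expansion $\vphi^2(\mft)\vphi(z)=f'(u)\vphi(\mft)z+\sum_{j\ge2}c_j\delta^j$, divide through by $\vphi(\mft)$, use that $\nu(u)=0$ (so $\nu(E(u))=E(0)\in pW(k)^{\times}$ and $\nu(f'(u))=a_1$), and observe that $\nu(\vphi(\mft))=0$ kills the higher-order terms; this gives the recursion $p\cdot(\mathrm{unit})\cdot\vphi(\nu(z))=a_1\nu(z)$ in $W(\overline k)$, and iterating with $v_p(a_1)>1$ forces $\nu(z)\in\bigcap_n p^n W(\overline k)=0$. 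That computation is what you would need to supply; as written, the phrase ``should force the extra $E(u)$-divisibility'' is a placeholder where the proof actually lives, and without it part~(2) is not established.
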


We use the following proposition in the final section. 

%

\begin{proposition}
\label{val}
Let $j_0$ be the minimum integer $1\le j\le p$ such that 
$v_p(ja_j)=1$.
Let $g\in G\smallsetminus G_{\underline{\pi}}$ and $N\ge 1$ the integer 
such that $g\pi_{N-1}=\pi_{N-1}$ and $g\pi_N\not= \pi_N$.
We denote by $\bar{u}$ the image of $u$ for the projection 
$W(R)\twoheadrightarrow R$.
Then we have $v_R(g\bar{u}-\bar{u})=p^N/(p-1)+(j_0-1)/(e(p-1))$.
\end{proposition}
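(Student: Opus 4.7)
Set $\epsilon_n := g\pi_n - \pi_n \in \overline{K}$ and $\gamma_n := v_p(\epsilon_n)$. Since $\epsilon_n$ is a lift of the $n$-th component of $g\bar{u} - \bar{u}$ to $\cO_{\overline{K}}$, the definition of $v_R$ gives $v_R(g\bar{u} - \bar{u}) = \lim_{n \to \infty} p^n \gamma_n$. The plan is to establish
\[
\gamma_n = \frac{1}{p^{n-N}(p-1)}\left(1 + \frac{j_0 - 1}{e p^N}\right) \qquad \text{for all } n \ge N,
\]
from which the claimed formula follows immediately, since $p^n \gamma_n = p^N \gamma_N = p^N/(p-1) + (j_0-1)/(e(p-1))$.

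For the base case $n = N$, the hypothesis $g\pi_{N-1} = \pi_{N-1}$ forces $\pi_N$ and $g\pi_N$ to be distinct roots of $f(X) - \pi_{N-1}$, so $\epsilon_N$ is a nonzero root of the polynomial $Q_N(Y) := (f(\pi_N + Y) - f(\pi_N))/Y$, which has leading coefficient $1$ and constant term $f'(\pi_N)$. Using that $v_p(a_j) > 1$ forces $v_p(a_j) \ge 2$ for $j < j_0$ (since $v_p(a_j) \in \mbb{Z}_{\ge 0}$ and $v_p(j) = 0$ for $j < p$), I compute $v_p(f'(\pi_N)) = 1 + (j_0-1)/(ep^N)$, with the minimum in $\sum_j j a_j \pi_N^{j-1}$ uniquely attained at $j = j_0$. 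To conclude $\gamma_N = (1 + (j_0-1)/(ep^N))/(p-1)$, I would show the Newton polygon of $Q_N$ consists of a single segment from $(0, v_p(f'(\pi_N)))$ to $(p-1, 0)$. Bounding each intermediate coefficient $b_m$ of $Q_N$ by splitting on whether $m < j_0 - 1$ or $m \ge j_0 - 1$, the required line-inequalities both reduce, after algebraic simplification, to $ep^N \ge p - j_0$, which holds because $ep^N \ge p > p - j_0$.

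For the inductive step at level $n > N$, applying $f$ gives $g\pi_{n-1} - \pi_{n-1} = \epsilon_n \cdot Q_n(\epsilon_n)$, where $Q_n$ is defined analogously with $\pi_n$ in place of $\pi_N$, so $\gamma_{n-1} = \gamma_n + v_p(Q_n(\epsilon_n))$. Setting $A_n := 1 + (j_0-1)/(ep^n)$, the same Newton polygon analysis shows $Q_n$ has a single-segment polygon from $(0, A_n)$ to $(p-1, 0)$. Assuming inductively that $\gamma_n = \gamma_N/p^{n-N}$, a direct computation shows $\gamma_n < A_n/(p-1)$, so the leading term $\epsilon_n^{p-1}$ strictly dominates in $Q_n(\epsilon_n)$, giving $v_p(Q_n(\epsilon_n)) = (p-1)\gamma_n$ and hence $\gamma_{n-1} = p \gamma_n$. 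The only delicate point is the transition $n = N \to n = N + 1$, where $\gamma_N$ equals $A_N/(p-1)$ exactly; there one rules out the alternative scenario (linear term dominating) via the sharp inequality $A_N < p A_{N+1}$, which is equivalent to $1 < p$.

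The main obstacle is the verification that the Newton polygons of the $Q_n$ are single segments, since this requires the definition of $j_0$ to be used in an essential way when lower-bounding the intermediate coefficients; once this is in place, the limit computation is routine and yields the stated formula.
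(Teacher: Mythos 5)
Your approach is the same Newton polygon analysis the paper uses, with a cosmetic reformulation: you work with $Q_n(Y) := (f(\pi_n + Y) - f(\pi_n))/Y$, which has exactly the coefficients $b_n^{(\ell)}$ the paper writes down explicitly (shifted by one degree). Your base case is sound: $\epsilon_N$ is a root of $Q_N$, the constant term is $f'(\pi_N)$ with valuation $1+(j_0-1)/(ep^N)$ (your observation that $v_p(a_j)>1$ forces $v_p(a_j)\ge 2$ for $j<j_0$ is correct and is used implicitly in the paper as well), and the single-segment Newton polygon of $Q_N$ pins down $\gamma_N$.

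The inductive step, however, is logically backward as written. You state ``Assuming inductively that $\gamma_n = \gamma_N/p^{n-N}$'' and then derive $\gamma_{n-1} = p\gamma_n$. But the base case is $n=N$ and you want to advance to larger $n$; you need to pass from a known $\gamma_{n-1}$ to an unknown $\gamma_n$, not the reverse. The relation $\gamma_{n-1} = \gamma_n + v_p(Q_n(\epsilon_n))$ alone does not determine $\gamma_n$, because $v_p(Q_n(\epsilon_n))$ depends on $\gamma_n$: if $\gamma_n < A_n/(p-1)$ the leading term of $Q_n$ dominates, if $\gamma_n > A_n/(p-1)$ the constant term dominates, and if $\gamma_n = A_n/(p-1)$ cancellation can occur. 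Establishing which case holds requires an argument, and your remark about ``ruling out the alternative scenario'' via $A_N < pA_{N+1}$ gestures at exactly the needed contradiction but only discusses the $N\to N+1$ transition; the same case analysis is needed at every step.

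The paper avoids this circularity entirely by considering the Newton polygon of the full degree-$p$ polynomial $\sum_{\ell=1}^p b_n^{(\ell)} X^\ell - (g\pi_{n-1}-\pi_{n-1})$, equivalently $YQ_n(Y) - \epsilon_{n-1}$. Its Newton polygon has endpoints $(0,\gamma_{n-1})$ and $(p,0)$, and showing it is a single segment (using $v_p(b_n^{(\ell)})\ge 1$ together with the smallness of $\gamma_{n-1}$) forces every root, in particular $\epsilon_n$, to have valuation $\gamma_{n-1}/p$, with no need to guess $\gamma_n$ first. I recommend restructuring your inductive step along those lines: you already have all the coefficient estimates; the only change is to attach the known constant term $-\epsilon_{n-1}$ before drawing the polygon.
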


\begin{proof}
Since $v_R(g\bar{u}-\bar{u})=\lim_{n\to \infty} p^nv_p(g\pi_n-\pi_n)$,
it suffices to show that 
$v_p(g\pi_n-\pi_n)=c_n$ for $n\ge N$, 
where  $c_n:=p^N/((p-1)p^n)+(j_0-1)/(ep^n(p-1))$.

We note that we have an equation $\sum^p_{i=1}a_i(g\pi_n^i-\pi_n^i)=g\pi_{n-1}-\pi_{n-1}$. 
Putting $b^{(\ell)}_n=
\sum^{p-\ell}_{j=0}a_{\ell+j}
\begin{pmatrix}\ell+j\\j \end{pmatrix}
\pi_n^j
$, we have 
$$
  \sum^p_{i=1}a_i(g\pi_n^i-\pi_n^i)
= \sum^p_{i=1}\sum^{i-1}_{j=0} a_i
\begin{pmatrix}i\\j \end{pmatrix}
(g\pi_n-\pi_n)^{i-j}\pi_n^j
=\sum^p_{\ell=1} 
b^{(\ell)}_n
(g\pi_n-\pi_n)^{\ell}.
$$
Hence we obtain that $g\pi_n-\pi_n$ is a solution of the equation
$$
\sum^p_{\ell=1} 
b^{(\ell)}_nX^{\ell}-(g\pi_{n-1}-\pi_{n-1})=0.
$$
We note that we have $b_n^{(p)}=1$ and
\begin{align*}
v_p(a_{\ell+j}
\begin{pmatrix}\ell+j\\j \end{pmatrix}
\pi_n^j)
=
\left\{ 
\begin{array}{cr}
v_p(a_{\ell+j})+\frac{j}{ep^n} &  \mrm{for}\ 0\le j<p-\ell, \cr
1+\frac{p-\ell}{ep^n} & \mrm{for}\ j=p-\ell \qquad 
\end{array}
\right.
\end{align*}
if  $1\le \ell \le p-1$.\\

\noindent
{\bf The case $j_0=p$}: By the assumption $v_p(a_1),\dots ,v_p(a_{p-1})>1$,
we have 
\begin{equation}
\label{val1}
v_p(b_n^{(\ell)})=1+\frac{p-{\ell}}{ep^n}
\end{equation}
for $1\le \ell \le p-1$.
Now we show $v_p(g\pi_n-\pi_n)=c_n$ by induction on $n\ge N$.

Suppose $n=N$.
Then $g\pi_N-\pi_N$ is a solution of the equation
$$
\sum^{p-1}_{\ell=0} 
b^{(\ell+1)}_NX^{\ell}=0.
$$
Hence it is enough to show that the Newton polygon of the polynomial
$\sum^{p-1}_{\ell=0} 
b^{(\ell+1)}_NX^{\ell}\in \mbb{Z}_p[X]$
is the line segment, denoted by $l_N$, connecting $(0,(p-1)c_N)$ to $(p-1,0)$.
This follows immediately by \eqref{val1}.

We suppose that the assertion holds for $n$ and consider the case $n+1$.
We recall that $g\pi_{n+1}-\pi_{n+1}$ is a solution of the equation
$$
\sum^p_{\ell=1} 
b^{(\ell)}_{n+1}X^{\ell}-(g\pi_n-\pi_n)=0.
$$
Thus it is enough to show that the Newton polygon of the polynomial
$\sum^p_{\ell=1} 
b^{(\ell)}_{n+1}X^{\ell}-(g\pi_n-\pi_n)$
is the line segment, denoted by $l_{n+1}$, connecting 
$(0,c_n)$ to $(p,0)$.
This follows immediately by \eqref{val1} again.

\noindent
{\bf The case $j_0<p$}:
Let $s$ be the number of integers $j$ such that 
$1\le j\le p-1$ and $v_p(a_j)=1$.
By assumption we have $s>0$.
Let $j_{-1},j_0,j_1,\dots ,j_{s-1}$ be integers such that 
$j_{-1}=0<  j_0< j_1< \cdots < j_{s-1}\le p-1$ and $v_p(a_{j_0})=\cdots =v_p(a_{j_{s-1}})=1$.
Then we see 
\begin{align}
\label{val2}
v_p(b_n^{(\ell)})
=
\left\{ 
\begin{array}{cr}
1+\frac{j_k-\ell}{ep^n} &  \mrm{if}\ j_{k-1}< \ell \le j_k\ \mrm{for\ some}\ 0\le k\le s-1, \cr
1+\frac{p-\ell}{ep^n} & \mrm{if}\ j_{s-1}<\ell \le p-1. \qquad \qquad \qquad \quad \qquad
\end{array}
\right.
\end{align}
By a similar strategy to the proof of (1),
we can show $v_p(g\pi_n-\pi_n)=c_n$ by induction on $n\ge N$.
We leave a proof to the readers.
\end{proof}



We consider a convergence property of $\vphi^n(gu-u)$. For this, we need 

\begin{lemma}
\label{poly}
Assume that $v_p(a_1)>r$.
Then, for any $n\ge 1$ and $0\le i\le n$, there exists a polynomial $h^{(n)}_{2^{n-i}}(X,Y)\in \mbb{Z}_p[X,Y]$
such that 
\begin{itemize}
\item $\mrm{deg}\ h^{(n)}_{2^{n-i}}(X,Y)\ge 2^{n-i}$,
\item $h^{(n)}_{2^{n-i}}(X,Y)\in X\mbb{Z}_p[X,Y]$ and 
\item $\vphi^n(gu-u)=\sum^n_{i=0} h^{(n)}_{2^{n-i}}(gu-u,u)p^{(r+1)i}$ for any $g\in G$.
\end{itemize}
\end{lemma}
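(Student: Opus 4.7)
My plan is induction on $n \geq 1$.

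For the base case $n=1$, I would compute $\vphi(gu-u) = f(gu) - f(u)$ and expand via the polynomial identity
$$
f(X+Y) - f(Y) = \sum_{j=1}^p c_j(Y)\, X^j, \qquad c_j(Y) := \sum_{i=j}^p a_i \binom{i}{j} Y^{i-j}.
$$
Three observations drive this case: $c_p(Y) = 1$; every coefficient of $c_j(Y)$ for $1 \leq j \leq p-1$ lies in $p\mbb{Z}_p$ (using $a_i \in p\mbb{Z}_p$ for $1 \leq i \leq p-1$ together with $p \mid \binom{p}{j}$ for $1 \leq j \leq p-1$); and the constant term of $c_1(Y) = f'(Y)$ is $a_1$, which by hypothesis satisfies $v_p(a_1) \geq r+1$. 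Writing $a_1 = p^{r+1}\alpha_1$ with $\alpha_1 \in \mbb{Z}_p$, I would peel off this term and set
$$
h^{(1)}_1(X,Y) := \alpha_1 X, \qquad h^{(1)}_2(X,Y) := X^p + (c_1(Y) - a_1)\, X + \sum_{j=2}^{p-1} c_j(Y)\, X^j,
$$
both in $X\mbb{Z}_p[X,Y]$, with degree lower bounds $\deg h^{(1)}_2 \geq p \geq 2$ and $\deg h^{(1)}_1 = 1$. This yields the required decomposition at $n=1$.

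For the inductive step, I would assume the formula at level $n$ and apply $\vphi$ to both sides. Since each $h^{(n)}_{2^{n-i}}$ has coefficients in $\mbb{Z}_p$, Frobenius commutes with polynomial substitution, and using $\vphi(u) = f(u)$ I obtain
$$
\vphi^{n+1}(gu-u) = \sum_{i=0}^n h^{(n)}_{2^{n-i}}\bigl(\vphi(gu-u),\, f(u)\bigr)\, p^{(r+1)i}.
$$
I would then substitute the base case expression $\vphi(gu-u) = h^{(1)}_2(gu-u,u) + p^{r+1} h^{(1)}_1(gu-u,u)$ into the first slot and use the Taylor-type polynomial identity
$$
h^{(n)}_{2^{n-i}}(A + p^{r+1}B, Z) = h^{(n)}_{2^{n-i}}(A, Z) + p^{r+1}\, B\, \wt q_i(A, B, Z)
$$
valid in $\mbb{Z}_p[A,B,Z]$ for some $\wt q_i \in \mbb{Z}_p[A,B,Z]$. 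Regrouping the resulting double sum by powers of $p^{r+1}$ produces polynomials of the form
$$
h^{(n+1)}_{2^{n+1-j}}(X,Y) := h^{(n)}_{2^{n-j}}\bigl(h^{(1)}_2(X,Y),\, f(Y)\bigr) + h^{(1)}_1(X,Y)\, \wt q_{j-1}\bigl(h^{(1)}_2, h^{(1)}_1, f(Y)\bigr)
$$
(with the convention that undefined terms at the boundaries $j=0$ or $j=n+1$ are zero). Divisibility by $X$ is inherited from $h^{(1)}_2, h^{(1)}_1 \in X\mbb{Z}_p[X,Y]$; and since composition with $h^{(1)}_2$ (whose leading $X$-monomial is $X^p$) multiplies total degree by at least $p \geq 2$, the bound $\deg h^{(n+1)}_{2^{n+1-j}} \geq 2^{n+1-j}$ propagates from $\deg h^{(n)}_{2^{n-j}} \geq 2^{n-j}$.

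The whole argument is essentially bookkeeping, and I do not anticipate any genuine obstacle. The only substantive input is the base case, where the hypothesis $v_p(a_1) > r$ is precisely what allows the constant term of $f'(Y)$ to be extracted as a $p^{r+1}$-multiple; without this, one recovers only the weaker factorization $\vphi(gu-u) = (gu-u)^p + p\cdot (gu-u)\cdot(\text{polynomial in } gu-u,\,u)$ that underlies Proposition \ref{Gact}.
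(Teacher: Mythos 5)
Your base case is correct and essentially identical to the paper's: both isolate $a_1 u_g = p^{r+1}\alpha_1 u_g$ and collect everything else into $h^{(1)}_2$. (One small slip: if ``$\deg$'' denotes the \emph{minimum} total degree of a monomial --- which is what the later application in Corollary \ref{polycor} requires --- then the correct bound is $\deg h^{(1)}_2 \ge 2$, not $\ge p$: the term $2a_2 Y\cdot X$ coming from $i=2$, $j=1$ already has degree $2$.)

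The gap is in the inductive step, specifically the degree bound. You substitute $\vphi(u_g)$ for the first slot and $f(Y)$ for the second slot of $h^{(n)}_{2^{n-j}}$, and you only decompose the \emph{first} slot via the Taylor identity. But the second slot receives $Z = f(Y)$, which has minimum degree $1$ when $a_1 \ne 0$ (and the hypothesis $v_p(a_1)>r$ permits $a_1 \ne 0$). For a monomial $A^a Z^b$ of $h^{(n)}_{2^{n-j}}$ with $a+b \ge 2^{n-j}$, the substitution yields minimum degree only $\ge 2a + b$, which is strictly less than $2(a+b) = 2^{n+1-j}$ whenever $b>0$, with no compensating power of $p^{r+1}$ coming out. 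A concrete failure: take $h^{(n)}_{2^n}(A,Z) = AZ^{2^n-1}$; then $h^{(n)}_{2^n}\bigl(h^{(1)}_2(X,Y), f(Y)\bigr)$ has minimum degree $2 + (2^n-1) = 2^n+1 < 2^{n+1}$ for $n\ge 1$. So your asserted bound $\deg h^{(n+1)}_{2^{n+1}}\ge 2^{n+1}$ does not follow. The heuristic ``composition with $h^{(1)}_2$ multiplies total degree by at least $p$'' conflates the lead monomial of $h^{(1)}_2$ with its minimum-degree monomial, and in any case says nothing about the second slot.

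This is exactly why the paper must also decompose $f(u) = u^2 h(u) + a_1 u$ and track the resulting $(a_1 u)^{\ell'} = (a_1/p^{r+1})^{\ell'} p^{(r+1)\ell'} u^{\ell'}$ factors: each unit of degree lost below the baseline $2^{n+1-i}$ is paid for by an extra factor of $p^{r+1}$, and the final inequality $2^{n+1-i}-(\ell+\ell') \ge 2^{n+1-i-(\ell+\ell')}$ closes the loop. That joint bookkeeping of degree-loss versus $p$-power-gain is the substantive content of the inductive step; your Taylor-identity formulation would have to be extended to the second variable as well (replacing $Z$ by $Y^2 h(Y) + p^{r+1}\alpha_1 Y$ and expanding in both perturbations), at which point it reduces to the paper's triple sum over $(j,\ell,\ell')$ rather than giving a shortcut.
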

\begin{proof}
To simplify notation, we put $u_g=gu-u$.
The idea of the proof here is similar to \cite[Lemma 2.2.2]{CL}.
We proceed a proof by induction on $m=n$.
We consider the case $m=1$.
We have $$
\vphi(u_g)=\sum^p_{i=1}a_i(gu^i-u^i)
=\sum^p_{i=1}\sum^{i-1}_{j=0}a_i
\begin{pmatrix}
i\\
j
\end{pmatrix}
u_g^{i-j}u^j
=\sum^p_{i=2}\sum^{i-1}_{j=0}a_i
\begin{pmatrix}
i\\
j
\end{pmatrix}
u_g^{i-j}u^j + a_1u_g.
$$
Putting $h^{(1)}_{2}(X,Y)=\sum^p_{i=2}\sum^{i-1}_{j=0}a_i
\begin{pmatrix}
i\\
j
\end{pmatrix}
X^{i-j}Y^j$ and $h^{(1)}_{1}(X,Y)=p^{-r-1}a_1X$,
we have the desired result. We remark that $p^{-r-1}a_1\in \mbb{Z}_p$
by the assumption $v_p(a_1)>r$.

We suppose that the assertion holds for $m=n$. Thus we have
$$
\vphi^{n+1}(u_g)
= \vphi(\sum^n_{i=0} h^{(n)}_{2^{n-i}}(u_g,u)p^{(r+1)i})
= \sum^n_{i=0} h^{(n)}_{2^{n-i}}(\vphi(u_g),\vphi(u))p^{(r+1)i}.
$$
Write  $h^{(n)}_{2^{n-i}}(X,Y)=\sum^{2^{n-i}}_{j=0}c^{(i)}_j(X,Y)X^{2^{n-i}-j}Y^j$
with some $c^{(i)}_j(X,Y)\in \mbb{Z}_p[X,Y]$ 
for $0\le j < 2^{n-i}$ and $c^{(i)}_{2^{n-i}}(X,Y)\in X\mbb{Z}_p[X,Y]$.
We also write $f(u)=u^2h(u)+a_1u$ with $h(u)\in \mbb{Z}_p[u]$.
Then, we have 
\begin{align*}
 h^{(n)}_{2^{n-i}}(\vphi(u_g),\vphi(u))
& = \sum^{2^{n-i}}_{j=0}c^{(i)}_j(\vphi(u_g),\vphi(u))\vphi(u_g)^{2^{n-i}-j}\vphi(u)^j\\
& = \sum^{2^{n-i}}_{j=0}c^{(i)}_j(\vphi(u_g),\vphi(u))
(h^{(1)}_2(u_g,u)+a_1u_g)^{2^{n-i}-j}(u^2h(u)+a_1u)^j\\
& = \sum^{2^{n-i}}_{j=0} \sum^{2^{n-i}-j}_{\ell=0}\sum^j_{\ell'=0} 
c^{(i)}_j(\vphi(u_g),\vphi(u))
\begin{pmatrix}
2^{n-i}-j\\
\ell
\end{pmatrix}
\begin{pmatrix}
j\\
\ell'
\end{pmatrix}\\
& \qquad \qquad \qquad \cdot (h^{(1)}_2(u_g,u))^{2^{n-i}-j-\ell} (a_1u_g)^{\ell} (u^2h(u))^{j-\ell'} (a_1u)^{\ell'}\\
& = \sum^{2^{n-i}}_{j=0} \sum^{2^{n-i}-j}_{\ell=0}\sum^j_{\ell'=0} 
\left\{c^{(i)}_j(\vphi(u_g),\vphi(u))
\begin{pmatrix}
2^{n-i}-j\\
\ell
\end{pmatrix}
\begin{pmatrix}
j\\
\ell'
\end{pmatrix}
\left(\frac{a_1}{p^{r+1}}\right)^{\ell+\ell'} h(u)^{j-\ell'}  \right\}\\
& \qquad \qquad \qquad 
\cdot \left\{(h^{(1)}_2(u_g,u))^{2^{n-i}-j-\ell} u_g^{\ell} u^{2j-\ell'}\right\}\cdot p^{(r+1)(\ell+\ell')}.
\end{align*}
We define polynomials 
$h^0_{j,\ell,\ell'}(X,Y), h_{j,\ell,\ell'}(X,Y)\in \mbb{Z}_p[X,Y]$
by
$$
h^0_{j,\ell,\ell'}(X,Y)=
(h^{(1)}_2(X,Y))^{2^{n-i}-j-\ell} X^{\ell} Y^{2j-\ell'}
$$
and 
$$
h_{j,\ell,\ell'}(X,Y):=
\left\{c^{(i)}_j(h^{(1)}_2(X,Y)+a_1X,f(Y))
\begin{pmatrix}
2^{n-i}-j\\
\ell
\end{pmatrix}
\begin{pmatrix}
j\\
\ell'
\end{pmatrix}
\left(\frac{a_1}{p^{r+1}}\right)^{\ell+\ell'} h(Y)^{j-\ell'} \right\}
h^0_{j,\ell,\ell'}(X,Y).
$$
Then we have inequalities 
\begin{equation}
\label{polyineq}
\mrm{deg}\ h_{j,\ell,\ell'}(X,Y)\ge 
\mrm{deg}\ h^0_{j,\ell,\ell'}(X,Y)\ge 2(2^{n-i}-j-\ell)+\ell+(2j-\ell')
=2^{n+1-i}-(\ell+\ell').
\end{equation}
We claim $h_{j,\ell,\ell'}(X,Y)\in X\mbb{Z}_p[X,Y]$.
If $\ell\not=0$, then it follows from $h^0_{j,\ell,\ell'}(X,Y)\in X\mbb{Z}_p[X,Y]$.
Suppose $\ell=0$.
If $j<2^{n-i}$.
Then $(h^{(1)}_2(X,Y))^{2^{n-i}-j-\ell}$ is contained in $ X\mbb{Z}_p[X,Y]$,
and thus we have  $h^0_{j,\ell,\ell'}(X,Y)\in X\mbb{Z}_p[X,Y]$.
Suppose $j=2^{n-i}$. 
Since $c^{(i)}_{2^{n-i}}(X,Y)\in X\mbb{Z}_p[X,Y]$,
we have .
$
c^{(i)}_j(h^{(1)}_2(X,Y)+a_1X,f(Y))\in (h^{(1)}_2(X,Y)+a_1X)\mbb{Z}_p[X,Y]
\subset X\mbb{Z}_p[X,Y].
$
Hence the claim follows.

On the other hand, we have
\begin{align*}
\vphi^{n+1}(u_g) 
& =
\sum^n_{i=0} h^{(n)}_{2^{n-i}}(\vphi(u_g),\vphi(u))p^{(r+1)i}\\
& = \sum^n_{i=0} \sum^{2^{n-i}}_{j=0} \sum^{2^{n-i}-j}_{\ell=0}\sum^j_{\ell'=0}   
h_{j,\ell,\ell'}(u_g,u) p^{(r+1)(i+\ell+\ell')}\\
& = \sum_{(i,j,\ell,\ell')\in S_1} h_{j,\ell,\ell'}(u_g,u)p^{(r+1)(i+\ell+\ell'-(n+1))}
p^{(r+1)(n+1)}\\
& \qquad +
\sum_{(i,j,\ell,\ell')\in S_2} h_{j,\ell,\ell'}(u_g,u)
p^{(r+1)(i+\ell+\ell')}.
\end{align*}
Here, $S_1$ (resp.\ $S_2$) is the set of $(i,j,\ell,\ell')$ such that 
$0\le i\le n$, $0\le j\le 2^{n-i}$, $0\le \ell\le 2^{n-i}-j$,
$0\le \ell' \le j$ and $i+\ell+\ell'\ge n+1$ 
(resp.\ $i+\ell+\ell'< n+1$).
Now we set 
$$
h^{(n+1)}_1(X,Y):=\sum_{(i,j,\ell,\ell')\in S_1} h_{j,\ell,\ell'}(X,Y)p^{(r+1)(i+\ell+\ell'-(n+1))}
\in X\mbb{Z}_p[X,Y]
$$
and 
$$
h^{(n+1)}_{2^{n+1-m}}(X,Y):=
\sum_{(i,j,\ell,\ell')\in S_2 \atop i+\ell+\ell'=m} h_{j,\ell,\ell'}(X,Y)
\in X\mbb{Z}_p[X,Y]
$$
for  $0\le m<n+1$.
We clearly have 
 $\vphi^{n+1}(u_g)=\sum^{n+1}_{m=0} h^{(n+1)}_{2^{n+1-m}}(u_g,u)p^{(r+1)m}$.
Hence we finish a proof if  we show 
$\mrm{deg}\ h_{j,\ell,\ell'}(X,Y)\ge 2^{n+1-m}$
if $0\le m<n+1$. To show this, 
it suffices to show  
$\mrm{deg}\ h_{j,\ell,\ell'}(X,Y)\ge 2^{n+1-m}$
if $(i,j,\ell,\ell')\in S_2$ and $m=i+\ell+\ell'<n+1$.
By \eqref{polyineq}, it is enough to show 
$2^{n+1-i}-(\ell+\ell')\ge 2^{n+1-m}$ for such $(i,j,\ell,\ell')$.
Since we have $2^{i_0}-\ell_0\ge 2^{i_0-\ell_0}$ 
for integers $\ell_0 < i_0$,
the desired inequality follows by setting 
$i_0:=n+1-i$ and $\ell_0:=\ell+\ell'$. 
\end{proof}

\begin{corollary}
\label{polycor}
Assume that $v_p(a_1)>r$.  Put $u_g=gu-u$
for any $g\in G$.
Then, $\vphi^n(u_g)/(u_gp^{nr})$ converges to zero $p$-adically in $B^+_{\mrm{cris}}$.
\end{corollary}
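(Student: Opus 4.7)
The plan is to expand $\vphi^n(u_g)/(u_g\,p^{nr})$ as an explicit sum of monomials in $u_g$ and $u$ via Lemma~\ref{poly}, and then to bound the $p$-adic valuation of each monomial in $A_{\mathrm{cris}}$. Setting $k := n-i$, Lemma~\ref{poly} gives
\[
\frac{\vphi^n(u_g)}{u_g\, p^{nr}} \;=\; \sum_{k=0}^{n} \frac{h^{(n)}_{2^k}(u_g,u)}{u_g}\, p^{\,n - k(r+1)}.
\]
Because $h^{(n)}_{2^k}(X,Y) \in X\mbb{Z}_p[X,Y]$, the quotient $h^{(n)}_{2^k}/X$ is a genuine element of $\mbb{Z}_p[X,Y]$, and the degree bound of Lemma~\ref{poly} forces every monomial $X^a Y^b$ that appears in $h^{(n)}_{2^k}/X$ to satisfy $a+b \ge 2^k - 1$. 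It will therefore suffice to prove that each monomial summand $c\, u_g^a u^b \cdot p^{\,n-k(r+1)}$ (with $c \in \mbb{Z}_p$, $0 \le k \le n$, and $a+b \ge 2^k-1$) has $p$-adic valuation in $A_{\mathrm{cris}}$ tending to $+\infty$, uniformly in all indices, as $n \to \infty$.

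Two ingredients will handle the monomial estimate. First, by Proposition~\ref{Gact}(1), $u_g \in I^{[1]}W(R) \subset \mathrm{Fil}^1 A_{\mathrm{cris}}$, so the divided power structure on $\mathrm{Fil}^1 A_{\mathrm{cris}}$ yields $u_g^a/a! \in A_{\mathrm{cris}}$ and hence $v_p(u_g^a) \ge v_p(a!) \ge \lfloor a/p \rfloor \ge a/p - 1$. Second, the text's identity $\mfS[\![E(u)^p/p]\!] = \mfS[\![u^{ep}/p]\!] \subset A_{\mathrm{cris}}$ gives $u^{ep}/p \in A_{\mathrm{cris}}$, whence $v_p(u^b) \ge \lfloor b/(ep) \rfloor \ge b/(ep) - 1$. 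Combining these bounds and using $a/p + b/(ep) \ge (a+b)/(ep)$, the $p$-adic valuation of a typical monomial summand is bounded below by
\[
n - k(r+1) + \frac{a+b}{ep} - 2 \;\ge\; n - k(r+1) + \frac{2^k - 1}{ep} - 2.
\]

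The function $\psi(k) := -k(r+1) + (2^k-1)/(ep)$ is convex in $k$ and blows up exponentially, so $\psi_{\min} := \inf_{k\ge 0}\psi(k)$ is a finite constant depending only on $(e, p, r)$. The monomial valuation is therefore uniformly at least $n + \psi_{\min} - 2$, which tends to $+\infty$ with $n$; this yields the claimed $p$-adic convergence to $0$ in $A_{\mathrm{cris}}$, and hence in $B^+_{\mathrm{cris}}$. The main obstacle is the factor $p^{\,n-k(r+1)}$, which becomes as small as $p^{-nr}$ in the $k=n$ summand; this is overcome precisely because the minimum-degree lower bound $2^k$ in Lemma~\ref{poly} grows super-polynomially in $k$, so the combined $p$-adic gain from divided powers of $u_g$ and from $u^{ep}/p \in A_{\mathrm{cris}}$ dominates the linear loss $k(r+1)$ for large $k$, while for bounded $k$ the factor $p^n$ itself is ample.
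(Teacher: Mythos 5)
Your proof is correct and leads to essentially the same quantitative bound as the paper's, but it handles the $u_g$-power differently, and the difference is worth noting. The paper also starts from Lemma~\ref{poly} and reduces to monomials $u_g^a u^b$, but it then \emph{re-expands} $u_g^a = (gu - u)^a$ by the binomial theorem so that everything becomes a sum of monomials in $gu$ and $u$, and the only divided-power input used is $u^{ep}/p \in A_{\mathrm{cris}}$ together with its $G$-translate $g(u^{ep}/p)=(gu)^{ep}/p \in A_{\mathrm{cris}}$. You instead keep $u_g^a$ intact and invoke the divided-power structure on $\mathrm{Fil}^1 A_{\mathrm{cris}}$ via Proposition~\ref{Gact}(1) (so $u_g^a/a! \in A_{\mathrm{cris}}$, giving the lower bound $\lfloor a/p\rfloor$), while treating $u^b$ as in the paper through $u^{ep}/p$. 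This avoids the extra binomial expansion entirely and is a cleaner route; the price is that you appeal to the PD structure on $\mathrm{Fil}^1 A_{\mathrm{cris}}$ rather than only to the explicit containment $\mfS[\![u^{ep}/p]\!]\subset A_{\mathrm{cris}}$ that the paper has already recorded. After the crude replacement $a/p + b/(ep) \ge (a+b)/(ep)$, both arguments reach the same $n + \psi(k) - O(1)$ estimate with $\psi(k) = -k(r+1) + (2^k-1)/(ep)$, and both close by observing that the exponential term in $\psi$ dominates, so $\inf_k\psi(k)$ is a finite constant and the valuation tends to infinity with $n$. One small caveat on hygiene: where you write ``$v_p(u_g^a)\ge\cdots$'' you are implicitly using $v_p$ to mean ``the largest $c$ with membership in $p^c A_{\mathrm{cris}}$''; $A_{\mathrm{cris}}$ is not a DVR, so this is an abuse of notation, but the underlying containment argument is sound and is, in any case, the same shorthand the paper uses.
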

\begin{proof}
Let $h^{(n)}_{2^{n-i}}(X,Y)\in X\mbb{Z}_p[X,Y]$ be as in Lemma \ref{poly}
and write 
$$
h^{(n)}_{2^{n-i}}(X,Y)=X\sum^{2^{n-i}-1}_{m=0}h^{n,i}_m(X,Y)X^{2^{n-i}-1-m}Y^m
$$
with some $h^{n,i}_m(X,Y)\in \mbb{Z}_p[X,Y]$.
Then we have 
\begin{align*}
\frac{\vphi^n(u_g)}{p^{nr}u_g}
& = \frac{1}{p^{nr}}\sum^n_{i=0} \sum^{2^{n-i}-1}_{m=0}
h^{n,i}_m(u_g,u)u_g^{2^{n-i}-1-m}u^mp^{(r+1)i}\\
& = \sum^n_{i=0} \sum^{2^{n-i}-1}_{m=0} \sum^{2^{n-i}-1-m}_{m'=0}
(-1)^{m'}h^{n,i}_m(u_g,u)\frac{gu^{2^{n-i}-1-m-m'}u^{m+m'}p^{(r+1)i}}{p^{nr}}.
\end{align*}
Hence it suffices to show that there exists a constant $C(n)>0$ 
which satisfies the following properties:
\begin{itemize}
\item $C(n)\to \infty$ as $n\to \infty$ and 
\item $\displaystyle \frac{gu^{2^{n-i}-1-m-m'}u^{m+m'}p^{(r+1)i}}{p^{nr}}\in p^{C(n)}A_{\mrm{cris}}$
for any $0\le i\le n$, $0\le m\le 2^{n-i}-1$ and $0\le m'\le 2^{n-i}-1-m$.
\end{itemize}
Let $i,m$ and $m'$ be as above.
Let $q,q',r''$ and $r'$ be non-negative integers such that 
$2^{n-i}-1-m-m'=epq+r''$, $m+m'=epq'+r'$, $0\le r'',r'< ep$.
Then  we have 
$$
\frac{gu^{2^{n-i}-1-m-m'}u^{m+m'}p^{(r+1)i}}{p^{nr}}
=g\left(\frac{u^{ep}}{p}\right)^q\left(\frac{u^{ep}}{p}\right)^{q'}\cdot gu^{r''}\cdot u^{r'}
\cdot p^{q+q'+(r+1)i-nr}.
$$
Hence it is enough to find $C(n)$ such that 
$C(n)\to \infty$ as $n\to \infty$ and $q+q'+(r+1)i-nr>C(n)$.
Since we  have an inequality 
$2^{n-i}-1=ep(q+q')+ r+r' < ep(q+q') + 2ep$,
we obtain $q+q'>(ep)^{-1}(2^{n-i}-1)-2>(ep)^{-1}2^{n-i}-3$.

Suppose $r=0$. Take any real number $C$ such that 
$(ep)^{-1}2^x-x>C$ for any $x\in \mbb{R}$.
Then we see
$$
q+q'+(r+1)i-nr=q+q'+i> (ep)^{-1}2^{n-i}+i-3 > C+n-3.
$$
Therefore, if we set $C(n):=C+n-3$, then $C(n)$ satisfies the desired property.

Suppose $r>0$.
Take any real number $C'$ such that 
$(ep)^{-1}2^x-2xr>C'$ for any $x\in \mbb{R}$.
Then we see
\begin{align*}
q+q'+(r+1)i-nr 
& > (ep)^{-1}2^{n-i}-3 -(n-i)r +i \\
& = (ep)^{-1}2^{n-i}-2(n-i)r +(n-i)r+i-3\\
& > C' +(n-i)r+i-3\\
& \ge C'+n-3.
\end{align*}
Therefore, if we set $C(n):=C'+n-3$, then $C(n)$ satisfies the desired property.
\end{proof}

\subsection{Essential image of $\hat{T}$}

The goal of this subsection is to show Theorem \ref{MT'} (2). 
We continue to use the same notation as in previous section.

\begin{lemma}
\label{fix}
For any $\mfM\in {}'\Mod^{r,\hat{G},\mrm{cris}}_{\mfS}$,
we have
$\xi_{\alpha}(M)\subset (B^+_{\mrm{cris}}\otimes_{\vphi, \mfS}\mfM)^G$.
\end{lemma}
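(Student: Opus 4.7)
\emph{Plan.} Keep the notation from Section~\ref{technical}: $e_i=1\otimes\mfrak{e}_i\in\vphi^*\mfM$, $\bar e_i=e_i\bmod u\vphi^*\mfM$, and $A,A_0,P_n,Q_n,Y$ as in the construction of $\xi_\alpha$, so that $\xi_\alpha(\bar e_i)=\sum_j Y_{ji}\,e_j$ with $Y=\lim_n P_nQ_n\in GL_d(\mfO_\alpha)$. I must show $g(\xi_\alpha(\bar e_i))=\xi_\alpha(\bar e_i)$ for every $g\in G$; equivalently, that $\vec\delta:=(g-1)(\vec e\cdot Y)$ vanishes. When $g\in G_{\underline\pi}$ this is immediate: $gu=u$ forces $g$ to fix $\mfO_\alpha$ (hence $Y$), and condition~(4) of the $(\vphi,\hat G)$-module definition forces $g$ to fix each $e_j$. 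For general $g$, iterating the $\vphi$-equivariance relation $Y=\vphi(A)\vphi(Y)\vphi(A_0)^{-1}$ gives the exact identity $\vec e\cdot Y=\vphi^n(\vec e\cdot Y)\cdot Q_n$ for every $n\ge 1$; since $\det A_0\in p^rW(k)^\times$ (from the height-$r$ condition and $E(0)$ being a uniformizer of $W(k)$), the matrix $Q_n\in p^{-nr}M_d(W(k))$ is $G$-fixed, and applying $g-1$ yields the recursion
\[
\vec\delta=\vphi^n(\vec\delta)\,Q_n\qquad(n\ge 1).
\]

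To finish I will show $\vphi^n(\vec\delta)\,Q_n\to 0$ $p$-adically in $B^+_{\mrm{cris}}\otimes_{\vphi,\mfS}\mfM$, which forces $\vec\delta=0$. Decompose
\[
\vec\delta=(g\vec e-\vec e)\cdot g(Y)+\vec e\cdot(gY-Y).
\]
By condition~(a) of Definition~\ref{mod}(1) the first summand lies in $\alpha_g\cdot M_{1\times d}(B^+_{\mrm{cris}})\otimes\vphi^*\mfM$, and by a Taylor-style expansion of the entries of $Y$, using $E(gu)-E(u)\in(gu-u)W(R)$ and the definition $\mfO_\alpha=\mfS[\![E(u)^p/p]\!][1/p]$, the second lies in $(gu-u)\cdot M_{1\times d}(B^+_{\mrm{cris}})\otimes\vphi^*\mfM$. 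Since the resulting quotients $(g\vec e-\vec e)/\alpha_g$ and $(gY-Y)/(gu-u)$ are fixed elements of $B^+_{\mrm{cris}}$-valued matrix-modules (so take values in some $p^{-N}A_{\mrm{cris}}$ for a single $N$ depending only on $g$), applying $\vphi^n$ and then multiplying by $Q_n$ bounds $\vphi^n(\vec\delta)Q_n$ by a sum of two terms of the form $p^{-N}\bigl(\vphi^n(\alpha_g)/p^{nr}\bigr)\cdot(\cdots)$ and $p^{-N}\bigl(\vphi^n(gu-u)/p^{nr}\bigr)\cdot(\cdots)$, each with factor in $M_{1\times d}(A_{\mrm{cris}})\otimes\vphi^*\mfM$. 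The first factor tends to $0$ $p$-adically by condition~(b) of Definition~\ref{mod}(1), the second by Corollary~\ref{polycor} (noting $v_p(a_1)>r\ge 1$ is in force whenever this lemma is invoked via Theorem~\ref{MT'}(2)).

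\emph{Main obstacle.} The delicate point is the Taylor-style divisibility $gY-Y\in(gu-u)M_d(B^+_{\mrm{cris}})$ together with a uniform $p$-adic bound on the quotient $(gY-Y)/(gu-u)$. Since $Y$ is produced as an infinite limit inside $\mfO_\alpha$, one must carefully propagate the substitution $u\mapsto gu$ through the defining expansion $Y=\sum_k c_k(E(u)^p/p)^k$ and verify that the resulting difference is a convergent sum in $A_{\mrm{cris}}$ with bounded $p$-denominators. Once this quantitative Taylor estimate is in hand, the recursion-plus-convergence argument above closes the proof.
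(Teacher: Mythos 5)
Your setup matches the paper's: reduce to showing $X_g g(Y)=Y$, where $X_g$ is the matrix of $g$ on the basis $\vec e=(e_1,\dots,e_d)$, and your exact telescoping identity $Y=P_n\vphi^n(Y)Q_n$ (equivalently $\vec\delta=\vphi^n(\vec\delta)Q_n$) is correct and is the same relation the paper exploits. The gap comes in how you estimate. Your decomposition $\vec\delta=(g\vec e-\vec e)\cdot g(Y)+\vec e\cdot(gY-Y)$ requires \emph{two} small-factor inputs: one for $g\vec e-\vec e$, which Definition~\ref{mod}(1)(a) supplies, and a Taylor-type divisibility $gY-Y\in(gu-u)\,p^{-N}M_d(A_{\mrm{cris}})$ for a single $N$. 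You flag the latter as the main obstacle and do not prove it; as written this is a genuine gap, and it is not a formal consequence of $Y\in GL_d(\mfO_\alpha)$ --- one must control the $p$-denominators of the divided differences of the power series defining $Y$ after the substitution $u\mapsto gu$, which is a nontrivial convergence estimate in $A_{\mrm{cris}}$.

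The paper's proof avoids the issue entirely by estimating \emph{before} passing to the limit $Y$. Writing $X_g=I_d+\alpha_gY_g$ and telescoping $X_g\,g\vphi(A)=\vphi(A)\vphi(X_g)$ through the finite product gives $X_g\,g(P_nQ_n)=P_n\vphi^n(X_g)Q_n=P_nQ_n+\vphi^n(\alpha_g)P_n\vphi^n(Y_g)Q_n$, so the error $Z_n:=X_g\,g(P_nQ_n)-P_nQ_n$ has $\alpha_g$ as the \emph{only} small factor. With $\lambda$ such that $p^{\lambda}Y_g\in M_d(A_{\mrm{cris}})$, $P_n\in M_d(\mfS)$, and $Q_n\in p^{-nr}M_d(W(k))$ (from the height condition), one gets $Z_n\in(\vphi^n(\alpha_g)/p^{nr})\,p^{-\lambda}M_d(A_{\mrm{cris}})\to0$ by condition~(b) of Definition~\ref{mod}(1) alone; letting $n\to\infty$ gives $X_g\,g(Y)=Y$. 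In particular the paper never needs Corollary~\ref{polycor}, nor any divisibility or $p$-adic bound on $gY-Y$. If you wish to keep your route you must actually carry out the quantitative Taylor estimate; the partial-product telescoping is cleaner and sidesteps it. (A minor side remark: the assertion $\det A_0\in p^rW(k)^{\times}$ is not correct in general --- the height-$r$ condition only forces $\det A_0$ to divide $E(0)^{rd}$ up to units --- but the needed conclusion $Q_n\in p^{-nr}M_d(W(k))$ still holds via $A_0^{-1}=E(0)^{-r}B_0$ with $B_0\in M_d(W(k))$, $B$ coming from $AB=E(u)^rI_d$.)
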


\begin{proof}
It suffices to show $g((e_1,\dots ,e_d)Y)=(e_1,\dots ,e_d)Y$
for any $g\in G$.
We define $X_g\in GL_d(W(R))$ by
$$
g(e_1,\dots ,e_d)=(e_1,\dots ,e_d)X_g.
$$
It is enough to show 
$X_gg(Y)=Y$.
Taking $\alpha_g\in B^+_{\mrm{cris}}$
as in Definition \ref{mod},
we know
$
X_g-I_d\in M_d(\alpha_gB^+_{\mrm{cris}}).
$
Hence we have $X_g=I_d+\alpha_gY_g$ for some 
$Y_g\in M_d(B^+_{\mrm{cris}})$.
Furthermore, we have $X_gg\vphi(A)=\vphi(A)\vphi(X_g)$
since $\vphi$ commutes with the $G$-action.
Thus we have 
\begin{align*}
  & X_gg(\vphi(A)\cdots 
\vphi^n(A)\vphi^n(A_0^{-1})\cdots \vphi(A_0^{-1}))\\
= & X_gg\vphi(A)g\vphi^2(A)\cdots 
g\vphi^n(A)\vphi^n(A_0^{-1})\cdots \vphi(A_0^{-1})\\
= & \vphi(A)\vphi(X_g)g\vphi^2(A)\cdots 
g\vphi^n(A)\vphi^n(A_0^{-1})\cdots \vphi(A_0^{-1})\\
= & \vphi(A)\vphi^2(A)\vphi^2(X_g)g\vphi^3(A)\cdots 
g\vphi^n(A)\vphi^n(A_0^{-1})\cdots \vphi(A_0^{-1})\\
= & \cdots \\
= & \vphi(A)\cdots \vphi^n(A)\vphi^n(X_g) 
\vphi^n(A_0^{-1})\cdots \vphi(A_0^{-1})\\
= & \vphi(A)\cdots \vphi^n(A)
\vphi^n(A_0^{-1})\cdots \vphi(A_0^{-1})\\
& + \vphi^n(\alpha_g)\vphi(A)\cdots \vphi^n(A)\vphi^n(Y_g) 
\vphi^n(A_0^{-1})\cdots \vphi(A_0^{-1}).
\end{align*}
Hence the proof completes if we show  that
$Z_n:=\vphi^n(\alpha_g)\vphi(A)\cdots \vphi^n(A)\vphi^n(Y_g) 
\vphi^n(A_0^{-1})\cdots \vphi(A_0^{-1})$
converges to zero $p$-adically in $B^+_{\mrm{cris}}$.
Let $\lambda>0$ be an integer such that 
$p^{\lambda}Y_g\in M_d(A_{\mrm{cris}})$.
Since $\mfM$ is of height $r$,
we see that $Z_n$
is contained in 
$\vphi^n(\alpha_g)/p^{nr} \cdot p^{-\lambda}
M_d(A_{\mrm{cris}})$. Since  $\vphi^n(\alpha_g)/p^{nr}$
converges to zero, we obtain the desired result.
\end{proof}

\begin{proof}[Proof of Theorem \ref{MT'} (2)]
We continue to use the same notation. 
First we assume the condition $v_p(a_1)>\max \{r,1\}$.
Proposition \ref{comp2} (2) and  Lemma \ref{fix},
we have injections
$$
M\overset{\xi_{\alpha}}{\hookrightarrow}
(B^+_{\mrm{cris}}\otimes_{\vphi, \mfS}\mfM)^G
\overset{\hat{\iota}}{\hookrightarrow}
(B^+_{\mrm{cris}}\otimes_{\mbb{Z}_p}\hat{T}(\hat{\mfM})^{\vee})^G.
$$
Hence we the equality $\mrm{dim}_{\mbb{Q}_p}(B^+_{\mrm{cris}}
\otimes_{\mbb{Z}_p}\hat{T}(\hat{\mfM})^{\vee})^G
=\mrm{dim}_{\mbb{Q}_p} \hat{T}(\hat{\mfM})^{\vee}[1/p]$.
This implies that $\hat{T}(\hat{\mfM})[1/p]$
is a crystalline $\mbb{Q}_p$-representation with non-negative 
Hodge-Tate weights.
In the rest of this proof, we show that the Hodge-Tate
weights of $\hat{T}(\hat{\mfM})[1/p]$ are at most $r$.

From now on, we assume the condition (P).
Under this assumption, we know that 
$T_{\mfS}$ is fully faithful (cf.\ Proposition \ref{KisinFF}). 
Put $V=\hat{T}(\hat{\mfM})[1/p]$ and $D=D_{\mrm{cris}}(V)$.
Take an integer $r'>0$  such that Hodge-Tate weights 
of $V$ are at most $r'$.
Let $\mcal{M}=\mcal{M}(D)$ be the $\vphi$-module over $\mfO$
corresponding to $D$ and take any 
free Kisin module $\mfM'\subset \mcal{M}$ of height $r'$
such that $\mfO\otimes_{\mfS} \mfM'=\mcal{M}$.

We claim that $\mfM'$ is of height $r$.
Note that  $T_{\mfS}(\mfM)$ and $T_{\mfS}(\mfM')$
are lattices of $V$.
By replacing $\mfM'$ with some $p^{\ell}\mfM$, 
we may assume that we have 
$T_{\mfS}(\mfM)\subset T_{\mfS}(\mfM')$.
Let $c>0$ be an integer such that 
$T_{\mfS}(\mfM')\subset p^{-c}T_{\mfS}(\mfM)$.
We consider the following commutative diagram.
\begin{center}
$\displaystyle \xymatrix{
T_{\mfS}(\mfM)  
\ar@{^{(}->}[r] \ar@{^{(}->}[rrd] 
& T_{\mfS}(\mfM') \ar@{^{(}->}[r] 
& p^{-c}T_{\mfS}(\mfM) \ar^{\simeq}[d]
\\
& &  
T_{\mfS}(p^c\mfM)
}$
\end{center}
Here, $p^{-c}T_{\mfS}(\mfM)\overset{\simeq}{\rightarrow} 
T_{\mfS}(p^c\mfM)$ in the diagram 
is the map given by $f\mapsto f|_{p^c\mfM}$,
and the other arrows are natural injections.
Since $T_{\mfS}$ is fully faithful,
we obtain maps 
$\eta'\colon \mfM'\to \mfM$ and 
$\eta\colon p^c\mfM\to \mfM'$ such that 
$\eta'\circ \eta$ is the inclusion map
$p^c\mfM\hookrightarrow \mfM$.
We see that $\eta$ and $\eta'$ are injective and 
$p^c\mfM\subset \eta'(\mfM')$.
We regard $\mfM'$ as a $\vphi$-stable submodule 
of $\mfM$ by $\eta'$.
Since $\mfM/\mfM'$ is killed by a power of $p$,
Proposition \ref{BASIC2} shows  that the natural map
$\mfM/\mfM'\to \cO_{\E}\otimes_{\mfS} \mfM/\mfM'$
is injective.
Thus we obtain the fact that $\mfM/\mfM'$  is $p'$-torsion free
in the sense that 
$\mrm{Ann}_{\mfS}(\mfM/\mfM')$ is zero or of the form $p^{\ell}\mfS$.
It follows from \cite[Proposition B.1.3.5]{Fo2}
that $\mfM'$ is of height $r$. 
In particular, $\mcal{M}$ is of height $r$.

Note that $\xi'_{\alpha}$ induces an isomorphism 
$\vphi^{\ast}\mcal{M}/E(u)\vphi^{\ast}\mcal{M}
\simeq K\otimes_{K_0}D=:D_K$.
If we define a decreasing filtration 
$\mrm{Fil}^i\mcal{M}$ of $\mcal{M}$ by
$$
\mrm{Fil}^i\mcal{M}=\{x\in \vphi^{\ast} \mcal{M}
\mid (1\otimes \vphi)(x)\in E(u)^i\mcal{M}\},
$$ 
then the natural projection 
$$
\vphi^{\ast}\mcal{M}\twoheadrightarrow 
\vphi^{\ast}\mcal{M}/E(u)\vphi^{\ast}\mcal{M}
\simeq D_K
$$
is strict compatible with filtrations (cf.\ \cite[Corollary 4.2.4]{CL}).
Since $\mcal{M}$ is of height $r$,
we have
$\mrm{Fil}^{r+1}\vphi^{\ast}\mcal{M}\subset E(u)\vphi^{\ast}\mcal{M}$,
which induces the fact $\mrm{Fil}^{r+1}D_K=0$
as desired.
\end{proof}

\subsection{Essential surjectiveness of $\hat{T}$}

We show Theorem \ref{MT'} (3). 
Let  $T$ be an object of $\mrm{Rep}^{r,\mrm{cris}}_{\mbb{Z}_p}(G)$
and put $V=T[1/p]$.
Let $D=D_{\mrm{cris}}(V)$ be the filtered $\vphi$-module
corresponding to $V$.
Throughout this subsection,
we identify $V$ with 
$V_{\mrm{cris}}(D)
=\mrm{Hom}_{K_0,\vphi}(D,B^+_{\mrm{cris}})
\cap \mrm{Hom}_{K,\mrm{Fil}}(D_K, B^+_{\mrm{dR}})
(\subset \mrm{Hom}_{K_0}(D,B^+_{\mrm{cris}})).$
Let $\mcal{M}=\mcal{M}(D)$ be the $\vphi$-module over $\mfO$
corresponding to $D$.
By Theorem 4.4.1 of {\it loc.\ cit.},
there exists a Kisin module $\mfM\subset \mcal{M}$ of height $r$
such that $\mfO\otimes_{\mfS} \mfM=\mcal{M}$.
In Section \ref{technical},
we defined a $G_{\underline{\pi}}$-equivalent injection
$\iota_0\colon T_{\mfS}(\mfM)\hookrightarrow V$. 
The image of $\iota_0$
might  not   coincide with $T$. However, we have

\begin{lemma}
\label{=T}
Assume the condition $(P)$. Then
we can choose $\mfM$ so that 
$\iota_0(T_{\mfS}(\mfM))=T$.
\end{lemma}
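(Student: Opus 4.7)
The plan is to construct a Kisin submodule $\mfM'\subset \mcal{M}$ of height $r$ with $\mfO\otimes_{\mfS}\mfM' = \mcal{M}$, by intersecting $\mcal{M}$ with the étale $\vphi$-module attached to $T|_{G_{\underline{\pi}}}$ under the Cais--Liu anti-equivalence $\Mod_{\cO_\E}\simeq \mrm{Rep}_{\mbb{Z}_p}(G_{\underline{\pi}})$, and then verify that the $\iota_0$ attached to this new choice realizes the given lattice $T$.

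First I would reduce to the case $T\subset T_1$, where $T_1:=\iota_0(T_{\mfS}(\mfM))$. Indeed, $T$ and $T_1$ are commensurable $G_{\underline{\pi}}$-lattices in $V$, and since $T_{\mfS}(p^n\mfM)=p^{-n}T_{\mfS}(\mfM)$ with $\iota_0$ being $\mbb{Z}_p$-linear, rescaling $\mfM$ by a sufficiently large power of $p$ places $T$ inside $T_1$. Let $M_T\in\Mod_{\cO_\E}$ be the étale $\vphi$-module corresponding to $T|_{G_{\underline{\pi}}}$ under the Cais--Liu equivalence; then $M_T$ and $M_1:=\cO_\E\otimes_{\mfS}\mfM$ are $\cO_\E$-lattices in a common $\E$-vector space $M_V$, and the inclusion $T\subset T_1$ translates to a containment $M_1\subset M_T$ inside $M_V$.

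Set $\mfM':=\mcal{M}\cap M_T$ inside $M_V$, where $\mcal{M}$ is viewed in $M_V$ via $\mfO[1/p]\hookrightarrow\cO_\E[1/p]$. This is a $\vphi$-stable $\mfS$-submodule of $\mcal{M}$ containing $\mfM$. The structural claim is that $\mfM'$ is a free Kisin module of height $r$ satisfying $\mfO\otimes_{\mfS}\mfM' = \mcal{M}$ and $\cO_\E\otimes_{\mfS}\mfM' = M_T$. Granting it, Proposition~\ref{KisinFF}(1) yields $T_{\mfS}(\mfM')\simeq T_{\cO_\E}(M_T)\simeq T|_{G_{\underline{\pi}}}$. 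Moreover, the comparison injection $\iota_0$ is constructed from $\xi'_{\alpha}$ and the $\vphi$-twisting map on $\mcal{M}$, both of which depend only on $\mcal{M}$ rather than the choice of Kisin submodule. Hence the $\iota_0$ attached to $\mfM'$ extends the one attached to $\mfM$ (both map into the same $V_{\mrm{cris}}(D)\simeq V$), and under the identification $T_{\mfS}(\mfM')\simeq T|_{G_{\underline{\pi}}}$ its image in $V$ is precisely $T$. The full faithfulness coming from $(P)$ (Proposition~\ref{KisinFF}(3)) ensures this identification is the canonical one.

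The main obstacle is the structural claim on $\mfM'$. Finite generation over $\mfS$ follows because $\mfM'/\mfM$ embeds into the $p$-power-torsion module $\mcal{M}/\mfM$; being also $p'$-torsion free (as it sits in $M_T$), $\mfM'$ is free of the correct rank by Proposition~\ref{BASIC2}. The height-$r$ condition is inherited from $\mcal{M}$ through the embedding $\vphi^{\ast}\mfM'\hookrightarrow\vphi^{\ast}\mcal{M}$, using Corollary~\ref{exact} to compare cokernels. Finally, the tensor equalities $\mfO\otimes_{\mfS}\mfM' = \mcal{M}$ and $\cO_\E\otimes_{\mfS}\mfM' = M_T$ follow from flatness of $\mfS\to\mfO$ and $\mfS\to\cO_\E$ combined with the inclusions $\mfM\subset\mfM'\subset\mcal{M}$ and $\mfM\subset\mfM'\subset M_T$ and the rank identity $\mcal{M}[1/p] = M_V = M_T[1/p]$.
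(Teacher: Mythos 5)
Your construction runs in the opposite direction to the paper's: you rescale so that $T\subset T_1=\iota_0(T_{\mfS}(\mfM))$ and try to enlarge $\mfM$, whereas the paper rescales so that $L=\iota_0(T_{\mfS}(\mfM))\subset T$ and then shrinks $\mfM$ to a submodule $\mfN'=\ker(\mrm{pr}\circ\mfrak{g})$. This is not a cosmetic choice. The paper's direction is engineered so that the height-$r$ condition is inherited automatically via \cite[Prop.\ B.1.3.5]{Fo2} applied to the short exact sequence $0\to\mfN'\to\mfM\to\mfM/\mfN'\to 0$ with all terms $p'$-torsion free, since that proposition passes height $r$ to \emph{subobjects and quotients}, not to overmodules. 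Your $\mfM'\supset\mfM$ cannot be handled by that result directly, and the justification you offer --- that height $r$ is ``inherited from $\mcal{M}$ through the embedding $\vphi^{\ast}\mfM'\hookrightarrow\vphi^{\ast}\mcal{M}$'' using Corollary~\ref{exact} --- does not work: $\mfS\to\mfO$ is not faithfully flat (e.g.\ $\mfO\otimes_{\mfS}k[\![u]\!]=0$), so the vanishing of $E(u)^r$ on $\mrm{coker}(1\otimes\vphi)$ after $\otimes\mfO$ does not descend to $\mfS$ by flatness alone. One can recover height $r$ for an overmodule with $p$-power-torsion quotient by a more delicate argument (e.g.\ using $p^c\mfM'\subset\mfM$), but that argument is not given and already presupposes freeness.

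There are also two outright slips. First, the finite-generation argument claims $\mfM'/\mfM$ embeds in ``the $p$-power-torsion module $\mcal{M}/\mfM$''; but $\mcal{M}/\mfM\simeq(\mfO/\mfS)\otimes_{\mfS}\mfM$ is not $p$-power torsion (indeed $p$ is invertible in $\mfO$). The correct observation is that $\mfM'/\mfM$ embeds in $M_T/M_1$, which is a torsion $\cO_{\E}$-module. Second, you cite Proposition~\ref{BASIC2} to conclude freeness of $\mfM'$; that proposition concerns Kisin modules \emph{killed by a power of $p$}, which $\mfM'$ is not. The relevant tool here is \cite[Lemma~3.3.4]{CL}, which is exactly what the paper invokes for the analogous reflexive-closure construction $\mfN=\mfN'[1/p]\cap(\cO_{\E}\otimes_{\mfS}\mfN')$: a finitely generated torsion-free module over the two-dimensional regular local ring $\mfS$ need not be free, and one must take a closure of precisely this type. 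Finally, the concluding identification $\iota_0(T_{\mfS}(\mfM'))=T$ is asserted rather than proved; the paper devotes the second half of its proof to a careful diagram chase (comparing $\iota_0$ on $\mfN$, $\mfM$, and $p^c\mfM$, and using full faithfulness of $T_{\cO_{\E}}$) precisely because this compatibility is the crux of the lemma and does not simply follow from ``$\xi'_{\alpha}$ depends only on $\mcal{M}$''. Your overall idea --- intersecting $\mcal{M}$ with the right étale lattice --- is reasonable, but each of the structural claims needs the supporting arguments the paper supplies, and two of your cited justifications are incorrect.
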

\begin{proof}
We identify $V$ with $\mrm{Hom}_{\mfO,\vphi,\mrm{Fil}}
(\vphi^{\ast}\mcal{M}, B^+_{\mrm{cris}})$ by 
isomorphisms
$\mrm{Hom}_{\mfO,\vphi,\mrm{Fil}}
(\vphi^{\ast}\mcal{M}, B^+_{\mrm{cris}})
\simeq 
\mrm{Hom}_{\mfO_{\alpha},\vphi,\mrm{Fil}}
(\mfO_{\alpha}\otimes_{\mfO}\vphi^{\ast}\mcal{M}, B^+_{\mrm{cris}})
\simeq 
\mrm{Hom}_{\mfO_{\alpha},\vphi,\mrm{Fil}}
(\mfO_{\alpha}\otimes_{K_0} D, B^+_{\mrm{cris}})
\simeq V_{\mrm{cris}}(D)=V$
(see the definition of $\iota_0$).
Under this identification,
$\iota_0$ is the injection
$$
\iota_0\colon T_{\mfS}(\mfM)=\mrm{Hom}_{\mfS,W(R)}(\mfM,W(R)) \to 
\mrm{Hom}_{\mfO,\vphi,\mrm{Fil}}
(\vphi^{\ast}\mcal{M}, B^+_{\mrm{cris}})=V
$$
given by 
$\iota_0(f)(a\otimes x)=a\vphi(f(x))$
for $f\in T_{\mfS}(\mfM)$, $a\in \cO$, $x\in \mfM$
(here we identify $\vphi^{\ast}\mcal{M}$ with 
$\cO\otimes_{\vphi, \mfS}\mfM$).
Put $L=\iota_0(T_{\mfS}(\mfM))$.
For any integer $\ell\ge 0$, we have 
natural injections  
$T_{\mfS}(p^{-\ell}\mfM)
\hookrightarrow T_{\mfS}(\mfM)
\hookrightarrow T_{\mfS}(p^{\ell}\mfM)$ 
induced by embeddings $p^{\ell}\mfM\subset \mfM\subset p^{-\ell}\mfM$.
It is not difficult to check the equality
$\iota_0(T_{\mfS}(p^{\pm \ell}\mfM))=p^{\mp \ell}L$.
Thus by replacing $\mfM$ with $p^{-\ell}\mfM$ for $\ell$ large enough,
we may assume that $L$ is a submodule of $T$.
Let $N\to M$ be the morphism of free \'etale $\vphi$-modules 
which corresponds to the natural injection $L\hookrightarrow T$.
This implies that we have the following commutative diagram:
\begin{center}
$\displaystyle \xymatrix{
T 
& T_{\cO_{\E}}(N) \ar_{\sim \quad }[l] 
\\
L 
\ar@{^{(}->}[u]
& T_{\cO_{\E}}(M) 
\ar@{^{(}->}[u] \ar_{\sim \quad }[l] 
}$
\end{center}
We denote by $\eta$ the  isomorphism $T_{\cO_{\E}}(M)\simeq L$
in the diagram.
We see that  $N\to M$ is injective and $M/N$ is a torsion \'etale $\vphi$-module
killed by $p^c$. Here, $c$ is any integer $c>0$ such that $p^c$ kills $T/L$.
Let $\mfrak{g}\colon \cO_{\E}\otimes_{\mfS}\mfM \overset{\sim}{\rightarrow} M$ 
be the morphism of 
\'etale $\vphi$-modules which corresponds to  the composition 
$T_{\cO_{\E}}(M)\underset{\eta}{\rightarrow} L\underset{\iota_0^{-1}}{\rightarrow}
T_{\mfS}(\mfM)\overset{\sim}{\rightarrow} T_{\cO_{\E}}(\cO_{\E}\otimes_{\mfS} \mfM)$.
We have the following commutative diagram:
\begin{center}
$\displaystyle \xymatrix{
T_{\cO_{\E}}(M) \ar[r] \ar^{\quad \sim}[r] \ar_{\quad \eta}[r] \ar@{=}[d]
& L \ar^{\sim\quad }[r] \ar_{\iota_0^{-1}\quad }[r]
& T_{\mfS}(\mfM) \ar^{\sim \qquad }[r] 
& T_{\cO_{\E}}(\cO_{\E}\otimes_{\mfS} \mfM)
\ar@{=}[d]
\\
T_{\cO_{\E}}(M) \ar@{^{(}->}[rrr]  \ar^{T_{\cO_{\E}}(\mfrak{g})}[rrr]
& 
&
& T_{\cO_{\E}}(\cO_{\E}\otimes_{\mfS} \mfM)
}$
\end{center}
Let $\mrm{pr}\colon M\to M/N$ be the natural projection.
Then $\mfN':=\mrm{ker}(\mrm{pr}\circ \mfrak{g})\subset \mfM$
is a $\vphi$-module of height $r$ by \cite[Proposition B.1.3.5]{Fo2}.
Put $\mfN=\mfN'[1/p]\cap (\cO_{\E}\otimes_{\mfS} \mfN')$.
It follows from  \cite[Lemma 3.3.4]{CL} that 
$\mfN$ is a free Kisin module of height $r$.
By the condition (P) and \cite[Proposition 3.3.5]{CL},
the embedding 
$\cO_{\E}\otimes_{\mfS} \mfN = \cO_{\E}\otimes_{\mfS} \mfN'
\hookrightarrow \cO_{\E}\otimes_{\mfS} \mfM$
induces an embedding 
$\mfN\hookrightarrow \mfM$.
We see that we have an isomorphism 
$\mfrak{g}'\colon \cO_{\E}\otimes_{\mfS}\mfN \overset{\sim}{\rightarrow} N$
which makes the diagram 
\begin{center}
$\displaystyle \xymatrix{
N \ar@{_{(}->}[d] 
&  \cO_{\E}\otimes_{\mfS} \mfN \ar@{_{(}->}[d]
\ar_{\sim}[l] \ar^{\mfrak{g}'}[l]
\\
M 
& \cO_{\E}\otimes_{\mfS} \mfM
 \ar_{\sim}[l] \ar^{\mfrak{g}}[l]
}$
\end{center}
commutative.
Here we consider the following commutative diagram.
\begin{center}
$\displaystyle \xymatrix{
T \ar[r] \ar^{\sim \quad }[r] 
& T_{\cO_{\E}}(N)  \ar^{\sim\quad }[r] \ar_{T_{\cO_{\E}}(\mfrak{g}')\quad }[r]
& T_{\cO_{\E}}(\cO_{\E}\otimes_{\mfS} \mfN) 
& T_{\mfS}(\mfN) \ar_{\qquad \sim}[l] 
\\
L \ar[r] \ar^{\sim \quad }[r] \ar_{\eta \quad }[r] 
\ar@{^{(}->}[u]
& T_{\cO_{\E}}(M)  \ar^{\sim\quad }[r] \ar_{T_{\cO_{\E}}(\mfrak{g}) \quad }[r]
\ar@{^{(}->}[u]
& T_{\cO_{\E}}(\cO_{\E}\otimes_{\mfS} \mfM) 
\ar@{^{(}->}[u]
& T_{\mfS}(\mfM) \ar@{^{(}->}[u] \ar_{\qquad \sim}[l] 
}$
\end{center}
The  composite map
$L\overset{\sim}{\to} T_{\cO_{\E}}(M)\overset{\sim}{\to} 
T_{\cO_{\E}}(\cO_{\E}\otimes_{\mfS} \mfM)\overset{\sim}{\to}  T_{\mfS}(\mfM) $
in the diagram is just $\iota^{-1}_0$.
It suffices to show that
the inverse $\iota_0'$ of the composite map
$T\overset{\sim}{\to} T_{\cO_{\E}}(N)\overset{\sim}{\to} 
T_{\cO_{\E}}(\cO_{\E}\otimes_{\mfS} \mfN)\overset{\sim}{\to}  T_{\mfS}(\mfN) $
is just $\iota_0\colon T_{\mfS}(\mfN) \hookrightarrow V$.

Since $\mfM/\mfN'\subset M/N$ is killed by $p^c$,
we have $p^c\mfM\subset \mfN' \subset \mfN \subset \mfM$.
Consider the following diagram:
\begin{center}
$\displaystyle \xymatrix{
T_{\mfS}(\mfM) \ar^{\iota_0}[d] \ar_{\simeq}[d]  \ar@{^{(}->}[r]
& T_{\mfS}(\mfN)  \ar^{\iota_0'}[d] \ar_{\simeq}[d]  \ar@{^{(}->}[r]
& T_{\mfS}(p^c\mfM) \ar^{\iota_0}[d] \ar_{\simeq}[d]
&
\\
L  \ar@{^{(}->}[r]
& T   \ar@{^{(}->}[r]
& p^{-c}L  \ar@{^{(}->}[r]
& V=\mrm{Hom}_{\mfO,\vphi,\mrm{Fil}}
(\vphi^{\ast}\mcal{M}, B^+_{\mrm{cris}})
}$
\end{center}
The biggest square in the diagram clearly commutes.
The left square in the  diagram also commutes by definition of $\iota_0'$.
Thus we see that the right square commutes.
This implies that 
$\iota_0'$ is the map $\iota_0\colon T_{\mfS}(p^c\mfM)\hookrightarrow V$
restricted to $T_{\mfS}(\mfN)$, which must coincide with 
$\iota_0\colon T_{\mfS}(\mfN) \hookrightarrow V$.
\end{proof}

{\it In the rest of this subsection,
we always assume the condition (P) and $v_p(a_1)>\max \{r,1\}$.}
Let $\mfM$ be as in Lemma \ref{=T}.
Then $\iota_0\colon T_{\mfS}(\mfM) \hookrightarrow V$
 induces an isomorphism $T_{\mfS}(\mfM)\simeq  T$.
By this isomorphism, we equip $T_{\mfS}(\mfM)$
with a $G$-action. 
Here, we consider the following diagram:
\begin{center}
\begin{equation}
\label{diag}
\displaystyle \xymatrix{
B^+_{\mrm{cris}}\otimes_{K_0} D  \ar@{^{(}->}[r]  \ar^{\xi'_{\alpha}}_{\simeq}[d] 
& \mrm{Hom}_{\mbb{Q}_p}(V_{\mrm{cris}}(D), B^+_{\mrm{cris}})  \ar^{\sim}[r]  
& B^+_{\mrm{cris}}\otimes_{\mbb{Q}_p} V_{\mrm{cris}}(D)^{\vee} \ar@{=}[r]
& B^+_{\mrm{cris}}\otimes_{\mbb{Z}_p} T^{\vee}\ar@{=}[d]\\
B^+_{\mrm{cris}}\otimes_{\mfS} \vphi^{\ast}\mfM  
\ar^{\vphi^{\ast}\iota_{\mfS}}[rr] \ar@{^{(}->}[rr] 
&  
& B^+_{\mrm{cris}}\otimes_{\mbb{Z}_p} T_{\mfS}(\mfM)^{\vee} 
& B^+_{\mrm{cris}}\otimes_{\mbb{Z}_p} T^{\vee} 
\ar^{\qquad \iota^{\vee}_0}[l] \ar_{\qquad \sim}[l]\\
W(R)\otimes_{\vphi, \mfS} \mfM  \ar^{\vphi^{\ast}\iota_{\mfS}}[rr] \ar@{^{(}->}[rr]  \ar@{^{(}->}[u]
& 
& W(R)\otimes_{\mbb{Z}_p} T_{\mfS}(\mfM)^{\vee} 
& W(R)\otimes_{\mbb{Z}_p} T^{\vee} 
\ar^{\qquad \iota^{\vee}_0}[l] \ar_{\qquad \sim}[l] \ar@{^{(}->}[u]
}
\end{equation}
\end{center}
The square 
\begin{center}
$\displaystyle \xymatrix{
B^+_{\mrm{cris}}\otimes_{\mfS} \vphi^{\ast}\mfM  \ar@{^{(}->}[r]  
& B^+_{\mrm{cris}}\otimes_{\mbb{Z}_p} T^{\vee} \\
W(R)\otimes_{\vphi, \mfS} \mfM  \ar@{^{(}->}[r]  \ar@{^{(}->}[u]
& W(R)\otimes_{\mbb{Z}_p} T^{\vee}  \ar@{^{(}->}[u]
}$
\end{center}
in the above diagram is clearly commutative. Furthermore, by direct computations, 
we can check that  the square
\begin{center}
$\displaystyle \xymatrix{
B^+_{\mrm{cris}}\otimes_{K_0} D  \ar@{^{(}->}[r]  \ar^{\xi'_{\alpha}}_{\simeq}[d] 
& B^+_{\mrm{cris}}\otimes_{\mbb{Z}_p} T^{\vee}\ar@{=}[d]\\
B^+_{\mrm{cris}}\otimes_{\mfS} \vphi^{\ast}\mfM  \ar@{^{(}->}[r]  
& B^+_{\mrm{cris}}\otimes_{\mbb{Z}_p} T^{\vee} \\
}$
\end{center}
in the  diagram 
is also commutative (here we note that 
$\xi'_{\alpha}$ appears in the definition of $\iota_0$).
Hence, seeing the biggest square in  the diagram \eqref{diag}, 
we obtain a commutative diagram
\begin{center}
$\displaystyle \xymatrix{
B^+_{\mrm{cris}}\otimes_{K_0} D  \ar@{^{(}->}[r]  
& B^+_{\mrm{cris}}\otimes_{\mbb{Z}_p} T^{\vee} \\
W(R)\otimes_{\vphi, \mfS} \mfM  \ar@{^{(}->}[r]  \ar@{^{(}->}[u]
& W(R)\otimes_{\mbb{Z}_p} T^{\vee}  \ar@{^{(}->}[u]
}$
\end{center}
By this diagram,
we regard $B^+_{\mrm{cris}}\otimes_{K_0} D,W(R)\otimes_{\mbb{Z}_p} T^{\vee}$
and $W(R)\otimes_{\vphi, \mfS} \mfM$
as $\vphi$-stable submodules of $B^+_{\mrm{cris}}\otimes_{\mbb{Z}_p} T^{\vee}$.
Note that  $B^+_{\mrm{cris}}\otimes_{K_0} D$ and 
$W(R)\otimes_{\mbb{Z}_p} T^{\vee}$ are 
$G$-stable submodules of $B^+_{\mrm{cris}}\otimes_{\mbb{Z}_p} T^{\vee}$.


\begin{lemma}
\label{stable}
Let the notation be as above.

\noindent
$(1)$ $G_{\underline{\pi}}$ acts on $\vphi^{\ast}\mfM$ trivial.

\noindent
$(2)$ The $G$-action on 
$W(R)\otimes_{\mbb{Z}_p} T^{\vee}$ preserves $W(R)\otimes_{\vphi, \mfS} \mfM$.

\noindent
$(3)$ The $G$-action on 
$W(R)\otimes_{\vphi, \mfS} \mfM$  commutes with $\vphi$.

\noindent
$(4)$ $G(\vphi^{\ast}\mfM)\subset \whR\otimes_{\vphi,\mfS} \mfM$.
\end{lemma}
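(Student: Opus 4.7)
The plan is to work inside $B^+_{\mrm{cris}}\otimes_{\mbb{Z}_p}T^{\vee}$, viewing $W(R)\otimes_{\vphi,\mfS}\mfM$, $W(R)\otimes_{\mbb{Z}_p}T^{\vee}$, and $B^+_{\mrm{cris}}\otimes_{K_0}D$ as submodules via diagram \eqref{diag}, and to exploit the fact that $G$ acts on $B^+_{\mrm{cris}}\otimes_{K_0}D$ only through the $B^+_{\mrm{cris}}$-factor, since $D\subset B_{\mrm{cris}}\otimes_{\mbb{Q}_p}V^{\vee}$ is pointwise $G$-fixed. The statements $(1), (2), (3)$ will reduce formally to $(4)$, which is the main content.

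For (1), the map $\vphi^{\ast}\iota_{\mfS}$ is $G_{\underline{\pi}}$-equivariant and $G_{\underline{\pi}}$ fixes $\mfS\subset W(R)$ pointwise, so the $G_{\underline{\pi}}$-action on $W(R)\otimes_{\vphi,\mfS}\mfM$ is through the $W(R)$-factor only and trivializes on $\vphi^{\ast}\mfM=\mfS\otimes_{\vphi,\mfS}\mfM$. For (2), the $G$-action on $W(R)\otimes_{\mbb{Z}_p}T^{\vee}$ is $W(R)$-semi-linear and $\vphi^{\ast}\mfM$ generates $W(R)\otimes_{\vphi,\mfS}\mfM$ over $W(R)$, so (4) implies (2). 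For (3), the Frobenius $\vphi\otimes\mrm{id}$ commutes with $g\otimes g$ on $W(R)\otimes_{\mbb{Z}_p}T^{\vee}$ since $\vphi$ and $G$ commute on $W(R)$, and $\vphi^{\ast}\iota_{\mfS}$ is $\vphi$-equivariant, so (3) follows from (2).

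For (4), fix $g\in G$ and $e\in \vphi^{\ast}\mfM$. Via $\xi'_{\alpha}$ I identify $\vphi^{\ast}\mfM\subset \mfO_{\alpha}\otimes_{K_0}D\subset B^+_{\mrm{cris}}\otimes_{K_0}D$; since $\mcal{R}_{K_0}$ is $G$-stable and contains $\mfO_{\alpha}$, and since $G$ acts trivially on $D$, we have $g(e)\in \mcal{R}_{K_0}\otimes_{K_0}D$, which via the $\mcal{R}_{K_0}$-linear extension of $\xi'_{\alpha}$ lies in $\mcal{R}_{K_0}\otimes_{\vphi,\mfS}\mfM$. Expressed in a basis $(e_k)$ of $\vphi^{\ast}\mfM$, we thus have $g(e)=\sum_k c_k e_k$ with $c_k\in \mcal{R}_{K_0}$. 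Separately, $g(e)\in W(R)\otimes_{\mbb{Z}_p}T^{\vee}$; letting $A\in M_d(W(R))$ be the matrix of $\vphi^{\ast}\iota_{\mfS}$ in the chosen bases, this forces $A\cdot (c_k)\in W(R)^d$. Combined with Proposition \ref{comp2} (2), which yields $A^{-1}\in M_d(\mft_0^{-r}W(R))$, we obtain $c_k\in \mcal{R}_{K_0}\cap \mft_0^{-r}W(R)$. The main obstacle is the saturation identity $\mcal{R}_{K_0}\cap \mft_0^{-r}W(R)=\whR$, which reduces to showing $W(R)$ is $\mft_0$-saturated in $B^+_{\mrm{cris}}$; once this standard fact from $p$-adic Hodge theory is granted, $c_k\in \whR$ and (4) follows.
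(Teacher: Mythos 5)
Your proof is correct and follows essentially the same route as the paper: show that the matrix of $g$ on a basis $(e_k)$ of $\vphi^{\ast}\mfM$ lies in both $GL_d(\mcal{R}_{K_0})$ (via $G$-equivariance of the $B^+_{\mrm{cris}}$-linear extension of $\xi'_{\alpha}$ together with $G$-stability of $\mcal{R}_{K_0}$) and $M_d(W(R))$ (via the comparison morphism and the $\mft_0$-saturation of $W(R)$ inside $B^+_{\mrm{cris}}$). The saturation fact you invoke as ``standard'' is precisely \cite[Lemma 3.1.3]{Li4}, which the paper cites at the corresponding step; it is not a routine assertion, so you should reference it explicitly rather than leaving it as granted.
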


\begin{proof}
(1) is trivial. If  we admit (2), the statement 
(3) follows from the fact that
$W(R)\otimes_{\vphi, \mfS} \mfM$ 
 is a $\vphi$-stable submodule of 
$W(R)\otimes_{\mbb{Z}_p} T^{\vee}$
and the $G$-action on $W(R)\otimes_{\mbb{Z}_p} T^{\vee}$
commutes with $\vphi$.
Hence it suffices to show (2) and (4).

We show (2). 
Take any $g\in G$.
Let $e_1,\dots ,e_d$ be a basis of $\vphi^{\ast}\mfM$.
Note that this
is also a basis of $B^+_{\mrm{cris}}\otimes_{K_0} D$.
Hence we have $g(e_1,\dots,e_d )=(e_1,\dots ,e_d)X_g$ 
for some $X_g\in GL_d(B^+_{\mrm{cris}})$.
By Proposition \ref{comp2} (2),
$\vphi(\mft)^rg(e_1,\dots,e_d )=(e_1,\dots,e_d )A_g$
for some $A_g\in M_d(W(R))$.
Hence we have $\vphi(\mft)^rX_g=A_g\in M_d(W(R))$.
Note that $\vphi(\mft)$ is a generator of $I^{[1]}W(R)$
by \cite[Proposition 5.1.3]{Fo1}.
Hence we have $X_g\in M_d(W(R))$ by \cite[Lemma 3.1.3]{Li4}.

Finally we show (4). By (2),
it suffices to show that $X_g$ has coefficients in $\mcal{R}_{K_0}$.
Put  $M:=\vphi^{\ast}\mfM/u\vphi^{\ast}\mfM$.
Let $\xi_{\alpha}\colon \mfO_{\alpha}\otimes_{W(k)} M\overset{\sim}{\longrightarrow} 
\mfO_{\alpha}\otimes_{\mfS} \vphi^{\ast}\mfM$,
$\xi'_{\alpha}\colon \mfO_{\alpha}\otimes_{K_0} D\overset{\sim}{\longrightarrow} 
\mfO_{\alpha}\otimes_{\mfS} \vphi^{\ast}\mfM$ and $Y$ be as in 
Section \ref{technical}.
By \cite[Corollary 4.5.7]{CL},
we have an equality 
$\xi_{\alpha}(M[1/p])=\xi'_{\alpha}(D)$.
By definition of the $G$-action on $\mfM$,
we know that $B^+_{\mrm{cris}}\otimes_{\mfO_{\alpha}} \xi'_{\alpha}$ is $G$-equivalent 
and thus $G$ acts on $\xi_{\alpha}(M)$ trivial. 
This implies $g((e_1,\dots,e_d )Y)=(e_1,\dots ,e_d)Y$.
Thus we have $X_g=Yg(Y)^{-1}$, which is an element of $GL_d(\mcal{R}_{K_0})$.
\end{proof}

By the above lemma,
we have a natural $\whR$-semi-linear $G$-action on $\whR\otimes_{\vphi,\mfS} \mfM$,
which commutes with $\vphi$.
Since $\mrm{Gal}(\overline{K}/\wh{K}_{\underline{\pi}})$ acts on $\whR$ and $\vphi^{\ast}\mfM$
trivial, the $G$-action on $\whR\otimes_{\vphi,\mfS} \mfM$
factors through $\hat{G}$.
Hence
$\mfM$ has a structure of an object of $\Mod^{r,\hat{G}}_{\mfS}$,
which we denote by  $\hat{\mfM}$.

\begin{lemma}
Let the notation be as above. 
Then we have a natural isomorphism $\hat{T}(\hat{\mfM})\simeq T$
of $\mbb{Z}_p[G]$-modules.
\end{lemma}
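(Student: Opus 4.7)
The plan is to show that under the identification $\iota_0 \colon T_{\mfS}(\mfM) \overset{\sim}{\to} T$, the canonical isomorphism $\theta \colon T_{\mfS}(\mfM) \overset{\sim}{\to} \hat{T}(\hat{\mfM})$ of $\mbb{Z}_p[G_{\underline{\pi}}]$-modules upgrades to a $G$-equivariant isomorphism; composing with $\iota_0^{-1}$ will then yield the desired isomorphism $\hat{T}(\hat{\mfM}) \simeq T$ of $\mbb{Z}_p[G]$-modules.

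The first step is a direct unwinding of definitions. I would verify that, under the natural identification $W(R) \otimes_{\mbb{Z}_p} T_{\mfS}(\mfM)^{\vee} = \mrm{Hom}_{\mbb{Z}_p}(T_{\mfS}(\mfM), W(R))$, the map $\vphi^{\ast}\iota_{\mfS}$ is exactly the transpose of $\theta$ under the canonical pairing, i.e.
$$
\vphi^{\ast}\iota_{\mfS}(y)(f) = \theta(f)(y)
$$
for any $y \in W(R) \otimes_{\vphi, \mfS} \mfM$ and $f \in T_{\mfS}(\mfM)$. On decomposable tensors $y = a \otimes x$, both sides equal $a\vphi(f(x))$ by the defining formulas of $\iota_{\mfS}$ and $\theta$, together with the compatibility $\vphi^{\ast}\iota_{\mfS} = W(R)\otimes_{\vphi,W(R)} \iota_{\mfS}$.

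Next, Lemma \ref{stable} and the biggest square of diagram \eqref{diag} ensure that the composite $\iota_0^{\vee} \circ \vphi^{\ast}\iota_{\mfS} \colon W(R) \otimes_{\vphi, \mfS} \mfM \to W(R) \otimes_{\mbb{Z}_p} T^{\vee}$ is $G$-equivariant, and indeed the $G$-action on $W(R) \otimes_{\vphi, \mfS} \mfM$ was defined precisely to make this so. Transporting the $G$-action on $T^{\vee}$ to $T_{\mfS}(\mfM)^{\vee}$ through $\iota_0^{\vee}$ (equivalently, pulling back the $G$-action on $T$ to $T_{\mfS}(\mfM)$ through $\iota_0$), one concludes that $\vphi^{\ast}\iota_{\mfS}$ itself is $G$-equivariant. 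Using the transpose relation from the previous step, this $G$-equivariance translates into
$$
\theta(gf)(gy) = g \cdot \theta(f)(y) \qquad (g \in G,\ f \in T_{\mfS}(\mfM),\ y \in W(R)\otimes_{\vphi,\mfS}\mfM),
$$
which upon replacing $y$ by $g^{-1}y$ becomes $\theta(gf)(y) = g(\theta(f)(g^{-1}y)) = (g \cdot \theta(f))(y)$. Hence $\theta$ is $G$-equivariant, which is what remained to be shown.

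The main subtlety is in cleanly tracking the various $G$-actions and dualizations: the $G$-action on $\hat{T}(\hat{\mfM})$ is intrinsically defined by $(g \cdot f)(y) = g(f(g^{-1}y))$, while the $G$-action on $T_{\mfS}(\mfM)$ has no intrinsic description on the $\mfS$-side and must be imported from $T$ via $\iota_0$. Once the transpose relation between $\vphi^{\ast}\iota_{\mfS}$ and $\theta$ is pinned down, the remainder is a formal diagram chase whose essential input is the $G$-stability of $W(R)\otimes_{\vphi,\mfS}\mfM$ inside $W(R)\otimes_{\mbb{Z}_p} T^{\vee}$ already secured in Lemma \ref{stable}.
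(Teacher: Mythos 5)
Your proof is correct and ultimately establishes the same thing the paper does (namely that $\theta\colon T_{\mfS}(\mfM)\to\hat{T}(\hat{\mfM})$ is $G$-equivariant, so that $\iota_0\circ\theta^{-1}$ gives the desired $\mbb{Z}_p[G]$-isomorphism), but the route to $G$-equivariance of $\theta$ is genuinely different and arguably cleaner. The paper sets $\eta := W(R)\otimes\theta^{\vee}$ and aims to show that $\eta$ is $G$-equivariant. Because the comparison map $\hat{\iota}$ is only an injection (not surjective), the paper cannot deduce $G$-equivariance of $\eta$ from the commutative diagram directly on the nose; instead it restricts $\eta$ to the $G$-stable submodule $\vphi(\mft)^{r}W(R)\otimes_{\mbb{Z}_p}\hat{T}(\hat{\mfM})^{\vee}$, which lies in the image of $\hat{\iota}$ by Proposition \ref{comp2}(2), proves $G$-equivariance there, and then lifts it to the full module using that $\vphi(\mft)$ is a non-zero-divisor. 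You sidestep all of this: you observe that, unwinding the identification $W(R)\otimes_{\mbb{Z}_p}T_{\mfS}(\mfM)^{\vee}\simeq \mrm{Hom}_{\mbb{Z}_p}(T_{\mfS}(\mfM),W(R))$, the commutativity of the diagram is equivalent to the pointwise ``transpose relation'' $\vphi^{\ast}\iota_{\mfS}(y)(f)=\theta(f)(y)$, and once this is pinned down the $G$-equivariance of $\theta$ follows from the known $G$-equivariance of $\vphi^{\ast}\iota_{\mfS}$ by a purely formal replacement argument that never touches $\hat{\iota}$ or the image estimates of Corollary \ref{comp1} and Proposition \ref{comp2}(2). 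So you trade a module-theoretic ``extend-from-a-large-submodule'' step for an elementary bilinear-pairing manipulation. One minor notational slip: the composite you call $\iota_0^{\vee}\circ\vphi^{\ast}\iota_{\mfS}$ should really be $(\iota_0^{\vee})^{-1}\circ\vphi^{\ast}\iota_{\mfS}$ (this is the inclusion $W(R)\otimes_{\vphi,\mfS}\mfM\hookrightarrow W(R)\otimes_{\mbb{Z}_p}T^{\vee}$ coming from the biggest square in \eqref{diag}), since in that diagram $\iota_0^{\vee}$ points from $W(R)\otimes_{\mbb{Z}_p}T^{\vee}$ into $W(R)\otimes_{\mbb{Z}_p}T_{\mfS}(\mfM)^{\vee}$; the mathematical content of the step is unaffected.
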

\begin{proof}
We follow the method of \cite[Section 3.2]{Li2}.
First we recall that we defined a $G$-action on $T_{\mfS}(\mfM)$
by the isomorphism $\iota_0\colon T_{\mfS}(\mfM)\simeq T$,
and also recall that the injection 
$W(R)\otimes_{\vphi, \mfS} \mfM\hookrightarrow W(R)\otimes_{\mbb{Z}_p} T_{\mfS}(\mfM)^{\vee}$
is $G$-equivalent by definition of the $G$-action on $W(R)\otimes_{\vphi, \mfS} \mfM$. 
We consider the following commutative diagram:
\begin{center}
$\displaystyle \xymatrix{
W(R)\otimes_{\vphi, \mfS} \mfM  \ar@{=}[d] \ar^{\vphi^{\ast}\iota_{\mfS}\ }[r] \ar@{^{(}->}[r]  
& W(R)\otimes_{\mbb{Z}_p} T_{\mfS}(\mfM)^{\vee}  
& W(R)\otimes_{\mbb{Z}_p} T^{\vee} \ar^{\quad \iota^{\vee}_0}[l] \ar_{\quad \sim}[l]
\\
W(R)\otimes_{\vphi, \mfS} \mfM  \ar^{\hat{\iota}\ }[r] \ar@{^{(}->}[r] 
& W(R)\otimes_{\mbb{Z}_p} \hat{T}(\hat{\mfM})^{\vee}  \ar^{\simeq}[u] \ar_{\eta}[u]  &
}$
\end{center}
Here, $\eta:=W(R)\otimes \theta^{\vee}$.
It suffices to show that $\eta$ is $G$-equivalent.
Note that all arrows in the diagram except $\eta$ are known to be 
$G$-equivalent and $\vphi(\mft)W(R)=I^{[1]}W(R)$
is stable under the $G$-action on $W(R)$. 
By Corollary \ref{comp1} and Proposition \ref{comp2} (2),
we can regard $\vphi(\mft)W(R)\otimes_{\mbb{Z}_p} \hat{T}(\hat{\mfM})^{\vee}$
and $\vphi(\mft)W(R)\otimes_{\mbb{Z}_p} T_{\mfS}(\mfM)^{\vee}$
as $G$-stable submodules of  $W(R)\otimes_{\vphi, \mfS} \mfM$,
and thus $\eta$ restricted to 
$\vphi(\mft)W(R)\otimes_{\mbb{Z}_p} \hat{T}(\hat{\mfM})^{\vee}$
induces an $G$-equivalent isomorphism
$\vphi(\mft)W(R)\otimes_{\mbb{Z}_p} \hat{T}(\hat{\mfM})^{\vee}
\simeq \vphi(\mft)W(R)\otimes_{\mbb{Z}_p} T_{\mfS}(\mfM)^{\vee}$.
It follows from this that $\eta$ is $G$-equivalent.
\end{proof}

Finally, we show the following, which completes a proof of  Theorem \ref{MT'} (3).
\begin{lemma}
\label{range}
Let the notation be as above. 
Then $\hat{\mfM}$ is an object of $\Mod^{r,\hat{G},\mrm{cris}}_{\mfS}$.
\end{lemma}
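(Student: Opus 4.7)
The plan is to verify Definition \ref{mod}(2) directly; set $u_g := gu-u$. Fix a basis $\mfrak{e}_1,\ldots,\mfrak{e}_d$ of $\mfM$, let $e_i := 1\otimes \mfrak{e}_i$, and let $X_g\in GL_d(\whR)$ be the matrix with $g(e_1,\ldots,e_d)=(e_1,\ldots,e_d)X_g$, as in the proof of Lemma \ref{stable}. For $x=\sum c_i\mfrak{e}_i$ with $c_i\in\mfS$, we have $1\otimes x=\sum\vphi(c_i)e_i$; since $g(f(u))-f(u)=\vphi(u_g)$, a binomial expansion of each $\vphi(c_i)\in W(k)[\![f(u)]\!]$ gives $g\vphi(c_i)-\vphi(c_i)\in\vphi(u_g)W(R)$, so writing
\[
g(1\otimes x)-(1\otimes x)=\sum_i g\vphi(c_i)\bigl(g(e_i)-e_i\bigr)+\sum_i\bigl(g\vphi(c_i)-\vphi(c_i)\bigr)e_i,
\]
the second sum already lies in $\vphi(u_g)W(R)\otimes_{\vphi,\mfS}\mfM$. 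It therefore suffices to prove $X_g-I_d\in M_d(\vphi(u_g)B^+_{\mrm{cris}})$.

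From the proof of Lemma \ref{stable}(4), $X_g=Yg(Y)^{-1}$ where $Y=\lim_n Y_n\in GL_d(\mfO_\alpha)$ and
\[
Y_n=\vphi(A)\cdots\vphi^n(A)\cdot\vphi^n(A_0^{-1})\cdots\vphi(A_0^{-1}),
\]
with $A$ the matrix of $\vphi_{\mfM}$ and $A_0=A\bmod u\in M_d(W(k))$. Since $A_0$ is $G$-fixed, telescoping yields
\[
g(Y_n)-Y_n=\sum_{i=1}^n g\vphi(A)\cdots g\vphi^{i-1}(A)\bigl(g\vphi^i(A)-\vphi^i(A)\bigr)\vphi^{i+1}(A)\cdots\vphi^n(A)\vphi^n(A_0^{-1})\cdots\vphi(A_0^{-1}).
\]
Because $A\in M_d(\mfS)$ and $g\vphi^i(u)-\vphi^i(u)=\vphi^i(u_g)$, a power-series difference quotient gives $g\vphi^i(A)-\vphi^i(A)\in\vphi^i(u_g)M_d(W(R))$ for each $i\ge 1$.

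The key divisibility is $\vphi^i(u_g)\in\vphi(u_g)W(R)$ for $i\ge 1$: by Lemma \ref{poly}, each $h^{(j)}_{2^{j-k}}(X,Y)$ lies in $X\mbb{Z}_p[X,Y]$, so $\vphi^{j}(u_g)/u_g\in W(R)$ for every $j\ge 1$, and applying $\vphi$ gives
\[
R_i:=\vphi^i(u_g)/\vphi(u_g)=\vphi\bigl(\vphi^{i-1}(u_g)/u_g\bigr)\in W(R)\qquad (i\ge 1).
\]
Hence every summand of $g(Y_n)-Y_n$ can be written as $\vphi(u_g)$ times an element of $M_d(B^+_{\mrm{cris}})$, and we obtain $g(Y_n)-Y_n=\vphi(u_g)S_n$ with $S_n\in M_d(B^+_{\mrm{cris}})$.

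The last and hardest step is to establish that $S_n$ itself converges $p$-adically, so that $S:=\lim S_n$ yields $g(Y)-Y=\vphi(u_g)S\in M_d(\vphi(u_g)B^+_{\mrm{cris}})$. The argument mirrors the proof of Lemma \ref{fix}: since $E(0)\in pW(k)^{\times}$, the height-$r$ condition forces $A_0^{-1}\in p^{-r}M_d(W(k))$ up to a unit, so the factor $\vphi^i(A_0^{-1})\cdots\vphi(A_0^{-1})$ has $p$-adic denominator at most $p^{ir}$; meanwhile Corollary \ref{polycor} applied to $\vphi^{i-1}(u_g)/u_g$ forces $R_i/p^{ir}$ to $0$ in $A_{\mrm{cris}}$. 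Combining these two estimates, the $p$-adic valuation of the $i$-th summand of $S_n$ goes to infinity uniformly in $n$, so the series defining $S_n$ converges in $M_d(B^+_{\mrm{cris}})$. The main obstacle is precisely this convergence bookkeeping; the other parts are matrix manipulation and a direct application of Lemma \ref{poly}.
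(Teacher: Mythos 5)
Your proposal is correct in spirit but takes a genuinely different route from the paper, and the hard convergence step is only sketched. The initial reduction --- writing $1\otimes x = \sum_i\vphi(c_i)e_i$, peeling off $g\vphi(c_i)-\vphi(c_i)\in\vphi(u_g)W(R)$, and reducing to $X_g-I_d\in\vphi(u_g)M_d(B^+_{\mrm{cris}})$ --- matches the paper's. Where you diverge is in how you control $X_g-I_d$: you pass through $X_g=Yg(Y)^{-1}$ and telescope $g(Y_n)-Y_n$ as a sum over $i$ of terms involving $g\vphi^i(A)-\vphi^i(A)$, using the denominator $p^{ir}$ coming from $A_0^{-1}$ (since $A_0B_0=E(0)^rI_d$). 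The paper instead directly telescopes $X_g=\lim_n X_{n,g}$ with $X_{n,g}:=\vphi(A)\cdots\vphi^n(A)\,g(\vphi^n(A)^{-1}\cdots\vphi(A)^{-1})$; each $X_{n,g}-X_{n-1,g}$ is then controlled in a single sweep using $B$ with $AB=E(u)^rI_d$ and the units $\vphi^j(E(u))/p$, so the convergence is a single-index series and the bookkeeping is cleaner. Both arguments bottom out in Corollary~\ref{polycor}. Your approach therefore buys nothing over the paper's, and it introduces a double-index structure ($\sum_{i=1}^n$ inside a limit over $n$) whose convergence requires care: the factor that gives the $p^{-ir}$ is really the trailing block $\vphi^i(A_0^{-1})\cdots\vphi(A_0^{-1})$ left after splitting $\vphi^n(A_0^{-1})\cdots\vphi(A_0^{-1})$ and absorbing $\vphi^{i+1}(A)\cdots\vphi^n(A)\vphi^n(A_0^{-1})\cdots\vphi^{i+1}(A_0^{-1})=\vphi^i(Y_{n-i})$ into a uniformly bounded middle piece, and one must also use that $Y_m$ is Cauchy to exchange the limit and the sum. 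That split and the Cauchy argument are implicit in what you wrote but not spelled out; as stated, the claim that the $i$-th summand already has denominator $p^{ir}$ is imprecise (the raw summand carries $\vphi^n(A_0^{-1})\cdots\vphi(A_0^{-1})$, with denominator $p^{nr}$). Two smaller remarks: the divisibility $\vphi^i(u_g)/\vphi(u_g)\in W(R)$ does not need Lemma~\ref{poly} at all --- it follows immediately from $\vphi(u_g)=\sum_j a_j(gu^j-u^j)\in u_gW(R)$ by iterating $\vphi$; and note that when you pass from $g(Y)-Y\in\vphi(u_g)M_d(B^+_{\mrm{cris}})$ to $X_g-I_d$ you need $g(Y)^{-1}\in GL_d(B^+_{\mrm{cris}})$, which is fine since $Y\in GL_d(\mfO_\alpha)$.
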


\begin{proof}
Let $\mfrak{e}_1,\dots \mfrak{e}_d$ be a basis of $\mfM$
and let $A\in M_d(\mfS)$ be a matrix such that
$\vphi(\mfrak{e}_1,\dots ,\mfrak{e}_d)
=(\mfrak{e}_1,\dots ,\mfrak{e}_d)A$.
Put $e_i=1\otimes \mfrak{e}_i\in \vphi^{\ast}\mfM$ 
for each $i$.
Then $e_1,\dots ,e_d$ is a basis of $\vphi^{\ast}\mfM$
and $\vphi(e_1,\dots, e_d)
=(e_1,\dots ,e_d)\vphi(A)$.
Put  $M:=\vphi^{\ast}\mfM/u\vphi^{\ast}\mfM$
and $\bar{e}_i=e_i\ \mrm{mod}\ u\vphi^{\ast}\mfM$ 
for each $i$.
Then $\bar{e}_1,\dots ,\bar{e}_d$ is a basis of $M$
and $\vphi(\bar{e}_1,\dots ,\bar{e}_d)
=(\bar{e}_1,\dots ,\bar{e}_d)\vphi(A_0)$
where $A_0=A\ \mrm{mod}\ u\mfS\in M_d(W(k))$.
Take any $g\in G$. 
Let $X_g\in GL_d(\whR)$ be a matrix given by 
$$
g(e_1,\dots ,e_d)=(e_1,\dots ,e_d)X_g.
$$ 
We show  $X_g-I_d\in \vphi(u_g)M_d(B^+_{\mrm{cris}})$ below.

Let $Y$ be as in 
Section \ref{technical}.
We have $X_g=Yg(Y)^{-1}$ (see the proof of Lemma \ref{stable} (4)).
We recall that 
$$
\vphi(A)\cdots \vphi^n(A)\vphi^n(A_0^{-1})\cdots \vphi(A_0^{-1})
$$
converges to $Y$.
Hence the matrix 
$$
\vphi(A_0)\cdots \vphi^n(A_0)g(\vphi^n(A)^{-1}\cdots \vphi(A)^{-1})\in 
GL_d(B^+_{\mrm{cris}})
$$
converges to $g(Y)^{-1}$
and the matrix 
$$
X_{n,g}:=\vphi(A)\cdots \vphi^n(A)g(\vphi^n(A)^{-1}\cdots \vphi(A)^{-1})\in 
GL_d(B^+_{\mrm{cris}})
$$
converges to $X_g$. 

Since $\mfM$ is of height $r$,
there exists a matrix $B\in M_d(\mfS)$ such that 
$AB=E(u)^rI_d$.
It is not difficult to check the following:
\begin{itemize}
\item $\vphi^n(E(u))/p\in \mfS[\![\frac{u^{ep}}{p}]\!]^{\times}\subset A_{\mrm{cris}}^{\times}$
for any $n\ge 1$,
\item $\vphi^n(A),\vphi^n(B)\in GL_d(\mfO_{\alpha})\subset 
GL_d(B^+_{\mrm{cris}})$ for any $n\ge 1$.
\end{itemize}
Setting $u_g:=gu-u\in W(R)$, we claim the following:
\begin{itemize}
\item[(I)] $X_{1,g}-I_d\in \vphi(u_g)M_d(B^+_{\mrm{cris}})$,
\item[(II)] $X_{n,g}-X_{n-1,g}\in \vphi(u_g)M_d(B^+_{\mrm{cris}})$ for $n\ge 2$, and
\item[(III)] $\vphi(u_g)^{-1}(X_{n,g}-X_{n-1,g})\to O
\ (n\to \infty)$ $p$-adically.
\end{itemize}

First we show (I). We have
\begin{align*}
g\vphi(E(u))^r\vphi(B)(X_{1,g}-I_d) 
& = g\vphi(E(u))^r\vphi(B)(\vphi(A)g\vphi(A)^{-1}-I_d)\\
& = g\vphi(E(u))^r \vphi(BA) g\vphi(A)^{-1}
    -g\vphi(E(u))^r\vphi(B)\\
& = \vphi(E(u))^rg\vphi(B)-g\vphi(E(u))^r\vphi(B)\\
& = \vphi(E(u))^r(g\vphi(B)-\vphi(B)) - (g\vphi(E(u))^r - \vphi(E(u))^r)\vphi(B).
\end{align*}
Since we have $g\vphi(B)-\vphi(B)\in \vphi(u_g)M_d(B^+_{\mrm{cris}})$ and 
$g\vphi(E(u))^r - \vphi(E(u))^r\in \vphi(u_g)B^+_{\mrm{cris}}$,
we obtain $g\vphi(E(u))^r\vphi(B)(X_{1,g}-I_d) \in \vphi(u_g)M_d(B^+_{\mrm{cris}})$.
This shows  $X_{1,g}-I_d \in \vphi(u_g)M_d(B^+_{\mrm{cris}})$ as desired. 

Next we show (II) and (III). Suppose $n\ge 2$.
Writing $X_{1,g}=I_d+\vphi(u_g)C_g$ with some $C_g\in M_d(B^+_{\mrm{cris}})$,
we have 
\begin{align*}
X_{n,g}-X_{n-1,g}
& = \vphi(A)\cdots \vphi^{n-1}(A)
(\vphi^n(A)g\vphi^n(A)^{-1}-I_d)
g\vphi^{n-1}(A)^{-1}\cdots g\vphi(A)^{-1}\\
& = \vphi(A)\cdots \vphi^{n-1}(A)
(\vphi^{n-1}(X_{1,g})-I_d)
g\vphi^{n-1}(A)^{-1}\cdots g\vphi(A)^{-1}\\
& = \vphi^n(u_g)\vphi(A)\cdots \vphi^{n-1}(A)
\vphi^{n-1}(C_g)
g\vphi^{n-1}(A)^{-1}\cdots g\vphi(A)^{-1}\\
& = \frac{\vphi^n(u_g)}{p^{(n-1)r}} \cdot \vphi^{n-1}(p^rE(u)^{-r})\cdots \vphi(p^rE(u)^{-r})\cdot 
\vphi(A)\cdots \vphi^{n-1}(A)\\
& \quad \cdot \vphi^{n-1}(C_g)\cdot 
g\vphi^{n-1}(B)\cdots g\vphi(B)\\
& \in \vphi(u_g)\cdot \vphi\left(\frac{\vphi^{n-1}(u_g)}{u_g p^{(n-1)r}}\right)
\cdot p^{-\lambda_g}M_d(A_{\mrm{cris}}).
\end{align*}
Here, $\lambda_g>0$ is an integer such that $p^{\lambda_g} C_g\in M_d(A_{\mrm{cris}})$.
By Corollary \ref{polycor},
we know that $\vphi\left(\vphi^{n-1}(u_g)/(u_g p^{(n-1)r}) \right)$
converges to $0$. This shows (II) and (III).

We set $Y_{n,g}:=\vphi(u_g)^{-1}(X_{n,g}-X_{n-1,g})$ and 
$Y_g:=\sum^{\infty}_{n=2} Y_{n,g}$.These are elements of 
$M_d(B^+_{\mrm{cris}})$ by the claim.
Since we have $X_{n,g}=\sum^n_{k=2}(X_{k,g}-X_{k-1,g})+X_{1,g}
=\vphi(u_g)(\sum^n_{k=2}Y_{k,g}+C_g)+I_d$,
by taking a limit, 
we obtain 
$$
X_g=\vphi(u_g)(Y_g+C_g)+I_d.
$$
Therefore,
we obtain $X_g-I_d\in \vphi(u_g)M_d(B^+_{\mrm{cris}})$.\\

Now we are ready to finish a proof of Lemma \ref{range}.
Take any $g\in G$ and $x\in \mfM$. We want to show 
$g(1\otimes x)-(1\otimes x)\in \vphi(u_g)B^+_{\mrm{cris}}\otimes_{\vphi,\mfS}\mfM$.
Let $\mbf{x}\in M_{d,1}(\vphi(\mfS))$ be a matrix such that  
$1\otimes x=(e_1,\dots e_d)\mbf{x}$. 
Then we have 
$g(1\otimes x)-(1\otimes x)=(e_1,\dots e_d)(X_gg\mbf{x}-\mbf{x})$.
Since we can write $X_g=I_d+\vphi(u_g)X'_g$ by some matrix $X'_g\in M_d(B^+_{\mrm{cris}})$,
we have 
$X_gg\mbf{x}-\mbf{x}=\vphi(u_g)X'_g+(g\mbf{x}-\mbf{x})$.
Thus it suffices to show $g\mbf{x}-\mbf{x}\in \vphi(u_g)M_{d,1}(W(R))$
but this is an easy exercise.
\end{proof}


\subsection{Compatibility of different uniformizers, and Dieudonn\'e crystals}

Suppose the conditions (P) and $v_p(a_1)>\max \{r,1\}$.
Let $T$ be an object of $\mrm{Rep}^{r,\mrm{cris}}_{\mbb{Z}_p}(G)$.
Then there exists a $(\vphi,\hat{G})$-module $\hat{\mfM}$ 
such that $\hat{T}(\hat{\mfM})\simeq T$.
Note that our arguments depends on the choice of a uniformizer $\pi$ of $K$,
a polynomial $f(u)$ and  a system $(\pi_n)_{n\ge 0}$.

If we select a different choice of 
a uniformizer $\pi'$ of $K$,
a polynomial $f'(u)$ and a system  $(\pi'_n)_{n\ge 0}$,
then we get another $(\vphi,\hat{G}')$-module $\hat{\mfM}'$.

\begin{question}
What is the relationship between $\hat{\mfM}$ and $\hat{\mfM}'$ ?
\end{question}

We denote by $\mfS_{\underline{\pi}}$ 
(resp.\ $\mfS_{\underline{\pi}'}$) 
the image of the injection $W(k)[\![u]\!]\to W(R)$ given by 
$u\mapsto \{\underline{\pi}\}_f$
(resp.\ $u\mapsto \{\underline{\pi}'\}_{f'}$).
We may regard $\mfM$ (resp.\ $\mfM'$)
as a $\vphi$-module over $\mfS_{\underline{\pi}}$
(resp.\ $\mfS_{\underline{\pi}'}$). 
Write $\mfS:=\mfS_{\underline{\pi}}$
(resp.\ $\mfS':=\mfS_{\underline{\pi}'}$).
We have comparison morphisms 
$$
\hat{\iota}\colon W(R)\otimes_{\vphi,\mfS}
\mfM\hookrightarrow W(R)\otimes_{\mbb{Z}_p}\hat{T}(\hat{\mfM})^{\vee}
\simeq W(R)\otimes_{\mbb{Z}_p}T^{\vee}
$$
and  
$$
\hat{\iota}'\colon W(R)\otimes_{\vphi,\mfS'} 
\mfM'\hookrightarrow W(R)\otimes_{\mbb{Z}_p}\hat{T}(\hat{\mfM}')^{\vee}
\simeq W(R)\otimes_{\mbb{Z}_p}T^{\vee}
$$

\begin{theorem}
\label{comparison}
Assume  the conditions $(P)$ and $v_p(a_1)>\max \{r,1\}$.
Let the notation be as above. Then we have
$\hat{\iota}(W(R)\otimes_{\vphi,\mfS} \mfM)
=\hat{\iota}(W(R)\otimes_{\vphi,\mfS'} \mfM')$.
In particular, we have a functorial isomorphism 
$W(R)\otimes_{\vphi,\mfS} \mfM
\simeq W(R)\otimes_{\vphi,\mfS'} \mfM'$
which commutes with $\vphi$ and $G$-actions.
\end{theorem}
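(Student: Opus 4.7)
The assertion reduces to the equality of images $\hat{\iota}(W(R) \otimes_{\vphi, \mfS} \mfM) = \hat{\iota}'(W(R) \otimes_{\vphi, \mfS'} \mfM')$ inside $W(R) \otimes_{\mbb{Z}_p} T^{\vee}$: granting this, the composite ${\hat{\iota}'}^{-1} \circ \hat{\iota}$ is the desired functorial $W(R)$-linear isomorphism, and it automatically commutes with $\vphi$ and with the $G$-action, since both $\hat{\iota}$ and $\hat{\iota}'$ already enjoy these equivariances by construction. So the task is to compare two $W(R)$-lattices inside $W(R) \otimes_{\mbb{Z}_p} T^{\vee}$.

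To exhibit a candidate for their common value, I apply the diagram \eqref{diag} separately to $\mfM$ and to $\mfM'$. The top square of \eqref{diag} identifies the image of $B^+_{\mrm{cris}} \otimes_{\vphi, \mfS} \mfM$ (respectively $B^+_{\mrm{cris}} \otimes_{\vphi, \mfS'} \mfM'$) in $B^+_{\mrm{cris}} \otimes_{\mbb{Z}_p} T^{\vee}$ with the image of the crystalline period map $B^+_{\mrm{cris}} \otimes_{K_0} D \hookrightarrow B^+_{\mrm{cris}} \otimes_{\mbb{Z}_p} T^{\vee}$, where $D = D_{\mrm{cris}}(V)$; this latter image depends only on $T$, not on $(\pi, f, (\pi_n))$. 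Consequently both images of $\hat{\iota}$ and $\hat{\iota}'$ are $W(R)$-lattices inside the \emph{same} intrinsic $B^+_{\mrm{cris}}$-submodule of $B^+_{\mrm{cris}} \otimes_{\mbb{Z}_p} T^{\vee}$, and by Proposition \ref{comp2} (2) each contains $\mft_0^r(W(R) \otimes_{\mbb{Z}_p} T^{\vee})$. The natural candidate for their common value is the intrinsic intersection
\[
N := (B^+_{\mrm{cris}} \otimes_{K_0} D) \cap (W(R) \otimes_{\mbb{Z}_p} T^{\vee}),
\]
and the inclusion $\hat{\iota}(W(R) \otimes_{\vphi, \mfS} \mfM) \subset N$ is immediate from the bottom square of \eqref{diag} (the analogous inclusion for $\mfM'$ being equally automatic).

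The main obstacle is the reverse inclusion $N \subset \hat{\iota}(W(R) \otimes_{\vphi, \mfS} \mfM)$. Given $x \in N$, the plan is to write $x = \sum b_i \otimes e_i$ with $b_i \in B^+_{\mrm{cris}}$ and $\{e_i\}$ a basis of $\vphi^{\ast}\mfM$, and then extract $b_i \in W(R)$ from the constraint $\hat{\iota}(x) \in W(R) \otimes_{\mbb{Z}_p} T^{\vee}$. I would attack this by an iteration argument mirroring Lemmas \ref{fix} and \ref{range}: since $\mfM$ is of height $r$, write $AB = E(u)^r I_d$ with $B \in M_d(\mfS)$, and use that $\vphi^n(A), \vphi^n(B) \in GL_d(\mfO_{\alpha})$ together with $\vphi^n(E(u))/p \in A_{\mrm{cris}}^{\times}$ and the convergence estimate of Corollary \ref{polycor} to produce a convergent $p$-adic expansion expressing each $b_i$ with $W(R)$-coefficients. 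The hypotheses $(P)$ and $v_p(a_1) > \max\{r, 1\}$ should be essential here, both to guarantee these convergences and to ensure $\xi_{\alpha}$, $\xi'_{\alpha}$ are genuine isomorphisms with the expected intrinsic behavior. Once $\hat{\iota}(W(R) \otimes_{\vphi, \mfS} \mfM) = N$ is established, the same argument applied to $\mfM'$ yields $\hat{\iota}'(W(R) \otimes_{\vphi, \mfS'} \mfM') = N$, whence the equality of images and the functorial isomorphism.
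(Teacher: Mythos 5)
Your framing via the intrinsic intersection $N := (B^+_{\mrm{cris}} \otimes_{K_0} D) \cap (W(R) \otimes_{\mbb{Z}_p} T^{\vee})$ is a reasonable reorganization, and the inclusion $\hat{\iota}(W(R) \otimes_{\vphi,\mfS} \mfM) \subset N$ is indeed immediate from the diagram. But you rightly identify the reverse inclusion as the crux, and your proposed method for it --- an iteration argument ``mirroring Lemmas \ref{fix} and \ref{range}'', invoking Corollary \ref{polycor} to produce ``a convergent $p$-adic expansion expressing each $b_i$ with $W(R)$-coefficients'' --- is the wrong tool and would not close the gap. Lemma \ref{fix} and Corollary \ref{polycor} control convergence of infinite products (and hence $G$-fixedness) inside $B^+_{\mrm{cris}}$; they do not give any mechanism for promoting an element of $B^+_{\mrm{cris}}$ to an element of $W(R)$, which is precisely what you need for the coefficients $b_i$.

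The ingredient you are missing is a \emph{regularity (or contraction) property of the ideal $I^{[1]}W(R)$}: once one knows that $\mft_0^r b_i \in W(R)$ for $\mft_0$ a generator of $I^{[1]}W(R)$ (and this \emph{does} follow from Proposition \ref{comp2}(2) exactly as you begin to set up), one needs a lemma asserting that this forces $b_i \in W(R)$. This is Lemma 3.1.3 of \cite{Li4}, which is the decisive citation in the paper's proof. Having it, the paper runs a leaner version of the same argument: it never introduces $N$, but instead writes the change-of-basis matrix $X \in GL_d(B^+_{\mrm{cris}})$ relating $\hat{\iota}(e_1,\dots,e_d)$ to $\hat{\iota}'(e'_1,\dots,e'_d)$ inside $B^+_{\mrm{cris}} \otimes_{K_0} D$, deduces $\mft_0^r X \in M_d(W(R))$ from the two-sided lattice containments, and applies \cite[Lemma 3.1.3]{Li4} (and its analogue for $X^{-1}$) to conclude $X \in GL_d(W(R))$. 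Your version would work too if you replace the proposed iteration by this lemma, but without it there is a genuine gap; the estimates from Section \ref{technical} simply do not speak to $W(R)$-integrality of $B^+_{\mrm{cris}}$-coordinates.
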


\begin{proof}
Let $d$ be the $\mbb{Z}_p$-rank of $T$.
Put $M=\vphi^{\ast}\mfM/u\vphi^{\ast}\mfM$.
We have $G$-equivalent injections
$B^+_{\mrm{cris}}\otimes_{W(k)} M
\overset{\xi_{\alpha}}{\simeq} B^+_{\mrm{cris}}\otimes_{\mfS} \vphi^{\ast}\mfM
\overset{\hat{\iota}}{\hookrightarrow} 
B^+_{\mrm{cris}}\otimes_{\mbb{Z}_p} T^{\vee}$.
By Lemma \ref{fix},
we have 
$\xi_{\alpha}(M)\subset (B^+_{\mrm{cris}}\otimes_{\mfS} \vphi^{\ast}\mfM)^G
\overset{\hat{\iota}}{\hookrightarrow}  
(B^+_{\mrm{cris}}\otimes_{\mbb{Z}_p} T^{\vee})^G
\subset D_{\mrm{cris}}(V)$.
Since the $W(k)$-rank of  $M$ is $d$,
we have isomorphisms 
\begin{equation}
\label{is}
M[1/p]
\overset{\xi_{\alpha}}{\simeq}
(B^+_{\mrm{cris}}\otimes_{\mfS} \vphi^{\ast}\mfM)^G
\overset{\hat{\iota}}{\simeq}  
(B^+_{\mrm{cris}}\otimes_{\mbb{Z}_p} T^{\vee})^G
=D_{\mrm{cris}}(V).
\end{equation}
Therefore, we obtain the following diagram:
\begin{center}
$\displaystyle \xymatrix{  
& B^+_{\mrm{cris}}\otimes_{\mfS} \vphi^{\ast}\mfM 
\ar^{\hat{\iota}}[r] \ar@{^{(}->}[r] 
& B^+_{\mrm{cris}}\otimes_{\mbb{Z}_p} T^{\vee} 
\\
B^+_{\mrm{cris}}\otimes_{W(k)} M  
\ar_{\hat{\iota}}[ur] \ar^{\simeq}[ur] 
\ar_{\xi_{\alpha}\qquad}[r] \ar^{\sim \qquad}[r] 
& B^+_{\mrm{cris}}\otimes_{K_0}(B^+_{\mrm{cris}}\otimes_{\mfS} \vphi^{\ast}\mfM) 
\ar[u] 
\ar_{\qquad  \hat{\iota}}[r] 
\ar^{\qquad  \sim}[r]
&
B^+_{\mrm{cris}}\otimes_{K_0} D_{\mrm{cris}}(V)
\ar@{^{(}->}[u] 
}$
\end{center}
Here, two vertical arrows in the diagram are natural maps.
We see that the left vertical arrow is isomorphism by the commutativity
of the diagram.

Let $e_1,\dots ,e_d$ be a basis of $\vphi^{\ast}\mfM$
and $e'_1,\dots ,e'_d$ be a basis of $\vphi^{\ast}\mfM'$.
Seeing the above diagram,
we obtain the fact that 
$\hat{\iota}(e_1),\dots ,\hat{\iota}(e_d)$ is a basis
of $B^+_{\mrm{cris}}\otimes_{K_0} D_{\mrm{cris}}(V)$.
Similarly, $\hat{\iota}'(e'_1),\dots ,\hat{\iota}'(e'_d)$
is also.
Hence there exist a matrix $X\in GL_d(B^+_{\mrm{cris}})$
such that 
$\hat{\iota}(e_1,\dots ,e_d)=\hat{\iota}'(e'_1,\dots ,e'_d)X$.
On the other hand,
if we take any generator $\mft_0$ of $I^{[1]}W(R)$,
we have 
$\mft_0^r\hat{\iota}'(W(R)\otimes_{\mfS} \vphi^{\ast}\mfM')
\subset \mft_0^r(W(R)\otimes_{\mbb{Z}_p} T^{\vee})
\subset \hat{\iota}(W(R)\otimes_{\vphi, \mfS} \vphi^{\ast}\mfM)$.
Thus we obtain  $\mft_0^rX\in M_d(W(R))$.
By \cite[Lemma 3.1.3]{Li4}, 
$X\in M_d(W(R))$. By the similar manner 
we can check $X^{-1}\in M_d(W(R))$.  
This finishes a proof.
(The assertion for the functoriality follows immediately by construction.)
\end{proof}

The following statements gives an affirmative answer of 
\cite[Section 6.3]{CL}.
\begin{corollary}
\label{comparisoncor}
Assume  the conditions $(P)$ and $v_p(a_1)>\max \{r,1\}$.
Let $T$ be an object of $\mrm{Rep}^{r,\mrm{cris}}_{\mbb{Z}_p}(G)$.
Let $\mfM$ $($resp.\ $\mfM'$$)$ be a Kisin module with respect to the choice of 
$(f(u),(\pi_n)_{n\ge 0})$ $($resp.\ 
$(f'(u),(\pi'_n)_{n\ge 0})$$)$
such that $T_{\mfS}(\mfM)\simeq T$ $($resp.\ $T_{\mfS'}(\mfM')\simeq T$$)$.
Then we have a functorial isomorphism 
$W(R)\otimes_{\mfS} \mfM\simeq W(R)\otimes_{\mfS'} \mfM'$
of $\vphi$-modules over $W(R)$.
\end{corollary}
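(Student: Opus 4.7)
The plan is to deduce the corollary from Theorem \ref{comparison} by exploiting the fact that $R$ is perfect, so that Frobenius on $W(R)$ is a ring automorphism.

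By the given isomorphisms $T_{\mfS}(\mfM) \simeq T \simeq T_{\mfS'}(\mfM')$, Corollary \ref{comp1} provides $\vphi$-equivariant $W(R)$-linear injections
\[
\iota_{\mfS} \colon W(R) \otimes_{\mfS} \mfM \hookrightarrow W(R) \otimes_{\mbb{Z}_p} T^{\vee}
\quad \text{and} \quad
\iota_{\mfS'} \colon W(R) \otimes_{\mfS'} \mfM' \hookrightarrow W(R) \otimes_{\mbb{Z}_p} T^{\vee}.
\]
The first step is to show that the image of $\hat{\iota}$ in $W(R) \otimes_{\mbb{Z}_p} T^{\vee}$ equals $\vphi(\mrm{Im}(\iota_{\mfS}))$, where $\vphi$ denotes Frobenius acting on the $W(R)$-factor. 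By Proposition \ref{comp2} (1), $\hat{\iota}$ is identified, via $W(R)\otimes \theta^{\vee}$, with the Frobenius pullback $\vphi^{\ast}\iota_{\mfS} = W(R) \otimes_{\vphi,W(R)} \iota_{\mfS}$. Unwinding the formula $\theta(f)(a \otimes x) = a\vphi(f(x))$, one checks directly that for a basis $\mfrak{e}_1, \dots, \mfrak{e}_d$ of $\mfM$, the element $\hat{\iota}(1 \otimes (1 \otimes \mfrak{e}_i))$ corresponds, under the identification $\hat{T}(\hat{\mfM}) \simeq T_{\mfS}(\mfM) \simeq T$, to $\vphi$ applied to $\iota_{\mfS}(1 \otimes \mfrak{e}_i)$, and the same holds on the primed side.

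Theorem \ref{comparison} now yields
\[
\hat{\iota}(W(R) \otimes_{\vphi,\mfS} \mfM) = \hat{\iota}'(W(R) \otimes_{\vphi,\mfS'} \mfM')
\]
inside $W(R) \otimes_{\mbb{Z}_p} T^{\vee}$, which by the previous step translates into $\vphi(\mrm{Im}(\iota_{\mfS})) = \vphi(\mrm{Im}(\iota_{\mfS'}))$. Since $R$ is perfect, $\vphi$ is a ring automorphism of $W(R)$ and hence acts bijectively on the free $W(R)$-module $W(R) \otimes_{\mbb{Z}_p} T^{\vee}$; applying $\vphi^{-1}$ gives $\mrm{Im}(\iota_{\mfS}) = \mrm{Im}(\iota_{\mfS'})$. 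Combined with injectivity of $\iota_{\mfS}$ and $\iota_{\mfS'}$, this produces a canonical $\vphi$-equivariant $W(R)$-linear isomorphism $W(R) \otimes_{\mfS} \mfM \simeq W(R) \otimes_{\mfS'} \mfM'$, and functoriality is inherited from that of the comparison morphisms.

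The only step that involves genuine (though short) content rather than formal manipulation is the identification $\mrm{Im}(\hat{\iota}) = \vphi(\mrm{Im}(\iota_{\mfS}))$, which amounts to tracking the single Frobenius twist that $\theta$ introduces on the $W(R)$-coefficient; once this is granted, the rest follows formally from Theorem \ref{comparison} together with the perfectness of $R$. I therefore anticipate no substantive obstacle.
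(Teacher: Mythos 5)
Your proposal follows the same basic route as the paper (apply Theorem~\ref{comparison}, undo the one Frobenius twist introduced by $\theta$ using that $R$ is perfect), but it has a genuine gap that sidesteps exactly the place where condition~(P) is doing work.

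Theorem~\ref{comparison} is a statement about the $(\vphi,\hat G)$-modules $\hat{\mfM}$, $\hat{\mfM}'$ produced by the essential surjectivity part of Theorem~\ref{MT}; the $\mfM$, $\mfM'$ appearing in its conclusion are the \emph{underlying Kisin modules of those specific $(\vphi,\hat G)$-modules}. By contrast, the $\mfM$, $\mfM'$ in Corollary~\ref{comparisoncor} are arbitrary Kisin modules with $T_{\mfS}(\mfM)\simeq T$ and $T_{\mfS'}(\mfM')\simeq T$; they carry no $(\vphi,\hat G)$-module structure a priori and need not coincide with the ones in Theorem~\ref{comparison}. When you write ``the image of $\hat\iota$ \dots equals $\vphi(\mathrm{Im}(\iota_{\mfS}))$'' and then invoke Proposition~\ref{comp2}(1), you are using the $\iota_{\mfS}$ attached to the underlying Kisin module of $\hat{\mfM}$, while the $\iota_{\mfS}$ you defined in your first display is attached to the corollary's $\mfM$. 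You are implicitly identifying these two Kisin modules, and that identification is precisely what requires an argument. The paper supplies it: writing $\mfN$, $\mfN'$ for the underlying Kisin modules coming from Theorem~\ref{comparison}, it observes $T_{\mfS}(\mfM)\simeq T|_{G_{\underline{\pi}}}\simeq T_{\mfS}(\mfN)$ and invokes full faithfulness of $T_{\mfS}$ on $\Mod^r_{\mfS}$ (Proposition~\ref{KisinFF}(3)), which holds under condition~(P), to conclude $\mfM\simeq\mfN$ and $\mfM'\simeq\mfN'$. Once that step is inserted, the remainder of your argument — the Frobenius-untwisting via perfectness of $R$ — is correct and matches the paper's ``taking $W(R)\otimes_{\vphi^{-1},W(R)}$'' step. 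So: same approach, but you must make explicit the full-faithfulness step that bridges the given $\mfM$, $\mfM'$ to the $(\vphi,\hat G)$-module side; without it the proof does not close, and it is the only place condition~(P) is used.
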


\begin{proof}
Let $\hat{\mfN}$ (resp.\ $\hat{\mfN}'$) be a 
$(\vphi,\hat{G})$-module with respect to the choice of  
$(f(u),(\pi_n)_{n\ge 0})$ (resp.\ 
$(f'(u),(\pi'_n)_{n\ge 0})$)
corresponding to $T$.
By Theorem \ref{comparison},
we have an isomorphism 
$W(R)\otimes_{\vphi,\mfS} \mfN\simeq W(R)\otimes_{\vphi,\mfS'} \mfN'$.
Taking $W(R)\otimes_{\vphi^{-1},W(R)}$,
we obtain an isomorphism
$W(R)\otimes_{\mfS} \mfN\simeq W(R)\otimes_{\mfS'} \mfN'$.
On the other hand, 
we have isomorphisms 
$T_{\mfS}(\mfM)\simeq T|_{G_{\underline{\pi}}}
\simeq \hat{T}(\hat{\mfN})|_{G_{\underline{\pi}}}\simeq T_{\mfS}(\mfN)$.
Similarly, we also have $T_{\mfS'}(\mfM')\simeq T_{\mfS'}(\mfN')$.
By the condition (P) and Proposition \ref{KisinFF},
we have isomorphisms $\mfM\simeq \mfN$ and $\mfM'\simeq \mfN'$.
Thus the result follows.
\end{proof}

\bn
{\bf The case $r\le 1$.}
In the case $r\le 1$,
we can omit the assumption (P) from Theorem \ref{comparison}
and Corollary \ref{comparisoncor}.

\begin{theorem}
\label{comparison:r=1}
Assume  $v_p(a_1)>1$.
Let $T$ be an object of $\mrm{Rep}^{1,\mrm{cris}}_{\mbb{Z}_p}(G)$.
Let $\mfM$ $($resp.\ $\mfM'$$)$ be the Kisin module with respect to the choice of 
$(f(u),(\pi_n)_{n\ge 0})$ $($resp.\ 
$(f'(u),(\pi'_n)_{n\ge 0})$$)$
corresponding to $T$ via Theorem \ref{pdiv}.
Then we have a functorial isomorphism 
$W(R)\otimes_{\mfS} \mfM\simeq W(R)\otimes_{\mfS'} \mfM'$
of $\vphi$-modules over $W(R)$.
\end{theorem}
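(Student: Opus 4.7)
The plan is to adapt the proofs of Theorem \ref{comparison} and Corollary \ref{comparisoncor} to $r=1$, dropping the hypothesis $(P)$ by exploiting Theorem \ref{pdiv}. In those earlier arguments, $(P)$ entered in exactly two places: to supply the full faithfulness of $T_{\mfS}$ on $\Mod^1_{\mfS}$ via Proposition \ref{KisinFF}(3), and inside the essential surjectivity statement of Theorem \ref{MT'}(3), specifically through the embedding $\mfN\hookrightarrow\mfM$ constructed in Lemma \ref{=T}. When $r=1$, Theorem \ref{pdiv} furnishes an anti-equivalence between $\Mod^1_{\mfS}$ and $\mrm{Rep}^{1,\mrm{cris}}_{\mbb{Z}_p}(G)$ without any appeal to $(P)$, and this anti-equivalence encodes exactly the full faithfulness needed.

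Concretely, I would first reprove essential surjectivity of $\hat{T}$ in height $1$ under the single hypothesis $v_p(a_1)>1$: the arguments of Lemma \ref{=T}, Lemma \ref{stable}, and Lemma \ref{range} go through almost verbatim once every appeal to $(P)$ or Proposition \ref{KisinFF}(3) is replaced by Theorem \ref{pdiv}. The remaining ingredients, namely Proposition \ref{comp2}(2), Corollary \ref{polycor}, and the constructions of $\xi_{\alpha}$ and $\xi'_{\alpha}$ in Section \ref{technical}, only require $v_p(a_1)>r=1$, so they apply unchanged. This produces $(\vphi,\hat{G})$-modules $\hat{\mfN}$ and $\hat{\mfN}'$, with respect to the two choices, satisfying $\hat{T}(\hat{\mfN})\simeq \hat{T}(\hat{\mfN}')\simeq T$. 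The proof of Theorem \ref{comparison} then applies verbatim and yields a $\vphi$- and $G$-equivariant isomorphism $W(R)\otimes_{\vphi,\mfS}\mfN \simeq W(R)\otimes_{\vphi,\mfS'}\mfN'$; twisting by $\vphi^{-1}$ gives $W(R)\otimes_{\mfS}\mfN \simeq W(R)\otimes_{\mfS'}\mfN'$. Finally, Theorem \ref{pdiv} identifies $\mfN\simeq \mfM$ and $\mfN'\simeq \mfM'$, since both pairs correspond to the same $p$-divisible group $H$ with $T_p(H)\simeq T$, which completes the proof.

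The main obstacle will be a careful audit of each step in Lemma \ref{=T}, Lemma \ref{stable}, Lemma \ref{range}, and Theorem \ref{comparison} to confirm that every instance of $(P)$ can be cleanly substituted by Theorem \ref{pdiv}. A subtler point concerns functoriality: the isomorphism in the statement must be canonical in $T$, so the ambiguities appearing during the essential surjectivity construction (the choice of a Kisin module inside $\mcal{M}(D)$ and the various intermediate maps) must be washed out in the final comparison, exactly as happens in the proof of Theorem \ref{comparison}. Once these bookkeeping checks are handled, the substantive content of the argument is the same as in the general $r$ case.
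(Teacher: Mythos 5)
Your proposal takes a genuinely different route from the paper's. The paper does not re-run the essential-surjectivity machinery (Lemmas \ref{=T}, \ref{stable}, \ref{range}) with $(P)$ removed; instead it directly equips $W(R)\otimes_{\mfS}\mfM$ (for $\mfM$ the Kisin module coming from Theorem \ref{pdiv}) with a $G$-action by invoking the explicit limit construction of \cite[Section 5]{CL}, namely $X'_g := \lim_n A\vphi(A)\cdots \vphi^n(A)g\vphi^n(A)^{-1}\cdots gA^{-1}$, $X_g = \vphi(X'_g)$, and then observes $X_g = Yg(Y)^{-1}$ where $Y$ is the $\xi_\alpha$-matrix from Section \ref{technical}. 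This identity is precisely what gives the $G$-invariance of $\xi_\alpha(M)$ and hence the analogue of \eqref{is}, after which the proof of Theorem \ref{comparison} carries over. The construction from \cite[Section 5]{CL} is tailored to $r=1$ (the paper remarks it fails for $r>1$) but requires no appeal to $(P)$, so the whole argument is short. Your route is more indirect but should be viable: you re-audit Lemma \ref{=T} and Lemma \ref{range}, replacing $(P)$ and \cite[Proposition 3.3.5]{CL} with the height-$1$ full faithfulness, construct $(\vphi,\hat{G})$-modules $\hat{\mfN}$, $\hat{\mfN}'$, apply Theorem \ref{comparison}, and then identify $\mfN\simeq\mfM$. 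Note that at the level of Lemma \ref{stable} and Lemma \ref{range} your argument also eventually rests on $X_g = Yg(Y)^{-1}$, so the two approaches converge on the same key identity; what differs is the bookkeeping needed to reach it.

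One imprecision worth flagging: you assert that the anti-equivalence in Theorem \ref{pdiv} ``encodes exactly the full faithfulness needed,'' but Theorem \ref{pdiv} by itself gives full faithfulness of $\hat{T}\colon\Mod^1_{\mfS}\to\mrm{Rep}^{1,\mrm{cris}}_{\mbb{Z}_p}(G)$, whereas what Lemma \ref{=T} uses is full faithfulness of the plain $T_{\mfS}\colon\Mod^1_{\mfS}\to\mrm{Rep}_{\mbb{Z}_p}(G_{\underline{\pi}})$. To bridge the gap you must also invoke full faithfulness of the restriction functor $\mrm{Rep}^{1,\mrm{cris}}_{\mbb{Z}_p}(G)\to\mrm{Rep}_{\mbb{Z}_p}(G_{\underline{\pi}})$ (the integral consequence of \cite[Theorem 1.0.2]{CL}). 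This is available under $v_p(a_1)>1$, but it is a separate input, not packaged inside Theorem \ref{pdiv}. With that citation made explicit, and assuming your step-by-step audit of Lemma \ref{=T} closes (in particular checking the height of the \'etale $\vphi$-module kernel stays controlled when $r=1$), the proposal gives a valid alternative proof; the paper's route via \cite[Section 5]{CL} is simply more economical.
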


\begin{proof}
At first, in the proof of Theorem \ref{comparison}, we used the assumption (P) 
to apply Lemma \ref{fix} and to obtain \eqref{is}. 
Following the arguments of \cite[Section 5]{CL},
we can obtain the same result without (P) in the case $r=1$, as follows.

By the arguments of \cite[Section 5]{CL},
we can equip $W(R)\otimes_{\mfS} \mfM$ with a (unique) $G$-action 
which satisfies the following:
\begin{itemize}
\item $G_{\underline{\pi}}$ acts on $\mfM$ trivial, and
\item $g(1\otimes x)-1\otimes x\in \mft M_d(I_+W(R))$ for any $g\in G$ and $x\in \mfM$.
\end{itemize}
(Note that their arguments do not work for $r>1$.)
Moreover, if we equip $T_{\mfS}(\mfM)$ with a $G$-action 
by the isomorphism $T_{\mfS}(\mfM)\simeq \mrm{Hom}_{W(R),\vphi}(W(R)\otimes_{\mfS} \mfM,W(R))$,
then we have  an isomorphism 
$T_{\mfS}(\mfM)\simeq T$
of $\mbb{Z}_p[G]$-modules.
Now we recall how to define a $G$-action on $W(R)\otimes_{\mfS} \mfM$.
Let $\mfrak{e}_1,\dots ,\mfrak{e}_d$ be a basis of $\mfM$ and 
let $A\in M_d(\mfS)$ be the matrix given by 
$\vphi(\mfrak{e}_1,\dots ,\mfrak{e}_d)=(\mfrak{e}_1,\dots ,\mfrak{e}_d)A$.
Set $X'_g:=\lim_{n\to \infty} A\vphi(A)\cdots \vphi^n(A)g\vphi^n(A)^{-1}\cdots g\vphi(A)^{-1}gA^{-1}$,
which is an element of $GL_d(W(R))$.
We put $X_g=\vphi(X'_g)$.
Then we have $X_g=Yg(Y)^{-1}$ where $Y$ is  the matrix defined in Section \ref{technical}.
Hence we see that the composite 
$B^+_{\mrm{cris}}\otimes_{W(k)} M
\overset{\xi_{\alpha}}{\simeq} B^+_{\mrm{cris}}\otimes_{\mfS} \vphi^{\ast}\mfM
\overset{\hat{\iota}}{\hookrightarrow} 
B^+_{\mrm{cris}}\otimes_{\mbb{Z}_p} T^{\vee}$
induces 
$\xi_{\alpha}(M)\subset 
(B^+_{\mrm{cris}}\otimes_{\mfS} \vphi^{\ast}\mfM)^G
\overset{\hat{\iota}}{\hookrightarrow} 
(B^+_{\mrm{cris}}\otimes_{\mbb{Z}_p} T^{\vee})^G$,
which gives 
$M[1/p]
\overset{\xi_{\alpha}}{\simeq}
(B^+_{\mrm{cris}}\otimes_{\mfS} \vphi^{\ast}\mfM)^G
\overset{\hat{\iota}}{\simeq}  
(B^+_{\mrm{cris}}\otimes_{\mbb{Z}_p} T^{\vee})^G
=D_{\mrm{cris}}(V)$
as \eqref{is}.
Then the same arguments as Theorem \ref{comparison} proceeds. 
\end{proof}

\bn
{\bf Comparison with Dieudonn\'e crystals.}
In this section, we give a geometric interpretation of Kisin modules 
in terms of  Dieudonn\'e crystals of $p$-divisible groups under our 
$K_{\underline{\pi}}/K$-setting, which is well-known 
in the Kisin's setting $f(u)=u^p$. 
We recall that (cf.\ Theorem \ref{pdiv}), under the assumption $v_p(a_1)>1$,
there exists an anti-equivalence of categories between 
the category $\Mod^1_{\mfS}$ of free Kisin modules of height $1$ 
and the category of  $p$-divisible groups
over the ring  of integers $\cO_K$ of $K$.

\begin{remark}
\label{pdiv:Kisin}
Consider the Kisin's setting $f(u)=u^p$. In this case Theorem \ref{pdiv}
is well-studied.
Let $S$ be the $p$-adic completion of the  divided power envelope of
the surjection $W[\![u]\!]\twoheadrightarrow \cO_K$ given by $u\mapsto \pi$. 
Let $H$ be a $p$-divisible group over $\cO_K$ and 
$\mfM$ the free Kisin module attached to $H$.
Then it is known that 
we have a functorial isomorphism 
$S\otimes_{\mfS} \vphi^{\ast}\mfM\simeq \mbb{D}(H)(S)$. 
For this, see \cite[Theorem 2.2.7 and Proposition A.6]{Kis} for $p>2$
and \cite[Proposition 4.2]{Kim} for $p=2$.
\end{remark}

Combining Theorems \ref{pdiv}, \ref{comparison:r=1}  and  Remark \ref{pdiv:Kisin},
the result below follows immediately.

\begin{theorem}
\label{Dieudonne}
Assume  $v_p(a_1)>1$.
Let $H$ be a $p$-divisible group over $\cO_K$
and $\mbb{D}(H)$ be the Dieudonn\'e crystal attached to $H$.
Let $\mfM$ be the Kisin module attached to $H$.
Then there exists a functorial isomorphism
$A_{\mrm{cris}}\otimes_{\mfS} \vphi^{\ast}\mfM
\simeq \mbb{D}(H)(A_{\mrm{cris}})$.
\end{theorem}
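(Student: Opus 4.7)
The plan is to reduce to the classical Kisin setting $f'(u) = u^p$ (where the desired comparison with Dieudonn\'e crystals is the content of Remark \ref{pdiv:Kisin}) and transport the result across via Theorem \ref{comparison:r=1}. Concretely, I would first fix a secondary choice of data $(f'(u), (\pi'_n)_{n \geq 0})$ with $f'(u) = u^p$ and $\pi'_0 = \pi$, $(\pi'_{n+1})^p = \pi'_n$. Since $a'_1 = 0$, the hypothesis $v_p(a'_1) > 1$ holds automatically, so Theorem \ref{pdiv} produces a free Kisin module $\mfM'$ of height $1$ over $\mfS' := W(k)[\![u']\!]$ also attached to $H$, with $T_{\mfS'}(\mfM') \simeq T_p(H) \simeq T_{\mfS}(\mfM)$ as $G$-representations.

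Next, putting $T := T_p(H)$, which is a $G$-stable lattice in a crystalline representation with Hodge--Tate weights in $[0,1]$, I would invoke Theorem \ref{comparison:r=1} to obtain a functorial isomorphism
\[
W(R) \otimes_{\mfS} \mfM \overset{\sim}{\longrightarrow} W(R) \otimes_{\mfS'} \mfM'
\]
of $\vphi$-modules over $W(R)$. Pulling back by Frobenius on $W(R)$ (equivalently, exploiting the $\vphi$-equivariance of the displayed isomorphism) and then base-changing along the canonical map $W(R) \hookrightarrow A_{\mrm{cris}}$ yields a functorial isomorphism
\[
A_{\mrm{cris}} \otimes_{\mfS} \vphi^{\ast}\mfM \overset{\sim}{\longrightarrow} A_{\mrm{cris}} \otimes_{\mfS'} \vphi^{\ast}\mfM'.
\]

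Finally, Remark \ref{pdiv:Kisin} supplies a functorial isomorphism $S \otimes_{\mfS'} \vphi^{\ast}\mfM' \simeq \mbb{D}(H)(S)$ in the classical setting, where $S$ is the $p$-adic completion of the divided power envelope of $\mfS' \twoheadrightarrow \cO_K$, $u' \mapsto \pi$. Since $S$ embeds canonically into $A_{\mrm{cris}}$ and $\mbb{D}(H)$ is a crystal, base change along $S \to A_{\mrm{cris}}$ gives $A_{\mrm{cris}} \otimes_{\mfS'} \vphi^{\ast}\mfM' \simeq \mbb{D}(H)(A_{\mrm{cris}})$. Composing this with the isomorphism of the previous step produces the desired $A_{\mrm{cris}} \otimes_{\mfS} \vphi^{\ast}\mfM \simeq \mbb{D}(H)(A_{\mrm{cris}})$. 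The only point requiring care is the functoriality in $H$ of the whole composite, which is already built into both Theorem \ref{comparison:r=1} and Remark \ref{pdiv:Kisin}, so no serious obstacle arises beyond this bookkeeping.
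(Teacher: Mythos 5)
Your argument is exactly the combination the paper has in mind: it composes Theorem \ref{pdiv} (applied with $f'(u)=u^p$, where $a'_1=0$ makes the hypothesis automatic), Theorem \ref{comparison:r=1} (twisted by Frobenius and base-changed along $W(R)\hookrightarrow A_{\mrm{cris}}$), and Remark \ref{pdiv:Kisin} followed by crystal base change along $S\to A_{\mrm{cris}}$. The paper's proof is literally the phrase ``combining Theorems \ref{pdiv}, \ref{comparison:r=1} and Remark \ref{pdiv:Kisin}, the result follows immediately,'' and you have supplied precisely the bookkeeping that makes that combination explicit.
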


\section{Torsion representations and full faithfulness theorem}

In this section, we study torsion  Kisin modules 
and show a full faithfulness theorem 
for a restriction functor on a category of torsion crystalline representations. 

\subsection{Statements of full faithfulness theorems}
We state main results of this section.
Let $\mrm{Rep}^{r,\mrm{cris}}_{\mrm{tor}}(G)$
be the category of torsion crystalline representations of $G$
with Hodge-Tate weights in $[0,r]$.
Here, a torsion $\mbb{Z}_p$-representation $T$ of $G$
is {\it torsion crystalline with Hodge-Tate weights in $[0,r]$}
 if $T$ is a quotient of 
lattices in a crystalline $\mbb{Q}_p$-representation of $G$
with Hodge-Tate weights in $[0,r]$.
For example, it is well-known that the category
$\mrm{Rep}^{1,\mrm{cris}}_{\mrm{tor}}(G)$ 
coincides with the category of flat representations of $G$.
Here, a torsion $\mbb{Z}_p$-representation $T$ of $G$
is {\it flat} if it is of the form $H(\overline{K})$ with some
finite flat group scheme $H$ over the integer ring of $K$ 
killed by a power of $p$.

In the case where $r=1$,  we have 
\begin{theorem} 
\label{FFT:r=1}
Assume the condition $(P)$ and  $v_p(a_i)>1$  for any $1\le i\le p-1$.
Then 
the restriction functor $\mrm{Rep}^{1,\mrm{cris}}_{\mrm{tor}}(G)\to 
\mrm{Rep}_{\mrm{tor}}(G_{\underline{\pi}})$
is fully faithful.
\end{theorem}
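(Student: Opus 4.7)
The plan is to reduce full faithfulness for torsion crystalline representations to the anti-equivalence of Theorem \ref{pdiv} via a torsion analogue. Since $\mrm{Rep}^{1,\mrm{cris}}_{\mrm{tor}}(G)$ is by definition the category of flat representations, every object $T$ is of the form $H(\overline{K})$ for a finite flat group scheme $H$ over $\cO_K$ killed by a power of $p$, and $H$ fits into an exact sequence $0 \to H \to H_1 \to H_0 \to 0$ of $p$-divisible groups. Under Theorem \ref{pdiv} this corresponds to an injection $\mfM_0 \hookrightarrow \mfM_1$ in $\Mod^1_{\mfS}$, giving a torsion Kisin module $\mfM := \mfM_1/\mfM_0$ that lies in $\Mod^1_{\mfS_{\infty}}$ by Proposition \ref{BASIC2}(4); the $G$-action on $T$ is intrinsically encoded by $\mfM$ through $H$. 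Combining Theorem \ref{pdiv} with Proposition \ref{BASIC2} in this way should yield an anti-equivalence between $\Mod^1_{\mfS_{\infty}}$ and the category of finite flat group schemes killed by a power of $p$ over $\cO_K$, compatibly with $T_{\mfS}$ on one side and taking $\overline{K}$-points on the other.

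Given $T, T' \in \mrm{Rep}^{1,\mrm{cris}}_{\mrm{tor}}(G)$ with associated torsion Kisin modules $\mfM, \mfM'$ and a $G_{\underline{\pi}}$-equivariant morphism $f \colon T \to T'$, I would transport $f$ via the equivalence $\Mod_{\cO_{\E},\infty} \simeq \mrm{Rep}^{\mrm{tor}}_{\mbb{Z}_p}(G_{\underline{\pi}})$ to an etale $\vphi$-morphism $\cO_{\E} \otimes_{\mfS} \mfM' \to \cO_{\E} \otimes_{\mfS} \mfM$, and show it descends uniquely to a morphism $\mfM' \to \mfM$ in $\Mod^1_{\mfS_{\infty}}$. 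Condition (P) gives full faithfulness of $T_{\mfS}$ on free Kisin modules (Proposition \ref{KisinFF}(3)); together with the presentations $\mfM = \mfM_1/\mfM_0$, $\mfM' = \mfM'_1/\mfM'_0$ and the saturation criteria of Proposition \ref{BASIC2}, this descent should go through. Under the torsion analogue of Theorem \ref{pdiv}, the resulting morphism $\mfM' \to \mfM$ corresponds to a morphism of finite flat group schemes $H \to H'$, and evaluation on $\overline{K}$-points gives a $G$-equivariant map whose restriction under $G_{\underline{\pi}}$ is $f$; since this extension and $f$ are the same underlying function, $f$ itself is $G$-equivariant, proving full faithfulness.

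The main obstacle is the torsion descent at the level of morphisms together with the recovery of the correct $G$-action on $T$ from $\mfM$. One needs to show that the $G$-action on $T = H(\overline{K})$ is uniquely determined by $\mfM$ in a way that is functorial in morphisms, which requires fine control over the interplay between $u$-torsion in $\mfM$ and the $G$-action on $\bar{u} = \underline{\pi} \in R$. This is exactly where the stronger hypothesis $v_p(a_i) > 1$ for $1 \le i \le p-1$ enters: it is equivalent to $f(u) \equiv u^p \pmod{p^2}$, placing the Cais-Liu situation maximally close to Kisin's classical setting, and it produces $j_0 = p$ in Proposition \ref{val}, yielding the tightest bound $v_R(g\bar{u} - \bar{u}) = p^N/(p-1) + 1/e$. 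These ramification bounds are what should propagate the uniqueness of Kisin module structures from the free to the torsion setting and guarantee the intrinsic reconstruction of the $G$-action from $\mfM$.
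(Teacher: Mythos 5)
The paper does not prove Theorem \ref{FFT:r=1} through finite flat group schemes at all; it goes through the $(\vphi,G)$-module machinery, maximal Kisin modules, and uniqueness of $G$-actions on simple objects. Your proposal takes a genuinely different route, but it has gaps that the paper's machinery is precisely designed to fill, so as written it does not constitute a proof.

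The first gap is that you invoke a ``torsion analogue of Theorem \ref{pdiv},'' an anti-equivalence between $\Mod^1_{\mfS_{\infty}}$ and finite flat group schemes over $\cO_K$ killed by a power of $p$, as though it followed from Theorem \ref{pdiv} together with Proposition \ref{BASIC2}. It does not: one would need the assignment $H\mapsto \mfM_1/\mfM_0$ to be independent of the chosen $p$-divisible resolution, functorial, and essentially surjective, which is the content of a substantial theorem (it is essentially what Kisin, and later Kim/Lau/Liu for $p=2$, proved in the case $f(u)=u^p$). Nothing in the present paper establishes this in the Cais--Liu setting, and the paper deliberately avoids needing it.

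The second, and more serious, gap is the descent step. You start from a $G_{\underline{\pi}}$-equivariant $f\colon T\to T'$, pass to \'etale $\vphi$-modules to get $\cO_{\E}\otimes_{\mfS}\mfM'\to \cO_{\E}\otimes_{\mfS}\mfM$, and assert that ``together with the presentations $\mfM=\mfM_1/\mfM_0$, $\mfM'=\mfM'_1/\mfM'_0$ and the saturation criteria of Proposition \ref{BASIC2}, this descent should go through.'' This is exactly where the argument fails. The functor $T_{\mfS}\colon \Mod^1_{\mfS_\infty}\to\mrm{Rep}_{\mrm{tor}}(G_{\underline{\pi}})$ is faithful but \emph{not full} in general (the paper says so explicitly at the start of Section~4.3): a $\vphi$-equivariant map of \'etale modules need not carry $\mfM'$ into $\mfM$, only into some larger element of $F^1_{\mfS}(M)$, because a torsion \'etale $\vphi$-module can support several non-isomorphic Kisin lattices of height $1$ (equivalently, a flat $G$-representation can have several non-isomorphic finite flat prolongations when $e\geq p-1$). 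Proposition \ref{BASIC2} gives no control over this. The paper's solution is the theory of maximal torsion Kisin modules: $\mrm{Max}^1$ identifies a canonical lattice, the category $\mrm{Max}^1_{\mfS_\infty}$ is abelian, $T_{\mfS}$ becomes fully faithful on it (Theorem \ref{MAX}(5)), and then one must show the $G$-action on $W(R)\otimes_{\vphi,\mfS}\mfM(\mfn)$ for the simple maximal objects $\mfM(\mfn)$ is \emph{unique} subject to a quantitative bound (Theorem \ref{simple:str}), fed by the ideal estimate of Proposition \ref{lem0}, which together with Theorem \ref{FFT2} forces any $G_{\underline{\pi}}$-equivariant morphism to be $G$-equivariant. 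Your discussion of $j_0=p$ and $v_R(g\bar{u}-\bar{u})$ correctly identifies the relevant quantitative input, but without the $(\vphi,G)$-module framework, the reduction to irreducible $T$ with $k=\overline{k}$, and the uniqueness theorem on $\mfM(\mfn)$, that estimate has nothing to act on. So the proposal names the right obstruction but does not overcome it.
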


\noindent
We recall that the condition (P)
is  that 
$\vphi^n(f(u)/u)$
is not a power of $E(u)$ for any $n\ge 0$.

For general $r$, we need some more technical assumptions.
Let $f(u)=\prod^n_{i=1}f_i(u)$ be an irreducible decomposition
of $f(u)$ in $W(\bar{k})[u]$
with the property that $f_1(u),\dots ,f_m(u)$ are of degree $\le e$
and $f_{m+1}(u),\dots ,f_n(u)$ are of degree $> e$.
We put $u_f=\prod^m_{i=1}f_i(u)$ and denote by $n_f$ the degree of $u_f$.
By definition, $u_f$ is divided by $u^{i_0}$
if $f(u)=\sum^p_{i=i_0}a_iu^i$ with $a_{i_0}\not=0$.
For example, we have $u_f=f(u)$ and $n_f=p$ 
if $f(u)$ is of the form $u^p+a_{p-1}u^{p-1}$.
For any integer $m\ge 0$,
we denote by $f^{(m)}(u)$ the $m$-th composite $(f\circ f\circ \cdots \circ f)(u)$
of $f(u)$.

\begin{theorem} 
\label{FFT}
Assume the following conditions.
\begin{itemize}
\item[${\rm (i)}$] $gu\in uW(R)$ for any $g\in G$.
\item[${\rm (ii)}$] $f^{(n)}(\pi)\not=0$ for any $n\ge 1$. 
\item[${\rm (iii)}$] $v_p(a_1)>r$ and $v_p(a_i)>1$  for any $1\le i\le p-1$.
\end{itemize}
Then 
the restriction functor $\mrm{Rep}^{r,\mrm{cris}}_{\mrm{tor}}(G)\to 
\mrm{Rep}_{\mrm{tor}}(G_{\underline{\pi}})$
is fully faithful if $e(r-1)<n_f(p-1)/p$.
\end{theorem}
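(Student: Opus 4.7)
The plan is to adapt the strategy of \cite[Theorem~1.2]{Oz2} (which treats the Kisin case $f(u)=u^p$) to the general iterate-extension setting. By the standard graph-of-morphism trick, full faithfulness reduces to showing that every $G_{\underline{\pi}}$-equivariant quotient $\pi\colon T\twoheadrightarrow T''$ between torsion crystalline representations in $\mrm{Rep}^{r,\mrm{cris}}_{\mrm{tor}}(G)$ is automatically $G$-equivariant. Write $T=L/L'$ where $L'\subset L$ are $G$-stable lattices in a crystalline $\mbb{Q}_p$-representation with Hodge--Tate weights in $[0,r]$. Hypothesis (iii) supplies $v_p(a_1)>r\ge\max\{r,1\}$, and hypothesis (ii) is readily seen to imply condition $(P)$ (since $f^{(n)}(\pi)\ne0$ rules out $\vphi^n(f(u)/u)$ being a power of $E(u)$). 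Thus Theorem \ref{MT} applies and produces an injection of free $(\vphi,\hat G)$-modules $\hat\mfM'\hookrightarrow\hat\mfM$ corresponding to $L'\subset L$. Setting $\mfN:=\mfM/\mfM'$, one obtains a torsion Kisin module of height $r$ on which $W(R)\otimes_{\vphi,\mfS}\mfN$ inherits a residual $\hat G$-action from $\hat\mfM$.

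Via $(P)$, Proposition \ref{KisinFF}(3), and d\'evissage using Proposition \ref{BASIC2}(3)--(4), one derives the torsion version of fully faithfulness of $T_\mfS$, which produces a $\vphi$-stable quotient $\mfN\twoheadrightarrow\mfN''$ of torsion Kisin modules whose $T_\mfS$-image realises the given $G_{\underline{\pi}}$-equivariant quotient $\pi$. It then suffices to show that $\mfK:=\ker(\mfN\to\mfN'')$ is stable under the $\hat G$-action on $W(R)\otimes_{\vphi,\mfS}\mfN$. For $g\in\hat G$ and a lift $\tilde y\in\mfM$ of $y\in\mfN$, Lemma \ref{range} places the element $g(1\otimes\tilde y)-(1\otimes\tilde y)$ inside $\vphi(u_g)W(R)\otimes_{\vphi,\mfS}\mfM$, where $u_g=gu-u\in uW(R)$ by hypothesis (i). Hence the obstruction to $\hat G$-stability of $\mfK$ lives in $\vphi(u_g)\cdot(W(R)/p^n)\otimes_{\vphi,\mfS}\mfN''$ for some $n$ killing $\mfN''$, and the task reduces to showing this module vanishes.

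The vanishing is a valuation computation. Combining Proposition \ref{val} with a Newton-polygon analysis of $f\bmod p$, and using the decomposition $f=u_f\cdot\prod_{i>m}f_i$ together with hypothesis (iii), one obtains the lower bound $v_R(u_g\bmod p)\ge n_f/(e(p-1))$ for any $g\in G\smallsetminus G_{\underline{\pi}}$, whence $v_R(\vphi(u_g)\bmod p)\ge pn_f/(e(p-1))$. On the other hand, by Proposition \ref{BASIC2}(3), $\mfN''$ admits a $\vphi$-stable filtration with successive quotients finite free over $k[\![u]\!]$ of height $r$; on each such quotient, the target of $\vphi^{\ast}$ is annihilated after multiplication by $u^{e(r-1)}$, once the $E(u)^r$-cokernel and the normalisation $v_R(\underline\pi)=1/e$ are accounted for. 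The hypothesis $e(r-1)<n_f(p-1)/p$ is precisely the inequality ensuring that $\vphi(u_g)$ carries strictly more $u$-adic weight than is needed to kill each graded piece, so the obstruction dies by d\'evissage along the filtration and the proof concludes.

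The hard part will be Stage~3: the precise numerical matching of the Newton-polygon bound $pn_f/(e(p-1))$ with the height-$r$ annihilator $e(r-1)$, since in the general iterate setting $u_g$ no longer lies automatically in $u^pW(R)$ as it does for $f(u)=u^p$. One must confirm that the extremal estimate on $j_0$ arising in Proposition \ref{val} really is controlled by the invariant $n_f$ under hypothesis (iii), and carefully manage the interaction between the inductive filtration of $\mfN''$ and the twist by $\vphi$ when reducing to the minimal $k[\![u]\!]$-free case. Hypothesis (i) is what allows the entire discussion to take place $u$-adically, and hypothesis (ii) is what ensures that $(P)$ and hence the full Theorem \ref{MT} machinery is available.
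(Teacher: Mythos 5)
Your proposal has a genuine gap at its second paragraph, where you claim to ``derive the torsion version of fully faithfulness of $T_{\mfS}$'' from Proposition \ref{KisinFF}(3), condition (P), and d\'evissage via Proposition \ref{BASIC2}. This is false: the functor $T_{\mfS}\colon \Mod^r_{\mfS_\infty}\to\mrm{Rep}_{\mrm{tor}}(G_{\underline\pi})$ is exact and \emph{faithful} but is \emph{not} full in general, and the paper states this explicitly at the start of the subsection on maximal objects. Proposition \ref{KisinFF}(3) is about \emph{free} Kisin modules, and no d\'evissage promotes that to the torsion case because a $G_{\underline\pi}$-equivariant map between torsion modules need not lift to a map of torsion Kisin modules of a given height. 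Without this lift, you cannot produce the $\vphi$-stable quotient $\mfN\twoheadrightarrow\mfN''$ on which your whole Stage~3 analysis operates, so the argument stalls before the valuation computation even begins.

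Overcoming exactly this failure is the purpose of the paper's machinery of maximal Kisin modules: Theorem \ref{MAX}(5) shows $T_{\mfS}$ \emph{is} fully faithful on the full subcategory $\mrm{Max}^r_{\mfS_\infty}$, and Proposition \ref{simple2} classifies its simple objects as the explicit modules $\mfM(\mfn)$. The paper's proof therefore first reduces to $k=\overline{k}$ and $T$ irreducible (two steps your sketch omits), identifies $\mrm{Max}^r(\mfM)\simeq\mfM(\mfn)$, invokes the \emph{uniqueness} of the $(\vphi,G)$-structure on $\mfM(\mfn)$ from Theorem \ref{simple:str}, and only then applies the commutator estimate of Theorem \ref{FFT2} with $\alpha=u_f$. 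Your Stage~3 also mislocates the decisive valuation: the relevant bound is $v_R(\bar{u}_f)=n_f/e>p(r-1)/(p-1)$ coming from Proposition \ref{lem1}(2) and Theorem \ref{FFT2}, not $v_R(u_g\bmod p)\ge n_f/(e(p-1))$, and the ``height-$r$ annihilator $e(r-1)$'' heuristic does not appear in the actual iterative estimate $c(s)=(v_R(\bar\alpha)-p(r-1)/(p-1))p^s+pr/(p-1)$. You would need to restructure the argument around maximal objects, or prove independently that the specific $\mfM$ arising from your exact sequence is maximal, before the rest can be made to work.
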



\subsection{Some remarks}
We give some remarks about the conditions (i) and (ii) in Theorem \ref{FFT}.  

We suspect that Theorem \ref{FFT} is still valid 
if we remove the condition (i) 
(moreover, maybe (i) is always satisfied). 
Here are some examples.

\begin{itemize}
\item[--] If $f(u)=u^p$, it is clear that  the conditions (i), (ii) and (iii) above are satisfied.

\item[--]  If $p$ is odd, $K$ is a finite extension of $\mbb{Q}_p$ and 
$K_{\underline{\pi}}/K$ is Galois (in this case this is abelian (cf.\ Remark 7.16 of \cite{CD})),
then the condition (i)  is satisfied.
In fact, the $G$-action on $W(R)$ preserves 
$\mfS$ if $K_{\underline{\pi}}/K$ is Galois and hence 
we have $gu\in I_+W(R)\cap \mfS= u\mfS\subset uW(R)$.
\end{itemize}

We give two remarks for the condition (ii).
First, it is not difficult to check that 
the condition (ii) implies the condition (P).
Next, for a fixed $f(u)$, 
the condition (ii) is satisfied except only finitely many choice of 
uniformizers $\pi$ of $K$.
Moreover, we have the following.
(We recall that $i_0$ is the integer defined by $f(u)=\sum^p_{i=i_0}a_iu^i$ with $a_{i_0}\not=0$.)

\begin{proposition}
Put
\begin{align*}
n_0=
\left\{ 
\begin{array}{cr}
ev_p(a_1) 
 & \mrm{if}\ i_0=1, \cr
\mrm{max}\{n\in \mbb{Z}\mid i_0^n\le e(i_0-1)v_p(a_{i_0})+1\} 
 &  \mrm{if}\ i_0\not=1.
\end{array}
\right.
\end{align*}
Then the following are equivalent.
\begin{itemize}
\item[${\rm (ii)}$] $f^{(n)}(\pi)\not=0$ for any $n\ge 1$.
\item[${\rm (ii')}$] $f^{(n)}(\pi)\not=0$ for any $1\le n\le n_0$.
\end{itemize} 
\end{proposition}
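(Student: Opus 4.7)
The direction $(\mrm{ii}) \Rightarrow (\mrm{ii}')$ is immediate. For the converse I would argue by contradiction: assume $(\mrm{ii}')$ holds but $(\mrm{ii})$ fails, and let $n \geq 1$ be the minimal index with $f^{(n)}(\pi) = 0$. Then $n > n_0$ and $\alpha := f^{(n-1)}(\pi)$ is a nonzero root of $f$. The plan is to combine an upper bound on $v_p(\alpha)$ coming from the Newton polygon of $f$ with a lower bound on $v_p(f^{(n-1)}(\pi))$ coming from iterating the valuation map, and to show that these bounds are incompatible unless $n \leq n_0$.

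For the upper bound I would factor $f(u) = u^{i_0} g(u)$ with $g(u) = a_{i_0} + a_{i_0+1} u + \cdots + u^{p-i_0}$. Since $f(u) \equiv u^p \pmod{p}$, every intermediate coefficient $a_{i_0 + j}$ of $g$ (for $1 \leq j < p-i_0$) satisfies $v_p(a_{i_0+j}) \geq 1$, while the endpoints of the Newton polygon of $g$ are $(0, V)$ and $(p-i_0, 0)$ with $V := v_p(a_{i_0})$. A direct inspection of the possible first segments gives the universal bound $v_p(\alpha) \leq V$. For the lower bound, set $v_n := v_p(f^{(n)}(\pi))$; then $v_0 = 1/e$ and $v_{n+1} \geq \phi(v_n)$, where
\[ \phi(v) := \min_{i_0 \leq i \leq p}(v_p(a_i) + i v). \]
The function $\phi$ is piecewise linear with a \emph{small regime} $\phi(v) = p v$ for $v < V/(p-i_0)$ (driven by the $u^p$ term) and a \emph{large regime} $\phi(v) = V + i_0 v$ for $v \geq V/(p-i_0)$ (driven by the $a_{i_0} u^{i_0}$ term).

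Iterating $\phi$ from $v_0 = 1/e$, one tracks the sequence through an initial small-regime phase (where $v_n = p^n/e$ grows geometrically by factor $p$) and a subsequent large-regime phase (where $v_{n+1} = V + i_0 v_n$, giving a closed-form expression $v_n = V(i_0^{n-n^*} - 1)/(i_0 - 1) + i_0^{n-n^*} v_{n^*}$ after the transition step $n^*$). A direct arithmetic verification then shows that the definition of $n_0$ --- namely $n_0 = eV$ when $i_0 = 1$ and $i_0^{n_0+1} > e(i_0-1)V + 1$ when $i_0 \geq 2$ --- is calibrated exactly so that $v_{n_0} > V$ whenever $(\mrm{ii}')$ ensures $v_0, \ldots, v_{n_0}$ are all finite. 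Combined with the root bound $v_p(\alpha) \leq V$ from Step 1 and the monotonicity of $v_n$, this forces $n - 1 < n_0$, contradicting $n > n_0$.

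The main obstacle is the bookkeeping required to handle the small-to-large regime transition when $v_0 = 1/e$ lies below the threshold $V/(p-i_0)$, which is the generic case whenever $eV$ is large relative to $p$. The formula for $n_0$ has been calibrated to be valid for every admissible prime $p \geq i_0 + 1$, with the worst case occurring at the smallest such $p$ (namely $p = 2$ for $i_0 = 1$ and $p = i_0 + 1$ for $i_0 \geq 2$), where the small-regime phase is longest. Matching the two-phase iteration against the exact formula for $n_0$ involves the same flavor of Newton polygon arithmetic used in the proof of Proposition \ref{val}.
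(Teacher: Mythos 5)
Your overall strategy matches the paper's: compare a Newton-polygon upper bound $v_p(f^{(n-1)}(\pi))\le V:=v_p(a_{i_0})$ (since $f^{(n-1)}(\pi)$ is a nonzero root of $f(u)/u^{i_0}$, whose polygon runs from $(0,V)$ to $(p-i_0,0)$) against a lower bound on the sequence $c_j:=v_p(f^{(j)}(\pi))$ obtained by iterating the valuation recursion. Your upper-bound step and the final combination via monotonicity are fine.

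The gap is in the lower-bound step: your two-regime description of $\phi(v)=\min_{i_0\le i\le p}(v_p(a_i)+iv)$ is false. The minimum is not always attained at $i=p$ or $i=i_0$: an intermediate term with $v_p(a_i)=1$ can beat both. Concretely, with $p=5$, $i_0=1$, $V=v_p(a_1)=10$, $v_p(a_2)=1$, at $v=2$ one has $\phi(2)=1+2\cdot 2=5$, whereas your ``small regime'' formula predicts $\phi(2)=pv=10$. Since the iteration needs $v_{n+1}\ge\phi(v_n)$, substituting an expression that \emph{overestimates} $\phi$ produces invalid lower bounds for the $v_n$, so the claimed ``direct arithmetic verification'' that $v_{n_0}>V$ does not follow from your closed forms. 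The correct and simpler device (and what the paper actually does) is the uniform estimate $\phi(v)\ge\min\{pv,\,1+i_0 v\}$, valid because $v_p(a_i)\ge 1$ and $i\ge i_0$ for all $i_0\le i\le p-1$; a one-line induction then gives $c_j\ge\frac{1}{e}\sum_{k=0}^{j}i_0^{k}$, which combined with $c_{n-1}\le V$ yields exactly the stated $n_0$ in both the $i_0=1$ and $i_0\ge 2$ cases, with no phase bookkeeping at all.
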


\begin{proof}
Assume that there exists an integer $n\ge 1$
such that $f^{(i)}(\pi)\not=0$ for any $0\le i\le n-1$ and $f^{(n)}(\pi)=0$.
(In particular, we have $f(u)\not= u^p$.)
It suffices to show $n\le n_0$.
Put $c_i=v_p(f^{(i)}(\pi))$ for $0\le i\le n-1$. 
We have $c_0=1/e$ by definition.
Note that $f^{(n-1)}(\pi)$ is a root of 
$X^{p-i_0}+\sum^{p-1}_{i=i_0}a_iX^{i-i_0}$.
Seeing the Newton polygon of this polynomial, 
it is not difficult to check that the inequality $c_{n-1}\le v_p(a_{i_0})$ holds. 
On the other hand, we claim that the inequality
\begin{equation}
\label{ii}
c_j\ge \frac{1}{e}\sum^j_{k=0}i_0^k
\end{equation}
holds for any $0\le j\le n-1$.
We show this claim by induction on $j$.
The case $j=0$ is clear. Assume that \eqref{ii} 
holds for $j= m-1$ and consider the case where $j=m$.
It follows from the equation 
$f^{(m)}(\pi)=f^{(m-1)}(\pi)^p+\sum^{p-1}_{i=i_0}a_if^{(m-1)}(\pi)^i$
that we have
\begin{align*}
c_m 
& 
\ge \mrm{min}\{ pc_{m-1}, v_p(a_i) + ic_{m-1} \mid i=i_0,\dots , p-1  \}
\ge \mrm{min}\{ pc_{m-1}, 1 + i_0c_{m-1} \}\\
& 
\ge \mrm{min}\left\{ \frac{p}{e}\sum^{m-1}_{k=0}i_0^k, 
\frac{1}{e} + \frac{i_0}{e}\sum^{m-1}_{k=0}i_0^k \right\} 
=\mrm{min}\left\{ \frac{p}{e}\sum^{m-1}_{k=0}i_0^k, 
\frac{1}{e}\sum^m_{k=0}i_0^k \right\}. 
\end{align*}
Since we have 
$p\sum^{m-1}_{k=0}i_0^k-\sum^m_{k=0}i_0^k
\ge  (1+i_0)\sum^{m-1}_{k=0}i_0^k-\sum^m_{k=0}i_0^k
=  \sum^{m-1}_{k=0}i_0^k-1
\ge 0$,
we obtain $c_m\ge e^{-1}\sum^m_{k=0}i_0^k$ as desired.
Therefore, we obtain 
$$
\frac{1}{e}\sum^{n-1}_{k=0}i_0^k\le c_{n-1}\le v_p(a_{i_0}).
$$
The desired result immediately follows from this.

\end{proof}

\subsection{Maximal objects}

We recall that the contravariant functor
$T_{\mfS}\colon \Mod^r_{\mfS_{\infty}}\to \mrm{Rep}_{\mrm{tor}}(G_{\underline{\pi}})$
is exact and faithful (cf.\ Proposition \ref{KisinFF}).
However, this is not full in general.
In this section, following \cite{CL1}, we first define a notion of 
{\it maximal} Kisin modules\footnote{We can also study 
the theory of {\it minimal} Kisin modules by similar arguments to \cite{CL1}.
However, we do not consider it in this paper since we do not need it 
for our purpose.}\footnote{As well as \cite{CL1}, 
results in this section can be applied 
also for the case ``$r=\infty$'' with suitable (minor) modifications.}.
Almost the arguments given in \cite{CL1} carry over to the present situation.
In particular, we can check that a category of maximal Kisin modules is abelian 
and the functor $T_{\mfS}$ restricted to a category of 
maximal Kisin modules is fully faithful.
These play an important role in the proof of Theorems \ref{FFT:r=1} and \ref{FFT}.

Let $M$ be an \'etale $\vphi$-module over $\cO_{\E}$ which is killed by a power of $p$.
Let $F^r_{\mfS}(M)$ be the set of torsion Kisin modules $\mfM$ over $\mfS$ of height $r$
such that $\mfM\subset M$ and $\mfM[1/u]=M$.
The set $F^r_{\mfS}(M)$ is an partially ordered set by inclusion.
\begin{lemma}
\label{supinf}
If $\mfM, \mfM'\in F^r_{\mfS}(M)$, 
then we have $\mfM+\mfM',\mfM\cap\mfM'\in F^r_{\mfS}(M)$.
\end{lemma}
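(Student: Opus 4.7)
The plan is to verify the two required properties -- height $r$ and generic fiber equal to $M$ -- for each of $\mfM+\mfM'$ and $\mfM\cap\mfM'$, handling the sum first and then deducing the intersection.

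For $\mfM+\mfM'$, I would first check the easy properties: it is finitely generated over $\mfS$ (sum of two finitely generated modules), is killed by a power of $p$, and is $u$-torsion free as a submodule of $M$, hence is $p'$-torsion free by the characterization preceding Proposition \ref{BASIC2}. The localization $(\mfM+\mfM')[1/u]$ contains $\mfM[1/u]=M$ and is contained in $M$, so equals $M$. For height, note that the image of $1\otimes\vphi\colon \vphi^{\ast}(\mfM+\mfM')\to \mfM+\mfM'$ is the $\mfS$-span of $\vphi(\mfM)\cup\vphi(\mfM')$, which contains $E(u)^r\mfM+E(u)^r\mfM'=E(u)^r(\mfM+\mfM')$ since $\mfM$ and $\mfM'$ individually have height $r$. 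Therefore $\mfM+\mfM'\in{}'\Mod^r_{\mfS}$, and being $p'$-torsion free it lies in $\Mod^r_{\mfS_\infty}$, so $\mfM+\mfM'\in F^r_{\mfS}(M)$.

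For $\mfM\cap\mfM'$, the preliminary properties are again straightforward: it is finitely generated (since $\mfS$ is noetherian and it sits inside the finitely generated module $\mfM$), killed by a power of $p$, and $u$-torsion free. The generic fiber equality $(\mfM\cap\mfM')[1/u]=\mfM[1/u]\cap\mfM'[1/u]=M\cap M=M$ follows from the flatness of $\mfS\to\cO_{\E}$. For the height statement, I would exploit the standard short exact sequence
\[
0\longrightarrow \mfM\cap\mfM'\xrightarrow{x\mapsto(x,x)} \mfM\oplus\mfM'\xrightarrow{(x,y)\mapsto x-y} \mfM+\mfM'\longrightarrow 0
\]
of $\vphi$-modules over $\mfS$. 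All three terms are of finite type and $p'$-torsion free, and the middle term $\mfM\oplus\mfM'$ is manifestly of height $r$. Applying the first proposition of Section \ref{EKmod} (the one citing \cite{Fo1}, Propositions B.1.3.3 and B.1.3.5), both the subobject $\mfM\cap\mfM'$ and the quotient $\mfM+\mfM'$ inherit height $r$ from the middle term. (The argument for the sum given above serves as an independent check.)

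There is really no hard step here; the only mild subtlety is that one needs the sum to belong to ${}'\Mod^r_{\mfS}$ before citing the proposition for the intersection via the short exact sequence, and one has to verify $p'$-torsion freeness for both constructions to promote membership in ${}'\Mod^r_{\mfS}$ to membership in $\Mod^r_{\mfS_\infty}$. Both are handled by the observation that submodules of $M$ are automatically $u$-torsion free. Thus the lemma reduces to one direct computation (height of the sum) plus a citation of the already-recorded stability of height $r$ under exact sequences.
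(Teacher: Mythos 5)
Your proof is correct and, as far as I can tell, reconstructs essentially the argument that Caruso--Liu give in Proposition 3.2.3 of \cite{CL1}, which is what the paper cites (the paper gives no independent proof). The key step in both is the short exact sequence $0\to\mfM\cap\mfM'\to\mfM\oplus\mfM'\to\mfM+\mfM'\to 0$ together with the stability of ``height $r$'' under subobjects and $p'$-torsion-free quotients; your direct verification of the height of $\mfM+\mfM'$ is a harmless redundancy since the exact sequence already delivers it.
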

\begin{proof}
See the proof of Proposition 3.2.3 of \cite{CL1}.
\end{proof}

\begin{lemma}
\label{length}
Let $\mfM$ be a torsion Kisin module $\mfM$ over $\mfS$ of height $r$ and
put $M=\mfM[1/u]$.
If $\mfM'\in  F^r_{\mfS}(M)$ and $\mfM \subset \mfM'$,
then we have 
$$
\mrm{length}_{\mfS} (\mfM'/\mfM)\le \left[\frac{er}{p-1}\right]\cdot \mrm{length}_{\cO_{\E}}M.
$$
Here, $[x]$ denotes the integer part of  $x$.
\end{lemma}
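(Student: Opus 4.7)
The strategy is to induct on $K$, the common $p$-power annihilator of $\mfM$ and $\mfM'$, with the base case $K=1$ carried out via an iterated-Frobenius/snake-lemma argument. For the inductive step, one decomposes
\[
0 \to (\mfM + p\mfM')/\mfM \to \mfM'/\mfM \to \mfM'/(\mfM + p\mfM') \to 0.
\]
The top quotient equals $(\mfM'/p\mfM')/\mrm{im}(\mfM/p\mfM)$, a $p$-killed pair of Kisin modules in $F^r_\mfS(M/pM)$ (the image is a Kisin submodule because submodules of height-$r$ Kisin modules are of height $r$, by the first proposition of Section \ref{EKmod}), so the base case bounds it by $[er/(p-1)] \cdot \mrm{length}_{\cO_\E}(M/pM)$. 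The bottom quotient $(\mfM + p\mfM')/\mfM \cong p\mfM'/(\mfM \cap p\mfM')$ is a quotient of $p\mfM'/p\mfM$, a Kisin pair in $F^r_\mfS(pM)$ with annihilator $p^{K-1}$; the inductive hypothesis bounds it by $[er/(p-1)] \cdot \mrm{length}_{\cO_\E}(pM)$. Adding and using the identity $\mrm{length}(M) = \mrm{length}(pM) + \mrm{length}(M/pM)$ closes the induction.

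For the base case $K=1$, Proposition \ref{BASIC2} makes $\mfM$ and $\mfM'$ free $k[\![u]\!]$-modules of rank $d := \mrm{length}_{\cO_\E}M$, and elementary divisors write $Q := \mfM'/\mfM \simeq \bigoplus_i k[\![u]\!]/u^{a_i}$; it suffices to prove the per-index bound $a_i \le [er/(p-1)]$. For each $n \ge 1$, the functor $\vphi^{\ast n} := \mfS \otimes_{\vphi^n, \mfS}(-)$ is exact (since $\mfS$ is free of rank $p^n$ over $\vphi^n(\mfS)$), so applying it to $0 \to \mfM \to \mfM' \to Q \to 0$ gives an exact sequence mapping down to the original via $\vphi^n$. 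Injectivity of $\vphi^n$ on the free modules $\vphi^{\ast n}\mfM$ and $\vphi^{\ast n}\mfM'$ together with the snake lemma yields an embedding $\ker \bar\vphi^n_Q \hookrightarrow \mrm{coker}(\vphi^n_\mfM)$. Iterating the height-$r$ condition, this cokernel is killed by $\prod_{i=0}^{n-1}\vphi^i(E(u))^r$, which reduces mod $p$ to $u^N$ with $N := er(p^n-1)/(p-1)$. Since $\vphi^{\ast n} Q \simeq \bigoplus_i k[\![u]\!]/u^{p^n a_i}$ modulo $p$, the $u^N$-torsion subgroup of $\vphi^{\ast n} Q$ has length $\sum_i \min(N, p^n a_i)$, and combining with the length identity $(p^n - 1)\sum_i a_i = \mrm{length}(\ker \bar\vphi^n_Q) - \mrm{length}(\mrm{coker}\,\bar\vphi^n_Q)$ gives
\[
(p^n - 1)\sum_i a_i \le \sum_i \min\!\bigl(\tfrac{er(p^n-1)}{p-1},\ p^n a_i\bigr).
\]

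Dividing by $p^n - 1$ and letting $n \to \infty$ yields $\sum_i a_i \le \sum_i \min(er/(p-1), a_i)$, which forces $a_i \le er/(p-1)$ for every $i$ and hence, by integrality, $a_i \le [er/(p-1)]$. The main obstacle is precisely this last passage: a one-step ($n=1$) version of the snake-lemma inequality only produces $\sum a_i \le der/(p-1)$, which is strictly weaker than $d\,[er/(p-1)]$ when $er/(p-1)$ is non-integral; the full iterated-Frobenius argument, combined with the refined length estimate $\sum_i \min(N, p^n a_i)$ (rather than the crude bound $dN$) and the asymptotic limit $n \to \infty$, is what extracts the sharp per-index bound from the family of average inequalities.
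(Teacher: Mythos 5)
Your base case is handled via a genuinely interesting iterated-Frobenius argument — applying $\vphi^{*n}$, using the snake lemma to embed $\ker(\bar\vphi^n_Q)$ into $\mrm{coker}(\vphi^n_\mfM)$ (killed mod $p$ by $u^{er(p^n-1)/(p-1)}$), combining the refined length estimate $\sum_i\min(N,p^na_i)$ with the identity $(p^n-1)\sum a_i = \mrm{length}(\ker)-\mrm{length}(\mrm{coker})$, and letting $n\to\infty$ to extract the per-index bound $a_i\le er/(p-1)$. That chain of reasoning is correct and gives the sharp bound.

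However, your inductive step has a genuine gap. To apply the base case to the pair $\bigl(\mrm{im}(\mfM/p\mfM),\ \mfM'/p\mfM'\bigr)$ in $F^r_{\mfS}(M/pM)$, you need $\mfM'/p\mfM'$ to be a torsion Kisin module, which (since it is killed by $p$) requires it to be $u$-torsion free — equivalently, $\mfM'\cap pM = p\mfM'$ inside $M$. You assert this implicitly but do not prove it, and it is not automatic: for a general finite-type $p'$-torsion-free $\mfS$-module killed by a power of $p$, the quotient by $p$ can acquire $u$-torsion (e.g.\ $\mfS^2/\langle(p^3,0),(-u,p)\rangle$ is $p'$-torsion free, yet $e_1$ has $ue_1=pe_2\in p\mfM'$ while $e_1\notin p\mfM'$). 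Whether the Frobenius and height-$r$ structure forbid this requires an argument you haven't given. The gap is cleanly avoided by running the d\'evissage on kernels and images of multiplication by $p$ rather than on $\mfM+p\mfM'$: the snake lemma applied to
\[
\begin{CD}
0 @>>> \mfM[p] @>>> \mfM @>{p}>> p\mfM @>>> 0\\
@. @VVV @VVV @VVV @.\\
0 @>>> \mfM'[p] @>>> \mfM' @>{p}>> p\mfM' @>>> 0
\end{CD}
\]
yields $0\to\mfM'[p]/\mfM[p]\to\mfM'/\mfM\to p\mfM'/p\mfM\to0$, where $\mfM'[p],\mfM[p]\in F^r_{\mfS}(M[p])$ and $p\mfM',p\mfM\in F^r_{\mfS}(pM)$ are all automatically $p'$-torsion-free (being submodules of $\mfM'$ and $\mfM$), hence torsion Kisin modules of height $r$ by the first Proposition of Section \ref{EKmod}. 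Combined with $\mrm{length}_{\cO_\E}(M[p])+\mrm{length}_{\cO_\E}(pM)=\mrm{length}_{\cO_\E}(M)$, this closes the induction without any $u$-torsion issue. You should either supply a proof that $\mfM'/p\mfM'$ is $u$-torsion free for torsion Kisin modules, or replace your decomposition with this one.
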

\begin{proof}
See the proof of Lemma 3.2.4 of \cite{CL1}.
\end{proof}

By the above lemmas, we immediately obtain
\begin{corollary}
\label{Great}
Let $M\in \Mod_{\cO_{\E,\infty}}$ and suppose that 
 $F^r_{\mfS}(M)\not= \emptyset$.

\noindent
$(1)$ The set $F^r_{\mfS}(M)$ has a greatest element and a smallest element.

\noindent
$(2)$ If $er<p-1$, then $F^r_{\mfS}(M)$ contains only one element.
\end{corollary}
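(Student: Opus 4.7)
The plan is to deduce both parts directly from Lemmas \ref{supinf} and \ref{length}, with part (2) being essentially immediate and part (1) following from a boundedness argument.

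First I would handle part (2), since it is the shortest. The hypothesis $er<p-1$ forces $[er/(p-1)]=0$, so Lemma \ref{length} gives $\mrm{length}_{\mfS}(\mfM'/\mfM)=0$ whenever $\mfM\subset \mfM'$ are both in $F^r_{\mfS}(M)$. Hence any two elements of $F^r_{\mfS}(M)$ containing a common element must be equal, and combined with Lemma \ref{supinf} (which lets us form $\mfM\cap\mfM'$ inside $F^r_{\mfS}(M)$) we conclude that any two elements at all are equal.

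For part (1), I would fix an arbitrary $\mfM_0\in F^r_{\mfS}(M)$ (nonempty by assumption) and consider the subset $F^r_{\mfS}(M)_{\supset \mfM_0}$ of those $\mfM'$ containing $\mfM_0$. By Lemma \ref{length}, every such $\mfM'$ satisfies $\mrm{length}_{\mfS}(\mfM'/\mfM_0)\le [er/(p-1)]\cdot \mrm{length}_{\cO_{\E}}M$, which is a fixed finite bound. By Lemma \ref{supinf}, this subposet is closed under finite sums, so among its elements there is one of maximal length, say $\mfM_{\max}$. Then for any $\mfM'\in F^r_{\mfS}(M)$, the sum $\mfM'+\mfM_{\max}$ lies in $F^r_{\mfS}(M)_{\supset \mfM_0}$ and contains $\mfM_{\max}$, so by maximality equals $\mfM_{\max}$; thus $\mfM'\subset \mfM_{\max}$, showing $\mfM_{\max}$ is the greatest element of all of $F^r_{\mfS}(M)$. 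The smallest element is obtained dually: fix any $\mfM_1\in F^r_{\mfS}(M)$, form intersections (which stay in $F^r_{\mfS}(M)$ by Lemma \ref{supinf}), and use that $\mrm{length}_{\mfS}(\mfM_1/(\mfM_1\cap \mfM'))=\mrm{length}_{\mfS}((\mfM_1+\mfM')/\mfM')$ is bounded by Lemma \ref{length} applied to $\mfM'\subset \mfM_1+\mfM'$, so the descending chain of intersections stabilizes at the smallest element.

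There is no real obstacle here: the only mild point is recognizing that Lemma \ref{length} must be applied to pairs where one contains the other (which is why we first pass to the subposet $F^r_{\mfS}(M)_{\supset \mfM_0}$, rather than trying to bound lengths of arbitrary elements directly), and then invoking Lemma \ref{supinf} to stay inside $F^r_{\mfS}(M)$ while taking the needed sums and intersections.
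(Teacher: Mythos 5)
Your proof is correct and is the natural way to deduce the corollary from Lemmas \ref{supinf} and \ref{length}; the paper itself states only that the corollary follows immediately from those two lemmas, and your argument fills in exactly the standard boundedness details (maximal length gives the greatest element, maximal colength in a fixed $\mfM_1$ gives the smallest, with Lemma \ref{supinf} guaranteeing sums and intersections stay in the family) that make it immediate. The final phrase about a "descending chain of intersections stabilizing" is a little loose since the poset need not be linearly ordered, but the intended reading — take an element of $F^r_{\mfS}(M)$ contained in $\mfM_1$ with maximal colength, then show it is contained in every $\mfM'$ by intersecting and invoking maximality — is clear and correct.
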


\begin{definition}
Let $\mfM$ be a torsion Kisin module over $\mfS$ of height $r$.
We denote by $\mrm{Max}^r(\mfM)$ the greatest element 
of $F^r_{\mfS}(\mfM[1/u])$.
We say that $\mfM$ is {\it maximal (of height $r$)} if $\mfM=\mrm{Max}^r(\mfM)$.
\end{definition}

We denote by $\mrm{Max}^r_{\mfS_{\infty}}$ the full subcategory of 
$\Mod^r_{\mfS_{\infty}}$ consisting of maximal Kisin modules.
By Corollary \ref{Great},
we have $\Mod^r_{\mfS_{\infty}}=\mrm{Max}^r_{\mfS_{\infty}}$
if $er<p-1$.

We can check that all the properties given in Section 3.3 
in \cite{CL1} holds  also for the present situation by the same arguments given in 
{\it loc.\ cit}. 
Here we describe only a part of properties on maximal Kisin modules
that we need later.

\begin{theorem}
\label{MAX}
$(1)$ The implication $\mfM \mapsto \mrm{Max}^r(\mfM)$ defines a covariant functor
$\mrm{Max}^r\colon \Mod^r_{\mfS_{\infty}}\to \Mod^r_{\mfS_{\infty}}$.
Furthermore, this is left exact and $\mrm{Max}^r\circ \mrm{Max}^r=\mrm{Max}^r$.

\noindent
$(2)$ The category $\mrm{Max}^r_{\mfS_{\infty}}$ is abelian.
Moreover, for any morphism $f\colon \mfM\to \mfM'$ in
$\mrm{Max}^r_{\mfS_{\infty}}$, we have the following.
\begin{itemize}
\item[${\rm (i)}$] The  kernel $\mrm{ker}(f)$ of $f$ in the usual sense is an object of  $\mrm{Max}^r_{\mfS_{\infty}}$.
Furthermore, it is the  kernel of $f$ in the abelian category
$\mrm{Max}^r_{\mfS_{\infty}}$.
\item[${\rm (ii)}$] The cokernel $\mrm{coker}(f)$ in the usual sense is of height $r$
and $\mrm{coker}(f)/(u\mathchar`-\mrm{tors})$ is a Kisin module of height $r$.
Moreover, 
$\mrm{Max}^r\left(\mrm{coker}(f)/(u\mathchar`-\mrm{tors}) \right)$
is the cokernel  of  $f$ in the abelian category
$\mrm{Max}^r_{\mfS_{\infty}}$.
If $f$ is injective, then $\mrm{coker}(f)$ is $u$-torsion free.
\item[${\rm (iii)}$] The image $\mrm{im}(f)$ $($resp.\ the coimage $\mrm{coim}(f)$$)$ 
of $f$ in the usual sense is a Kisin module of height $r$.
Moreover, $\mrm{Max}^r(\mrm{im}(f))$ $($resp.\ $\mrm{Max}^r(\mrm{coim}(f))$$)$ is
the image $($resp.\ the coimage$)$ of  $f$ in the abelian category
$\mrm{Max}^r_{\mfS_{\infty}}$.
\end{itemize}

\noindent
$(3)$ Let $0\to \mfM'\overset{\alpha}{\to} \mfM\overset{\beta}{\to} \mfM''\to 0$ 
be a sequence in $\mrm{Max}^r_{\mfS_{\infty}}$ such that $\beta\circ \alpha=0$.
Then this sequence is exact in the abelian category $\mrm{Max}^r_{\mfS_{\infty}}$
if and only if 
$0\to \mfM'[1/u]\overset{\alpha[1/u]}{\to} \mfM[1/u]
\overset{\beta[1/u]}{\to} \mfM''[1/u]\to 0$ is exact as $\cO_{\E}$-modules.

\noindent
$(4)$ The functor  $\mrm{Max}^r_{\mfS_{\infty}}\to \Mod_{\cO_{\E,\infty}}$  given by 
$\mfM\mapsto \cO_{\E}\otimes_{\mfS}\mfM$ is exact and fully faithful.

\noindent
$(5)$ The functor $T_{\mfS}\colon \mrm{Max}^r_{\mfS_{\infty}}
\to \mrm{Rep}_{\mrm{tor}}(G_{\underline{\pi}})$
is exact and fully faithful.
\end{theorem}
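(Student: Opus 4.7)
The plan is to follow the strategy of \cite{CL1} closely; as the text indicates, the arguments there transfer to our Cais--Liu setting with no essential change, since the constructions use only the extension $\mfS\hookrightarrow\cO_{\E}$ and the height condition. The two workhorse inputs are Lemmas \ref{supinf} and \ref{length}: they guarantee that $F^r_{\mfS}(M)$ is closed under finite sums and intersections and that its elements have uniformly bounded $\mfS$-length, so $\mrm{Max}^r(\mfM)$ is well-defined as the unique maximum of $F^r_{\mfS}(\mfM[1/u])$.

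For (1), functoriality goes as follows: given $f\colon\mfM\to\mfM'$ in $\Mod^r_{\mfS_{\infty}}$, I extend to $f[1/u]$ and consider $f[1/u](\mrm{Max}^r(\mfM))+\mfM'\subset\mfM'[1/u]$. The first summand is a height-$r$ quotient, hence a Kisin module in $\Mod^r_{\mfS_{\infty}}$ (it is $u$-torsion-free because it sits in the $\cO_{\E}$-module $\mfM'[1/u]$, which is $p$-power torsion), so the sum lies in $F^r_{\mfS}(\mfM'[1/u])$ by Lemma \ref{supinf}, and maximality forces $f[1/u](\mrm{Max}^r(\mfM))\subset\mrm{Max}^r(\mfM')$. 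Idempotence $\mrm{Max}^r\circ\mrm{Max}^r=\mrm{Max}^r$ is immediate, and left exactness is read off from the kernel description in (2)(i) below.

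For (2), the key point is that the usual-sense kernel $\mrm{ker}(f)$ is already maximal: if $\mfN'\in F^r_{\mfS}(\mrm{ker}(f)[1/u])$ contains $\mrm{ker}(f)$, then $\mfN'+\mfM\in F^r_{\mfS}(\mfM[1/u])$ by Lemma \ref{supinf}, so $\mfN'\subset\mfM$ by maximality of $\mfM$, and then $f(\mfN')=0$ since $\mfM'$ has no $u$-torsion. The usual cokernel may contain $u$-torsion; quotienting it out produces a Kisin module of height $r$, and then $\mrm{Max}^r$ realises the abelian-category cokernel. The image, coimage, and the canonical map between them are handled similarly. Part (3) reduces to observing that $[1/u]$ is both exact (by flatness of $\mfS\hookrightarrow\cO_{\E}$) and faithful on $\mrm{Max}^r_{\mfS_{\infty}}$, the latter coming from part (4) below.

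For (4), exactness follows from flatness of $\mfS\hookrightarrow\cO_{\E}$, and full faithfulness is the central technical point: given $g\colon\cO_{\E}\otimes_{\mfS}\mfM\to\cO_{\E}\otimes_{\mfS}\mfN$ between the \'etale $\vphi$-modules associated to objects of $\mrm{Max}^r_{\mfS_{\infty}}$, the submodule $g(\mfM)\subset\mfN[1/u]$ is a height-$r$ $p'$-torsion-free Kisin module, so $g(\mfM)+\mfN\in F^r_{\mfS}(\mfN[1/u])$ and maximality of $\mfN$ forces $g(\mfM)\subset\mfN$, producing the required lift. Part (5) then follows by composing (4) with the equivalence $\Mod_{\cO_{\E,\infty}}\simeq\mrm{Rep}^{\mrm{tor}}_{\mbb{Z}_p}(G_{\underline{\pi}})$ of \cite[Corollary 3.2.3]{CL}. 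The main obstacle, exactly as in \cite[\S 3.3]{CL1}, will be the careful $u$-torsion bookkeeping in the cokernel and image constructions: one must verify that the two-step operation ``quotient out $u$-torsion, then apply $\mrm{Max}^r$'' genuinely realises the abelian-category cokernel by checking its universal property against arbitrary test morphisms in $\mrm{Max}^r_{\mfS_{\infty}}$, which is the nontrivial verification that consumes most of the work.
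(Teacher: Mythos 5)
Your proposal is correct and follows the same route as the paper, which simply observes that the arguments of \cite[\S 3.3]{CL1} carry over verbatim to the Cais--Liu setting and cites Propositions 3.3.2--3.3.4, Theorem 3.3.8, and Lemma 3.3.9 of that reference. Your sketch correctly identifies Lemmas \ref{supinf} and \ref{length} as the workhorses and reproduces the key maneuvers (the ``add $\mfM'$ and invoke maximality'' trick for functoriality, maximality of the naive kernel, $u$-torsion bookkeeping for cokernels, the $g(\mfM)+\mfN$ argument for full faithfulness); the only small informality is that several places apply Lemma \ref{supinf} to a pair of height-$r$ submodules of $\mfM'[1/u]$ only one of which a priori spans $\mfM'[1/u]$ over $\cO_{\E}$, but the underlying fact that a sum of two height-$r$ torsion Kisin modules inside a common \'etale $\vphi$-module is again one is exactly what the cited lemma's proof establishes, so this is a matter of phrasing rather than a gap.
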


\begin{proof}
(1) :  See the proof of Propositions 3.3.2 to 3.3.4 of \cite{CL1}.

\noindent
(2) : See the proof of Theorem 3.3.8 \cite{CL1}.

\noindent
(3) and (4) :  See the proof of Lemma 3.3.9 of \cite{CL1}.

\noindent
(5) : This follows from (4) immediately.

\end{proof}

Let us consider simple objects in the abelian category $\mrm{Max}^r_{\mfS_{\infty}}$.
Let $\mcal{S}$ be the set of sequences $\mfn=(n_i)_{i\in \mbb{Z}/d\mbb{Z}}$
of integers $0\le n_i\le er$ with smallest period  $d$ for some integer $d>0$.

\begin{definition}
\label{simple1}
Let $\mfn=(n_i)_{i\in \mbb{Z}/d\mbb{Z}}\in \mcal{S}$
be a sequence
with smallest period $d$.
We define a torsion Kisin module $\mfM(\mfn)$ of height $r$, killed by $p$, as follows:
\begin{itemize}
\item as a $\ku$-module, $\mfM(\mfn)=\bigoplus_{i\in \mbb{Z}/d\mbb{Z}} \ku e_i$;
\item for all $i\in \mbb{Z}/d\mbb{Z}$, $\vphi(e_i)=u^{n_i}e_{i+1}$.
\end{itemize}
\end{definition}

We denote by $\mcal{S}^r_{\mrm{max}}$ the set of sequences $\mfn=(n_i)_{i\in \mbb{Z}/d\mbb{Z}}$
of integers $0\le n_i\le \mrm{min}\{er, p-1\}$ with smallest period $d$ for some integer $d$
except the constant sequence with value $p-1$ (if necessary).  

\begin{proposition}
\label{simple2}
Assume that $k$ is algebraically closed.
Then all simple objects in the abelian category $\mrm{Max}^r_{\mfS_{\infty}}$
 are of the form $\mfM(\mfn)$
with  some $\mfn\in \mcal{S}^r_{\mrm{max}}$.
\end{proposition}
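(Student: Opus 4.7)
The plan is to follow closely the strategy developed in \cite{CL1} for the classical case $f(u)=u^p$, which carries over to our setting thanks to Theorem \ref{MAX}. The crucial tool is the exact fully faithful functor $\mfM\mapsto \mfM[1/u]$ from $\mrm{Max}^r_{\mfS_{\infty}}$ into $\Mod_{\cO_{\E,\infty}}$, which allows us to transfer questions about simplicity in $\mrm{Max}^r_{\mfS_{\infty}}$ to questions about simplicity of \'etale $\vphi$-modules over $k(\!(u)\!)$.

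I would begin by showing that any simple $\mfM$ is killed by $p$. Multiplication by $p$ is an endomorphism of $\mfM$ in the abelian category $\mrm{Max}^r_{\mfS_{\infty}}$, so by simplicity its image is either $0$ or $\mfM$, and Nakayama (applied to the finite-length, $p^{\infty}$-torsion $\mfS$-module $\mfM$) rules out the second alternative. Next, $M := \mfM[1/u]$ is simple as an \'etale $\vphi$-module: any $\vphi$-stable $k(\!(u)\!)$-subspace $N\subset M$ gives rise to $\mfN:=\mfM\cap N$, a torsion Kisin submodule of $\mfM$ of height $r$ by Proposition \ref{BASIC2}, and the inclusion $\mrm{Max}^r(\mfN)\subset \mrm{Max}^r(\mfM)=\mfM$ yields a subobject of $\mfM$ in $\mrm{Max}^r_{\mfS_{\infty}}$ with generic fibre $N$; simplicity then forces $N=0$ or $N=M$.

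I would then invoke the classification of simple \'etale $\vphi$-modules over $k(\!(u)\!)$ killed by $p$, where the algebraic closedness of $k$ is essential: such an $M$ of dimension $d$ admits a basis $\bar e_0,\dots,\bar e_{d-1}$ and integers $m_i\in\mbb{Z}$ ($i\in\mbb{Z}/d\mbb{Z}$) with $\vphi(\bar e_i)=u^{m_i}\bar e_{i+1}$, and the sequence $(m_i)$ has minimal period $d$ (otherwise $M$ would split nontrivially, contradicting simplicity). Every Kisin lattice of $M$ then has the form $\bigoplus_i \ku\cdot u^{a_i}\bar e_i$ for some $a_i\in\mbb{Z}$; writing $e_i:=u^{a_i}\bar e_i$, the Frobenius becomes diagonal with $\vphi(e_i)=u^{n_i}e_{i+1}$, where $n_i=m_i+pa_i-a_{i+1}$ is subject to $n_i\ge 0$ (integrality) and $n_i\le er$ (height $r$). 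Hence $\mfM\simeq \mfM(\mfn)$ for $\mfn=(n_i)$, and $\mfn$ inherits the minimal period $d$ from $(m_i)$.

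Finally, maximality of $\mfM$ forces $n_i\le p-1$ for every $i$: if some $n_{i_0}\ge p$, an elementary integer-programming argument on the constraints $0\le m_i+pa_i-a_{i+1}\le er$ produces a strictly smaller choice of $(a_i)$ satisfying the same constraints, which contradicts maximality of $\mfM$. The constant sequence $\mfn=(p-1)$ (of period $d=1$) must be excluded, because $\mfM((p-1))=\ku e_0$ with $\vphi(e_0)=u^{p-1}e_0$ is not maximal: the substitution $e_0':=u^{-1}e_0$ yields $\vphi(e_0')=e_0'$ and produces the strictly larger height-zero lattice $\ku e_0'$, whose maximalisation is $\mfM((0))$. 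The main obstacle, in my view, is the third step, namely the existence of the cyclic ``diagonal'' normal form for simple \'etale $\vphi$-modules over $k(\!(u)\!)$; this is where the hypothesis $k=\bar k$ enters crucially (to normalize scalar units to pure powers of $u$ via $(p-1)$-th roots and Hensel-type lifts), and it lies at the heart of the Dieudonn\'e--Manin style analysis underlying the proof.
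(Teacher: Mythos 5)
Your proposal reconstructs the argument of \cite{CL1} (Propositions 3.6.8 and 3.6.12), which is exactly what the paper does: the paper's proof is simply the citation to those results, together with the earlier remark that the arguments of \cite[\S 3.3]{CL1} transfer verbatim because everything in the mod-$p$ setting depends only on $E(u)\equiv u^e\pmod p$. Your four steps (reduction to $p\mfM=0$, simplicity of $\mfM[1/u]$ via the left exactness of $\mrm{Max}^r$ and the fully faithful functor to $\Mod_{\cO_{\E,\infty}}$, the Dieudonn\'e--Manin normal form over $k(\!(u)\!)$ using $k=\bar k$, and the combinatorial bound $n_i\le p-1$ with the exclusion of the constant sequence) are precisely the content of the cited propositions, so this is the same route, not a different one.
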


\begin{proof}
This is a part of Propositions 3.6.8 and 3.6.12 in \cite{CL1}.
\end{proof}

\subsection{$(\vphi,G)$-modules}

\begin{definition}
A {\it free $($resp.\ torsion$)$ $(\vphi,G)$-module $($of height $r$$)$} is a triple 
$\hat{\mfM}=(\mfM,\vphi,G)$ where 
\begin{itemize}
\item[(1)] $(\mfM,\vphi)$ is a free (resp.\ torsion) 
Kisin module $\mfM$ of height $r$,
\item[(2)] $G$ is an $W(R)$-semi-linear continuous
$G$-action  on $W(R) \otimes_{\vphi, \mfS}\mfM$,
\item[(3)] the $G$-action on $W(R) \otimes_{\vphi, \mfS}\mfM$
commutes  with $\vphi_{W(R)}\otimes \vphi_{\mfM}$, and 
\item[(4)] $\vphi^{\ast}\mfM \subset 
(W(R) \otimes_{\vphi, \mfS}\mfM)^{G_{\underline{\pi}}}$.
\end{itemize}
We denote by $\Mod^{r,G}_{\mfS}$ (resp.\ $\Mod^{r,G}_{\mfS_{\infty}}$) 
the category of free (resp.\ torsion) $(\vphi,G)$-modules of height $r$.
\end{definition}

We define  a $\mbb{Z}_p$-representation $\hat{T}(\hat{\mfM})$ 
of $G$ for any $(\vphi,G)$-module $\hat{\mfM}$ by
\begin{align*}
\hat{T}(\hat{\mfM}):=
\left\{ 
\begin{array}{cr}
\mrm{Hom}_{W(R),\vphi}(W(R)\otimes_{\vphi,\mfS}\mfM,W(R)). 
 & \mrm{if}\ \hat{\mfM}\in \Mod^{r,G}_{\mfS}, \cr
\mrm{Hom}_{W(R),\vphi}(W(R)\otimes_{\vphi,\mfS}\mfM,W(R)\otimes_{\mbb{Z}_p} 
\mbb{Q}_p/\mbb{Z}_p). 
 &  \mrm{if}\ \hat{\mfM}\in \Mod^{r,G}_{\mfS_{\infty}}.
\end{array}
\right.
\end{align*}
Here, the  $G$-action on  $\hat{T}(\hat{\mfM})$
is given by $(g.f)(x):=g(f(g^{-1}(x)))$ for 
$f\in \hat{T}(\hat{\mfM})$,
$g\in G$ and $x\in W(R)\otimes_{\vphi,\mfS}\mfM$.
Note that 
we have a natural isomorphism of 
$\mbb{Z}_p[G_{\underline{\pi}}]$-modules
$$
\theta\colon T_{\mfS}(\mfM)\overset{\sim}{\longrightarrow}
\hat{T}(\hat{\mfM})
$$
given by $\theta(f)(a\otimes x):=a\vphi(f(x))$
for $f\in T_{\mfS}(\hat{\mfM})$, $a\in W(R)$ and $x\in \mfM$.
In particular, if $\hat{\mfM}$ is free, then
$\hat{T}(\hat{\mfM})$ is a free $\mbb{Z}_p$-module of rank $d$,
where $d:=\mrm{rank}_{\mfS}\mfM$.
Hence we obtain a contravariant functor 
$$
\hat{T}\colon \Mod^{r,G}_{\mfS}\to \mrm{Rep}_{\mbb{Z}_p}(G)\quad \mrm{and}\quad
\hat{T}\colon \Mod^{r,G}_{\mfS_{\infty}}\to \mrm{Rep}^{\mrm{tor}}_{\mbb{Z}_p}(G).
$$
For a $(\vphi,\hat{G})$-module $\hat{\mfM}$, by extending the $G$-action on
$\whR\otimes_{\vphi,\mfS} \mfM$ (naturally obtained by the $\hat{G}$-action on this module)
to $W(R)\otimes_{\vphi,\mfS} \mfM$ by $W(R)$-semi-linearity,
we obtain a $(\vphi,G)$-module; we abuse notation by writing $\hat{\mfM}$ for it.

\begin{definition}
Let $\alpha\in W(R)\smallsetminus pW(R)$.
We define a full subcategory 
$\Mod^{r,G}_{\mfS}(\alpha)$ (resp.\ $\Mod^{r,G}_{\mfS_{\infty}}(\alpha)$)
of 
$\Mod^{r,G}_{\mfS}$ (resp.\ $\Mod^{r,G}_{\mfS_{\infty}}$) 
consisting of objects $\hat{\mfM}$ with the condition that
$$
g(1\otimes x)-(1\otimes x) \in \alpha I^{[1]}W(R)\otimes_{\vphi, \mfS}\mfM 
$$ 
for any $g\in G$ and $x\in \mfM$.
We put $\bar{\alpha}=\alpha \ \mrm{mod}\ pW(R)\in R$.
\end{definition}

\begin{theorem}
\label{FFT2}
Let $r,r'\ge 0$, 
$\hat{\mfM}\in \Mod^{r,G}_{\mfS_{\infty}}(\alpha)$
and 
$\hat{\mfN}\in \Mod^{r',G}_{\mfS_{\infty}}(\alpha)$.
Then we have
$\mrm{Hom}(\hat{\mfM},\hat{\mfN})=
\mrm{Hom}(\mfM,\mfN)$
if $v_R(\bar{\alpha})>p(r-1)/(p-1)$.

In particular, 
the forgetful functor 
 $\Mod^{r,G}_{\mfS_{\infty}}(\alpha)\to \Mod^r_{\mfS_{\infty}}$
is fully faithful
if $v_R(\bar{\alpha})>p(r-1)/(p-1)$.
\end{theorem}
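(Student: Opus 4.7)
The plan is to show that every $\vphi$-morphism $f : \mfM \to \mfN$ between the underlying Kisin modules is automatically a $(\vphi,G)$-morphism of $(\vphi,G)$-modules. Set $M := W(R) \otimes_{\vphi,\mfS} \mfM$, $N := W(R) \otimes_{\vphi,\mfS} \mfN$, and let $F := 1 \otimes f : M \to N$ be the $W(R)$-linear extension of $f$. For each $g \in G$ I form the defect $\delta_g := g \circ F - F \circ g$. Since $F$ is $W(R)$-linear and $g$ is $W(R)$-semi-linear, $\delta_g$ is $g$-semi-linear (i.e.\ $\delta_g(am) = g(a)\delta_g(m)$); and since both $F$ and the $G$-action commute with Frobenius, so does $\delta_g$. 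The goal is then to prove $\delta_g = 0$ for every $g$.

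For the base estimate, the defining condition of $\Mod^{r,G}_{\mfS_{\infty}}(\alpha)$ and $\Mod^{r',G}_{\mfS_{\infty}}(\alpha)$ applied to $x \in \mfM$ and $f(x) \in \mfN$, combined with $W(R)$-linearity of $F$, yields
\[
\delta_g(1 \otimes x) = \bigl(g(1 \otimes f(x)) - 1 \otimes f(x)\bigr) - F\bigl(g(1 \otimes x) - 1 \otimes x\bigr) \in \alpha I^{[1]} W(R) \otimes_{\vphi,\mfS} \mfN
\]
for every $x \in \mfM$. Next I iterate using the height-$r$ condition on $\mfM$: writing $E(u)^r m = \sum_i a_i \vphi_{\mfM}(y_i)$ with $a_i \in \mfS$, $y_i \in \mfM$, and applying $\delta_g$ to $\vphi(E(u))^r(1 \otimes m) = 1 \otimes E(u)^r m = \sum_i \vphi(a_i) \vphi_M(1 \otimes y_i)$, I obtain
\[
\vphi(gE(u))^r \cdot \delta_g(1 \otimes m) = \sum_i \vphi(g a_i) \cdot \vphi_N\bigl(\delta_g(1 \otimes y_i)\bigr).
\]
The principality $I^{[1]} W(R) = \vphi(\mft) W(R)$ together with $\vphi(\mft) = \mu_0 E(u) \mft$ yields the structural identity $\vphi(I^{[1]} W(R)) = \vphi(E(u)) \cdot I^{[1]} W(R)$; hence the inductive hypothesis $\delta_g(1 \otimes \mfM) \subset c_n \cdot I^{[1]} W(R) \cdot N$ refines, after one step, to a bound with new ``coefficient'' $c_{n+1}$ given (in valuation terms) by $\vphi(c_n)\vphi(E(u))/\vphi(gE(u))^r$.

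To make this division rigorous, I would d\'evissage---using the $G$-stable $p$-adic filtration $\mfN \supset p\mfN \supset p^2\mfN \supset \cdots$ and Proposition \ref{BASIC2}---to reduce to the case $p\mfM = p\mfN = 0$. Then $\mfM$ and $\mfN$ are free over $\ku$, $N$ is free over the valuation ring $R = W(R)/pW(R)$, and the computations $v_R(\overline{\vphi(E(u))}) = p$, $v_R(\overline{\vphi(gE(u))^r}) = rp$ give the recurrence $v_R(c_{n+1}) = p\, v_R(c_n) - p(r-1)$ with $v_R(c_0) = v_R(\bar\alpha)$, whose explicit solution
\[
v_R(c_n) = p^n v_R(\bar\alpha) - p(r-1) \cdot \tfrac{p^n - 1}{p-1}
\]
tends to $+\infty$ precisely when $v_R(\bar\alpha) > p(r-1)/(p-1)$, the hypothesis of the theorem. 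Since $\bigcap_{c > 0} \mfm^{\ge c}_R = 0$ and $\bar N$ is $R$-free, this forces $\delta_g(1 \otimes \mfM) = 0$, hence $\delta_g = 0$ by $g$-semi-linearity. The principal technical obstacle is the bookkeeping of the iteration---preserving the inductive form $c_n \cdot I^{[1]}W(R)$ at each step---whose success rests on the sharp identity $\vphi(I^{[1]}W(R)) = \vphi(E(u)) \cdot I^{[1]}W(R)$ providing the bonus factor of $\vphi(E(u))$ that matches the threshold $p(r-1)/(p-1)$ exactly.
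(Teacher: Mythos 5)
Your proposal is correct and follows essentially the same approach as the paper's proof, which simply cites Proposition~4.2 of \cite{Oz2} and then records the recurrence $c(0)=v_R(\bar{\alpha})+p/(p-1)$, $c(s+1)=pc(s)-pr$. Your bookkeeping separates out the $I^{[1]}W(R)$ factor (so your $c_n$ corresponds to the paper's $c(s)$ with $p/(p-1)$ subtracted), but the recurrence $v_R(c_{n+1})=p\,v_R(c_n)-p(r-1)$ you obtain is exactly equivalent to the paper's and yields the same threshold $v_R(\bar\alpha)>p(r-1)/(p-1)$; the defect operator $\delta_g$, the use of the height-$r$ relation $E(u)^r\mfM\subset\mathrm{Im}(1\otimes\vphi)$, the identity $\vphi(I^{[1]}W(R))=\vphi(E(u))\,I^{[1]}W(R)$, and the reduction to $p\mfN=0$ are all the same ingredients the cited argument uses.
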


\begin{proof}
The proof is the same as that of Proposition 4.2 in \cite{Oz2}.
Here we only explain why we need 
the condition $v_R(\bar{\alpha})>p(r-1)/(p-1)$.
Assume $p\mfN=0$ for simplicity. Let $g\in G$ and $f\colon \mfM \to \mfN$ 
be a morphism of Kisin modules. 
We also denote by 
$f\colon W(R)\otimes_{\vphi,\mfS}\mfM\to  W(R)\otimes_{\vphi,\mfS}\mfN$
the $W(R)$-linear extension of $f$.
Then it follows from the argument of the proof of  Proposition 4.2 of {\it loc.\ cit.} 
that  we have 
$f\circ g(x)-g\circ f(x)
\in \mfm_R^{\ge c(s)}\otimes_{\vphi,\mfS} \mfN$
for any $s\ge 0$ and $x\in W(R)\otimes_{\vphi,\mfS}\mfM$.
Here, $c(s)$ is defined by $c(0)=v_R(\bar{\alpha})+p/(p-1)$
and $c(s+1)=pc(s) -pr$, that is, $c(s)=(v_R(\bar{\alpha})-p(r-1)/(p-1))p^s+pr/(p-1)$.
By the assumption $v_R(\bar{\alpha})>p(r-1)/(p-1)$,
we have  $\lim_{s\to \infty} c(s)=\infty$ and hence $f$ commutes with $g$.
\end{proof}

\subsection{A $G$-action on $\mfM(\mfn)$}
\label{simple:act}

In this section, we equip a $(\vphi,G)$-module structure on 
$\mfM(\mfn)$. 
In the classical setting $f(u)=u^p$,
this has been already studied in Section 4.3 of \cite{Oz2} by using the fact that
the $G$-action on $u$ is explicitly calculated.
In the present setting, the $G$-action on $u$ is not  so easy to understand, and so
we need more delicate arguments. 
\begin{theorem}
\label{simple:str}
Assume that $v_p(a_i)>1$ for any $1\le i\le p-1$.
Let $\mfn=(n_i)_{i\in \mbb{Z}/d\mbb{Z}}\in \mcal{S}$
be a sequence
with smallest period $d$.
Let $\mfM(\mfn)$ be the Kisin module of height $r$ defined in Definition \ref{simple1}. 
Then there exists a $W(R)$-semi-linear $G$-action on 
$W(R)\otimes_{\vphi,\mfS} \mfM(\mfn)$ which satisfies the following properties:

\noindent
$(1)$ The $G$-action on $W(R)\otimes_{\vphi,\mfS} \mfM(\mfn)$
commutes with $\vphi_{W(R)}\otimes \vphi_{\mfM(\mfn)}$.

\noindent
$(2)$ $G_{\underline{\pi}}$ acts on $\mfS\otimes_{\vphi,\mfS} \mfM(\mfn)$ trivial.

\noindent
$(3)$ For any $g\in G$ and $x\in \mfM$, we have 
$g(1\otimes x)-(1\otimes x)\in  \mfm^{\ge p^2/(p-1)}_{R}\otimes_{\vphi,\mfS} \mfM(\mfn)$. 

\noindent
Moreover, such a $G$-action is uniquely determined if $0\le n_i\le \mrm{min}\{ er,p-1 \}$ for any $i$.
\end{theorem}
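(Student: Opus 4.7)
The strategy is to construct the action diagonally on the natural basis of $W(R)\otimes_{\vphi,\mfS}\mfM(\mfn)$. Since $\mfM(\mfn)$ is killed by $p$, this module is identified with $R\otimes_{\vphi,\ku}\mfM(\mfn)$; writing $\bar u:=\underline{\pi}$ and $f_i:=1\otimes e_i$, the Frobenius takes the explicit form $\vphi(f_i)=\bar u^{pn_i}f_{i+1}$. I will seek an action of the form $g(f_i)=c_i(g)f_i$ with $c_i(g)\in R^{\times}$; the requirement $g\circ\vphi=\vphi\circ g$ forces the recursion $c_{i+1}(g)=c_i(g)^p\,u_g^{pn_i}$ with $u_g:=\bar u/g\bar u$, which, by the periodicity condition $c_0(g)=c_d(g)$, collapses to the single fixed-point equation
\[
c_0(g) = c_0(g)^{p^d}\cdot u_g^{S},\qquad S:=\sum_{j=0}^{d-1}p^{d-j}n_j.
\]

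First I would produce $c_0(g)$ as the convergent product $c_0(g):=\prod_{n\ge 0}u_g^{Sp^{nd}}$. The crucial estimate is that the hypothesis $v_p(a_i)>1$ for $1\le i\le p-1$ forces $j_0=p$ in Proposition \ref{val}, so that $v_R(g\bar u-\bar u)\ge p/(p-1)+1/e$ for all $g\notin G_{\underline{\pi}}$, giving $v_R(u_g-1)\ge p/(p-1)$. Since $p\mid S$ and $R$ is perfect of characteristic $p$, the char-$p$ identity $v_R((1+x)^{p^k m}-1)=p^k\,v_R(x)$ for $\gcd(m,p)=1$ and $x\in\mfm_R$ yields $v_R(u_g^{Sp^{nd}}-1)\ge p^{nd+2}/(p-1)$, establishing both convergence of the product in $R$ and the bound $v_R(c_0(g)-1)\ge p^2/(p-1)$. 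The remaining scalars are then forced by the recursion to be $c_i(g)=c_0(g)^{p^i}u_g^{T_i}$ with $T_i:=\sum_{\ell=0}^{i-1}p^{i-\ell}n_{\ell}$.

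The required properties can then be verified directly. The cocycle identity $c_i(g_1g_2)=g_1(c_i(g_2))\,c_i(g_1)$, which is what ensures $g\mapsto(f_i\mapsto c_i(g)f_i)$ extends $W(R)$-semi-linearly to a genuine $G$-action, follows from the multiplicativity $u_{g_1g_2}=u_{g_1}\cdot g_1(u_{g_2})$ together with the homogeneous form of $c_i(g)$ in $(c_0(g),u_g)$. Condition (1) is built in; (2) is immediate from $u_g=1$ for $g\in G_{\underline{\pi}}$; and the bound (3) on $c_i(g)-1$ propagates by induction on $i$ via $c_{i+1}-1=c_i^p(u_g^{pn_i}-1)+(c_i^p-1)$, using the char-$p$ identity $c_i^p-1=(c_i-1)^p$ and the estimate $v_R(u_g^{pn_i}-1)\ge p^2/(p-1)$ whenever $n_i\ge 1$. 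Continuity in $g$ follows from uniform convergence of the defining product in the weak topology.

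For uniqueness when $n_i\le\mrm{min}\{er,p-1\}$, I would note that if $g$ and $g'$ are two actions satisfying (1)--(3), then $h_g:=g'\circ g^{-1}$ is an $R$-linear $\vphi$-equivariant endomorphism of $R\otimes_{\vphi,\ku}\mfM(\mfn)$ that is congruent to the identity modulo $\mfm_R^{\ge p^2/(p-1)}$. Writing $h_g-\mrm{id}=(\eta_{ji})$ in the basis $(f_i)$, commutation with $\vphi$ gives $\eta_{j+1,i+1}=\eta_{ji}^p\,\bar u^{p(n_j-n_i)}$, and iterating around the period $d$ forces $\eta_{ji}=\eta_{ji}^{p^d}\,\bar u^{S_j-S_i}$ with $S_i:=\sum_{\ell=0}^{d-1}p^{d-\ell}n_{i+\ell}$. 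Hence either $\eta_{ji}=0$ or $v_R(\eta_{ji})=(S_i-S_j)/(e(p^d-1))$; but the bound $n_k\le p-1$ gives $|S_i-S_j|\le p(p^d-1)$, so $v_R(\eta_{ji})\le p/e<p^2/(p-1)$, contradicting (3) unless $\eta_{ji}=0$. The hardest part of the argument will be organizing the char-$p$ estimates cleanly enough to push through the convergence of the infinite product for $c_0(g)$ and the verification of the cocycle identity without circularity, since uniqueness is not yet available at that stage.
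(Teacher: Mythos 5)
Your proposal is correct in substance, and it takes a genuinely different route from the paper's at both stages.

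\emph{Existence.} The paper first manufactures an explicit element $\underline{\pi}_d\in R$ with $\underline{\pi}_d^{p^d-1}=\underline{\pi}$: it chooses a $(p^d-1)$-st root $\pi_{(0)}$ of $\pi$ in $\overline{K}$, defines $\pi_{(n)}$ by the recursion $\pi_{(n)}=\pi_{(n-1)}^{p^{d-1}}\pi_n^{-1}$, and proves by a congruence argument (using $v_p(a_i)>1$) that the sequence $(\pi_{(n)}\bmod p)$ lies in $R$. The diagonal cocycle is then $x_g=a_g^{-1}\,g\underline{\pi}_d\,\underline{\pi}_d^{-1}$ and the action is $g(1\otimes e_i)=x_g^{m_i}(1\otimes e_i)$ with $m_i=\sum_{j=1}^dp^jn_{i-j}$. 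You instead write down the same recursion $c_{i+1}=c_i^p u_g^{pn_i}$ forced by $\vphi$-equivariance, observe the period-$d$ compatibility is a fixed-point equation $c_0=c_0^{p^d}u_g^S$, and solve it by the convergent product $c_0(g)=\prod_{n\ge 0}u_g^{Sp^{nd}}$. These produce literally the same answer: one checks $S=m_0$ and that both $c_0(g)$ and $x_g^{m_0}$ are the unique $(p^d-1)$-st root of $u_g^{-S}$ lying in $1+\mfm_R$ (roots of unity of order prime to $p$ are separated mod $\mfm_R$). Your route avoids the inductive construction of $\underline{\pi}_d$ at the cost of an argument for convergence of the product; in exchange the cocycle identity becomes transparent from $u_{g_1g_2}=u_{g_1}\,g_1(u_{g_2})$, whereas the paper gets it from the cocycle $a_g$. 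The valuation bookkeeping ($j_0=p$ under your hypothesis, hence $v_R(u_g-1)\ge p/(p-1)$; $p\mid S$; the char-$p$ identity) all checks out and matches what the paper needs.

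\emph{Uniqueness.} The paper propagates an inequality: if $g_*$ and $g_\sharp$ agree modulo $\mfm_R^{\ge c(0)}$, the $\vphi$-commutation and the bound $n_i\le p-1$ force agreement modulo $\mfm_R^{\ge c(s)}$ with $c(s)=p^s(c(0)-p)+p\to\infty$. You instead observe that the matrix $h_g=\rho_2(g)\rho_1(g)^{-1}$ is $R$-linear and $\vphi$-equivariant, so each entry $\eta_{ji}$ satisfies the exact relation $\eta_{ji}^{p^d-1}=\bar u^{S_i-S_j}$ (in $\mathrm{Fr}\,R$), which either forces $\eta_{ji}=0$ or pins its valuation to $(S_i-S_j)/(e(p^d-1))\le p/e<p^2/(p-1)$, contradicting the proximity bound. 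This is cleaner and uses the structure more fully; it is the same mechanism that classifies simple Kisin modules, applied to the endomorphism $h_g$.

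One small omission: for condition (3) with a general $x\in\mfM(\mfn)$ (not just the basis vectors $e_i$), one still has to expand $1\otimes x=\sum \bar a_i^p f_i$ and use $(g\bar a_i)^p-\bar a_i^p=(g\bar a_i-\bar a_i)^p$ together with $v_R(g\bar a_i-\bar a_i)\ge p/(p-1)$ (again from Proposition \ref{val}); you only treat the $c_i(g)-1$ bound. This is routine and is what the paper does in its final paragraph of that step, but it should be said. Likewise the continuity of $g\mapsto c_0(g)$ deserves a sentence beyond "uniform convergence."
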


\begin{remark}
For the uniqueness assertion above, we do not need  (2).
\end{remark}

\begin{proof}[Proof of Theorem \ref{simple:str}]
Take any $(p^d-1)$-st root $\pi_{(0)}$ of $\pi=\pi_0$.
We define $\pi_{(n)}$ inductively by the formula
 $\pi_{(n)}=\pi^{p^{d-1}}_{(n-1)}\pi_n^{-1}$ for $n\ge 1$.
We see $v_p(\pi_{(n)})=1/(ep^n(p^d-1))$ and thus 
we have $\pi_{(n)}\in \cO_{\overline{K}}$.
Now we claim the following.
\begin{equation}
\label{eq}
\quad \pi^p_{(n)}\equiv \pi_{(n-1)}\ \mrm{mod}\ p\pi_{(n-1)}\cO_{\overline{K}}
\quad \mrm{and} \quad 
\pi^{p^d-1}_{(n)}\equiv \pi_n\ \mrm{mod}\ p\pi_n\cO_{\overline{K}}.
\end{equation}
We proceed a proof of this claim by induction on $n$.

Consider the case $n=1$. We have $\pi^p_{(1)}=\pi^{p^d}_{(0)}\pi_1^{-p}
=\pi_{(0)}\pi^{p^d-1}_{(0)}\pi_1^{-p}=\pi_{(0)}\cdot \pi\pi_1^{-p}$
and $\pi=\pi^p_1+\sum^{p-1}_{i=1}a_i\pi_1^i$. 
Hence we obtain  
$\pi^p_{(1)}=\pi_{(0)}+\pi_{(0)}\sum^{p-1}_{i=1}a_i\pi_1^{-(p-i)}$.
By the assumption $v_p(a_i)>1$ for any $i$,
we obtain $\pi^p_{(1)}\equiv \pi_{(0)}\ \mrm{mod}\ p\pi_{(0)}\cO_{\overline{K}}$.
On the other hand,
we have 
$\pi^{p^d-1}_{(1)}=\pi^{p^{d-1}(p^d-1)}_{(0)}\pi_1^{-(p^d-1)}
=\pi_1(\pi\pi^{-p}_1)^{p^{d-1}}
=\pi_1(1+\sum^{p-1}_{i=1}a_i\pi_1^{-(p-i)})^{p^{d-1}}
$.
By the assumption $v_p(a_i)>1$ for any $i$,
we have 
$(1+\sum^{p-1}_{i=1}a_i\pi_1^{-(p-i)})^{p^{d-1}}\in 1+p\cO_{\overline{K}}$.
Hence we have $\pi^{p^d-1}_{(1)}\equiv \pi_1\ \mrm{mod}\ p\pi_1\cO_{\overline{K}}$
as desired.

Next we assume that \eqref{eq} holds for $n=m-1$ and consider the case $n=m$.
By induction hypothesis, we have $\pi^{p^d-1}_{(m-1)}=\pi_{m-1}+p\pi_{m-1}x$
for some $x\in \cO_{\overline{K}}$. Thus 
we have 
$\pi^p_{(m)}=\pi^{p^d}_{(m-1)}\pi_m^{-p}
=\pi_{(m-1)}\pi^{p^d-1}_{(m-1)}\pi^{-p}_m
=\pi_{(m-1)}(\pi_{m-1}+p\pi_{m-1}x)\pi_m^{-p}
=\pi_{(m-1)}(1+\sum^{p-1}_{i=1}a_i\pi^{-(p-i)}_{m}
+p\pi_{m-1}\pi^{-p}_m x)$.
By the assumption $v_p(a_i)>1$ for any $i$, 
we obtain $\pi^p_{(m)}\equiv \pi_{(m-1)}\ \mrm{mod}\ p\pi_{(m-1)}\cO_{\overline{K}}$.
On the other hand,
we have $\pi^{p^d-1}_{(m)}
=(\pi^{p^{d-1}}_{(m-1)}\pi_m^{-1})^{p^d-1}
=(\pi_{m-1}+p\pi_{m-1}x)^{p^{d-1}}\pi^{-p^d}_m\pi_m
=\pi_m((\pi_{m-1}+p\pi_{m-1}x)\pi^{-p}_m)^{p^{d-1}}
=\pi_m(1+\sum^{p-1}_{i=1}a_i\pi^{-(p-i)}_{m}
+p\pi_{m-1}\pi^{-p}_m x)^{p^{d-1}}$.
By the assumption $v_p(a_i)>1$ for any $i$, we have 
$(1+\sum^{p-1}_{i=1}a_i\pi^{-(p-i)}_{m}
+p\pi_{m-1}\pi^{-p}_m x)^{p^{d-1}}
\in 1+p\cO_{\overline{K}}$.
Therefore, we obtain 
$\pi^{p^d-1}_{(m)}\equiv \pi_m\ \mrm{mod}\ p\pi_m\cO_{\overline{K}}$. 
This finishes a proof of \eqref{eq}.

By \eqref{eq}, we can define an element $\underline{\pi}_d$ of $R$
by $\underline{\pi}_d:=(\pi_{(n)}\ \mrm{mod}\ p\cO_{\overline{K}})_{n\ge 0}$.
By definition we have $\underline{\pi}_d^{p^d-1}=\underline{\pi}$.
On the other hand, for any $g\in G$, there exists a unique 
$a_g\in \mbb{F}^{\times}_{p^d}$ such that 
$g\pi_{(0)}\pi_{(0)}^{-1}=[a_g]$. 
Here, $[\cdot]$ stands for the Teichm\"uller lift.
We note that 
we have a cocycle condition
$a_{gh}=a_g \cdot ga_h$ for any $g,h\in G$.
Put $x_g=a_g^{-1}g\underline{\pi}_d\underline{\pi}_d^{-1}\in R^{\times}$.
By the cocycle condition above, we can define an $R$-semi-linear 
$G$-action on $R\otimes_{\vphi,\mfS} \mfM(\mfn)=
\oplus_{i\in \mbb{Z}/d\mbb{Z}}R(1\otimes e_i)$ by 
$$
g(1\otimes e_i)=x^{m_i}_g(1\otimes e_i)
$$
for any $g\in G$ and $i\in \mbb{Z}/d\mbb{Z}$.
Here, $m_i=\sum^d_{j=1} p^j n_{i-j}$.
In the rest of this proof, we show that this $G$-action 
satisfies the assertions (1), (2) and (3) in the statement of this lemma. 
The assertion (1) can be checked by a direct computation  without difficulty.
We check (2) and (3) below.

We show (2). Let $g\in G_{\underline{\pi}}$. It suffices to show that 
$g\underline{\pi}_d\underline{\pi}_d^{-1}$ coincides with $a_g$.
The case $(p,d)=(2,1)$ is clear.
Thus we may assume $(p,d) \not=(2,1)$. 
Put $b_g:=g\underline{\pi}_d\underline{\pi}_d^{-1}$, 
which is an element of $\mbb{F}^{\times}_{p^d}$.
Seeing the $0$-th components of both sides of 
$g\underline{\pi}_d=b_g\underline{\pi}_d$,
we have $g\pi_{(0)}\equiv [b_g]\pi_{(0)}\ \mrm{mod}\ p\cO_{\overline{K}}$.
Thus we have $[a_g]\pi_{(0)} \equiv [b_g]\pi_{(0)}\ \mrm{mod}\ p\cO_{\overline{K}}$,
and this induces $[a_g]-[b_g]\in p\pi^{-1}_{(0)}\cO_{\overline{K}}$.
By the assumption $(p,d) \not=(2,1)$,
we have $v_p(p\pi^{-1}_{(0)})=1-1/(e(p^d-1))>0$,
and hence we obtain $[a_g]-[b_g]\in pW(\overline{k})$.
Therefore, we have $a_g=b_g$. This shows (2).

We show (3). We may assume $g\notin G_{\underline{\pi}}$.
At first we show 
\begin{equation}
\label{eq2}
g(1\otimes e_i)-(1\otimes e_i)\in  \mfm^{\ge \frac{p^2}{p-1}}_{R}\otimes_{\vphi,\mfS} \mfM(\mfn)
\end{equation}
for any $g\in G$ and $i\in \mbb{Z}/d\mbb{Z}$. 
Since $m_i$ is divided by $p$, it suffices to show 
$x_g-1\in \mfm_R^{\ge p/(p-1)}\otimes_{\vphi,\mfS} \mfM(\mfn)$.
Note that the $n$-th component of 
$a_g^{-1}g\underline{\pi}_d-\underline{\pi}_d$
is $[a^{-p^{-n}}_g]g\pi_{(n)}-\pi_{(n)}\ \mrm{mod}\ p\cO_{\overline{K}}$.
Hence we have 
\begin{align*}
v_R(x_g-1) & 
=v_R(a_g^{-1}g\underline{\pi}_d-\underline{\pi}_d)-v_R(\underline{\pi}_d)\\
& =\lim_{n\to \infty} 
p^nv_p([a^{-p^{-n}}_g]g\pi_{(n)}-\pi_{(n)})
- \frac{1}{e(p^d-1)}\\
& =\lim_{n\to \infty} 
p^n(v_p([a^{-p^{-n}}_g]g\pi_{(n)}-\pi_{(n)})
-v_p(\pi_{(n)}))\\
& =\lim_{n\to \infty} 
p^nv_p\left([a^{-p^{-n}}_g]\frac{g\pi_{(n)}}{\pi_{(n)}}-1\right).
\end{align*}
Hence it is enough to show that 
$v_p\left([a^{-p^{-n}}_g]g\pi_{(n)}\pi_{(n)}^{-1}-1\right)\ge p/(p^n(p-1))$
for $n$ large enough. More precisely, we claim the following: Let $N\ge 1$ be the integer 
such that $g\pi_{N-1}=\pi_{N-1}$ and $g\pi_N\not= \pi_N$.
(Such $N$ exists  by the assumption  $g\notin G_{\underline{\pi}}$.)
Then we have
\begin{equation}
\label{eq3}
v_p\left([a^{-p^{-n}}_g]\frac{g\pi_{(n)}}{\pi_{(n)}}-1\right)\ge \frac{p^N}{p^n(p-1)}
\end{equation}
for $n$ large enough.
We show this inequality. 
Put $c_n=[a^{-p^{-n}}_g]g\pi_{(n)}\pi_{(n)}^{-1}$ for $n\ge 0$.
Since $a_g^{p^d}=a_g$, we have 
\begin{align*}
c_n-1&=[a^{-p^{-n}}_g]^{p^d}\frac{g\pi^{p^{d-1}}_{(n-1)}g\pi_n^{-1}}{\pi^{p^{d-1}}_{(n-1)}\pi_n^{-1}}-1
= \left([a^{-p^{-(n-1)}}_g]\frac{g\pi_{(n-1)}}{\pi_{(n-1)}}\right)^{p^{d-1}}
\left( \frac{g\pi_n}{\pi_n}\right)^{-1}-1 \\
& = c^{p^{d-1}}_{n-1}\left(  \frac{g\pi_n}{\pi_n} \right)^{-1}-1
=  (c^{p^{d-1}}_{n-1}-1)\left(  \frac{g\pi_n}{\pi_n} \right)^{-1}
+\left(  \frac{g\pi_n}{\pi_n} \right)^{-1}-1.
\end{align*}
In particular, we have $v_p(c_n-1)\ge \mrm{min}\{v_p(c_{n-1}-1), v_p(g\pi_n\pi_n^{-1}-1)\}$.
Repeating this argument, we obtain
$v_p(c_n-1)\ge \mrm{min}\{v_p(c_{0}-1),v_p(g\pi_1\pi_1^{-1}-1),\dots ,v_p(g\pi_n\pi_n^{-1}-1)\}$.
Since $c_{0}-1=0$, we have 
$v_p(c_n-1)\ge \mrm{min}\{v_p(g\pi_1\pi_1^{-1}-1),\dots ,v_p(g\pi_n\pi_n^{-1}-1) \}$.
On the other hand, 
we know $v_R(g\pi_n\pi_n^{-1}-1)=
p^N/(p^n(p-1))$ for any $n\ge N$ by the proof of Proposition \ref{val}.
Hence, to show  \eqref{eq3}, it suffices to show  
$v_p(g\pi_n\pi_n^{-1}-1)>0$ for any $n\ge 1$. More precisely, we show 
\begin{equation}
\label{eq4}
v_p\left(\frac{g\pi_n}{\pi_n}-1\right)>\frac{p}{p^n(p-1)}
\end{equation}
for any $n\ge 1$.
We note that 
$x_n:=g\pi_n\pi_n^{-1}-1$ is a root of $\sum^p_{i=1}a_i\pi^{-(p-i)}_n(X+1)^i-g\pi_{n-1}\pi_n^{-p}$.
Put $b_j=\sum^p_{i=j} \binom{i}{j} a_i\pi^{-(p-i)}_n \in p\cO_{\overline{K}}$ 
for any $1\le j\le p-1$.
Then we see the equality 
$\sum^p_{i=1} a_i \pi^{-(p-i)}_n(X+1)^i=X^p+\sum^{p-1}_{j=1}b_jX^j+\pi_{n-1}\pi^{-p}_n$.
Hence $x_n$ for $n\ge 2$ (resp.\ $n=1$) is a root of 
$X^p+\sum^{p-1}_{j=1}b_jX^j+(g\pi_{n-1}-\pi_{n-1})\pi^{-p}_n$
(resp.\ $X^{p-1}+\sum^{p-1}_{j=1}b_jX^{j-1}$).
Now  \eqref{eq4})follows by induction on $n$ and arguments of Newton polygons.  
Consequently we finish a proof of  \eqref{eq2}.

To finish a proof of (3), we need to show
\begin{equation}
\label{eq5}
g(1\otimes x)-(1\otimes x)\in  \mfm^{\ge \frac{p^2}{p-1}}_{R}\otimes_{\vphi,\mfS} \mfM(\mfn)
\end{equation}
for any  $x\in \mfM(\mfn)$. 
Writing $x=\sum^d_{i=1}a_ie_i$ with some $a_i\in \ku$,
we have 
$g(1\otimes x)-(1\otimes x)=
\sum^d_{i=1}(g(1\otimes a_ie_i)-(1\otimes a_ie_i))
=\sum^d_{i=1}((ga_i-a_i)^pg(1\otimes e_i))+a^p_i(g(1\otimes e_i)-(1\otimes e_i)))$.
By \eqref{eq2},
it suffices to show $ga_i-a_i\in \mfm_R^{\ge p/(p-1)}$
but this immediately follows from Proposition \ref{val}. 
Consequently, we obtained a proof of (3).

Finally, we show that an $R$-semi-linear $G$-action on $R\otimes_{\vphi,\mfS} \mfM(\mfn)$
satisfying (1) and (3) is uniquely determined when $0\le n_i\le \mrm{min}\{er, p-1\}$ for any $i$.
Assume that two  $G$-actions 
$\rho_1,\rho_2\colon G\to \mrm{End}_{R}(R\otimes_{\vphi,\mfS} \mfM(\mfn))$
on $R\otimes_{\vphi,\mfS} \mfM(\mfn)$
satisfy (1) and (3), 
and put $g_{\ast}(x)=\rho_1(g)(x)$ and $g_{\sharp}(x)=\rho_2(g)(x)$ 
for any $g\in G$ and $x\in R\otimes_{\vphi,\mfS} \mfM(\mfn)$.
By (3),  we have 
$g_{\ast}(1\otimes e_i)-g_{\sharp}(1\otimes e_i)\in 
\mfm^{\ge c(0)}_{R}\otimes_{\vphi,\mfS} \mfM(\mfn)$
where $c(0)=p^2/(p-1)$.
Thus, by (1), we obtain
$$
gu^{pn_i}(g_{\ast}(1\otimes e_{i+1})-g_{\sharp}(1\otimes e_{i+1}))
= \vphi(g_{\ast}(1\otimes e_i)-g_{\sharp}(1\otimes e_i))
\in \mfm^{\ge pc(0)}_{R}\otimes_{\vphi,\mfS} \mfM(\mfn).
$$
Furthermore, we have $pc(0)-pn_i/e\ge pc(0)-p(p-1)$
by the assumption $0\le n_i\le \mrm{min}\{er, p-1\}$.
Hence we obtain 
$g_{\ast}(1\otimes e_{i+1})
-g_{\sharp}(1\otimes e_{i+1})
\in \mfm^{\ge c(1)}_{R}\otimes_{\vphi,\mfS} \mfM(\mfn)$
where $c(1)=pc(0)-p(p-1)$.
Repeating this argument,
we obtain 
$g_{\ast}(1\otimes e_{i+s})-g_{\sharp}(1\otimes e_{i+s})
\in \mfm^{\ge c(s)}_{R}\otimes_{\vphi,\mfS} \mfM(\mfn)$
for any $s\ge 0$
where $c(s)=pc(s-1)-p(p-1)=p^{s+1}/(p-1)+p$.
Since $\lim_{s\to \infty} c(s)=\infty$,
we obtain $g_{\ast}(1\otimes e_i)=g_{\sharp}(1\otimes e_i)$ for any $i$ as desired.
\end{proof}

\subsection{Proofs of Theorems \ref{FFT:r=1} and \ref{FFT} }

In this section we prove  Theorems \ref{FFT:r=1} and \ref{FFT}.
We put $\bar{a}=a\ \mrm{mod}\ pW(R)$ for any $a\in W(R)$.
It is known (cf.\ Example 3.3.2 of \cite{CL}) 
that there exists $\mft'\in W(R)\smallsetminus pW(R)$ such that
$\vphi(\mft')=E(u)\mft'$. By Lemma 2.3.1 of {\it loc.\ cit.},
$\vphi(\mft')$ is a generator of $I^{[1]}W(R)$.

\begin{remark}
Under the condition $v_p(a_1)>1$, 
we defined $\mft\in W(R)\smallsetminus pW(R)$ in Section \ref{EKmod}
such that $\vphi(\mft)=\mu_0E(u)\mft$ with some $\mu_0\in \mfS^{\times}$.
Then we have $\mft/\mft'\in W(R)^{\times}$
since both $\vphi(\mft)$ and  $\vphi(\mft')$ are generators of 
a principal ideal $I^{[1]}W(R)$.
\end{remark}

We start with two estimations of the ideal $\vphi(gu-u)B^+_{\mrm{cris}}\cap W(R)$ of $W(R)$
for $g\in G$ to study its reduction modulo $p$.
The first proposition gives a ``weak'' estimation, however, it does not need any assumption.
The second one gives a ``strong'' estimation although 
we need some technical assumptions.

\begin{proposition}
\label{lem0}
Let $j_0$ be the minimum integer $1\le j\le p$ such that 
$v_p(ja_j)=1$. 
Put $h=0$ $($resp.\ $h=1)$ if $e<j_0-1$ $($resp.\ $e\ge j_0-1$$)$.

\noindent
$(1)$ Let $g\in G\smallsetminus G_{\underline{\pi}}$ and $N\ge 1$ the integer 
such that $g\pi_{N-1}=\pi_{N-1}$ and $g\pi_N\not= \pi_N$.
Then 
\begin{itemize}
\item[${\rm (i)}$] $gu-u=\vphi^N(\mft')v_g$ for some $v_g\in W(R)$.
\item[${\rm (ii)}$] $\vphi(v_g)= v_gw_g$ for some $w_g\in W(R)$.
\item[${\rm (iii)}$] $\vphi(gu-u)B^+_{\mrm{cris}}\cap W(R) 
\subset v_gw^h_gI^{[1]}W(R)$.
\end{itemize}
\noindent
$(2)$ 
The image of 
$\vphi(gu-u)B^+_{\mrm{cris}}\cap W(R)$
under the projection $W(R)\to R$
is contained in 
$\mfm_R^{\ge c}$   for any $g\in G$. Here,
$$
c=\frac{p}{p-1}+\frac{j_0-1}{e(p-1)}p^h.
$$
\end{proposition}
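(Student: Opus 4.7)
The plan is as follows. The case $g\in G_{\underline{\pi}}$ is trivial because $gu=u$ forces $\vphi(gu-u)B^+_{\mrm{cris}}\cap W(R)=0$, so throughout I assume $g\in G\smallsetminus G_{\underline{\pi}}$ with $N$ as in the statement. Parts (1)(i)--(iii) will be proved sequentially, and part (2) follows from (1)(iii) by reducing modulo $p$ and applying the Newton-polygon calculation of Proposition \ref{val}.

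For (1)(i), I would induct on $1\le n\le N$ to show $gu-u\in \vphi^n(\mft')W(R)$, the base case $n=1$ being Proposition \ref{Gact}(1). Assuming $gu-u=\vphi^n(\mft')v^{(n)}$ with $n<N$, apply $\vphi$ and use $\vphi(gu-u)=gf(u)-f(u)=(gu-u)f'(u)+(gu-u)^2 P(u,gu-u)$ for a suitable $P\in\mbb{Z}_p[u,X]$; substituting $gu-u=\vphi^n(\mft')v^{(n)}$ and dividing by $\vphi^n(\mft')$ gives
\[
\vphi^n(E(u))\vphi(v^{(n)})=v^{(n)}\bigl(f'(u)+\vphi^n(\mft')v^{(n)}P(u,\vphi^n(\mft')v^{(n)})\bigr),
\]
and the extra vanishing provided by $g\pi_n=\pi_n$ for $n<N$ (tracked via Proposition \ref{val}) forces $\vphi^n(E(u))$ to divide $v^{(n)}$ times the bracket in $W(R)$, which promotes the divisibility one step. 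At $n=N$ this produces $v_g$.

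For (1)(ii), the very same computation at $n=N$ gives
\[
\vphi^N(E(u))\vphi(v_g)=v_g\bigl(f'(u)+\vphi^N(\mft')v_g\,Q\bigr)
\]
for some $Q\in W(R)$. Setting $w_g$ equal to the bracket divided by $\vphi^N(E(u))$, the key point is to verify $w_g\in W(R)$; this rests on the $u$-adic structure of $f'(u)$ (controlled by $j_0$, since $v_p(a_i)\ge 1$ for $i<j_0$) together with $\vphi^N(E(u))\equiv u^{ep^N}\pmod p$. For (1)(iii), combine (i) and (ii) to write $\vphi(gu-u)=\vphi^{N+1}(\mft')v_gw_g$; any $x=\vphi^{N+1}(\mft')v_gw_g y\in W(R)$ with $y\in B^+_{\mrm{cris}}$ can then be rewritten as $v_gw_g^h\cdot\bigl(\vphi^{N+1}(\mft')w_g^{1-h}y\bigr)$, and showing that the parenthesized factor lies in $I^{[1]}W(R)$ reduces, via $\vphi^{N+1}(\mft')\in I^{[1]}W(R)$, to the integrality claim $w_g^{1-h}y\in W(R)$; this follows from a $W(R)$-integrality argument modeled on \cite[Lemma 3.2.2]{Li3}, with the case split $h=0$ vs.\ $h=1$ reflecting whether one needs to absorb an extra factor of $w_g$ to cover the gap between $f'(u)$ and $\vphi^N(E(u))$ (i.e., whether $e<j_0-1$ or $e\ge j_0-1$).

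For part (2), reduce (1)(iii) modulo $p$: Proposition \ref{val} together with (i) yields $v_R(\bar v_g)=(j_0-1)/(e(p-1))$; relation (ii) gives $v_R(\bar w_g)=(p-1)v_R(\bar v_g)=(j_0-1)/e$; and the image of $I^{[1]}W(R)$ in $R$ has $v_R\ge p/(p-1)$. Summing these contributions with multiplicities $1$, $h$, $1$ respectively gives exactly the stated constant $c$. The main obstacle throughout will be the $w_g\in W(R)$ claim in (1)(ii) and the closely related integrality extraction in (1)(iii): both hinge on a careful comparison between $f'(u)$, $\vphi^N(E(u))$, and the multiplicative Frobenius tower above $\mft'$, and both are exactly where the dichotomy $h\in\{0,1\}$ enters; the induction of (1)(i) and the mod-$p$ bookkeeping of (2) should be routine once these are available.
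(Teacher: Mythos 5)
Your overall structure is right, and part (2) is handled correctly — the valuation bookkeeping (with multiplicities $1$, $h$, $1$ for $v_g$, $w_g$, $I^{[1]}W(R)$) matches the paper's computation exactly. However, there are significant issues with (1).

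The most serious problem is in (1)(iii). You rewrite $x=\vphi^{N+1}(\mft')v_gw_gy$ as $v_gw_g^h\cdot\bigl(\vphi^{N+1}(\mft')w_g^{1-h}y\bigr)$ and claim the parenthesized factor lies in $I^{[1]}W(R)$ as soon as $w_g^{1-h}y\in W(R)$. That implication is sound, but the integrality claim itself is too strong and (in general) false: from $x=\vphi(gu-u)y\in W(R)$ one cannot deduce $w_g^{1-h}y\in W(R)$, since the cofactor $\vphi^{N+1}(\mft')v_gw_g^h$ has a reduction with $v_R$-valuation far exceeding $1$ (driven by $\vphi^{N+1}(\mft')$), so Lemma \ref{lemwr} cannot strip it off in one go. The correct move — and what the paper actually does — is to expand $\vphi^{N+1}(\mft')=\vphi(\mft')\prod_{i=1}^N\vphi^i(E(u))$ and define $z:=\bigl(\prod_{i=1}^N\vphi^i(E(u))\bigr)w_g^{1-h}y$, so that $x=E(u)\mft'v_gw_g^h z$. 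Now every explicit cofactor $E(u)$, $\mft'$, $v_g$, $w_g^h$ has $v_R$ of its reduction $\le 1$, and iterated application of Lemma \ref{lemwr} gives $z\in W(R)$, whence $x=\vphi(\mft')v_gw_g^h z\in v_gw_g^h I^{[1]}W(R)$. Your reduction drops the $\prod\vphi^i(E(u))$ factors, which is exactly where the needed slack lives.

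For (1)(i)–(ii) your route is different from the paper's and less convincing as written. For (i), the paper simply notes (via \cite[Lemma 2.1.3]{CL}) that $g\pi_i=\pi_i$ for $i\le N-1$ is equivalent to $\vphi^{-(N-1)}(gu-u)\in I^{[1]}W(R)=\vphi(\mft')W(R)$, and then applies $\vphi^{N-1}$ — no induction required; your inductive scheme, which tries to extract divisibility by $\vphi^n(E(u))$ at each stage from a Newton-polygon estimate, is not justified as stated (a valuation estimate does not yield exact divisibility in $W(R)$). For (ii), the paper does not analyze $f'(u)$ or reduce mod $p$; instead it sets $\tilde w_g=\sum_i a_i(gu^i-u^i)/(gu-u)$, obtains $v_g\tilde w_g=\vphi^N(E(u))\vphi(v_g)$, and then uses the fact that $W(R)/\mrm{Fil}^1W(R)$ is a domain and $\vphi^{-N}(v_g)\notin\mrm{Fil}^1W(R)$ to conclude $\vphi^{-N}(\tilde w_g)\in\mrm{Fil}^1W(R)=E(u)W(R)$, hence $\tilde w_g=\vphi^N(E(u))w_g$. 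Your proposed mod-$p$/$u$-adic comparison controls only the reduction of $\tilde w_g$, which does not by itself establish divisibility by $\vphi^N(E(u))$ in $W(R)$; you need the filtration argument (or an equivalent exact-divisibility mechanism), not just valuation estimates.
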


\begin{proposition}
\label{lem1}
Assume the following conditions.
\begin{itemize} 
\item[${\rm (i)}$] $gu\in uW(R)$ for any $g\in G$.
\item[${\rm (ii)}$] $f^{(n)}(\pi)\not=0$ for any $n\ge 1$.
\end{itemize}
Then we have the following.

\noindent
$(1)$ $gu-u\in uI^{[1]}W(R)$ for any $g\in G$.

\noindent
$(2)$ $\vphi(gu-u)B^+_{\mrm{cris}}\cap W(R)\subset u_fI^{[1]}W(R)$
for any $g\in G$.

\noindent
$(3)$
The image of 
$\vphi(gu-u)B^+_{\mrm{cris}}\cap W(R)$
under the projection  $W(R)\to R$
is contained in 
$\mfm_R^{\ge c}$ for any $g\in G$. Here, 
$$
c=\frac{p}{p-1}+\frac{n_f}{e}.
$$
\end{proposition}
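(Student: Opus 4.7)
The plan is to establish parts (1), (2), (3) in turn, with (3) an easy consequence of (2).

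For part (1), by Proposition~\ref{Gact}(1) we have $gu-u\in I^{[1]}W(R)$, and condition~(i) gives $gu-u=u\alpha$ with $\alpha\in W(R)$. To upgrade $\alpha$ to $I^{[1]}W(R)$, I would verify that $\vphi^n(\alpha)\in\mrm{Fil}^1 W(R)$ for every $n\ge 0$. Applying $\vphi^n$ to the relation $u\alpha=gu-u$ yields
\[
f^{(n)}(u)\,\vphi^n(\alpha)\ =\ \vphi^n(u\alpha)\ \in\ \mrm{Fil}^1 W(R).
\]
The ideal $\mrm{Fil}^1 W(R)$ is prime (its quotient is a $p$-adic valuation ring), and condition~(ii) guarantees $f^{(n)}(u)\notin\mrm{Fil}^1 W(R)$, since its image equals $f^{(n)}(\pi)\ne 0$. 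Primality then forces $\vphi^n(\alpha)\in\mrm{Fil}^1 W(R)$, giving $\alpha\in I^{[1]}W(R)$.

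For part (2), I would combine part (1) with the factorization $f(u)=u_f\cdot h(u)$, where $h(u):=\prod_{i=m+1}^{n}f_i(u)$ collects the irreducible factors of degree exceeding $e$. Since $\vphi$ preserves $I^{[1]}W(R)=\vphi(\mft')W(R)$, writing $\vphi(\alpha)=\vphi(\mft')w$ with $w\in W(R)$ gives
\[
\vphi(gu-u)\ =\ u_f\vphi(\mft')\cdot h(u)w\ \in\ u_f\,I^{[1]}W(R),
\]
so the easy inclusion $\vphi(gu-u)\cdot W(R)\subset u_f I^{[1]}W(R)$ is immediate. The more delicate step is to extend this to arbitrary $B^+_{\mrm{cris}}$-multiples; this reduces naturally to the \emph{division lemma}
\[
u_f\vphi(\mft')B^+_{\mrm{cris}}\cap W(R)\ =\ u_f\vphi(\mft')W(R).
\]
This is where I expect the main obstacle. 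The $\vphi(\mft')$ part is tractable using the principal structure of $I^{[1]}W(R)$ (in the spirit of \cite[Lemma 3.2.2]{Li3}). For the factor $u_f$, the key structural inputs are the coprimality of $E(u)$ and $f(u)$ in $W(\overline{k})[u]$ (automatic from $f(\pi)\ne 0$) together with the explicit degree bound on the $f_i$'s entering $u_f$; I would handle each irreducible factor individually and multiply up.

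For part (3), given (2), taking images modulo $p$ in $R$ and computing valuations suffices. For each $f_i$ of degree $d_i$, a Newton-polygon estimate on $f_i(\pi_n)$ for large $n$ gives $v_R(\bar{f}_i(\underline{\pi}))=d_i/e$, and summing yields $v_R(\bar{u}_f)=n_f/e$. Reducing $\vphi(\mft')=E(u)\mft'$ modulo $p$ together with $E(u)\equiv u^e$ modulo $p$ gives $\overline{\mft'}^{\,p-1}=\underline{\pi}^{\,e}$, hence $v_R(\overline{\mft'})=1/(p-1)$ and $v_R(\overline{\vphi(\mft')})=p/(p-1)$. Adding the two valuations yields $c=p/(p-1)+n_f/e$ as claimed.
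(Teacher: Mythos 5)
Your overall architecture matches the paper's proof almost exactly: deduce (3) from (2), and prove (1) and (2) as the two real steps. For (1), the paper simply cites \cite[Lemma 2.3.2]{CL} together with assumption (ii); what you wrote is essentially a careful unpacking of that citation, using the primality of $\mathrm{Fil}^1W(R)$ (as $\ker\theta$) and $\theta(f^{(n)}(u))=f^{(n)}(\pi)\neq 0$ to pass from $f^{(n)}(u)\,\vphi^n(\alpha)\in\mathrm{Fil}^1W(R)$ to $\vphi^n(\alpha)\in\mathrm{Fil}^1W(R)$. For (3) your valuation bookkeeping (including $v_R(\overline{\mft'})=1/(p-1)$ from $\overline{\mft'}^{\,p-1}=\bar u^{\,e}$ and $v_R(\bar u_f)=n_f/e$) reproduces the computation the paper uses implicitly.

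The one place where your account drifts from the actual mechanism is the ``division lemma'' you identify as the crux of (2). The paper does not use coprimality of $E(u)$ and $f(u)$ in $W(\overline k)[u]$ at all; indeed, as polynomials, $E(u)\equiv u^e$ and each $f_i(u)\equiv u^{\deg f_i}\pmod p$, so they share the common factor $u$ modulo $p$, and a Bezout-type argument does not lift. What the paper actually uses is its Lemma~\ref{lemwr}, a valuation-comparison statement (itself a generalization of \cite[Lemma 3.2.2]{Li3}): if $v\in W(R)$ with $v_R(\bar v)\le 1=v_R(\overline{E(u)})$ and $vz\in W(R)$ with $z\in B^+_{\mrm{cris}}$, then $z\in W(R)$. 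The hypothesis $v_R(\bar v)\le 1$ is exactly what lets one write $E(u)\in vW(R)+pW(R)$ and run the argument of \cite{Li3}. The paper writes $x=u_f E(u)\mft' z$ with $z\in B^+_{\mrm{cris}}$ and then peels off $\mft'$, $E(u)$, and each $f_i(u)$ (each with $v_R(\bar\cdot)\le 1$ thanks to $\deg f_i\le e$) one at a time by repeated use of this lemma. So the ``explicit degree bound on the $f_i$'s'' that you also mention is in fact the whole story, while ``coprimality'' is a red herring; I would replace that sentence and make the valuation-comparison lemma the explicit intermediate statement, as the paper does. With that adjustment your plan coincides with the paper's proof.
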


For proofs of these propositions, we use  
\begin{lemma}
\label{lemwr}
Let $v\in W(R)$ such that $v_R(\bar{v})\le 1$.
If $x\in B^+_{\mrm{cris}}$ satisfies $vx\in W(R)$, then we have $x\in W(R)$.
\end{lemma}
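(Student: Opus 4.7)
The plan is to control the $p$-adic denominator of $x$ first, and then refine to $x \in W(R)$. Since $B^+_{\mrm{cris}} = A_{\mrm{cris}}[1/p]$, I would write $x = y/p^n$ with $y \in A_{\mrm{cris}}$ and $n \ge 0$ chosen minimally. The hypothesis $vx = w \in W(R)$ then reads $vy = p^n w$, so $vy \in p^n A_{\mrm{cris}}$. Reducing modulo $p$ when $n \ge 1$ gives $\bar{v}\cdot\bar{y} = 0$ in $A_{\mrm{cris}}/pA_{\mrm{cris}}$, where $\bar{v}$ is the image of $v$ under $W(R) \to A_{\mrm{cris}}/p$.

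The heart of the proof is then to establish that multiplication by $\bar{v}$ is injective on $A_{\mrm{cris}}/pA_{\mrm{cris}}$, which would force $y \in pA_{\mrm{cris}}$, contradicting the minimality of $n$. To handle this I would use the explicit structure of $A_{\mrm{cris}}/p$: recall $A_{\mrm{cris}}$ is the $p$-adic completion of $W(R)[\xi^n/n! : n \ge 1]$, where $\xi \in W(R)$ generates the kernel of the canonical surjection $W(R) \twoheadrightarrow \cO_{\mbb{C}_p}$. Since $\xi^p \in pA_{\mrm{cris}}$, the image $\bar{\xi} = \underline{p}$ is nilpotent modulo $p$, and $A_{\mrm{cris}}/p$ inherits a filtration by divided-power degree whose graded pieces are quotients of $R$. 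Because $v_R(\underline{p}) = 1$, the hypothesis $v_R(\bar{v}) \le 1$ is precisely what prevents $\bar{v}$ from being absorbed into the nilpotent ideal, so $\bar{v}$ acts injectively on each graded piece, using that $R$ is a valuation domain.

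Once $x \in A_{\mrm{cris}}$ is established, the second step refines to $x \in W(R)$. For this I would apply a parallel analysis to the quotient $A_{\mrm{cris}}/W(R)$, which inherits its own divided-power filtration whose graded pieces can be computed explicitly in terms of quotients of $R$; the same valuation condition on $\bar{v}$ ensures injectivity of multiplication on each graded piece, yielding $x \in W(R)$ after passing to the $p$-adic limit (using that $W(R)$ is $p$-adically closed in $A_{\mrm{cris}}$).

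The main obstacle will be making the injectivity claim on $A_{\mrm{cris}}/p$ rigorous. Pinning down the structure of $A_{\mrm{cris}}/p$ as an algebra over $R$, with its many relations coming from the divided-power axioms, and verifying that the valuation bound $v_R(\bar{v}) \le 1$ is the sharp condition ensuring non-zero-divisibility at every level of the filtration, is the technical core of the argument. A concrete realization might go through a presentation $A_{\mrm{cris}}/p \cong R[X_n : n \ge 1]/J$ where $X_n$ corresponds to $\xi^{p^n}/(p^n)!$ and $J$ encodes divided-power relations such as $X_n^p \in \underline{p}^{?}R\cdot X_{n+1} + \cdots$, on which the action of $\bar{v}$ can be tracked termwise.
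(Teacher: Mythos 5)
Your argument rests on the claim that multiplication by $\bar{v}$ is injective on $A_{\mathrm{cris}}/pA_{\mathrm{cris}}$, and this fails precisely at the borderline value $v_R(\bar{v})=1$ that the lemma must allow. Take $v=E(u)$, a generator of $\ker\bigl(\theta\colon W(R)\to\mathcal{O}_{\mathbb{C}_p}\bigr)$, so $v_R(\bar{v})=1$. Since $E(u)^p = p!\cdot\bigl(E(u)^p/p!\bigr)\in pA_{\mathrm{cris}}$, the class $\overline{E(u)}$ is nilpotent of index $p$ in $A_{\mathrm{cris}}/p$, while $\overline{E(u)}^{p-1}\neq 0$: if $E(u)^{p-1}\in pA_{\mathrm{cris}}$ then $E(u)^{p-1}/p\in\mathrm{Fil}^{p-1}A_{\mathrm{cris}}$, but its class in $\mathrm{Fil}^{p-1}A_{\mathrm{cris}}/\mathrm{Fil}^{p}A_{\mathrm{cris}}\simeq\mathcal{O}_{\mathbb{C}_p}$ equals $(p-1)!/p$, of $p$-adic valuation $-1$, a contradiction. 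So $\overline{E(u)}\cdot\overline{E(u)}^{p-1}=0$ with $\overline{E(u)}^{p-1}\neq 0$, and $\bar v$ is a zero-divisor. Your heuristic that ``$v_R(\bar{v})\le 1$ prevents $\bar v$ from being absorbed into the nilpotent ideal'' has the inequality running the wrong way: any $\bar v$ with $v_R(\bar v)=1$ lies in the nilradical of $A_{\mathrm{cris}}/p$. Likewise, on the bottom graded piece $\mathcal{O}_{\mathbb{C}_p}/p\simeq R/\overline{E(u)}R$, multiplication by any $\bar v$ of positive valuation has nonzero kernel, so the claimed ``injectivity on each graded piece'' does not hold either.

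The reduction-mod-$p$ step therefore cannot be made to work, and the route has to be replaced rather than repaired. The paper's proof follows Lemma 3.2.2 of [Li3] and operates $p$-adically rather than residually: from $v_R(\bar v)\le 1=v_R(\overline{E(u)})$ one gets $E(u)\in vW(R)+pW(R)$, hence $E(u)^{i+1}=p^{i+1}b_i+vw_i$ with $b_i,w_i\in W(R)$. Substituting this into a divided-power expansion of $x$ and using that $W(R)$ is $p$-adically separated, complete, and closed in $A_{\mathrm{cris}}$, one obtains $x\in W(R)$ by a direct approximation argument, with no mod-$p$ injectivity statement invoked.
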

\begin{proof}
This is a generalization of Lemma 3.2.2 of \cite{Li3} but almost the same proof 
can be applied to our setting.
We only give one  remark that 
$E(u)$ is contained in $vW(R)+pW(R)$ by the condition $v_R(\bar{v})\le 1=v_R(E(\bar{u}))$,
and thus we can write $E(u)^{i+1}=p^{i+1}b_i +vw_i$ by some $b_i,w_i\in W(R)$. 
\end{proof}

\begin{proof}[Proof of Proposition \ref{lem0}]
(1) By definition of $N$,
we have $\vphi^{-(N-1)}(gu-u)\in I^{[1]}W(R)$ and $\vphi^{-N}(gu-u)\notin \mrm{Fil}^1W(R)$
(cf.\ Lemma 2.1.3 of \cite{CL}).
By the condition $\vphi^{-(N-1)}(gu-u)\in I^{[1]}W(R)$ and the fact that $\vphi(\mft')$
is a generator of $I^{[1]}W(R)$,
we have  $gu-u=\vphi^N(\mft')v_g$ for some $v_g\in W(R)$, which shows (1)-(i).
Taking $\vphi$ to both sides of this equality, we have 
\begin{equation}
\label{eq1:gu-u}
\vphi(gu-u)=\vphi^{N+1}(\mft')\vphi(v_g)=\vphi^N(\mft')\vphi^N(E(u))\vphi(v_g).
\end{equation}
On the other hand, the equation $\vphi(u)=f(u)$ implies 
\begin{equation}
\label{eq2:gu-u}
\vphi(gu-u)=(gu-u)\tilde{w}_g=\vphi^N(\mft')v_g\tilde{w}_g
\end{equation}
where $\tilde{w}_g=\sum^p_{i=1}a_i(gu^i-u^i)/(gu-u)\in W(R)$.
By \eqref{eq1:gu-u} and \eqref{eq2:gu-u},
we obtain 
\begin{equation}
\label{eq3:gu-u}
v_g\tilde{w}_g=\vphi^N(E(u)) \vphi(v_g).
\end{equation}
Hence we have $\vphi^{-N}(v_g)\vphi^{-N}(\tilde{w}_g)\in \mrm{Fil}^1W(R)$.
Here we note that $\vphi^{-N}(v_g)$ is not contained in $\mrm{Fil}^1W(R)$ since
$\mft'\vphi^{-N}(v_g) =\vphi^{-N}(gu-u)\notin \mrm{Fil}^1W(R)$.
Thus we obtain $\vphi^{-N}(\tilde{w}_g)\in  \mrm{Fil}^1W(R)$.
Since $E(u)$ is a generator of $\mrm{Fil}^1W(R)$ (cf.\ Lemma 2.1.3 of \cite{CL}),
we obtain $\tilde{w}_g=\vphi^N(E(u))w_g$ for some $w_g\in W(R)$.
By \eqref{eq3:gu-u}, we obtain $\vphi(v_g)= v_gw_g$, which shows (1)-(ii). 

Finally we show (1)-(iii).
Take any $x=\vphi(gu-u)y\in \vphi(gu-u)B^+_{\mrm{cris}}\cap W(R)$.
We have 
\begin{align*}
x & = \vphi(gu-u)y=\vphi^{N+1}(\mft')\vphi(v_g)y=
\vphi^{N+1}(\mft')v_gw_gy\\
&=\vphi^N(E(u))\vphi^{N}(\mft')v_gw_g y
=\vphi^N(E(u))\vphi^{N-1}(E(u))\vphi^{N-1}(\mft')v_gw_g y\\
& = \cdots = \vphi^N(E(u))\cdots \vphi(E(u))\cdot E(u)\mft' v_gw_g y\\
& =E(u)\mft' v_gw_g^h  z
\end{align*} 
where $z:= \vphi^N(E(u))\cdots \vphi(E(u)) w_g^{1-h}y\in B^+_{\mrm{cris}}$.
Note that we have $v_R(\overline{E(u)})=ev_R(\bar{u})=1$.
By the equality $\vphi(\mft')=E(u)\mft'$, we have $v_R(\bar{\mft}')=1/(p-1)\le 1$.
It follows from Proposition \ref{val} and the equality $gu-u=\vphi^N(\mft')v_g$
that we have  $v_R(\bar{v}_g)=(j_0-1)/(e(p-1))\le 1$.
Furthermore, by the equality $\vphi(v_g)=v_gw_g$, we also see
$v_R(\bar{w}_g^h)=h(j_0-1)/e\le 1$.
Hence it follows from Lemma \ref{lemwr} and  $E(u)\mft' v_gw_g^h z=x\in W(R)$
that 
we have $z\in W(R)$.
Therefore, we obtain $x=\vphi(\mft')v_gw_g^hz\in v_gw_g^hI^{[1]}W(R)$
as desired. 

\noindent
(2) Since $v_R(\bar{\mft}')=1/(p-1)$, 
$v_R(\bar{v}_g)=(j_0-1)/(e(p-1))$
and  $v_R(\bar{w}_g^h)=h(j_0-1)/e$,
the result follows from (1)-(iii) immediately. 
\end{proof}

\begin{proof}[Proof of Proposition \ref{lem1}] 
The assertion (3) follows from (2) immediately, and thus it suffices to show (1) and (2). 
By the assumption (i),  we have $gu-u=uv_g$ for some $v_g\in W(R)$.
By Lemma 2.3.2 of \cite{CL} and the assumption (ii), 
we see $v_g\in I^{[1]}W(R)$, which shows (1).
Take any $x=\vphi(gu-u)y\in \vphi(gu-u)B^+_{\mrm{cris}}\cap W(R)$.
Writing $v_g=\vphi(\mft')v'_g$ with some $v'_g\in W(R)$,
we have 
$$
x=\vphi(uv_g)y=\vphi(u)\vphi(\vphi(\mft'))\vphi(v'_g)y=
u_fE(u)\mft'\left(\frac{f(u)}{u_f}\vphi(E(u))\vphi(v'_g)y\right).
$$ 
Put $z=(f(u)u_f^{-1})\vphi(E(u))\vphi(v'_g)y$, which is an element of $B^+_{\mrm{cris}}$.
Note that we have $v_R(\bar{u})=1/e \le 1$,
$v_R(\bar{\mft}')=1/(p-1)\le 1$ and $u_fE(u)\mft' z=x\in W(R)$.
Hence it follows from Lemma \ref{lemwr} that 
we have $z\in W(R)$.
Therefore, we obtain $x=u_f\vphi(\mft')z\in u_fI^{[1]}W(R)$. 

\end{proof}

The above propositions allow us to show the existence of ``good'' $(\vphi,G)$-modules
which correspond to objects of $\mrm{Rep}^{r,\mrm{cris}}_{\mrm{tor}}(G)$.
For the case $r=1$, we have

\begin{corollary}
\label{rep:mod:gen}
Assume  $v_p(a_1)>1$ and the condition $(P)$. 
Let $j_0$ be the minimum integer $1\le j\le p$ such that 
$v_p(ja_j)=1$. 
Put $h=0$ $($resp.\ $h=1)$ if $e<j_0-1$ $($resp.\ $e\ge j_0-1)$.
Let $\alpha\in W(R)\smallsetminus pW(R)$ such that 
$v_R(\bar{\alpha})\le (j_0-1)p^h/(e(p-1))$.
Let $T$ be an object of $\mrm{Rep}^{1,\mrm{cris}}_{\mrm{tor}}(G)$ such that $pT=0$.
Then there exists a $(\vphi,G)$-module 
$\hat{\mfM}\in \Mod^{1,G}_{\mfS_{\infty}}(\alpha)$ killed by $p$ such that 
$T\simeq \hat{T}(\hat{\mfM})$. 
%
\end{corollary}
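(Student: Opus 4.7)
The plan is to obtain $\hat{\mfM}$ as the cokernel of an injection of free $(\vphi,\hat{G})$-modules, equipped with the descended $G$-action. Writing $T \simeq L/L'$ for lattices $L' \subset L$ in a crystalline $\mbb{Q}_p$-representation $V$ of $G$ with Hodge--Tate weights in $[0,1]$ and $pL\subset L'$ (possible since $pT=0$), Theorem \ref{MT} applies because $(P)$ and $v_p(a_1)>1=\max\{1,1\}$ hold, producing $\hat{\mfN},\hat{\mfN}'\in \Mod^{1,\hat{G},\mrm{cris}}_{\mfS}$ with $\hat{T}(\hat{\mfN})\simeq L$ and $\hat{T}(\hat{\mfN}')\simeq L'$. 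By full faithfulness and contravariance, the inclusion $L'\hookrightarrow L$ lifts to a morphism $f\colon \hat{\mfN}\to \hat{\mfN}'$; inverting $p$ converts $f$ into an isomorphism (it matches the identity on $V$), so $f$ is injective with cokernel $\mfM:=\mfN'/f(\mfN)$. The containment $pL\subset L'$ lifts to the factorisation $p\cdot \mrm{id}_{\mfN'} = f\circ p''$ for the morphism $p''\colon \hat{\mfN}'\to \hat{\mfN}$ corresponding to $L\to L'$, $\ell\mapsto p\ell$; hence $p\mfN'\subset f(\mfN)$ and $p\mfM=0$. Applying $\vphi^{*}(-)=\mfS\otimes_{\vphi,\mfS}(-)$ (exact by flatness of $\vphi\colon \mfS\to \mfS$) to $0\to \mfN\to \mfN'\to \mfM\to 0$ and running the snake lemma against $1\otimes \vphi$ shows that $\mrm{coker}(\vphi^{*}\mfM\to \mfM)$ is a quotient of $\mrm{coker}(\vphi^{*}\mfN'\to \mfN')$, hence killed by $E(u)$; thus $\mfM\in \Mod^{1}_{\mfS_{\infty}}$.

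Next I endow $\mfM$ with a $(\vphi,G)$-module structure by descent. Extending the $\hat{G}$-action on $\whR\otimes_{\vphi,\mfS}\mfN'$ to a $G$-action on $W(R)\otimes_{\vphi,\mfS}\mfN'$ by $W(R)$-semi-linearity (as in the paragraph preceding Definition \ref{mod}) and noting that $W(R)\otimes_{\vphi,\mfS}\mfN$ is $\vphi$- and $G$-stable by functoriality, the $G$-action descends to the quotient $W(R)\otimes_{\vphi,\mfS}\mfM$; call the resulting triple $\hat{\mfM}$. That $\hat{T}(\hat{\mfM})\simeq T$ as $\mbb{Z}_p[G]$-modules comes from a direct computation: $\mrm{Hom}_{\mfS,\vphi}(\mfM,\mbb{Q}_p/\mbb{Z}_p\otimes \mfS^{\mrm{ur}})$ consists of those maps $\mfN'\to \mbb{Q}_p/\mbb{Z}_p\otimes \mfS^{\mrm{ur}}$ vanishing on $f(\mfN)$, and under the identifications of $\mrm{Hom}_{\mfS,\vphi}(\mfN',\,\cdot\,)$ with $V/L'$ and $\mrm{Hom}_{\mfS,\vphi}(\mfN,\,\cdot\,)$ with $V/L$, this is exactly the kernel of the natural map $V/L'\to V/L$, namely $L/L'=T$; the $G$-equivariance of the identification is inherited from that of $\hat{\mfN}'$.

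The remaining step is to verify $\hat{\mfM}\in \Mod^{1,G}_{\mfS_{\infty}}(\alpha)$. Fix $g\in G$ and $x\in \mfM$, and lift $x$ to $\tilde{x}\in \mfN'$. Because $\hat{\mfN}'\in \Mod^{1,\hat{G},\mrm{cris}}_{\mfS}$ and $\mfN'$ is free over $\mfS$, Definition \ref{mod}~(2) yields
$$
g(1\otimes \tilde{x})-(1\otimes \tilde{x})\in \bigl(\vphi(gu-u)B^+_{\mrm{cris}}\cap W(R)\bigr)\otimes_{\vphi,\mfS}\mfN'.
$$
Proposition \ref{lem0}~(2) shows that the image of $\vphi(gu-u)B^+_{\mrm{cris}}\cap W(R)$ under $W(R)\to R$ lies in $\mfm_R^{\ge c}$ with $c=p/(p-1)+(j_0-1)p^h/(e(p-1))$. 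The hypothesis $v_R(\bar{\alpha})\le (j_0-1)p^h/(e(p-1))$ gives $c\ge v_R(\bar{\alpha})+p/(p-1)$, and since $R$ is a valuation ring and $\bar{\alpha}\ne 0$, we obtain $\mfm_R^{\ge c}\subset \bar{\alpha}\cdot \mfm_R^{\ge p/(p-1)}$, which is precisely the reduction modulo $p$ of $\alpha I^{[1]}W(R)$ (as $I^{[1]}W(R)=\vphi(\mft')W(R)$ with $v_R(\overline{\vphi(\mft')})=p/(p-1)$). Since $p\mfM=0$, passing to the quotient yields $g(1\otimes x)-(1\otimes x)\in \alpha I^{[1]}W(R)\otimes_{\vphi,\mfS}\mfM$, as desired. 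The main obstacle is the bookkeeping required to ensure that the Galois action is transported correctly through Theorem \ref{MT} and the descent to the quotient; the $\alpha$-membership itself is a direct consequence of Proposition \ref{lem0}~(2) once the $(\vphi,G)$-module structure has been set up.
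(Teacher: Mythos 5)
Your proof is correct and follows essentially the same approach as the paper's: present $T$ as $L/L'$ for lattices in a crystalline $\mbb{Q}_p$-representation with HT weights in $[0,1]$, apply Theorem \ref{MT} to obtain an injection of free $(\vphi,\hat{G})$-modules, take the cokernel Kisin module with its descended $G$-action, identify $\hat T$ of it with $T$, and invoke Proposition \ref{lem0}~(2) to verify the $\alpha$-containment. (The only very small point to tighten is the conclusion $\mfM\in \Mod^1_{\mfS_\infty}$: your snake-lemma argument shows $\mfM$ is of height $1$, but for $p'$-torsion freeness you should also invoke Proposition \ref{BASIC2}~(4), applied to $\mfN\hookrightarrow\mfN'$, as the paper does.)
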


\begin{proof}
Take an exact sequence 
$0\to L_1\to L_2\to T\to 0$
of representations of $G$, where
$L_1\subset L_2$ are $G$-stable $\mbb{Z}_p$-lattices 
in a crystalline $\mbb{Q}_p$-representation of $G$ with Hodge-Tate weights in $[0,1]$.
Take a morphism $i\colon \hat{\mfL}_2\to \hat{\mfL}_1$
in $\Mod^{1,\hat{G},\mrm{cris}}_{\mfS}$ which corresponds to
the injection $L_1\hookrightarrow L_2$ via Theorem \ref{MT}.
We regard $\hat{\mfL}_1$ and $\hat{\mfL}_2$
as $(\vphi,G)$-modules by a canonical way. 
It is not difficult to check that the map 
$\mfL_2\to \mfL_1$ of underlying Kisin modules of $i$ is injective, 
and thus we may regard $\hat{\mfL}_2$ as a sub $(\vphi,G)$-module
of $\hat{\mfL}_1$.
Put $\mfM=\mfL_1/\mfL_2$. 
It follows from Proposition \ref{BASIC2} that 
$\mfM$ is an object of $\Mod^1_{\mfS_{\infty}}$. 
Furthermore, we can naturally equip $\mfM$ with a 
$(\vphi,G)$-module structure; we denote it by $\hat{\mfM}$.
By construction, 
we have an exact sequence 
$0\to \hat{\mfL}_2\to \hat{\mfL}_1\to \hat{\mfM}\to 0$
of  $(\vphi,G)$-modules.
It follows from (the proof of) Lemma 3.1.4 of \cite{CL2} 
that this exact sequence induces $0\to L_1\to L_2\to T\to 0$.
We note that $\mfM[1/p]$ is an \'etale $\vphi$-module 
corresponding to  $T|_{G_{\underline{\pi}}}$, and thus $M$ is killed by $p$ (see the isomorphism (3.2.1) of \cite{CL}).
In particular, $\mfM$ is killed by $p$.
Combining this with the fact that $\hat{\mfL}_1$ and $\hat{\mfL}_2$
are objects of $\Mod^{1,\hat{G},\mrm{cris}}_{\mfS}$,
 it follows from Proposition \ref{lem0} 
that $\hat{\mfM}$ is an object of $\hat{\mfM}\in \Mod^{1,G}_{\mfS_{\infty}}(\alpha)$.
\end{proof}

Next we consider general $r$.

\begin{corollary}
\label{condition}
Assume the following conditions.
\begin{itemize}
\item[${\rm (i)}$] $gu\in uW(R)$ for any $g\in G$.
\item[${\rm (ii)}$] $f^{(n)}(\pi)\not=0$ for any $n\ge 1$.
\item[${\rm (iii)}$] $v_p(a_1)>\mrm{max}\{r,1\}$.
\end{itemize}
Let $\hat{\mfM}$ be a free $(\vphi,\hat{G})$-module of height $r$. 
Then the following are equivalent.

\noindent
$(1)$ $\hat{\mfM}$ is an object of $\Mod^{r,\hat{G},\mrm{cris}}_{\mfS}$.

\noindent
$(2)$ $g(1\otimes x)-(1\otimes x)\in  u_fI^{[1]}W(R)\otimes_{\vphi,\mfS} \mfM$
for any $g\in G$ and $x\in \mfM$.
\end{corollary}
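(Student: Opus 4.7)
The plan is to verify the two directions separately, with the harder direction routed through the identification $\Mod^{r,\hat{G},\mrm{cris}}_{\mfS}={}'\Mod^{r,\hat{G},\mrm{cris}}_{\mfS}$ of Theorem \ref{MT}(1). Conditions (i)--(iii) entail both (P) (as remarked in Section 7.2) and $v_p(a_1)>\max\{r,1\}$, so Theorem \ref{MT}(1) is indeed applicable here.

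For the implication (1)$\Rightarrow$(2), I would start from Definition \ref{mod}(2), which places $g(1\otimes x)-(1\otimes x)$ in $\vphi(gu-u)B^+_{\mrm{cris}}\otimes_{\vphi,\mfS}\mfM$, while the $(\vphi,\hat{G})$-module structure automatically places the same element in $W(R)\otimes_{\vphi,\mfS}\mfM$. Since $\mfM$ is free over $\mfS$, the intersection of these two submodules of $B^+_{\mrm{cris}}\otimes_{\vphi,\mfS}\mfM$ reduces to $(\vphi(gu-u)B^+_{\mrm{cris}}\cap W(R))\otimes_{\vphi,\mfS}\mfM$, and Proposition \ref{lem1}(2) -- which is tailor-made for conditions (i) and (ii) -- bounds this ideal above by $u_fI^{[1]}W(R)$, delivering (2).

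For the implication (2)$\Rightarrow$(1), rather than attempting the direct ideal containment $u_fI^{[1]}W(R)\subseteq\vphi(gu-u)B^+_{\mrm{cris}}$ (which one should not expect, since the quotient $\vphi(gu-u)/(u_f\vphi(\mft'))$ need not be invertible in $B^+_{\mrm{cris}}$), I would verify membership of $\hat{\mfM}$ in the more flexible category ${}'\Mod^{r,\hat{G},\mrm{cris}}_{\mfS}$ via the uniform choice $\alpha_g:=u_f\vphi(\mft')$, independent of $g$. Condition (a) of Definition \ref{mod}(1) is immediate from (2), since $u_fI^{[1]}W(R)=u_f\vphi(\mft')W(R)\subset\alpha_gB^+_{\mrm{cris}}$. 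For condition (b), one iterates $\vphi(\mft')=E(u)\mft'$ and uses $\vphi^j(E(u))/p\in A_{\mrm{cris}}^\times$ for $j\ge 1$ (as recalled in the proof of Lemma \ref{range}) to get $\vphi^{n+1}(\mft')=p^n\cdot(\text{unit})\cdot E(u)\mft'$. Since $u\mid u_f$ and $u^{ep}/p\in A_{\mrm{cris}}^\times$, the $p$-adic valuation of $\vphi^n(u_f)$ in $A_{\mrm{cris}}$ grows at least like $[p^{n-1}/e]$, so
\[
\vphi^n(\alpha_g)/p^{nr}=p^{n(1-r)}\cdot\vphi^n(u_f)\cdot(\text{unit})\cdot E(u)\mft'
\]
has $p$-adic valuation tending to $+\infty$, establishing (b). Theorem \ref{MT}(1) then upgrades the membership from ${}'\Mod^{r,\hat{G},\mrm{cris}}_{\mfS}$ to $\Mod^{r,\hat{G},\mrm{cris}}_{\mfS}$, giving (1).

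The principal obstacle is the convergence in condition (b) of the backward direction: one must carefully separate the $p$-factors emerging from the iterated Frobenius of $E(u)$ from the polynomial growth of $\vphi^n(u_f)$ in $A_{\mrm{cris}}$, and exploit both $u\mid u_f$ and the relation $u^{ep}/p\in A_{\mrm{cris}}^\times$ to extract enough $p$-adic decay to overwhelm the $p^{-nr}$ normalization.
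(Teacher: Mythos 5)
Your proof is correct and follows essentially the same route as the paper: the forward direction comes from Proposition \ref{lem1}(2) applied componentwise to a free module, and the reverse direction is routed through ${}'\Mod^{r,\hat{G},\mrm{cris}}_{\mfS}$ with $\alpha_g$ a generator of $u_fI^{[1]}W(R)$, followed by Theorem \ref{MT}(1). The only difference is that the paper delegates the convergence of $\vphi^n(u_f)/p^{nr}$ to Lemma 2.2.2 of \cite{CL} (using only that $u$ divides $u_f$), whereas you re-derive that estimate directly from $\vphi(\mft')=E(u)\mft'$ and $u^{ep}/p\in A_{\mrm{cris}}^{\times}$; your remark that (ii) implies (P) is in Section 4.2 of the paper, not Section 7.2.
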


\begin{proof}
Note that $u_f$ is divided by $u$, and thus $\vphi^n(u_f)/p^{nr}$ converges to zero 
$p$-adically in $B^+_{\mrm{cris}}$ by Lemma 2.2.2
of \cite{CL}.
Thus the result follows from Theorem \ref{MT} (1) and Proposition \ref{lem1}.
\end{proof}

\begin{corollary}
\label{rep:mod}
Assume the following conditions.
\begin{itemize}
\item[${\rm (i)}$] $gu\in uW(R)$ for any $g\in G$.
\item[${\rm (ii)}$] $f^{(n)}(\pi)\not=0$ for any $n\ge 1$.
\item[${\rm (iii)}$]  $v_p(a_1)>\mrm{max}\{r,1\}$.
\end{itemize}
Let $T$ be an object of $\mrm{Rep}^{r,\mrm{cris}}_{\mrm{tor}}(G)$.
Then there exists a $(\vphi,G)$-module 
$\hat{\mfM}\in \Mod^{r,G}_{\mfS_{\infty}}(u_f)$ such that $T\simeq \hat{T}(\hat{\mfM})$. 

Moreover, we have the following:
Suppose that 
we have an exact sequence 
$$
(\#)\quad 0\to L_1\to L_2\to T\to 0
$$
of representations of $G$, where
$L_1\subset L_2$ are $G$-stable $\mbb{Z}_p$-lattices 
in a crystalline $\mbb{Q}_p$-representation of $G$ with Hodge-Tate weights in $[0,r]$.
Then there exist $\hat{\mfL}_1, \hat{\mfL}_2\in \Mod^{r,G}_{\mfS}(u_f)$,
$\hat{\mfM}\in \Mod^{r,G}_{\mfS_{\infty}}(u_f)$ and 
an exact sequence
$$
(\ast)\quad 0\to \hat{\mfL}_2\to \hat{\mfL}_1\to \hat{\mfM}\to 0
$$
of  $(\vphi,G)$-modules which induces $(\#)$.
\end{corollary}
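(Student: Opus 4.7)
The plan is to mimic the proof of Corollary \ref{rep:mod:gen}, but replace the input coming from Proposition \ref{lem0} (which gave the $r=1$ estimate) with Corollary \ref{condition}, which precisely characterizes objects of $\Mod^{r,\hat{G},\mrm{cris}}_{\mfS}$ via the stronger condition $g(1\otimes x)-(1\otimes x)\in u_fI^{[1]}W(R)\otimes_{\vphi,\mfS}\mfM$ under assumptions (i)--(iii). Note first that the bare existence statement follows from the moreover clause by choosing any exact sequence $0\to L_1\to L_2\to T\to 0$ realizing $T$ as a quotient of lattices in a crystalline $\mbb{Q}_p$-representation with Hodge--Tate weights in $[0,r]$, so I would concentrate on the moreover clause.

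Given such a sequence, I would apply Theorem \ref{MT} to the injection $L_1\hookrightarrow L_2$ to obtain a morphism $\hat{\mfL}_2\to \hat{\mfL}_1$ in $\Mod^{r,\hat{G},\mrm{cris}}_{\mfS}$, and then view $\hat{\mfL}_1,\hat{\mfL}_2$ as $(\vphi,G)$-modules by extending the $\hat{G}$-action $W(R)$-semilinearly as in the discussion preceding Corollary \ref{condition}. By Corollary \ref{condition}, both objects lie in $\Mod^{r,G}_{\mfS}(u_f)$. Next, the underlying map $\mfL_2\to \mfL_1$ is injective: by Proposition \ref{KisinFF} (4), $T_{\mfS}$ is exact and faithful on free Kisin modules, and this forces injectivity from the injectivity of $L_1\hookrightarrow L_2$. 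Thus I may regard $\hat{\mfL}_2$ as a sub-$(\vphi,G)$-module of $\hat{\mfL}_1$ and form $\mfM:=\mfL_1/\mfL_2$. Since $\mfM$ is killed by a power of $p$ and embeds into the torsion \'etale $\vphi$-module $(\cO_{\E}\otimes_{\mfS}\mfL_1)/(\cO_{\E}\otimes_{\mfS}\mfL_2)$ (by flatness of $\mfS\to\cO_{\E}$, cf.\ Corollary \ref{exact}), Proposition \ref{BASIC2} puts $\mfM$ in $\Mod^r_{\mfS_{\infty}}$.

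The quotient $(\vphi,G)$-module structure on $\mfM$ exists because $W(R)\otimes_{\vphi,\mfS}(-)$ is right exact (again Corollary \ref{exact}) and the $G$-action commutes with $\vphi$; the defining condition of $\Mod^{r,G}_{\mfS}(u_f)$ visibly passes to quotients, giving $\hat{\mfM}\in \Mod^{r,G}_{\mfS_{\infty}}(u_f)$. This yields the exact sequence $(\ast)$ of $(\vphi,G)$-modules. Finally, I must check that $\hat{T}$ applied to $(\ast)$ recovers $(\#)$: the outer identifications $\hat{T}(\hat{\mfL}_i)\simeq L_i$ come from Theorem \ref{MT}, and the identification of $\hat{T}(\hat{\mfM})$ with the cokernel $T$ follows from the analogue of \cite[Lemma 3.1.4]{CL2} (an exactness statement for $\hat{T}$ on short exact sequences of $(\vphi,G)$-modules which are killed after passing through Kisin modules and whose generic fibers are lattices). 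This completes the argument and simultaneously verifies $\hat{T}(\hat{\mfM})\simeq T$.

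The main obstacle is the exactness step at the end: one needs a version of \cite[Lemma 3.1.4]{CL2} valid in the Cais--Liu iterate-extension setting, asserting that $\hat{T}$ is exact on sequences of the form $(\ast)$ where the two outer terms are free and the quotient is torsion. The descent of the $u_f$-condition to $\hat{\mfM}$ and the descent of the $(\vphi,G)$-structure, while straightforward, also require a careful verification that the natural surjection $W(R)\otimes_{\vphi,\mfS}\mfL_1\twoheadrightarrow W(R)\otimes_{\vphi,\mfS}\mfM$ sends $u_fI^{[1]}W(R)\otimes_{\vphi,\mfS}\mfL_1$ into $u_fI^{[1]}W(R)\otimes_{\vphi,\mfS}\mfM$, which is immediate once one observes that the surjection respects the $W(R)$-module structure.
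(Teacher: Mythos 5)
Your proposal follows the paper's proof essentially verbatim: the paper simply refers back to the argument of Corollary \ref{rep:mod:gen} and observes that under hypotheses (i)--(iii), Corollary \ref{condition} places the free objects $\hat{\mfL}_1,\hat{\mfL}_2$ in $\Mod^{r,G}_{\mfS}(u_f)$, whence the quotient $\hat{\mfM}$ lies in $\Mod^{r,G}_{\mfS_{\infty}}(u_f)$; the final exactness step is delegated to \cite[Lemma 3.1.4]{CL2}, exactly as you note.

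One soft spot in your write-up: to show $\mfM=\mfL_1/\mfL_2$ belongs to $\Mod^r_{\mfS_{\infty}}$ you invoke criterion (2) of Proposition \ref{BASIC2} and claim $\mfM$ embeds into $\cO_{\E}\otimes_{\mfS}\mfM$ ``by flatness of $\mfS\to\cO_{\E}$.'' Flatness yields only that $\cO_{\E}\otimes_{\mfS}\mfM$ is the cokernel of $\cO_{\E}\otimes_{\mfS}\mfL_2\hookrightarrow\cO_{\E}\otimes_{\mfS}\mfL_1$; it does not give injectivity of the natural map $\mfM\to\cO_{\E}\otimes_{\mfS}\mfM$, which is equivalent to the saturation statement $\mfL_2=\mfL_1\cap(\cO_{\E}\otimes_{\mfS}\mfL_2)$ inside $\cO_{\E}\otimes_{\mfS}\mfL_1$. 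The intended (and cleaner) route is criterion (4) of Proposition \ref{BASIC2}, which directly says that a $p$-power-torsion quotient of two objects of $\Mod^r_{\mfS}$ lies in $\Mod^r_{\mfS_{\infty}}$. Likewise your citation of Proposition \ref{KisinFF}(4) for injectivity of $\mfL_2\to\mfL_1$ is not quite the right tool (faithfulness of a contravariant functor does not by itself reflect monos); the standard argument passes through the anti-equivalence $T_{\cO_{\E}}$ on \'etale $\vphi$-modules and the fact that $\cO_{\E}\otimes_{\mfS}\mfL_2$ is $\cO_{\E}$-torsion free. Neither gap is hard to fill, and the overall structure and key substitution (Corollary \ref{condition} for Proposition \ref{lem0}) are exactly the paper's.
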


\begin{proof}
The  proof is almost the same as that of Corollary \ref{rep:mod:gen}.
We only give a remark that 
$\hat{\mfL}_1$ and $\hat{\mfL}_2$ in the present situation
are objects of $\Mod^{r,G}_{\mfS}(u_f)$
by Corollary \ref{condition}, and thus 
$\hat{\mfM}$ is an object of $\Mod^{r,G}_{\mfS_{\infty}}(u_f)$.

\end{proof}

Now we are ready to prove Theorems \ref{FFT:r=1} and \ref{FFT}.
We essentially follow the method of \cite{Oz2}.

\begin{proof}[Proof of Theorem \ref{FFT}]
The goal is to show the equality  
\begin{equation}
\label{Meq1}
\mrm{Hom}_{G}(T,T')= \mrm{Hom}_{G_{\underline{\pi}}}(T,T')
\end{equation} 
for any $T,\ T'\in \mrm{Rep}^{r,\mrm{cris}}_{\mrm{tor}}(G)$.\\

\noindent
\underline{{\bf STEP 1.}}\quad We reduce a  proof to the case where $k=\overline{k}$.
Assume that the theorem holds when $k=\overline{k}$ and consider general cases.
We denote by $L$ and $H$ 
the completion of the maximal unramified extension of $K$
and the absolute Galois group of $L$, respectively.
We identify the inertia subgroup $I$ of $G$ with $H$.
We set $L_{\underline{\pi}}:=\bigcup_{n\ge 0} L(\pi_n)$
and denote by $H_{\underline{\pi}}$ the absolute Galois group of  $L_{\underline{\pi}}$.
We remark that
$L_{\underline{\pi}}$ is an $f$-iterate extension of $L$
since $\pi$ is a uniformizer of $L$.

Let $f\colon T\to T'$ be a $G_{\underline{\pi}}$-equivalent homomorphism.
Since $T|_H$ and $T'|_H$ are objects of 
$\mrm{Rep}^{r,\mrm{cris}}_{\mrm{tor}}(H)$ and $f$ 
commutes with $H_{\underline{\pi}}$, the assumption above 
implies that $f$ is $H$-equivalent.
Since the extension $K_{\underline{\pi}}/K$ is a totally ramified pro-$p$-extension,
we know that $H$ and $G_{\underline{\pi}}$ topologically generates $G$.
Hence $f$ commutes with $G$.\\

\noindent
\underline{{\bf STEP 2.}}\quad  We reduce a  proof to the case where 
$T$ is irreducible.
Assume that the equality \eqref{Meq1} holds when $T$ is irreducible and consider general cases.
Since the category $\mrm{Rep}^{r,\mrm{cris}}_{\mrm{tor}}(G)$ is stable under subquotients  and direct sums
in $\mrm{Rep}_{\mrm{tor}}(G)$ (cf.\ Lemma 4.19 of \cite{Oz2}), it is an exact category in the sense of Quillen
\cite[Section 2]{Qu}.
Hence short exact sequences in $\mrm{Rep}^{r,\mrm{cris}}_{\mrm{tor}}(G)$ give rise
to exact sequences of Hom's and $\mrm{Ext}^1$'s in the usual way.
Thus a standard  {\it d\'evissage}  argument
(with respect to a Jordan-H\"older sequence of $T$)  
reduces a  proof to the case where  $T$ is irreducible.\\

\noindent
\underline{{\bf STEP 3.}}\quad By Steps 1 and 2,
it suffices to show the equality \eqref{Meq1}
under the conditions that $k=\overline{k}$ and $T$ is irreducible. 
Now we assume these conditions.

First we claim that $T|_{G_{\underline{\pi}}}$ is irreducible.
Let $W$ be a $G_{\underline{\pi}}$-stable submodule of $T$.
Since $T$ is irreducible, 
the wild inertia subgroup $I^{\mrm{w}}$ of $G$ 
acts on $T$ trivial.
In particular, the $I^{\mrm{w}}$-action on $T$ preserves $W$.  
Since  $G_{\underline{\pi}}$ and $I^{\mrm{w}}$
topologically generates $G$,
the irreducibility of $T$ implies that $W$ is $0$ or $T$. Thus the claim follows.

By Corollary \ref{rep:mod},
there exist $(\vphi,G)$-modules $\hat{\mfM},\hat{\mfM}'\in \Mod^{r,G}_{\mfS_{\infty}}(u_f)$
such that $T\simeq \hat{T}(\hat{\mfM})$ and $T'\simeq \hat{T}(\hat{\mfM}')$.
Then we have $T|_{G_{\underline{\pi}}}\simeq T_{\mfS}(\mfM)\simeq T_{\mfS}(\mrm{Max}^r(\mfM))$.
By Theorem \ref{MAX} (5) and the condition that $T|_{G_{\underline{\pi}}}$ is irreducible,
we know that $\mrm{Max}^r(\mfM)$ is a simple object in the abelian category
$\mrm{Max}^r_{\mfS_{\infty}}$.
By Proposition \ref{simple2} and the assumption $k=\overline{k}$,
there exists an sequence $\mfn\in \mcal{S}^r_{\mrm{max}}$ such that 
$\mfM(\mfn)\simeq \mrm{Max}^r(\mfM)$.
We note  that the ideal 
$u_fI^{[1]}W(R)$ of $W(R)$ is generated by $u_f\vphi(\mft)$ and 
$v_R(u_f\vphi(\mft)\ \mrm{mod}\ p)= n_f/e+p/(p-1)\le p+p/(p-1) = p^2/(p-1)$.
It follows from  Theorem \ref{simple:str} that 
there exists a (unique) $(\vphi,G)$-module $\hat{\mfM}(\mfn)\in \Mod^{r,G}_{\mfS_{\infty}}(u_f)$
with underlying Kisin module $\mfM(\mfn)$.
Then we have an isomorphism
$T|_{G_{\underline{\pi}}}\simeq \hat{T}(\hat{\mfM}(\mfn))|_{G_{\underline{\pi}}}$.
By this isomorphism,
we know that $\hat{T}(\hat{\mfM}(\mfn))|_{G_{\underline{\pi}}}$ is irreducible
since $T|_{G_{\underline{\pi}}}$ is irreducible.
Hence $\hat{T}(\hat{\mfM}(\mfn))$ is irreducible as a representation of $G$.
In particular, $T$ and $\hat{T}(\hat{\mfM}(\mfn))$ are tame. 
Since $G_{\underline{\pi}}$ and $I^{\mrm{w}}$ topologically generates
$G$, 
the isomorphism
$T|_{G_{\underline{\pi}}}\simeq \hat{T}(\hat{\mfM}(\mfn))|_{G_{\underline{\pi}}}$
is in fact $G$-equivalent.
We consider the following commutative diagram.
\begin{center}
$\displaystyle \xymatrix{
\mrm{Hom}_G(T,T')\ar@{^{(}->}[rr] & &   
\mrm{Hom}_{G_{\underline{\pi}}}(T,T') \\
\mrm{Hom}(\hat{\mfM}',\hat{\mfM}(\mfn)) \ar^{\hat{T}}[u] \ar[r] &
\mrm{Hom}_{\mfS,\vphi}(\mfM',\mfM(\mfn)) \ar^{\mrm{Max}^r\quad \ \ }[r] & 
\mrm{Hom}_{\mfS,\vphi}(\mrm{Max}^r(\mfM'),\mfM(\mfn))
\ar^{T_{\mfS}}[u]. 
}$
\end{center}
Here, we recall that we have $v_R(\bar{u}_f)=n_f/e>p(r-1)/(p-1)$.
Hence the first arrow in the bottom line, obtained by forgetting $G$-actions,
is bijective by Theorem \ref{FFT2}.
Since $\mfM(\mfn)$ is maximal, it is not difficult to check that  
the second arrow in the bottom line is also bijective. 
Furthermore, the right vertical arrow is also bijective by Theorem \ref{MAX} (5).
Therefore, the top horizontal arrow must be bijective as desired.
This is the end of a proof of Theorem \ref{FFT}.
\end{proof}

\begin{proof}[Proof of Theorem \ref{FFT:r=1}]
The goal is to show the equality  
\begin{equation}
\label{Meq2}
\mrm{Hom}_{G}(T,T')= \mrm{Hom}_{G_{\underline{\pi}}}(T,T')
\end{equation} 
for any $T,\ T'\in \mrm{Rep}^{1,\mrm{cris}}_{\mrm{tor}}(G)$.
The arguments in Steps 1 and 2 just above proceed  also for the present situation.
Thus it suffices to show the equality \eqref{Meq2} 
under the conditions that $k=\overline{k}$ and $T$ is irreducible. 
Put $T''=\mrm{ker}(T'\to T'; x\mapsto px)$.
This is an object of $\mrm{Rep}^{1,\mrm{cris}}_{\mrm{tor}}(G)$
by Lemma 4.19 of \cite{Oz2}. 
Since $pT=0$, we know that  any homomorphism 
$T\to T'$ of $\mbb{Z}_p$-modules have values in $T''$.
Thus, by replacing $T'$ with $T''$, we may assume $pT'=0$.

Take any $\alpha\in W(R)\smallsetminus pW(R)$ such that 
$0< v_R(\bar{\alpha})\le (j_0-1)/(e(p-1))$.
Since $T$ and $T'$ are killed by $p$,
there exist $(\vphi,G)$-modules 
$\hat{\mfM},\hat{\mfM}'\in \Mod^{r,G}_{\mfS_{\infty}}(\alpha)$ killed by $p$
such that $T\simeq \hat{T}(\hat{\mfM})$ and $T'\simeq \hat{T}(\hat{\mfM}')$
by Corollary \ref{rep:mod:gen}.
Now we can use the same arguments of the third paragraph of  Step 3.
\end{proof}

In the case where $er<p-1$, we can improve the assumption (iii) 
of Theorem \ref{FFT}.
\begin{theorem}
Assume the following conditions.
\begin{itemize}
\item[$(i)$] $gu\in uW(R)$ for any $g\in G$.
\item[$(ii)$] $f^{(n)}(\pi)\not=0$ for any $n\ge 1$.
\item[$(iii)$] $v_p(a_1)>r$.
\end{itemize}
Then 
the restriction functor $\mrm{Rep}^{r,\mrm{cris}}_{\mrm{tor}}(G)\to 
\mrm{Rep}_{\mrm{tor}}(G_{\underline{\pi}})$
is fully faithful if $e(r-1)<n_f(p-1)/p$ and $er<p-1$.
\end{theorem}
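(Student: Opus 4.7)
The idea is to streamline the proof of Theorem \ref{FFT} by exploiting the stronger assumption $er < p-1$, which forces every torsion Kisin module of height $r$ to be automatically maximal. This removes the need for the explicit construction of $(\vphi,G)$-module structures on the simple Kisin modules $\mfM(\mfn)$ via Theorem \ref{simple:str}, which is precisely the step that required $v_p(a_i) > 1$ for all $1 \le i \le p-1$. The case $r = 0$ is immediate (torsion crystalline representations with Hodge--Tate weights in $[0,0]$ are unramified), so throughout the proof I would assume $r \ge 1$.

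First, I would apply Corollary \ref{rep:mod} to any $T, T' \in \mrm{Rep}^{r,\mrm{cris}}_{\mrm{tor}}(G)$; its hypotheses (i), (ii), (iii)$'$ = ``$v_p(a_1) > \max\{r,1\}$'' coincide with the conditions (i), (ii), (iii) of the present theorem once $r \ge 1$. This produces $(\vphi,G)$-modules $\hat{\mfM}, \hat{\mfM}' \in \Mod^{r,G}_{\mfS_{\infty}}(u_f)$ with $\hat{T}(\hat{\mfM}) \simeq T$ and $\hat{T}(\hat{\mfM}') \simeq T'$ as $G$-representations. Since $v_R(\bar{u}_f) = n_f/e$, the numerical assumption $e(r-1) < n_f(p-1)/p$ translates into $v_R(\bar{u}_f) > p(r-1)/(p-1)$, so Theorem \ref{FFT2} yields that the forgetful functor $\Mod^{r,G}_{\mfS_{\infty}}(u_f) \to \Mod^r_{\mfS_{\infty}}$ is fully faithful. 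Independently, the assumption $er < p-1$ combined with Corollary \ref{Great}(2) gives $\Mod^r_{\mfS_{\infty}} = \mrm{Max}^r_{\mfS_{\infty}}$, so by Theorem \ref{MAX}(5) the functor $T_{\mfS}\colon \Mod^r_{\mfS_{\infty}} \to \mrm{Rep}_{\mrm{tor}}(G_{\underline{\pi}})$ is itself fully faithful.

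The two full-faithfulness statements combine directly: given a $G_{\underline{\pi}}$-equivariant morphism $\phi\colon T \to T'$, transport it along the canonical isomorphisms $T|_{G_{\underline{\pi}}} \simeq T_{\mfS}(\mfM)$ and $T'|_{G_{\underline{\pi}}} \simeq T_{\mfS}(\mfM')$ to obtain a morphism $\psi\colon \mfM' \to \mfM$ of Kisin modules; Theorem \ref{FFT2} upgrades $\psi$ to a morphism $\hat{\psi}\colon \hat{\mfM}' \to \hat{\mfM}$ of $(\vphi,G)$-modules, and $\hat{T}(\hat{\psi})$ is then a $G$-equivariant map which by construction restricts to $\phi$ on $G_{\underline{\pi}}$, hence must equal $\phi$. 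The main point that deserves care -- and the only real potential obstacle -- is to verify that no step above secretly uses the dropped conditions $v_p(a_i) > 1$ for $i \ge 2$: the only candidate is the proof of Corollary \ref{rep:mod}, which ultimately appeals to Proposition \ref{lem1}, and that proposition's hypotheses are precisely (i), (ii), making no assumption on the $a_i$ for $i \ge 2$. In contrast with the proof of Theorem \ref{FFT}, there is no reduction to $k = \overline{k}$, no reduction to $T$ irreducible, and no invocation of the classification of simple maximal Kisin modules -- the maximality of all objects under $er < p-1$ absorbs all of these reductions at once.
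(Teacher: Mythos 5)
Your argument is correct, and its skeleton is the same as the paper's: invoke Corollary~\ref{rep:mod} to produce $(\vphi,G)$-module structures, use Theorem~\ref{FFT2} for full faithfulness of the forgetful functor $\Mod^{r,G}_{\mfS_{\infty}}(u_f)\to\Mod^r_{\mfS_{\infty}}$ (since $v_R(\bar u_f)=n_f/e>p(r-1)/(p-1)$), and use $er<p-1$ via Corollary~\ref{Great}(2) to make $T_{\mfS}$ on $\Mod^r_{\mfS_{\infty}}=\mrm{Max}^r_{\mfS_{\infty}}$ fully faithful (Theorem~\ref{MAX}(5)), then compose. The one genuine difference is that the paper treats this theorem as ``essentially the same proof as Theorem~\ref{FFT}, with Step~3 easier,'' which implicitly keeps Steps~1 and~2 (reduction to $k=\overline k$ and to irreducible $T$); you correctly observe that those reductions existed only to feed Proposition~\ref{simple2} and the construction of $\hat{\mfM}(\mfn)$ in Theorem~\ref{simple:str}, both of which become vacuous when every torsion Kisin module of height $r$ is already maximal, and none of the remaining ingredients (Corollary~\ref{rep:mod}, Theorem~\ref{FFT2}, Corollary~\ref{Great}, Theorem~\ref{MAX}) require $k$ algebraically closed or $T$ irreducible. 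So your version is a modest but real streamlining; the conclusion and the numerical bookkeeping (identifying condition~(iii) with $v_p(a_1)>\max\{r,1\}$ for $r\ge1$, disposing of $r=0$ separately via unramifiedness and totality of ramification of $K_{\underline\pi}/K$) are all handled correctly.
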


\begin{proof}
Essentially the same proof of Therem \ref{FFT}  proceeds
but arguments in Step 3 become easier in the case where $er<p-1$.
In fact, any torsion Kisin module of height $r$ is automatically 
maximal by Corollary \ref{Great} (2), and hence 
we do not need arguments of Section \ref{simple:act}.
This is the reason why we can improve the assumption (iii) of Theorem \ref{FFT}.
\end{proof}

\end{document}